\documentclass[11pt,reqno,a4paper,psamsfonts]{amsart}
\usepackage[utf8]{inputenc}
\usepackage[T1]{fontenc}
\usepackage[english]{babel}
\usepackage{amssymb,amsmath,amscd,enumerate,amsthm}
\usepackage[psamsfonts]{amsfonts}
\usepackage{latexsym}
\usepackage{amsbsy}
\usepackage{ulem}
\usepackage[left=3.cm,right=3.cm,top=3.cm,bottom=3.cm]{geometry}
\usepackage[hidelinks]{hyperref}
\usepackage{todonotes}
\usepackage{frcursive}

\usepackage{pdfsync}
\usepackage{xcolor,hyperref}
\usepackage{relsize,exscale}
\hypersetup{backref,colorlinks=true,linkcolor=blue,citecolor=blue}
\usepackage{pdfsync}
\usepackage{enumerate}
\usepackage{verbatim}
\usepackage{color}
\definecolor{green}{rgb}{0,0.5,0.0}
\definecolor{red}{rgb}{1,0,0}
\definecolor{marron}{rgb}{0.5,0.2,0.2}
\usepackage[all,color]{xy}
\usepackage{graphicx}

%

\newcommand{\diam}{\diamond}
%
\newcommand{\C}{{\mathbb{C}}}

\newcommand{\F}{{\mathcal{F}}}
\newcommand{\R}{{\mathbb{R}}}%
\newcommand{\Z}{{\mathbb{Z}}}

%

%
\let\CAL=\mathcal%
\def\mathcal#1{{\CAL#1}}%
\newcommand{\N}{\mathbb{N}}

%

%

%

%

%
%

%

%
%


%

\newcommand{\A}{{\mathsf{A}}}

\newcommand{\fcon}{\looparrowright}
\newcommand{\Fcon}{\stackrel{\F^\sharp}{\fcon}}
\newcommand{\Fconl}{\stackrel{\F^\sharp}{\fconl}}
\newcommand{\Gcon}{\stackrel{\G}{\fcon}}
\newcommand{\Gconl}{\stackrel{\G}{\fconl}}
\newcommand{\fconl}{\looparrowleft}

\newcommand{\G}{\mathcal{G}}

\renewcommand{\emph}[1]{{\textbf{#1}}}

\newcommand{\mf}[1]{{\mathfrak{#1}}}
\newcommand{\mr}[1]{{{\mathrm{#1}}}}
\newcommand{\mc}[1]{{\mathcal{#1}}}
\newcommand{\mb}[1]{{\mathbb{#1}}}

\newcommand{\mbf}[1]{\mathbf{#1}}
\let\msf\msl

\newcommand{\limproj}{\underleftarrow\lim}
\newcommand{\limind}{\underrightarrow\lim}

\newcommand{\iso}{{\overset{\sim}{\longrightarrow}}}

%
%

\newcommand{\inte}[1]{\overset{\circ}{{#1}}}
%

\newtheorem{teo}{Theorem}[section]

\newtheorem{lema}[teo]{Lemma}

\newtheorem{prop}[teo]{Proposition}
\newtheorem{defin}[teo]{Definition}
\newtheorem{cor}[teo]{Corollary}

\newtheorem{obs2}[teo]{Remark}
\newtheorem{recap2}[teo]{Recapitulation}
\newtheorem{ex2}[teo]{Example}
\newtheorem{question}[teo]{Question}

\newtheorem{idproof2}[teo]{\textit{Idea of the proof}}
\newenvironment{obs}{\begin{obs2}\rm}{\hfill\qed\end{obs2}}

\def\bibartp#1#2#3#4#5#6#7#8
{\bibitem{#1} {\sc #2}, {\it #3}, {#4}, {\bf t. #5}, pages { #6} to
{ #7}, {({#8})}}
%
\def\bibart#1#2#3#4#5#6
{\bibitem{#1} {\sc #2}, {\it #3}, {#4}, {\bf #5}, {({#6})}}
\def\bibliv#1#2#3#4#5
{\bibitem{#1} {\sc #2}, {\it #3}, {#4}, {({#5})}}
\def\bibaart#1#2#3#4
{\bibitem{#1} {\sc #2}, {\it #3}, {#4}}
%
%
\definecolor{vert}{rgb}{0,0.46,0}




\usepackage{amsfonts,mathrsfs}
\newcommand{\ms}{\mathscr}

\title[Topology of singular foliation germs in $\C^2$]{Topology of  singular foliation germs in $\C^2$}
\date{\today}
\author{David Mar\'{\i}n, Jean-Fran\c{c}ois Mattei and {E}liane Salem}

\thanks{D. Mar\'{\i}n acknowledges financial support from the Spanish Ministry of Science, Innovation and Universities, through grant 
PID2021-125625NB-I00 and
 by the Agency for Management of University and Research Grants of Catalonia through the grant 2021SGR01015.
 This work is also supported by the Spanish State Research Agency, through the Severo Ochoa and Mar\'ia de Maeztu Program for Centers and Units of Excellence in R\&D (CEX2020-001084-M). E. Salem acknowledges the support of CNRS through a delegation.}
\address{Departament de Matem\`{a}tiques\\ Universitat Aut\`{o}noma de Barcelona \\ E-08193 Cerdanyola del Vall\`es (Barcelona)\\ Spain\\ \newline \indent Centre de Recerca Matem\`atica, Campus de Bellaterra, E-08193 Cerdanyola del Vall\`es, Spain} \email{David.Marin@uab.cat}
\address{Institut de Math\'{e}matiques de Toulouse\\ Universit\'{e} Paul Sabatier\\ 118, Route de Narbonne\\ F-31062 Toulouse Cedex 9, France} \email{jean-francois.mattei@math.univ-toulouse.fr}
\address{Sorbonne Universit\'e, Universit\'e de  Paris, CNRS,  Institut de Math\'ematiques de Jussieu - Paris Rive Gauche, F-75005 Paris, France}
\email{eliane.salem@imj-prg.fr}

\subjclass[2010]{Primary 37F75; Secondary 32M25, 32S50, 32S65, 34M} 

\keywords{Complex dynamical systems, complex ordinary differential equations, holomorphic vector fields, singular foliations, holonomy}

\usepackage{tikz}
\usetikzlibrary{calc}
\tikzset{elliparc/.style args={#1:#2:#3}{%
insert path={++(#1:#3) arc (#1:#2:#3)}}}

\begin{document}

\maketitle

\tableofcontents

\section*{Introduction}

The object of this survey is to give an overview 
 on the topology of singularities of holomorphic foliation germs on $(\C^2,0)$. Such a foliation germ $\F$ can be defined by a differential form $\omega=a(x,y)dx + b(x,y)dy$ or equivalently by a vector field $X=b(x,y)\partial_x - a(x,y)\partial_y$, with $a(x,y)$ and $b(x,y)$  elements of the ring $\mc O_{\C^2,0}$ of  germs of holomorphic functions vanishing at the origin of $\C^2$, without common divisor. A leaf of the foliation $\F_U$ that represents the germ $\F$  on an open neighborhood $U$ of $0$ on which $a(x,y)$ and $b(x,y)$ are defined and holomorphic, is a maximal immersed Riemann surface in $U\setminus\{0\}$ that is solution of the equation $\omega=0$, or equivalently that is tangent to $X$. Two foliation germs $\F$ and $\G$ are topologically equivalent (or $\mc C^0$-conjugated) if there is a homeomorphism between two open neighborhoods $U$ and $V$ of $0$ that sends any leaf of $\F_U$ to a leaf of $\G_V$. This notion is an equivalence relation on the set of all foliation germs, and the final 
objective of a topological classification would be to obtain a list of foliation germs containing an element  of each topological class, will minimal redundancy.

Special leaves $L$ of $\F_U$ are those such that $L\cup \{0\}$ is a closed  analytic curve in $U$; they are topologically characterized by the property of being closed in $U\setminus\{0\}$.
However for the germ $\F$ this notion takes a rigorous sense only as  germ of curve at $0$, called separatrix of $\F$.
Indeed there are germs $\F$ for which any leaf of $\F_U$ contains a  separatrix (as punctured curve germ) and there are leaves of $\F_U$ containing an infinity of separatrices, see Remark~\ref{inf-sep} and Figure~\ref{fig-inf-sep}.\\

We begin with a historical approach to the topological study of singularities of foliation germs.
The first result on the topology of separatrices is the  famous paper \cite{Briot-Bouquet}  (1856) of C. Briot and J.C. Bouquet  where the authors, after introducing the notion of Newton's polygon of a foliation germ, establish  an algorithm giving the Puiseux pairs of any separatrix, thus determining their topological type. Later A. Seidenberg  establishes in \cite{Seidenberg} (1968)   that for a given  foliation germ,  the number of topological types of separatrices is finite. 
For this he proves that after a finite number of blowing-ups over the origin, the foliation is locally defined, at each singular point on the exceptional divisor, by a vector field  whose  linear part is diagonalizable with a non-zero eigenvalue. 
After supplementary blowing-ups, each singular point becomes reduced, i.e. the quotient of eigenvalues is  not  a positive rational number.  A simple proof of this result is obtained by A. van den Essen \cite{vdE}, \cite[Appendice~I]{MatMou} (1979) and F. Cano \cite{FCano} (1986). The existence of a separatrix is established in 1982 by C. Camacho and P. Sad in \cite{CS}. Alternative proofs of this result are given later by J. Cano \cite{JoseCano} (1997), M. Toma \cite{Toma} (1999).\\

The simplest reduced foliation germs are given by $1$-differential  forms $\omega$ with linear part $\omega_L=\lambda_1 y dx +\lambda_2 xdy$, with $\lambda_1,\lambda_2\in\C^\ast$ and $\lambda_1/\lambda_2\notin\R$. They are studied by H. Poincar\'e and H. Dulac \cite{Dulac} (1904) proving
the existence of local holomorphic coordinates for which $\omega=\omega_L$, i.e. $\F$ is linearizable.  All these foliation germs belong to the same topological class, see \S\ref{PoincareType}.  The topological classification of reduced foliation germs given by a $1$-form $\omega$ with linear part $pydx+qxdy$ with $p,q\in\N^\ast$, called resonant saddles, is made by C. Camacho and P. Sad in \cite{Camacho-Sad} (1982), who obtain a countable set of topological classes and show that the topological type is determined by a finite jet of $\omega$. The remaining reduced foliation germs are the saddle-nodes, that are given by a differential form $\omega$ with  $\omega_L=ydx$,   whose  topological classification  is made by P.M. Elizarov  in  \cite{Elizarov} (1990), and the non-linearizable foliation germs given by $\omega$ with $\omega_L=\lambda_1 y dx +\lambda_2 xdy$ and $\lambda_1/\lambda_2\in\R\setminus\mb Q$. In this last case the foliation germs which are linearizable form a topological class, and the classification problem  of those which are  non-linearizable is equivalent to that of the topological classification of Cremer biholomorphisms of one variable.\\

For non-reduced foliation germs, C. Camacho, A. L. Neto and P. Sad highlight in  \cite{CLNS} (1984) an important class of foliation germs: the  so called generalized curves for which no saddle-node singularity appear after reduction.
They prove the topological invariance of the Milnor number of any foliation germ (defined as the complex codimension  of the ideal of $\mc O_{\C^2,0}$ generated by the coefficients of the $1$-form with isolated singularity defining  the foliation germ). Using this invariant they characterize topologically generalized curves. They also prove that generalized curves have same reduction as their separatrices, but this topological property does not characterize them \cite{MS} (2004). Few topological results are known for non-generalized curves, see \S\ref{Loic}.\\

After these results and in the 1990's the study of the topology of non-reduced foliation germs was ``local'' in the sense of a search for  invariants under ``small $\mc C^0$-trivial deformations'' defined by  families of germs of differential forms parametrized by a germ of space. 
An important step in this study  is the topological rigidity property of non solvable groups of biholomorphism germs in one variable proved by 
A.A. Shcherbakov \cite{Shcherbakov} (1984) and completed by I. Nakai \cite{nakai} (1994). 
This one makes it possible  
to put new (generic) assumptions on the differential form defining a foliation germ (the non-solvability of some holonomy groups), that induce transverse holomorphy properties on any homeomorphism conjugating such foliations germs, see \S \ref{bigdyncomp}. 
Under rigidity assumptions D. Cerveau and P. Sad prove in \cite{CerveauSad} (1986) that  the holonomy of the exceptional divisor is   invariant under $\mc C^0$-trivial deformation for generic foliation  germs reduced by a single blowing-up; they also conjecture that this holds for any pair of topologically conjugated  generalized curves not necessarily contained in a $\mc C^0$-trivial deformation. The classification of generic logarithmic foliations established by E. Paul in \cite{Paul} (1989) shows that the rigidity assumption is necessary, see Theorem \ref{thmpaul}. 
In fact, it is not sufficient to consider the holonomies 
to obtain a complete topological classification.
Other continuous topological invariants under $\mc C^0$-trivial deformations exist and are completely specified by two of us in  \cite{MS2} (1997) for generalized curves satisfying some additional assumptions.\\

The object of this paper is to describe, giving ideas of the proofs, results obtained by the authors  on the topology of leaves,  the structure of leaves space and criteria of conjugacy for any two foliation germs not necessarily contained in a $\mc C^0$-trivial deformation. They are based on the construction of the monodromy, a new topological invariant of geometric and dynamical nature.
All these results  lead to prove, for two  generalized curve  satisfying some generic assumptions,
that the existence of a conjugating homeomorphism between neighborhoods of  the origin of $\C^2$ is equivalent to the existence of a conjugating homeomorphism 
between neighborhoods of the exceptional divisors   after reduction of singularities.
 Clearly this property (Excellence Theorem~\ref{exTheorem})  gives a positive answer to the Cerveau-Sad conjecture. It is also the starting point of a series of three works \cite{MMS, MMS2, MMS3} (2020-2022) that enables us  to carry out the topological classification of generic generalized curves, which we summarize in \S\ref{moduli}.\\

\noindent This paper is structured as follows:
\begin{enumerate}[\hphantom{\S\S}\S]
\item[\S 1] We study the simplest invariant sets of foliation germs: separatrices and separators. Separators appear when there are nodal singularities or non-invariant irreducible components of the exceptional divisor of the reduction of the foliation germ. They divide any small neighborhood of $0\in\C^2$ into disjoint invariant sets that we call dynamical components. 
 We compare it with the decomposition into conical geometric blocks of the complementary of the separatrices obtained using  3-manifold theory~\cite{LMW}.
\item[\S 2]  The key topological property of the leaves of a generalized curve is their incompressibility in the complementary of a suitable union of separatrices called appropriate curve (Incompressibility Theorem \ref{IncomprThm}). The introduction of the foliated connectedness notion and a foliated Van Kampen Theorem allow to prove incompressibility thanks to a foliated block decomposition of the space guided by the decomposition into geometric blocks.
\item[\S 3] We discuss an example that shows the necessity of non-isolated separatrices in an appropriate curve. 
\item[\S 4] We first give examples of the use of the ends of leaves space in the analytical study of a reduced foliation. We then highlight the complex structure of the leaf space of the foliation induced by $\F_U$ on the universal covering of $U\setminus S$, for $U$ in a suitable system of neighborhoods of an appropriate curve $S$. This allows to introduce the notion of monodromy of a foliation germ as an action on the leaf spaces system of the fundamental group of the complementary of $S$. Together with the Camacho-Sad indices, the monodromy is a complete topological invariant, see Classification Theorem~\ref{ClassThm}.
\item[\S 5] We describe different types of dynamical components that permit to state optimal assumptions for topological invariance of the Camacho-Sad indices at the singular points in the exceptional divisor of the reduction.
\item[\S 6] We explain the proof of  Excellence Theorem~\ref{exTheorem} and we 
state the topological classification of generalized curves.
\end{enumerate}

\section{Separatrices and separators}\label{Topologie des feuilles}
A consequence of the long exact sequence associated to the Milnor fibration \cite{Milnor} of a germ of function with isolated singularity  is that the fundamental group of a regular level set injects into the fundamental group of the complement of the singular fiber (\emph{incompressibility property}). In the more general setting  of generalized curves,
we will see that incompressibility property of the leaves
in the complementary of a suitable invariant curve (containing the isolated separatrices)
also holds.

\subsection{Graph decomposition of the complement of a germ of curve}\label{decompCompCurve}
Let $(S,0)$ be a germ of analytic curve $S$ at the origin of $\C^2$ with reduced equation $f=0$. It is well known that 
the \emph{full Milnor tube of $S$}
\begin{equation*}\label{tubeMilnor}
\mb T^\ast_{\varepsilon,\eta}:=\mb T_{\varepsilon,\eta}\setminus S\,,\quad \hbox{with}\quad
\mb T_{\varepsilon,\eta}:=\{\vert f(z)\vert \leq \eta, \Vert z\rVert \leq \varepsilon\}\subset\C^2\,,\quad 0<\eta\ll \varepsilon\ll 1\,,
\end{equation*}
is a retract by deformation of the complementary of the curve in the ball $\{ \Vert z\rVert \leq \varepsilon\}$ and also that it is a cone on the \emph{empty Milnor tube}  
\[\delta\mb T_{\varepsilon,\eta}:=\{\vert f(z)\vert =\eta, \Vert z\rVert \leq \varepsilon\},\] 
i.e. there is a homeomorphism  $\psi:\mb T^\ast_{\varepsilon,\eta}\iso\, \delta\mb T_{\varepsilon,\eta} \times (0,1]$ such that $\psi(z)=(z,1)$ if $z\in \delta \mb T_{\varepsilon,\eta}$. The structure of $\delta\mb T_{\varepsilon,\eta}$ as graphed 3-manifold is described by various authors \cite{Waldhausen,JacoShalen,Johannson} and we adopt here the presentation of  \cite[Theorems 1.2.3 and 1.2.6]{LMW}, see also \cite[\textsection 3.2]{MM2}:
\begin{itemize}
\item \it There is a finite collection $\mathscr T$ of 2-tori  embedded in $\delta\mb T_{\varepsilon,\eta}$, unique up to isotopy, that decomposes $\delta\mb T_{\varepsilon,\eta}$ into incompressible compact submanifolds   with boundary $K_ \alpha$, each of them being endowed with a (unique) Seifert fibration $\mathscr S_\alpha$, such that if $T\in\mathscr T$ is the intersection $K_\alpha\cap K_\beta$, then \begin{enumerate}
\item $T$ is incompressible in $K_\alpha$, in $K_\beta$ and it is not homotopic to a connected component of the boundary of $\delta\mb T_{\varepsilon,\eta}$,
\item in $T$ the general fibers of $\mathscr S_\alpha$ and of $\mathscr S_\beta$ are not homotopic.
\end{enumerate}
\end{itemize}
We will describe a way to obtain the collection $\mathscr T$ using the desingularization map $E_S: M_S\to \C^2$  of $S$.
We recall that the \emph{dual graph $\A_C$ of a curve $C$ with normal crossings}, is a graph whose vertices  are the irreducible components (compact or not) of this curve, two vertices being joined by an edge if and only if the corresponding irreducible components of the curve intersect. 
For simplicity we will denote by $\A_S$ the dual graph $\A_{\mc D_S}$ of the \emph{total transform} 
\[ \mc D_S:=E_S^{-1}(S)\,.\]  Clearly $\A_S$ is a tree. We equip it with the metric for which the edges have length equal to 1. The \emph{valence} of a vertex is the number of edges attached to it.  A \emph{chain} in $\A_S$ is a geodesic of $\A_S $ joining two vertices of valence at least three called \emph{end vertices of the chain}, all the other vertices being of valence $2$.
A \emph{branch of $\A_S$} is  a geodesic that joins a vertex of valence at least three  called \emph{attaching vertex}
 to a vertex of valence one, called \emph{end vertex}, the remaining vertices being of valence $2$. It is called \emph{dead branch} if its end vertex corresponds to a compact irreducible component of $\mc D_S$ and \emph{simple branch} otherwise.
 The union of the irreducible components corresponding to the vertices of a dead or simple branch of $\A_S$ is called a \emph{dead} or \emph{simple branch of $\mc D_S$.}
 Similarly, a \emph{chain of $\mc D_S$} is the union of the irreducible components corresponding to the vertices of valence $2$ of a chain of $\A_S$  or the unique intersection point $D\cap D'$ of a chain reduced to $\stackrel{D}{\bullet}-\stackrel{D'}{\bullet}$.
 
 Now, for  each chain $\mc C$ in $\A_S$ let us choose an edge $\msf e_{\mc C}$. It corresponds to the intersection point of two irreducible components of $\mc D_S$. Let us also choose local holomorphic coordinates  $(u_{\mc C},v_{\mc C})$ at this point    such that $u_{\mc C}v_{\mc C}=0$ is a local equation of $\mc D_S$. 
The real analytic hypersurface $\mb \{|u_{\mc C}|=1\}$  intersects
transversally  the  ``Milnor tube'' $E_S^{-1}( \delta\mb T_{\varepsilon,\eta})$  along a 2-torus $T_{\mc C}$,  with  $0<\eta\ll \varepsilon\ll 1$. After identifying $E_S^{-1}( \delta\mb T_{\varepsilon,\eta})$ with  $\delta\mb T_{\varepsilon,\eta}$  via $E_S$,  we  set 
\[ 
\mathscr T =\{T_{\mc C}\}_{\mc C\in \mathrm{chain}(\A_S)}
 \]
where $\mathrm{chain}(\A_S)$ is the set of all chains in $\A_S$. 
Similarly, the collection of real hypersurfaces 
$(\{|u_{\mc C}|=1\})_{\mc C\in\mathrm{chain}(\A_S)}$ induces  a \emph{geometric block decomposition}
of $E_S^{-1}(\mb T^\ast_{\varepsilon,\eta})\simeq \mb T^\ast_{\varepsilon,\eta}$, 
each \emph{geometric block} being a  real four-dimensional submanifold with boundary and corners  and we have:
\begin{enumerate}[(B1)]\it
\item\label{BB1} each block  is incompressible in  $\mb T^\ast_{\varepsilon,\eta}$,
\item any intersection of two blocks is a   boundary component of each block,  is incompressible in each block and is homeomorphic to a coreless full torus 
$\mb D^*\times \mb S^1$,
\item\label{BB2} the closure in $E_S^{-1}(\mb T_{\varepsilon,\eta})$ of each block $\mc B$ contains a unique irreducible component $D$ of $\mc D_S$ with  at least three singular points of $\mc D_S$ and we denote $\mc B$ by $\mc B_D$;  conversely for any vertex of $\A_S$ with valence at least three, there is a block $\mc B$ such that $\mc B=\mc B_D$,
\item\label{BB4}  given a block $\mc B_D$, over the open set $D^\star\subset D$ such that  $D\setminus D^\star$ is a union of    disjoint   discs centered at  singular points of $\mc D_S$ in $D$,  
the Seifert fibration of $E_S^{-1}(\delta\mb T_{\varepsilon,\eta})\cap \mc B$  coincides with the Hopf fibration of $D$; moreover the  exceptional  fibers consist in one Hopf fiber for each irreducible component of $\mc D_S$ that corresponds to an end vertex of a  dead branch of $\A_S$ with attaching vertex $D$.
\end{enumerate}

\subsection{Separatrices} Now, and in all the sequel we fix a germ of singular foliation $\F$ at the origin of $\C^2$ defined by a germ of holomorphic differential form $\omega$ with $0$ as isolated singular point.

A \emph{separatrix}  of $\F$ is a germ of  \emph{invariant} irreducible holomorphic curve, i.e. any parametrization of it  $\gamma:(\C,0)\to \C^2$  satisfies $\gamma^\ast\omega\equiv 0$.  
Classically,
a germ of a curve $C$ is a separatrix if and only if
there is an  open neighborhood $U$  of $0$  and a  closed leaf $L$ of the regular foliation defined by $\omega$ on $U\setminus \{0\}$, such that 
$C\cap (U\setminus \{0\})=L$.  According to Camacho-Sad existence  theorem \cite{CS}, the number of separatrices  of a foliation is strictly positive. It  may be infinite, but the number of their topological classes is finite by Seidenberg's reduction theorem \cite{Seidenberg}. This one states the existence of   a unique  proper map called \emph{reduction map of $\F$}
\begin{equation*}\label{reduction}
E_\F:M_\F\to \C^2
\end{equation*}
obtained by a minimal sequence of blowing-up  maps  with  finite centers, such that the germ of the foliation $\F^\sharp:=E_\F^{-1}\F$ at any point  of the \emph{exceptional divisor} 
\begin{equation*}
\mc E_\F:=E_\F^{-1}(0)
\end{equation*} is either regular or
defined in suitable  local coordinates $(u, v)$ by  a differential form 
\begin{equation}\label{reducedform}
( \lambda v+\cdots)du-( u+\cdots)dv\quad  \hbox{with}   \quad\lambda\in \C\setminus \mathbb Q_{>0}\,, 
\end{equation}
moreover, at any point of a \emph{dicritical component} (i.e. non $\F^\sharp$-invariant) $D$ of $\mc E_\F$, the foliation $\F^\sharp$ is regular and transversal to $D$. 

We recall that a foliation germ $\F$ is called \emph{generalized curve} if there is no \emph{saddle-node} singularity of $\F^\sharp$, i.e. given by a differential form (\ref{reducedform}) with $\lambda=0$.  For this type of foliations $E_\F$ coincides with the minimal composition of blowing-ups that desingularizes all separatrices of $\F$, see~\cite{CLNS}.

A separatrix $C$ will be called \emph{dicritical separatrix} if its \emph{strict transform} $\overline{E_\F^{-1}(C\setminus\{0\})}$ through  $E_\F$ meets a dicritical component of $\mc E_\F$, otherwise $C$ is called \emph{isolated separatrix}. The \emph{isolated separatrices curve} is the germ of  curve obtained as the union of all
 the isolated separatrices. We denote it, resp. its total transform by the reduction map of~$\F$, by
\begin{equation}\label{compsepcurve}
S_\F\subset (\C^2,0),\quad \hbox{resp.}\quad S_\F^\sharp:=E_\F^{-1}(S_\F)\,.
\end{equation}

These notions can be topologically characterized. Indeed, 
a separatrix is dicritical if and only if it is contained in a non-constant equireducible family of invariant analytic curve germs. On the other hand, classically for  families of germs of curves in $(\C^2,0)$,  equireducibility and topological triviality are equivalent notions. 

The topological information contained in the dicritical and isolated separatrices may be organized into a  global combinatorial object which will also be a topological invariant of the foliation.

\begin{defin}\label{defAF}
The \emph{dual graph $\A_\F$ of $\F$} is the dual graph of the normal crossings curve~$S_\F^\sharp$. 
\end{defin}

Notice that the desingularization map $E_{S_\F}$ of $S_\F$ can be different from the reduction of singularities of $\F$ and in this case $\A_\F$ is not the dual graph $ \A_{S_\F}$ of $E_{S_\F}^{-1}(S_\F)$, see Figure~\ref{EFES}.

\begin{prop}\label{InducedGraphMorphism}
Any homeomorphism germ $\phi:(\C^2,0)\to(\C^2,0)$ that conjugates two generalized curves  $\F$ and $\G$,  $\phi(\F)=\G$, defines an isomorphism
\[ \A_\phi:\A_\F\to\A_\G\]
 between the dual graphs of $\F$ and $\G$,
such that if $D$ is a vertex corresponding to the strict transform of an irreducible component $C$ of $S_\F$, then $\A_\phi(D)$ is the vertex of $\A_\G$ corresponding to the strict transform of $\phi(C)$.
 This isomorphism sends dicritical components into dicritical components and is compatible with the intersection forms, i.e. we have 
 \begin{equation*}\label{Aphi-dot}
 \A_\phi(D)\cdot\A_\phi(D')=D\cdot D'
  \end{equation*}
  for any vertices $D$, $D'$ of  $\A_\F$  
 considered as irreducible components of $S_\F^\sharp$. Moreover the correspondence $(\F,\phi)\mapsto (\A_\F,\A_\phi)$ is a functor from the category of generalized curves  and topological conjugacies, to the category of graphs and isomorphisms of graphs. 
\end{prop}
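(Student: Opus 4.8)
The plan is to reduce the statement to the classical topological invariance of the (decorated) dual resolution graph of a plane curve germ. First I would check that $\phi$ transports the relevant invariant curves. Since a germ of curve is a separatrix of $\F$ precisely when, off $0$, it coincides with a closed leaf of the regular foliation defined by $\omega$, and $\phi$ sends leaves to leaves and closed sets to closed sets, $\phi$ puts the separatrices of $\F$ in bijection with those of $\G$. Using the recalled equivalence between equireducibility and topological triviality for families of curve germs, $\phi$ moreover carries a non-constant equireducible family of separatrices to a family of the same kind, hence preserves the dichotomy isolated/dicritical; taking the union of the finitely many isolated separatrices we obtain $\phi(S_\F)=S_\G$ as germs of curves.

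Because $\F$ is a generalized curve, $E_\F$ is the minimal composition of blowing-ups desingularizing \emph{all} separatrices, so the blowing-ups of $E_\F$ beyond those resolving $S_\F$ alone are exactly the ones forced by the dicritical families. To capture these combinatorially I would enrich $S_\F$: through each dicritical component $D$ of $\mc E_\F$ pick finitely many generic members of the corresponding dicritical family — their strict transforms being smooth and meeting $\mc E_\F$ transversally at a single non-corner point of $D$ — choosing enough of them that the reduction map $E_\F$ becomes the minimal embedded resolution of the finite curve germ $\widehat S_\F$ obtained by adjoining these dicritical separatrices to $S_\F$. Then the dual graph of $E_\F^{-1}(\widehat S_\F)$ is $\A_\F$ with, at the vertex of each dicritical $D$, one extra valence-one vertex attached per chosen generic member, and conversely $\A_\F$ is recovered from it by deleting these extra leaves. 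By the first paragraph, $\phi(\widehat S_\F)$ is of the same shape for $\G$.

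Next I would invoke the curve case. The germ $\phi$ induces, up to isotopy and by the local conical structure of analytic germs, a homeomorphism of link pairs in $\mb S^3$; and the decorated dual resolution graph of a plane curve germ is a topological invariant of its link pair, a classical fact provable via the graph-manifold (plumbing) structure of the link and Neumann's plumbing calculus, with the induced isomorphism of decorated graphs canonical because it comes from the canonical JSJ/plumbing decomposition preserved up to isotopy by any homeomorphism. This produces an isomorphism between the dual graphs of $E_\F^{-1}(\widehat S_\F)$ and $E_\G^{-1}(\widehat S_\G)$ that carries meridians to meridians — hence sends the vertex of the strict transform of a branch $C$ of $\widehat S_\F$ to that of the strict transform of $\phi(C)$ — respects the self-intersection numbers of the exceptional components, and respects all adjacencies. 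Since it matches the extra leaves on the two sides it descends to an isomorphism $\A_\phi:\A_\F\to\A_\G$ with the asserted properties: for a vertex $D$ which is the strict transform of an irreducible component $C$ of $S_\F$ it sends $D$ to the strict transform of $\phi(C)$; and $\A_\phi(D)\cdot\A_\phi(D')=D\cdot D'$, the case $D\neq D'$ being preservation of adjacency in a normal crossings configuration and the case $D=D'$ being preservation of self-intersections. Moreover a component $D$ of $\mc E_\F$ is dicritical if and only if it meets the strict transform of one of the adjoined generic dicritical separatrices, so $\A_\phi$ sends dicritical components to dicritical components; and functoriality ($\A_{\id}=\id$ and $\A_{\psi\circ\phi}=\A_\psi\circ\A_\phi$) follows from the functoriality of the classical construction on link pairs together with the independence of the descended map $\A_\phi$ from the auxiliary choice of generic members.

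The step I expect to be the main obstacle is the enrichment: showing that finitely many generic members per dicritical component can be chosen so that $E_\F$ is precisely the minimal embedded resolution of $\widehat S_\F$ (and that deleting the corresponding leaves indeed returns $\A_\F$). This uses essentially the generalized-curve hypothesis — that $E_\F$ is already the minimal desingularization of the whole set of separatrices of $\F$ — together with the transversality of a generic dicritical separatrix to its dicritical component; the remainder is the classical topological invariance of resolution graphs of plane curve germs and the topological characterizations of separatrices recalled just before the statement.
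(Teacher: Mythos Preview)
Your approach is essentially the same as the paper's: enrich $S_\F$ by finitely many dicritical separatrices so that $E_\F$ becomes the minimal embedded resolution of the resulting finite curve, invoke the topological invariance of the decorated dual resolution graph of a plane curve germ, and then delete the auxiliary valence-one leaves to descend to $\A_\F\to\A_\G$. The paper makes the same move (adding a \emph{pair} of dicritical separatrices per dicritical component, citing \cite{CLNS}), and then, instead of Neumann's plumbing calculus, appeals to its own Theorem~\ref{curves} (from \cite{MM2}), which produces an actual lifted homeomorphism $\Psi:(M_X,\mc E_X)\to(M_Y,\mc E_Y)$ and hence a graph morphism $\phi_\bullet$ that is shown to be independent of the lift. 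Your route via the JSJ/graph-manifold structure gives the same graph isomorphism and intersection-form compatibility; the paper's route has the advantage that the lifted homeomorphism $\Psi$ is reused elsewhere (e.g.\ in Proposition~\ref{conjappr}), while yours is slightly more self-contained. The point you flagged as the main obstacle---that $E_\G$ coincides with the minimal resolution of $\phi(\widehat S_\F)$---is exactly the ``Moreover'' the paper also asserts without further detail; both arguments rest on the \cite{CLNS} characterization of $E_\F$ for generalized curves.
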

\noindent Let us give an idea of the proof. 
A consequence of the results in  \cite{CLNS} is that when $\F$ is a generalized curve, the  reduction map  $E_\F$  coincides with the desingularization map  of a suitable  curve germ $C_\F$, obtained by adding to $S_\F$  a pair of non-isolated separatrices for each dicritical component. Moreover 
the reduction map of $\G=\phi(\F)$  also coincides with the desingularization map of $\phi(C_\F)$.  
A classical Zariski theorem \cite{Z} associates to $\phi$ 
a morphism \[\phi_\bullet:\A_{C_\F}\to\A_{\phi(C_\F)}\] between the dual trees of the total transforms of $C_\F$ and  $\phi(C_\F)$ by the reduction map $E_{C_\F}=E_\F$. 
More specifically $\phi_\bullet$ can be obtained by applying the following theorem to $X=C_\F$, $Y=\phi(C_\F)$ and $\varphi=\phi$:
\begin{teo} \cite[Theorem A]{MM2}\label{curves}
 Let $\varphi:(\C^2,0)\to(\C^2,0)$ be a germ of homeomorphism that sends a germ of curve $X$ onto a germ of curve $Y$. Then there exists a germ of homeomorphism $\psi : (\C^2,0)\to(\C^2,0)$ and a germ of homeomorphism along the exceptional divisors of the desingularizations of $X$ and $Y$                        
\[ 
\Psi:(M_X,\mc E_X)\to(M_Y,\mc E_Y)
 \]
such that
\begin{enumerate}
\item $\Psi$ lifts $\psi$, i.e. $E_Y\circ \Psi=\psi\circ E_X$;
\item for any irreducible component $C$ of $X$ we have $\varphi(C)=\psi(C)$;
\item  for small enough Milnor balls $B$ for $X$ and $B'\supset \psi(B)\cup \varphi(B)$ for $Y$, the group morphisms $\pi_1(B\setminus X)\to \pi_1(B'\setminus Y)$ induced by $\varphi$ and $\psi$ are equal up to composition by an inner automorphism. 
\end{enumerate}
\end{teo}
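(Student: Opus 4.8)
The plan is to reduce the statement to the rigidity theory of graph $3$-manifolds applied to the Seifert-fibered block decomposition of \S\ref{decompCompCurve}, and then to lift a suitably \emph{straightened} homeomorphism across the exceptional divisors by coning Seifert fibers over Hopf disks. First I would shrink the balls so that $\varphi$ restricts to a homeomorphism of the full Milnor tubes of $X$ and $Y$, $\mb T^\ast_{\varepsilon,\eta}(X)\iso\mb T^\ast_{\varepsilon',\eta'}(Y)$, and, via the cone structure of the full tube over the empty one recalled in \S\ref{decompCompCurve}, to a homeomorphism of the empty Milnor tubes $\delta\mb T(X)\to\delta\mb T(Y)$, which are graph $3$-manifolds. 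By the topological invariance of the minimal embedded resolution of a plane curve germ (Zariski \cite{Z}), $\varphi$ induces an isomorphism $\A_\varphi\colon\A_X\iso\A_Y$ of weighted dual trees, carrying each vertex to the vertex of the component $\varphi(C)$ and matching self-intersections; in particular it matches the torus collections $\ms T$ and the blocks $\mc B_D\mapsto\mc B_{\A_\varphi(D)}$.

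Next I would straighten $\varphi$ on the link. Because each block is incompressible (property (B1)) and each separating torus of $\ms T$ is incompressible and not boundary-parallel, with non-homotopic adjacent general fibers (conditions (1), (2) on $\ms T$), this decomposition is the JSJ decomposition of $\delta\mb T$ \cite{JacoShalen,Johannson}, unique up to isotopy. Waldhausen's rigidity theorem for Haken manifolds \cite{Waldhausen} then allows me to isotope $\varphi|_{\delta\mb T(X)}$ to a homeomorphism $\psi_0$ that sends $\ms T(X)$ to $\ms T(Y)$, carries each block $\mc B_D$ onto $\mc B_{\A_\varphi(D)}$, and is fiber-preserving for the Seifert fibrations on every piece. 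I would arrange this isotopy to be supported in the interior of the complement and to track a single base point, so that it alters the induced $\pi_1$-morphism only by an inner automorphism.

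Then I would lift. By property (B4) the Seifert fibration on each block agrees over $D^\star$ with the Hopf fibration of the exceptional component $D$, and near $\mc E_X$ the Milnor tube is a plumbing of disk bundles whose fiber circles are precisely these Hopf fibers. A fiber-preserving $\psi_0$ therefore extends by coning each fiber circle over its disk across the smooth part of $\mc E_X$ and, after matching the finitely many exceptional fibers of the dead branches and the corner tori over the crossing points $D\cap D'$, yields a germ of homeomorphism $\Psi\colon(M_X,\mc E_X)\to(M_Y,\mc E_Y)$ with $\Psi(\mc E_X)=\mc E_Y$. Since $E_X$ and $E_Y$ are homeomorphisms away from the divisors and contract $\mc E_X$, $\mc E_Y$ to the origin, the map $\psi:=E_Y\circ\Psi\circ E_X^{-1}$ is well defined and continuous, giving a germ of homeomorphism of $(\C^2,0)$ with $E_Y\circ\Psi=\psi\circ E_X$, which is (1). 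As the straightening fixed the component correspondence and moved $\varphi$ by an isotopy of the complement, $\psi$ sends each $C$ to $\varphi(C)$, which is (2), and induces the same $\pi_1$-map up to the inner automorphism produced by the base-point track, which is (3).

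The main obstacle is this straightening-and-extension step: an arbitrary topological $\varphi$ is fiber-preserving for no geometric reason, and the whole weight of the argument lies in using $3$-manifold rigidity to make it so while simultaneously (i) respecting the prescribed component correspondence and torus matching dictated by $\A_\varphi$, and (ii) extending the resulting fiber-preserving boundary homeomorphism continuously across the corners, where two Seifert blocks are glued along a torus on which their general fibers are non-homotopic, and across the exceptional fibers of the dead branches. Controlling the base point throughout, so that (3) holds exactly up to inner automorphism rather than merely up to free homotopy, is the delicate bookkeeping that makes the statement directly usable in Proposition~\ref{InducedGraphMorphism}.
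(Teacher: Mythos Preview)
This survey does not prove Theorem~\ref{curves}; it is quoted verbatim from \cite[Theorem~A]{MM2} and used as a black box in the discussion following Proposition~\ref{InducedGraphMorphism}. So there is no ``paper's own proof'' to compare against here. Your sketch is, however, a faithful outline of the strategy actually carried out in \cite{MM2}: pass to the graph $3$-manifold $\delta\mb T$, invoke uniqueness of the JSJ decomposition together with Waldhausen rigidity \cite{Waldhausen,JacoShalen,Johannson} to isotope the link homeomorphism to a block- and fiber-preserving one, and then extend across the divisor using the plumbed disk-bundle structure.

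Two points deserve more care than your write-up gives them. First, the extension step ``by coning fiber circles over Hopf disks'' is straightforward over the smooth part $D^\star$ of each component, but at a double point $D\cap D'$ you are gluing two disk bundles along a bidisk where neither Hopf fibration is distinguished; you must produce a local model homeomorphism of the bidisk compatible with both boundary extensions, and this is where \cite{MM2} does the real work (and why the resulting $\Psi$ can be taken holomorphic at the singular points of the total transform, a feature used later in the survey). Second, Waldhausen's theorem requires the $3$-manifold to be Haken with incompressible boundary; when $X$ is smooth or has very few branches the complement can contain Seifert pieces with compressible boundary tori (the dead branches), and the isotopy has to be arranged relative to these degenerate pieces. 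Both issues are handled in \cite{MM2}, but they are genuine steps, not formalities.
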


\noindent  The homeomorphism germ $\Psi$ induces a graph morphism $\A_{C_\F}\to\A_{\phi(C_\F)}$ between the dual trees of the total transforms of $C_\F$ and $\phi(C_\F)$ by their desingularization maps, sending the vertex corresponding to an irreducible component $D$ into the vertex corresponding to the irreducible component $\Psi(D)$ and satisfying $\Psi(D)\cdot\Psi(D')=D\cdot D'$ for any pair of vertices $D,D'$ of $\A_{C_\F}$. In fact, it results from the proof of this theorem that this graph-morphism  does not depend on the choice of  $\Psi$, hence we can denote it by $\phi_\bullet$.  
Since $E_\F=E_{C_\F}$, we have the inclusion 
\[S_\F^\sharp\subset E_{C_\F}^{-1}(C_\F)\] and $\A_\F$ is a subgraph of $\A_{C_\F}$, similarly $\A_\G$ is a subgraph of $\A_{\phi(C_\F)}$. These subgraphs are obtained by deleting the vertices corresponding to dicritical separatrices.
Using the fact that $\phi$ sends dicritical separatrices of $\F$  to dicritical separatrices of $\G$,  we see that $\phi_\bullet$ sends  $\A_\F$ onto  $\A_\G$,  inducing the  isomorphism $\A_\phi$  between these graphs stated in Proposition~\ref{InducedGraphMorphism}. \\

As we will see in \S\ref{Exdic}, there are dicritical foliations $\F$  for which  the incompressibility property  of the leaves in the complementary of  $S_\F$ in any neighborhood, does not hold. In other words, the fundamental group $\Gamma$ of the complementary of $S_\F$ in a Milnor ball may be too small to contain the fundamental group of any leaf. For this reason  we will increase $\Gamma$ by adding to $S_\F$ some dicritical separatrices.

\begin{defin}\label{Fappropcurve}
A germ of an invariant curve  containing all the isolated separatrices of $\F$ and whose strict transform by the reduction of $\F$ meets any  dicritical component $D$ with  $\mr{card}(D\cap \mr{Sing}(\mc E_\mc  \F))=1$, will  be called \emph{$\F$-appropriate curve germ}.  
\end{defin}

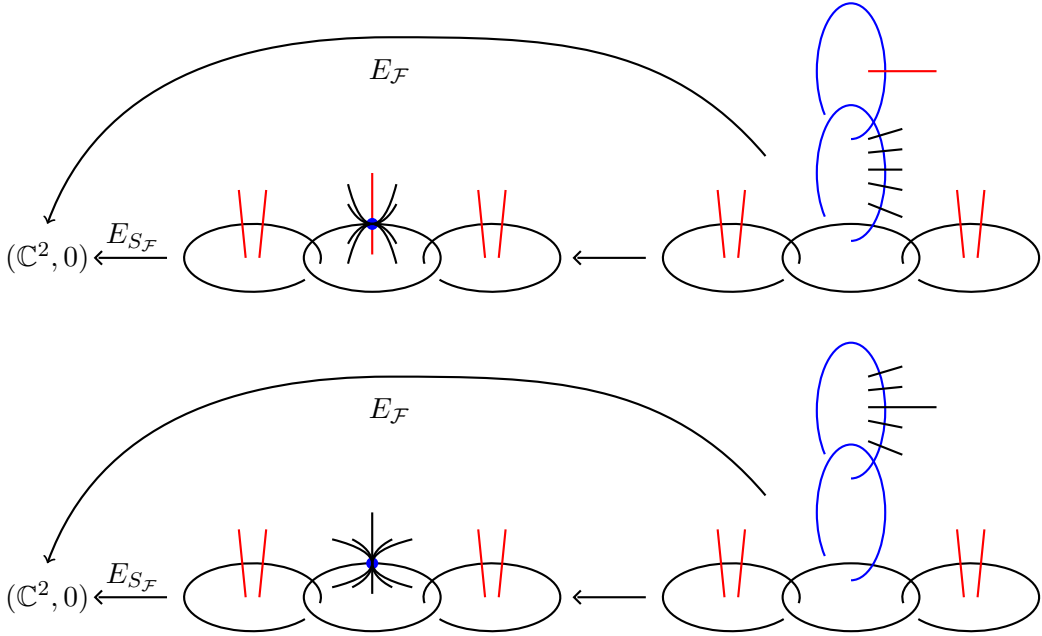
\begin{figure}[ht]
\begin{tikzpicture}[thick,scale=0.9]
\node at (-5,4) {$(\C^2,0)$};
\draw (-4.25,4) to (-3.25,4);
\draw (-4.2,4.1) to (-4.3,4) to (-4.2,3.9);
\node at (-3.75,4.3) {$E_{S_\F}$};
\draw (2.75,4) to (3.75,4);
\draw (2.8,4.1) to (2.7,4) to (2.8, 3.9);
\begin{scope}[shift={(-1,0)}]
\draw (-1,4) [elliparc=-10:320:1cm and .5cm]; 
\draw (.75,4) ellipse (1cm and .5cm);
\draw (2.5,4) [elliparc=220:550:1cm and .5cm]; 
\draw[red] (.75,4.05) to (.75,5.25);
\fill [blue] (.75,4.5) circle [radius=2.5pt];
\draw (.75,5.5) [elliparc=-45:-135:.5cm and 1cm];
\draw (.75,6.5) [elliparc=-45:-135:.5cm and 2cm];
\draw (.75,3.5) [elliparc=45:135:.5cm and 1cm];
\draw (.75,2.5) [elliparc=45:135:.5cm and 2cm];
\draw[red] (-.9,4) to (-.8,5);
\draw[red] (-1.1,4) to (-1.2,5);
\draw[red] (2.6,4) to (2.7,5);
\draw[red] (2.4,4) to (2.3,5);
\end{scope}
\begin{scope}[shift={(6,0)}]
\draw (-1,4) [elliparc=-10:320:1cm and .5cm]; 
\draw (.75,4) ellipse (1cm and .5cm);
\draw (2.5,4) [elliparc=220:550:1cm and .5cm]; 
\draw[red] (-.9,4) to (-.8,5);
\draw[red] (-1.1,4) to (-1.2,5);
\draw[red] (2.6,4) to (2.7,5);
\draw[red] (2.4,4) to (2.3,5);
\draw[blue]  (.75,5.25) [elliparc=-90:220:0.5cm and 1cm];
\draw[blue]  (.75,6.75) [elliparc=-90:220:0.5cm and 1cm];
\draw[red] (1,6.75) to (2,6.75);
\draw (1,5.75) to (1.5,5.9);
\draw (1,5.55) to (1.5,5.6);
\draw (1,5.3) to (1.5,5.3);
\draw (1,5.1) to (1.5,5.);
\draw (1,4.8) to (1.5,4.6);
\end{scope}
\draw [->] (5.5,5.5) to [out=130,in=0] (0,7.25) to [out=-180, in=70] (-5,4.5);
\node at (0,6.75) {$E_\F$};
\begin{scope}[shift={(0,-5)}]
\node at (-5,4) {$(\C^2,0)$};
\draw (-4.25,4) to (-3.25,4);
\draw (-4.2,4.1) to (-4.3,4) to (-4.2,3.9);
\node at (-3.75,4.3) {$E_{S_\F}$};
\draw (2.75,4) to (3.75,4);
\draw (2.8,4.1) to (2.7,4) to (2.8, 3.9);
\begin{scope}[shift={(-1,0)}]
\draw (-1,4) [elliparc=-10:320:1cm and .5cm]; 
\draw (.75,4) ellipse (1cm and .5cm);
\draw (2.5,4) [elliparc=220:550:1cm and .5cm]; 
\draw (.75,4.05) to (.75,5.25);
\fill [blue] (.75,4.5) circle [radius=2.5pt];
\draw (1.75,4.5) [elliparc=135:225:1cm and 0.5cm];
\draw (-.25,4.5) [elliparc=-45:45:1cm and 0.5cm];
\draw (-1.25,4.5) [elliparc=-45:45:2cm and 0.5cm]; 
\draw (2.75,4.5) [elliparc=135:225:2cm and 0.5cm]; 
\draw[red] (-.9,4) to (-.8,5);
\draw[red] (-1.1,4) to (-1.2,5);
\draw[red] (2.6,4) to (2.7,5);
\draw[red] (2.4,4) to (2.3,5);
\end{scope}
\begin{scope}[shift={(6,0)}]
\draw (-1,4) [elliparc=-10:320:1cm and .5cm]; 
\draw (.75,4) ellipse (1cm and .5cm);
\draw (2.5,4) [elliparc=220:550:1cm and .5cm]; 
\draw[red] (-.9,4) to (-.8,5);
\draw[red] (-1.1,4) to (-1.2,5);
\draw[red] (2.6,4) to (2.7,5);
\draw[red] (2.4,4) to (2.3,5);
\draw[blue]  (.75,5.25) [elliparc=-90:220:0.5cm and 1cm];
\draw[blue]  (.75,6.75) [elliparc=-90:220:0.5cm and 1cm];
\begin{scope}[shift={(0,1.5)}]
\draw (1,5.75) to (1.5,5.9);
\draw (1,5.55) to (1.5,5.6);
\draw (1,5.3) to (2,5.3);
\draw (1,5.1) to (1.5,5.);
\draw (1,4.8) to (1.5,4.6);
\end{scope}
\end{scope}
\draw [->] (5.5,5.5) to [out=130,in=0] (0,7.25) to [out=-180, in=70] (-5,4.5);
\node at (0,6.75) {$E_\F$};
\end{scope}
\end{tikzpicture}

\caption{In both cases $E_{S_\F}\neq E_\F$ due to a non-reduced singularity in the central divisor of $E_{S_\F}$ (in blue). In the first one $S_\F$ (in red) is $\F$-appropriate and in the second one is not.}\label{EFES}
\end{figure}

\begin{prop}\label{conjappr}
Any homeomorphism germ $\phi:(\C^2,0)\to(\C^2,0)$ that conjugates two generalized curves  $\F$ and $\G$, $\phi(\F)=\G$,  necessarily transforms any dicritical, resp. isolated separatrix of $\F$, resp. $\F$-appropriate curve germ, 
into  a dicritical, resp. isolated separatrix of $\G$, resp. $\G$-appropriate curve germ. 
\end{prop}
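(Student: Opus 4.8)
The plan is to deduce everything from Proposition~\ref{InducedGraphMorphism} together with the topological characterizations of dicritical and isolated separatrices recalled just above it. First I would recall that, by the remark following Definition~\ref{defAF}, the vertices of $\A_\F$ are naturally partitioned into those coming from dicritical components of $\mc E_\F$ and those coming from $\F^\sharp$-invariant components; among the latter, a vertex is the strict transform of a branch of $S_\F$ exactly when it is an end vertex (valence one) whose component is non-compact. The isomorphism $\A_\phi:\A_\F\to\A_\G$ provided by Proposition~\ref{InducedGraphMorphism} sends dicritical vertices to dicritical vertices, preserves the intersection form, and matches the strict transform of an irreducible component $C$ of $S_\F$ with the strict transform of $\phi(C)$. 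So the assertion ``$\phi$ sends isolated separatrices to isolated separatrices'' reduces to the combinatorial fact that $\A_\phi$ matches simple branches of $\A_\F$ attached to non-dicritical vertices, which is immediate from the two listed properties of $\A_\phi$ (preservation of valence, which follows from preservation of the edge relation, and preservation of the dicritical/non-dicritical dichotomy).

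For the dicritical separatrices I would instead invoke the topological characterization stated in the text: a separatrix $C$ of $\F$ is dicritical if and only if it belongs to a non-constant equireducible (equivalently, topologically trivial) family of invariant analytic curve germs. Since $\phi$ is a homeomorphism conjugating $\F$ to $\G$, it carries such a family through $C$ to a topologically trivial family of invariant curve germs of $\G$ through $\phi(C)$; hence $\phi(C)$ is a dicritical separatrix of $\G$. (Alternatively one can argue purely combinatorially: the strict transform of a dicritical separatrix is a simple branch of $\mc E_\F$ whose end vertex is dicritical in the sense of being non-$\F^\sharp$-invariant, and $\A_\phi$, being defined on the larger graph $\A_{C_\F}$ as $\phi_\bullet$ in the proof of Proposition~\ref{InducedGraphMorphism}, respects this. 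I would present the dynamical argument as the clean one and mention the combinatorial one as a back-up.)

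It remains to treat $\F$-appropriate curve germs. Let $S$ be one, so by Definition~\ref{Fappropcurve} its strict transform contains all isolated separatrices and meets every dicritical component $D$ of $\mc E_\F$ with $\mathrm{card}(D\cap\mathrm{Sing}(\mc E_\F))=1$. Write $S=S_\F\cup C_1\cup\cdots\cup C_r$ where the $C_i$ are the extra dicritical separatrices; each $C_i$ is dicritical and its strict transform meets one of the distinguished dicritical components. By the previous two paragraphs $\phi(S)=S_\G\cup\phi(C_1)\cup\cdots\cup\phi(C_r)$ is a union of isolated separatrices of $\G$ (all of them, since $\A_\phi$ is a bijection between the sets of simple non-dicritical branches) together with dicritical separatrices of $\G$. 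So the only thing left is: if $D$ is a dicritical component of $\mc E_\G$ with exactly one singular point of $\mc E_\G$ on it, then $D$ is met by the strict transform of $\phi(S)$. But $D=\A_\phi(D')$ for a unique dicritical vertex $D'$ of $\A_\F$, and the condition ``$D$ has exactly one neighbor in $\A_\G$'' translates, via the graph isomorphism $\A_\phi$, to ``$D'$ has exactly one neighbor in $\A_\F$'', i.e. $\mathrm{card}(D'\cap\mathrm{Sing}(\mc E_\F))=1$; by definition of $\F$-appropriateness the strict transform of $S$ then meets $D'$, along a curve that $\A_\phi$ (equivalently, $\phi$ together with the lift $\Psi$ of Theorem~\ref{curves}) carries to a curve meeting $D$. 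Hence $\phi(S)$ is $\G$-appropriate.

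The only genuinely delicate point is the faithful translation of the local condition ``$\mathrm{card}(D\cap\mathrm{Sing}(\mc E_\F))=1$'' into graph-theoretic language and back — one must be careful that $\A_\F$, which is built from $S_\F^\sharp$ rather than from the full exceptional divisor $\mc E_\F$, still records the valence of every dicritical component, and that ``strict transform of $S$ meets $D$'' corresponds exactly to an edge of $\A_{C_\F}$ between the $S$-vertex and the $D$-vertex. Granting the functoriality statement of Proposition~\ref{InducedGraphMorphism} and the description of $\phi_\bullet$ on $\A_{C_\F}$ from its proof, this bookkeeping is routine; I expect no further obstacle.
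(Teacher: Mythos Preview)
Your argument is correct and follows essentially the same route as the paper. The only difference is organizational: where you decompose $S=S_\F\cup C_1\cup\cdots\cup C_r$, use $\A_\phi:\A_\F\to\A_\G$ for the valence bookkeeping on dicritical components, and then appeal to $\phi_\bullet$ on the larger graph $\A_{C_\F}$ (with $C_\F$ chosen to contain the $C_i$) to track where the strict transforms of the $C_i$ land, the paper simply reruns the whole analysis of Proposition~\ref{InducedGraphMorphism} with $X=S$ and $Y=\phi(S)$, obtaining in one stroke a graph isomorphism $\A_{S^\sharp}\to\A_{\phi(S)^\sharp}$ that already contains the $C_i$ as vertices and restricts to $\A_\phi$ on $\A_\F$. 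Your closing paragraph correctly identifies the one point that needs care---namely that $\phi_\bullet$ must be taken on a graph large enough to see the $C_i$---and the paper's phrasing ``perform the previous analysis with $X=S$'' is exactly the clean way to close that gap.
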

\begin{proof}
Using Proposition~\ref{InducedGraphMorphism} it suffices to see that
the image by $\phi$ of a $\F$-appropriate curve $S$ is a $\G$-appropriate curve. For this
we may   perform  the previous analysis with $X=S$ and $Y=\phi(S)$. We similarly obtain a graph-isomorphism  $\A_{S^\sharp}\to\A_{\phi(S)^\sharp}$ from the dual graph of  $S^\sharp:=E_\F^{-1}(S)$ to that of $\phi(S)^\sharp:=E_\G^{-1}(\phi(S))$, that  sends dicritical components into dicritical components and sends $\A_\F\subset \A_{S^\sharp}$ into  $\A_\G\subset \A_{\phi(S)^\sharp}$. We easily deduce that $\phi(S)$ is  $\G$-appropriate.  
\end{proof}

\subsection{Separators and dynamical decomposition} \label{subsecSeparator}
One says that the germ  of the reduced foliation  $\F^\sharp$ at a singular point $p$ in the exceptional divisor $\mc E_\F$  is  a \emph{node} when  it is defined in suitable  local coordinates $(u, v)$ by  a differential form (\ref{reducedform}) with $\lambda\in\R_{>0}\setminus\mathbb Q$.
Then there are exactly two invariant curve germs at this point; these are regular, with normal crossings and they contain the germ of the exceptional divisor. 

Other invariant sets can be constructed using the existence of \emph{local linearizing coordinates} $(u,v)$  in which $\F^\sharp$ is given by  the differential form  $\lambda vdu-udv$. 
For any small enough ball $B:=\{|u|^2+|v|^2 <r\}$, any leaf of the foliation $\F^\sharp$ restricted to $B\setminus\{uv=0\}$ is contained and dense in  a  real invariant  hypersurface defined by an equation $ |v|=c |u|^\lambda$,  $c\in\R^\ast$. 
More generally   any  subset %
\begin{equation*}\label{nodalseparator_mu}
S_{c_1,c_2}:=\{c_1 |u|^\lambda\leq |v|\leq c_2 |u|^\lambda\}\subset B\quad\text{with}\quad 0<c_1\le c_2<\infty\,, 
\end{equation*}
is \emph{invariant}, i.e. it is a  union of leaves, and it divides $B$ in two invariant connected components. 
For this reason we call \emph{nodal separator of $\F^\sharp$ at $p$},
any  intersection of $S_{c_1,c_2}$ 
with a neighborhood of  $\mc E_\F$.

Another type of  invariant set can be constructed using a  dicritical irreducible component $D$ of $\mc E_\F$. For this we first choose a small enough compact tubular neighborhood $T$ of $D$ so that  $\F^\sharp$ restricted to $T$ is a locally trivial fibration. Then 
we choose open conformal discs $D_s\subset D$ centered at the singular points $s\in D$  of the exceptional divisor $\mc E_\F$,
such that $\overline{D_s}\cap \overline{D_{s'}}=\emptyset$ for $s\neq s'$. The  union of the leaves of 
the foliation $\F^\sharp$  restricted to $T$ 
that meet no disc $D_s$ is a $\F^\sharp$-invariant  set  which we call \emph{dicritical separator of $\F^\sharp$ at $D$}.
\\

The intersection of a neighborhood of $0\in\C^2$ with the image by $E_\F$ of a nodal or  dicritical separator of $\F^\sharp$ will be called \emph{nodal} or \emph{dicritical separator of $\F$}.
A \emph{nodal separatrix of $\F$} is a separatrix whose strict transform by $E_\F$ passes through a nodal singularity of $\F^\sharp$.
C. Camacho and R. Rosas prove that every invariant set contains a separatrix or a nodal separator, see \cite[Theorem~1]{CamachoRosas}.

\begin{teo}[{\cite[Theorem~1]{Rosas} and \cite[Theorem~1.3]{RosasFourier}}]\label{C0InvSep} The image of a nodal separatrix, resp. nodal separator germ, resp. dicritical separator germ of $\F$ by a germ of homeomorphism that conjugates $\F$ to a germ of foliation $\G$, is a nodal separatrix, resp. nodal separator germ, resp. dicritical separator germ of $\G$.
\end{teo}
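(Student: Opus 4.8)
The strategy is to reduce each of the three statements to a topological characterization of the corresponding invariant object, so that its image under a conjugating homeomorphism is automatically recognized as an object of the same type. First, I would handle the \emph{nodal separatrix} case. A separatrix $C$ of $\F$ is a leaf $L$ closed in $U\setminus\{0\}$; its strict transform passes through a nodal singularity if and only if, in a deleted neighborhood, the holonomy of $L$ (equivalently, of a loop generating $\pi_1$ of a transverse punctured disc inside the leaf near the separatrix) is linearizable with multiplier $e^{2\pi i\lambda}$, $\lambda\in\R_{>0}\setminus\mathbb Q$ — that is, it is an irrational rotation. The point is that the conjugacy of $\F$ to $\G$ restricts, near $C$ and its image $\phi(C)$, to a topological conjugacy of the holonomy pseudogroups, and ``being topologically conjugate to an irrational rotation'' (as opposed to a hyperbolic/parabolic germ, or a rotation by a rational angle, or a linearizable map with $|\mathrm{multiplier}|\neq 1$) is a topological invariant of a germ of homeomorphism of $(\C,0)$ — detectable, e.g., through the structure of orbit closures, the absence of attracting/repelling behavior, and the density of every orbit on circles. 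This is exactly the content invoked from \cite{Rosas}: I would cite it and sketch why the holonomy-germ dichotomy is preserved.

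Next, the \emph{nodal separator} $S_{c_1,c_2}$. Here the key observation is the description recalled in \S\ref{subsecSeparator}: a nodal separator is, topologically, a compact region foliated by the hypersurfaces $|v|=c|u|^\lambda$ ($c\in[c_1,c_2]$), each of which is a $3$-manifold on which every leaf of $\F^\sharp$ is dense; it separates a small ball into two invariant pieces and its boundary consists of two such dense-leaf hypersurfaces. I would argue that these features — a connected compact invariant set whose boundary tori (or solid-torus boundaries) carry leaves that are \emph{dense} in them, and which locally disconnects the ambient space into two invariant components, all sitting in a neighborhood of a nodal singularity already identified by the previous paragraph — are preserved by any homeomorphism taking leaves to leaves. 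Concretely: $\phi$ sends the two bounding nodal separatrices/hypersurfaces of $S_{c_1,c_2}$ to invariant hypersurfaces of $\G$ near a nodal point $\phi(p)$ (nodal by the first part), with dense leaves, hence of the form $|v'|=c'_i|u'|^{\lambda'}$ in $\G$-linearizing coordinates; and the region between them is forced to be the image, so it is a nodal separator of $\G$. This is Theorem~1.3 of \cite{RosasFourier}; I would cite it for the hard analytic input and present the above as the geometric skeleton.

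Finally, the \emph{dicritical separator} at a dicritical component $D$. By Proposition~\ref{conjappr} (via Proposition~\ref{InducedGraphMorphism}), $\phi$ lifts, after reduction, to a graph isomorphism $\A_\phi$ sending the dicritical vertex $D$ to a dicritical vertex $D'$ of $\G$, and sends dicritical separatrices to dicritical separatrices. A dicritical separator at $D$ is, up to the choice of the discarded discs $D_s$, the union of leaves of $\F^\sharp|_T$ meeting none of the $D_s$; since $\F^\sharp|_T$ is a locally trivial fibration transverse to $D$, this set is homeomorphic to (a subdomain of $D$) $\times$ (a disc transverse to the fibration), and its topological signature is: a union of leaves forming a product-foliated block attached to $D$, bounded by the dicritical separatrices through the chosen singular points. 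Transporting by $\phi$ (and using that $\phi$ respects the dicritical separatrices and the fibered structure near $D'$, which is again dicritical), the image is a union of leaves of $\G^\sharp|_{T'}$ avoiding the corresponding discs, i.e. a dicritical separator of $\G$. Again this is the content of \cite{RosasFourier}, and I would spell out the fibered-block picture as the reason it goes through.

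\emph{Main obstacle.} The genuinely hard step is the nodal separator statement: one must upgrade a mere leaf-preserving homeomorphism to something that controls the \emph{closures} of leaves and the transverse structure across the separator — i.e., show that the topological conjugacy of holonomy pseudogroups forces the image of a family $\{|v|=c|u|^\lambda\}_{c\in[c_1,c_2]}$ to be again such a family, without assuming any transverse regularity of $\phi$ a priori. This is precisely where the delicate dynamical analysis of \cite{Rosas, RosasFourier} is indispensable; my plan assumes those results and only organizes the reduction around them.
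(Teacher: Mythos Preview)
The paper does not prove this theorem at all: it is stated with attribution to \cite{Rosas} and \cite{RosasFourier} and then used as a black box. So there is no ``paper's own proof'' to compare against; your proposal goes well beyond what the paper does.

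That said, your sketch has two genuine gaps that you should be aware of.

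\textbf{Nodal separatrices via holonomy.} Your criterion ``holonomy is an irrational rotation'' does not single out nodal separatrices. For a linearizable irrational saddle ($\lambda\in\R_{<0}\setminus\mathbb Q$) the holonomy along either separatrix is also $z\mapsto e^{2\pi i\lambda}z$, an irrational rotation; yet such a separatrix is not nodal. What distinguishes a node from a saddle is not the conjugacy class of the holonomy germ but the \emph{accumulation behaviour of nearby leaves}: at a node every leaf near the separatrix is dense in a real $3$-hypersurface $|v|=c|u|^\lambda$ containing the separatrix in its closure, whereas at a (linearizable) saddle leaves stay away from the separatrix. Rosas's argument works with this kind of dynamical information (closures of leaves, minimal invariant sets, cf.\ the Camacho--Rosas result quoted just before the theorem), not with the holonomy germ alone.

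\textbf{Dicritical separators via Propositions~\ref{InducedGraphMorphism} and~\ref{conjappr}.} Both of those propositions are stated and proved only for \emph{generalized curves}, while Theorem~\ref{C0InvSep} carries no such hypothesis. More seriously, those propositions give you a graph isomorphism $\A_\phi:\A_\F\to\A_\G$, not a homeomorphism between neighbourhoods of the exceptional divisors; the sentence ``$\phi$ respects \ldots\ the fibered structure near $D'$'' presupposes exactly the kind of lift through $E_\F$, $E_\G$ that is \emph{not} available a priori (indeed, producing such lifts is the content of the much later Excellence Theorem). Rosas's proof in \cite{RosasFourier} has to recognize a dicritical separator purely from its trace in $(\C^2,0)$ --- as a saturated set with a specific product/fibered leaf structure --- without ever lifting $\phi$.

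Your closing paragraph is right that the substantive dynamical analysis lives entirely in \cite{Rosas,RosasFourier}; just be careful that the ``geometric skeleton'' you wrap around it does not silently assume either a generalized-curve hypothesis or an already-lifted conjugacy.
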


Let us consider now a collection of separators of $\F$
\begin{equation*}
\mc S =(\mc S_\alpha)_{\alpha\in \mc A}\,, \qquad \mc A=\mc E_\F^{\mr{dic}}\cup \mc N,
\end{equation*}
where $\mc N$ is the collection of the nodal singular points of $\F^\sharp$, $\mc E_\F^{\mr{dic}}$ is the collection of  the dicritical components of $\mc E_\F$ 
and for each $\alpha\in\mc A$, $\mc S_\alpha$ is the image by $E_\F$, in a closed ball, of a chosen closed nodal or dicritical separator of $\F^\sharp$ at $\alpha$. We will say that $\mc S$ is a \emph{complete system of separators for $\F$}. In any small enough closed  ball $B_r$ of  radius $r\leq R$, every $\mc S_\alpha$ is a cone, as foliated manifold,  over its intersection with the sphere $\partial B_r$. This results from the transversality of the leaves contained in $\mc S_\alpha$ to any sphere $\partial B_r$, $0<r\leq R$. We will say that $B_R$ is a \emph{Milnor ball for $\mc S$}.

 \begin{defin}\label{dyncompdef} Let $B$ be a Milnor ball for a complete system $\mc S$ of  separators for $\F$.
 We call  $\F$-\emph{dynamical component  of $B$} defined by $\mc S$,  any connected component of $B\setminus \cup_{\alpha\in\mc A}\mc S_\alpha$. 
 We call \emph{dynamical component of   $S_\F^\sharp$}, see (\ref{compsepcurve}), the closure in $S_\F^\sharp$ of any connected component of $S_\F^\sharp\setminus (\mc E_\F^{\mr{dic}}\cup \mc N)$, see Figure~\ref{decomposition-dynamique}.
 The dual graph of a dynamical component of   $S_\F^\sharp$ is called  \emph{dynamical component of  $\A_\F$}, see Definition \ref{defAF}.
\end{defin}

\begin{figure}[ht]
\begin{center}
\begin{tikzpicture}[scale=1.1]
\draw[thick,red] (1,2) [elliparc=-20:320:1cm and 0.5cm];
\draw[red,thick] (2.75,2)  [elliparc=-20:320:1cm and 0.5cm]; 
\node at (2.75,2) {$\color{red}D_{11}$};
\draw[blue,thick] (4.5,2)  ellipse (1cm and .5cm);
\node at (4.5,2) {$\color{blue}D_{12}$};
\draw[blue,thick] (4.5,.75) [elliparc=60:400:0.5cm and 1cm]; 
\draw[red,thick] (2.75,.75) [elliparc=60:400:0.5cm and 1cm]; 
\draw[red,thick] (2.75,3.25) [elliparc=-100:-10:0.5cm and 1cm]; 
\draw[red,thick] (2.75,3.25) [elliparc=10:220:0.5cm and 1cm]; 
\draw[green,thick] (1.25,3.5) ellipse (1cm and .5cm);
\node at (1.25,3.5) {$\color{green}D_2$};
\fill [black] (2.25,3.5) circle [radius=2pt];
\node at (2.25,4.5) {node};
\draw[->] (2.25,4.25) to (2.25,3.9);
\draw[thick,dashed] (4,3.5) ellipse (1cm and .5cm);
\node at (4,3.5) {dicritical};
\draw[thick,marron] (5.75,3.5) [elliparc=220:550:1cm and .5cm]; 
\node at (5.75,3.5) {$\color{marron}D_3$};
\draw[thick,marron] (7.5,3.5) [elliparc=220:550:1cm and .5cm]; 
\draw[green,thick] (-0.5,3.5) [elliparc=40:380:1cm and .5cm];
\draw [thick,->,green] (1.3,3.75) to (1.3,4.75);
\draw [thick,->,blue] (5,2) to (6,2);
\draw [thick,->,marron] (5.75,3.75) to (5.75,4.75);
\end{tikzpicture}\end{center}
\caption{In this schematic representation of the divisor $S_\F^\sharp$ we have three dynamical components: the green and black ones and on the other hand the union of the red and blue components. Each color correspond to a geometric block (in the sense of \S\ref{decompCompCurve}) containing a divisor $D_i$ of valence $\geq 3$. The non-compact components of $S_\F^\sharp$ are denoted by arrows.}\label{decomposition-dynamique}
\end{figure}
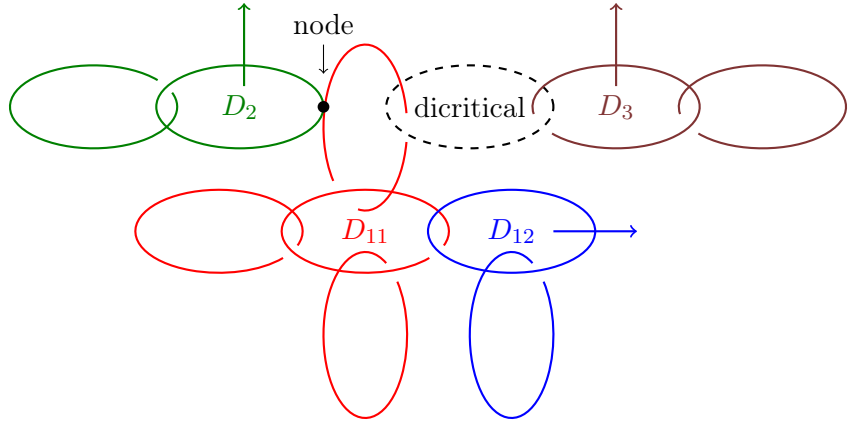

By construction if $\mc D$ is a $\F$-dynamical component of  $B$, then the intersection
\begin{equation*}\label{ED}
\mc E_{\mc D}=\overline{E_\F^{-1}(\mc D)}\cap\overline{S_\F^\sharp\setminus\mc E_\F^\mr{dic}}
\end{equation*}  
of is  a dynamical component  $S_\F^\sharp$. 
 The \emph{dual graph $\A_\mc D$ of $\mc E_D$}  can also be  characterized  as a connected component of the graph obtained by removing from the dual graph $\A_\F$ of $S_\F^{\sharp}$:
\begin{enumerate}[(a)]
\item  the edges $\langle D,D'\rangle$ such that $D\cap D'$ is a nodal singularity of  $\F^\sharp$,  
\item the vertices $D$ corresponding to a dicritical component of $\mc E_\F$ and all the edges containing these vertices.
\end{enumerate}

Notice that a dynamical component $\mc E_{\mc D}$ of $S^\sharp_\F$ can be reduced to a single irreducible component of $S^\sharp_\F$: we have $\mc E_{\mc D}=C$ when   $C$  is a non-compact irreducible component of $S_\F^\sharp$ that intersects $\mc E_\F$ at a nodal singular point of $\F^\sharp$, in other words, when   $C$  is the strict transform by $E_\F$ of a nodal separatrix.

The remarkable property of dynamical components is  the following  refinement of the Camacho-Sad existence theorem, given by L. Ortiz-Bobadilla, E. Rosales-Gonz\'alez and S.M. Voronin \cite[Strong Camacho-Sad Separatrix Theorem]{OBRGV} and completed in \cite[Lem\-ma~1.9]{MM4} for dicritical foliations.
\begin{teo}\label{sepdyncomp}  Any dynamical component of the total transform $S^\sharp_\F$ of the separatrices curve $S_\F$ either contains a non-nodal isolated separatrix, or is reduced to a single nodal separatrix.
\end{teo}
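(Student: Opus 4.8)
The statement is the \emph{Strong Camacho-Sad Separatrix Theorem} of \cite{OBRGV}, together with its extension to dicritical foliations in \cite[Lemma~1.9]{MM4}; already \cite[Theorem~1]{CamachoRosas} gives the weaker fact that the $\F$-invariant set underlying a dynamical component contains a separatrix or a nodal separator (and the latter is excluded, since that invariant set is disjoint from every separator), so the point is to pin down that separatrix. The plan is to follow the index mechanism of \cite{OBRGV, MM4}. Let $\mc E_{\mc D}$ be a dynamical component of $S_\F^\sharp$.

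First I would treat the degenerate case in which $\mc E_{\mc D}$ contains no irreducible component of $\mc E_\F$. Since $0$ is the only singular point of $\F$ and $S_\F^\sharp$ has normal crossings, the strict transforms of two distinct isolated separatrices are disjoint smooth curves, so here the connected curve $\mc E_{\mc D}$ is the strict transform of a single isolated separatrix $C$, meeting $\mc E_\F$ at a single point $p$. As $C$ is not dicritical, $p$ lies on no dicritical component, hence is a singular point of $\F^\sharp$; were $p$ not a node it would not be deleted in the construction of the dynamical component and then the exceptional component through $p$ would belong to $\mc E_{\mc D}$, against our hypothesis. So $p$ is a node and $C$ is a nodal separatrix: this is the second alternative. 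From now on I assume $\mc E_{\mc D}$ contains at least one component of $\mc E_\F$, so that $\mc E_{\mc D}$ is a connected compact normal crossings curve whose dual graph $\A_{\mc D}$ is a subtree of $\A_\F$, and the goal becomes to produce a non-nodal isolated separatrix whose strict transform lies in $\mc E_{\mc D}$.

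Next I would argue by contradiction and classify the singularities of $\F^\sharp$ on $\mc E_{\mc D}$. Assuming no such separatrix exists, the reduction theorem leaves, for a singular point $p$ of $\F^\sharp$ on a component $D\subset\mc E_{\mc D}$, only the following possibilities: an internal corner with another component of $\mc E_{\mc D}$; a corner with a component of $\mc E_\F$ outside $\mc E_{\mc D}$, which by the definition of a dynamical component is then a node; a non-corner node, carrying a nodal separatrix of some other dynamical component; and a non-corner saddle-node whose strong (convergent) separatrix is $D$ and whose weak separatrix is divergent. Indeed a non-corner point carrying a convergent separatrix not contained in $\mc E_\F$ would be an attachment of an isolated separatrix (excluded), and a corner with a dicritical component is a regular point of $\F^\sharp$. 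I would then record the Camacho-Sad index values needed: $\mr{CS}(\F^\sharp,D,p)$ is a positive real number at a node, vanishes along the strong separatrix of a saddle-node, and satisfies $\mr{CS}(\F^\sharp,D,p)\cdot\mr{CS}(\F^\sharp,D',p)=1$ at a corner $p=D\cap D'$ that is not a saddle-node.

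Finally I would run the numerical argument. Writing the index theorem $\sum_{p\in D}\mr{CS}(\F^\sharp,D,p)=D\cdot D$ for each component $D$ of $\mc E_{\mc D}$ and feeding in the classification gives a system linking the strictly negative self-intersections $D\cdot D$ to the corner indices and to the positive nodal contributions; since $\mc E_{\mc D}$ is a sub-divisor of the exceptional divisor of a composition of blow-ups of points, its intersection matrix is negative definite. Combining the index relations along the tree $\A_{\mc D}$ (exploiting that, where no saddle-node intervenes, the corner relation $\mr{CS}\cdot\mr{CS}=1$ lets one write the corner data as a coboundary on the tree), the vanishing of the index along strong separatrices of saddle-nodes, and the positivity of the nodal terms, one builds a nonzero vector on which the intersection form is non-negative, contradicting negative definiteness; hence the assumption fails and $\mc E_{\mc D}$ carries a non-nodal isolated separatrix. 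I expect the hard part to be exactly this last step: the saddle-node corners, where the relation $\mr{CS}\cdot\mr{CS}=1$ degenerates (one index is $0$) and the combinatorics must be reorganized, and, when $\F$ is dicritical, the fact that part of the boundary of $\mc E_{\mc D}$ consists of regular points of $\F^\sharp$ on dicritical components rather than of singularities — the cases settled respectively in \cite{OBRGV} and \cite[Lemma~1.9]{MM4}.
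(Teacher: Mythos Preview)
The paper does not give its own proof of this theorem: it simply attributes the result to \cite[Strong Camacho-Sad Separatrix Theorem]{OBRGV}, completed for dicritical foliations in \cite[Lemma~1.9]{MM4}, and moves on. Your proposal correctly identifies these sources and sketches the index-theoretic mechanism they employ (Camacho-Sad index formula along each compact component, the corner relation $\mr{CS}\cdot\mr{CS}=1$ at non--saddle-node intersections, vanishing of the index along strong separatrices of saddle-nodes, positivity at nodes, and negative definiteness of the intersection matrix of a sub-divisor of $\mc E_\F$), so you are following exactly the route the paper defers to.

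Two small remarks on the write-up. First, your ``non-corner node'' is in fact a corner of $S_\F^\sharp$ --- the second branch is the strict transform of a nodal separatrix, which belongs to $S_\F^\sharp$ --- so it is only a non-corner of $\mc E_\F$; this is purely terminological and does not affect the index bookkeeping. Second, you are right that the delicate step is the final combinatorial/linear-algebra contradiction, and that the two subtleties (saddle-node corners where the reciprocity degenerates, and boundary points of $\mc E_{\mc D}$ sitting on dicritical components where $\F^\sharp$ is regular and contributes no index) are precisely what \cite{OBRGV} and \cite[Lemma~1.9]{MM4} respectively handle; the paper does not reproduce those arguments, so your honest flagging of them is appropriate.
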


We will highlight now that dynamical components of $S^\sharp_\F$ can be characterized as traces on $S^\sharp_\F$ of ``minimal'' invariant sets.
 
The foliation $\F$ being defined on a ball  $B$ let   $A\subset A'$  be two subsets of $B$, resp. of $E_\F^{-1}(B)$.  We will denote by $\mathrm{sat}(A,A')$  the union of the connected components of $L\cap A'$ for every leaf $L$ of $\F$, resp. of $\F^\sharp$, meeting $A$.
We also denote   by $\overline{\mathrm{sat}}(A,A')$ the closure of ${\mathrm{sat}}(A,A')$ in $B$, resp. in  $E_\F^{-1}(B)$. For any subset $A$ in $\C^2$ let us write 
\[A^\sharp:=E_\F^{-1}(A).\] 
Following the ideas developed in \cite{CamachoRosas} one can prove:
\begin{prop}\label{proprtranscurve}
Let $B$ be a Milnor ball for $S_\F$ and let $(W_i)_i$ be a fundamental system of neighborhoods of $S_\F$ in $B$.
Let $\Delta$ be an embedded disc in $B\setminus\{0\}$ transversal to $\F$, meeting the isolated separatrices $S_\F$ at a single point $p$, and define $\Delta_i=\Delta\cap W_i$. Then
\begin{enumerate}
\item
$\bigcap_i \mr{sat}(\Delta_i^\sharp,B^\sharp)$
is the dynamical component of $S_\F^\sharp$ meeting $\Delta^\sharp$;
\item there exist two complete systems of separators
$\mc S=(\mc S_\alpha)_{\alpha\in \mc A}$ in $B$
and $\mc S'=(\mc S'_\alpha)_{\alpha\in \mc A}$  in a ball $B'\subset B$ such that
\begin{enumerate}[-]
\item $B$ is a Milnor ball for $\mc S$, $B'$ is a Milnor ball for $\mc S'$ and
 $\mc S_\alpha\cap B'\subset \mc S'_\alpha$, 
\item if $W_i$ is small enough then $\mr{sat}(\Delta_i,B)$ is contained in one $\F$-dynamical component of $B$ defined by $\mc S$ and contains one $\F$-dynamical component of $B'$ defined by~$\mc S'$.
\end{enumerate}
\end{enumerate}
\end{prop}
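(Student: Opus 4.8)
The plan is to lift the whole situation to the reduction $M_\F$ via $E_\F$ and to argue block by block along the dual tree, in the spirit of \cite{CamachoRosas}. Writing $\Delta^\sharp=E_\F^{-1}(\Delta)$, $\Delta_i^\sharp=E_\F^{-1}(\Delta_i)$ and $B^\sharp=E_\F^{-1}(B)$, one still has that $\Delta^\sharp$ is an embedded disc transversal to $\F^\sharp$ meeting $S_\F^\sharp$ at a single point, which is a regular point of $\F^\sharp$ lying on an irreducible component $D_0$ of $S_\F^\sharp$. Let $\mc E_{\mc D}$ be the dynamical component of $S_\F^\sharp$ through $D_0$; by the description of dynamical components in \S\ref{subsecSeparator}, $\A_{\mc D}$ is the connected subtree of $\A_\F$ obtained by cutting the edges over nodal singularities of $\F^\sharp$ and deleting the dicritical vertices together with their edges.

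The technical core is a local statement at the reduced singularities, and it is precisely there that nodes become exceptional. If $s=D\cap D'$ is a reduced singularity of $\F^\sharp$ that is \emph{not} a node, then --- using linearizing coordinates when $s$ is linearizable, and a normal form or the holonomy of one separatrix in the remaining cases (resonant saddles, Siegel singularities with $\lambda\in\R_{<0}\setminus\mathbb{Q}$, saddle-nodes) --- one shows that inside any small bidisc $V$ around $s$ the saturation in $V$ of a transversal disc to $\F^\sharp$ centered at a point of $D\setminus\{s\}$ close to $s$ contains a full punctured neighborhood of $D'\setminus\{s\}$. At a node, by contrast, the invariant hypersurfaces $|v|=c|u|^\lambda$ separate $V$, the analogous saturation pinches onto $\{s\}$ as one approaches $D'$, and it is its complement that contains the nodal separators. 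A companion remark at a (regular) point of a dicritical component $D$ adjacent to $\mc E_{\mc D}$ shows that, provided the removal discs $D_s$ are not taken too small, the leaves running close to the component of $\mc E_{\mc D}$ that meets $D$ necessarily meet $D_s$, hence stay out of the dicritical separator at $D$.

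Granting this, I would prove (1) as follows. Since $D_0$ is non-dicritical it is $\F^\sharp$-invariant, so $D_0\setminus\mr{Sing}(\F^\sharp)$ is a leaf meeting $\Delta_i^\sharp$ and hence is contained in $\mr{sat}(\Delta_i^\sharp,B^\sharp)$; transporting $\Delta_i^\sharp$ along $D_0\setminus\mr{Sing}(\F^\sharp)$ by the holonomy of $D_0$ shows that $\mr{sat}(\Delta_i^\sharp,B^\sharp)$ contains a transversal disc through every point of $D_0\setminus\mr{Sing}(\F^\sharp)$. Applying the local statement at each singularity of $\mc E_{\mc D}$ lying on $D_0$ --- these are reduced and not nodal, and their $\A_\F$-edge survives in $\A_{\mc D}$ --- one reaches the neighboring components, and an induction over the tree $\A_{\mc D}$ yields $\mc E_{\mc D}\setminus\mr{Sing}(\F^\sharp)\subset\mr{sat}(\Delta_i^\sharp,B^\sharp)$, in fact a full neighborhood of $\mc E_{\mc D}$ minus its singular points, for \emph{every} $i$. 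For the reverse inclusion I would use that a dynamical component of a Milnor ball, being a connected component of the complement of the closed invariant separators, is itself a union of leaves; since $\Delta_i^\sharp$ avoids all separators for $i$ large, $\mr{sat}(\Delta_i^\sharp,B^\sharp)$ is trapped in the dynamical component that meets $\Delta_i^\sharp$, and making the separator systems sweep their dynamical components down to $\mc E_{\mc D}$ (part (2)) forces $\bigcap_i\mr{sat}(\Delta_i^\sharp,B^\sharp)=\mc E_{\mc D}$.

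Part (2) is the quantitative version realizing this squeeze. First I would fix a Milnor ball $B$ and a complete system $\mc S$ with ``thin'' separators (nodal shells $S_{c_1,c_2}$ with $c_1,c_2$ close, dicritical separators built with large removal discs $D_s$): as each $\mc S_\alpha$ meets $S_\F$ only at the origin and so stays away from the regular point $p$ cut by $\Delta$, one gets $\Delta_i\cap\bigcup_\alpha\mc S_\alpha=\emptyset$ for $i\gg1$, and then $\mr{sat}(\Delta_i,B)$, being connected and disjoint from every (invariant) $\mc S_\alpha$, lies in a single $\F$-dynamical component of $B$ defined by $\mc S$. Next, in a smaller Milnor ball $B'$, I would fix $\mc S'$ with ``fat'' separators (wide shells, $c_1'\le c_1\le c_2\le c_2'$; small removal discs $D'_s\subset D_s$) fat enough that $\mc S_\alpha\cap B'\subset\mc S'_\alpha$ and that the $\F$-dynamical component of $B'$ defined by $\mc S'$ with trace $\mc E_{\mc D}$ is squeezed inside the fixed neighborhood of $\mc E_{\mc D}$ which, by the proof of (1), lies in $\mr{sat}(\Delta_i,B)$ for $i\gg1$; that component is then contained in $\mr{sat}(\Delta_i,B)$. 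The hard part will be the local statement at the non-nodal singularities (and at the dicritical points): getting the quantifiers right in the non-linearizable cases and in the presence of the projective monodromy of dicritical components, and, simultaneously, tuning the sizes of all the separators in $\mc S$ and $\mc S'$ so that both clauses of (2) hold for one common threshold on $i$ --- a bookkeeping resting on the Camacho--Rosas fact \cite[Theorem~1]{CamachoRosas} that every invariant set contains a separatrix or a nodal separator.
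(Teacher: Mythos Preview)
The paper does not actually give a proof of this proposition; it only states that it can be obtained ``following the ideas developed in \cite{CamachoRosas}.'' Your proposal is precisely that strategy: the local passage lemma at reduced non-nodal singularities, the obstruction at nodes via the invariant real hypersurfaces $|v|=c|u|^\lambda$, the transport by holonomy along invariant components, and the induction along the dual tree $\A_{\mc D}$. For part~(2) your thin/fat separator scheme (widening the shells $S_{c_1,c_2}$ and shrinking the discs $D_s$) is the natural way to realize the squeeze, and your final appeal to \cite[Theorem~1]{CamachoRosas} is exactly the intended ingredient. So the approach matches what the paper has in mind.

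One point where your sketch is a bit loose is the \emph{reverse} inclusion in~(1). You argue that ``making the separator systems sweep their dynamical components down to $\mc E_{\mc D}$'' forces $\bigcap_i\mr{sat}(\Delta_i^\sharp,B^\sharp)\subset\mc E_{\mc D}$. But fattening separators only shrinks the dynamical component of $B$ to a \emph{tubular neighborhood} of $\mc E_{\mc D}$, not to $\mc E_{\mc D}$ itself: between two adjacent non-nodal singularities there is no separator to push in. In particular, a leaf $L$ that accumulates on $D_0$ (as happens near any non-linearizable resonant saddle in $\mc E_{\mc D}$) meets every $\Delta_i^\sharp$, so a point of $L\setminus S_\F^\sharp$ would survive in the intersection. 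The clean way to close this --- and what the Camacho--Rosas argument really gives --- is to work with $\overline{\mr{sat}}$ and show that the closed invariant germ $\bigcap_i\overline{\mr{sat}}(\Delta_i^\sharp,B^\sharp)$ has trace on $S_\F^\sharp$ equal to $\mc E_{\mc D}$; equivalently, use \cite[Theorem~1]{CamachoRosas} directly on this closed invariant set rather than trying to squeeze with separators. Conversely, note that the leaf $D_1^\ast$ of an adjacent component $D_1\subset\mc E_{\mc D}$ never meets $\Delta_i^\sharp$, so it lies in $\overline{\mr{sat}}(\Delta_i^\sharp,B^\sharp)$ only via closure --- another reason the statement should be read with closures. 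With these adjustments your outline goes through.
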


\begin{center}
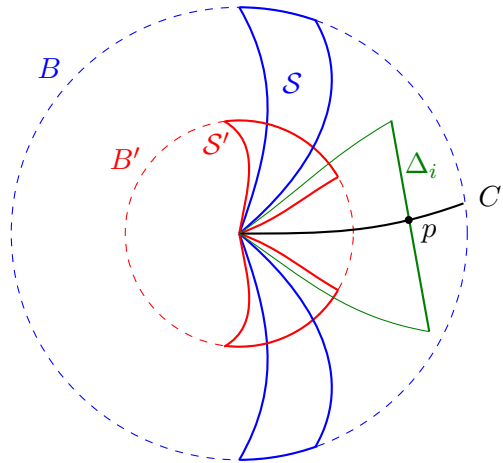
\begin{figure}[ht]
\begin{tikzpicture}
\draw[dashed,blue] (5,2) circle [radius=3];
\draw[dashed,red] (5,2) circle [radius=1.5];
\draw[thick,blue] (5,2) [out=70,in=300] to (5,5);
\draw[thick,blue] (5,2) [out=-70,in=-300] to (5,-1);
\draw [blue,thick] (5,5) arc [radius=3,start angle=90, end angle=70];
\draw [blue,thick] (5,-1) arc [radius=3,start angle=-90, end angle=-70];
\node at (7.5,2) {$p$};
\draw[red,thick](6.3,2.75) arc [radius=1.5,start angle=30, end angle=97];
\draw[red,thick](6.3,1.25) arc [radius=1.5,start angle=-30, end angle=-97];
\draw[thick,blue] (5,2) [out=40,in=300] to (6,4.83);
\draw[thick,blue] (5,2) [out=-40,in=-300] to (6,-.83);
\draw[red,thick] (5,2) [out=20,in=210] to (6.3,2.75);
\draw[red,thick] (5,2) [out=-20,in=-210] to (6.3,1.25);
\draw[red,thick] (5,2) [out=80,in=-30] to (4.8,3.49);
\draw[red,thick] (5,2) [out=-80,in=30] to (4.8,0.5);
\draw [black,thick] (5,2) [out=0,in=200] to (7.95,2.4);
\draw [green,thick] (7,3.5) to (7.5,.7);
\draw [green] (5,2) [out=35,in=210] to (7,3.49);
\draw [green] (5,2) [out=-35,in=-190] to  (7.5,.7);
\node[red] at (3.5,3) {$B'$};
\node[blue] at (2.5,4.2) {$B$};
\node[green] at (7.4,2.9) {$\Delta_i$}; 
\node[blue] at (5.7,4) {$\mc S$};
\node[red] at (4.7,3.2) {$\mc S'$};
\fill  (7.23,2.18) circle [radius=1.5pt];
\node at (8.3,2.5) {$C$};
\end{tikzpicture}
\caption{Illustration of Proposition~\ref{proprtranscurve}, $C$ is an irreducible component of $S_\F$.}
\end{figure}
\end{center}

\begin{obs}\label{inf-sep}
Notice that as a consequence of Proposition~\ref{proprtranscurve},
when $\F^\sharp$ has no nodal singularities and 
$\mc E_\F$ contains dicritical components  then
$
U=E_\F\big(\,\mr{sat}(\mc E_\F^{\mr{dic}},B^\sharp)\cup S_\F^\sharp\,\big)$
is a neighborhood of $0$ in $\C^2$. Thus, for every leaf $L\subset U$ of $\F$ there is a separatrix contained (as germ of curve) in $L\cup\{0\}$. 
If in addition $\mc E_\F$ contains an invariant irreducible component with infinite holonomy group then there are leaves $L\subset U$ of $\F$ containing  infinitely many separatrices, see Figure~\ref{fig-inf-sep}.
\end{obs}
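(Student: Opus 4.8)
The strategy is to argue upstairs, in the model $M_\F$ of the reduction, and to push the conclusions down through $E_\F$; throughout, $B$ is a Milnor ball for $S_\F$ and $B^\sharp=E_\F^{-1}(B)$, $S_\F^\sharp=E_\F^{-1}(S_\F)$.

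\emph{The set $U$ is a neighborhood of $0$.} Put $\Omega:=\mathrm{sat}(\mc E_\F^{\mr{dic}},B^\sharp)$. Since $\F^\sharp$ is regular and transversal to each dicritical component, transversality shows that the set of leaves meeting $\mc E_\F^{\mr{dic}}$ is open, hence $\Omega$ is an open, $\F^\sharp$-saturated set containing the dicritical part of $\mc E_\F$. I would prove that $B^\sharp=\Omega\cup S_\F^\sharp$, which by applying $E_\F$ gives $U=E_\F(\Omega\cup S_\F^\sharp)=E_\F(B^\sharp)=B$, in particular a neighborhood of $0$. Let $Z:=B^\sharp\setminus\Omega$; it is closed, $\F^\sharp$-saturated and disjoint from $\mc E_\F^{\mr{dic}}$. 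If $Z\neq\emptyset$, take a leaf $L\subseteq Z$: its closure $\overline L\subseteq Z$ is a nonempty closed invariant set avoiding the dicritical components, so by the Camacho-Rosas minimality theorem~\cite{CamachoRosas} — there are no nodal separators, as $\F^\sharp$ has no nodal singularity — it contains a separatrix of $\F^\sharp$, necessarily either the strict transform of an isolated separatrix of $\F$ or an invariant component of $\mc E_\F$. Using the local structure at the reduced singular points met along the way (a leaf clustering at such a point but not equal to one of its two separatrices has both of them in its closure) one extracts that \emph{either} $L$ is itself one of those separatrices, so $L\subseteq S_\F^\sharp$, \emph{or} $\overline L$ contains some invariant component $D'$ of $\mc E_\F$ entirely. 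In the second alternative, propagating the inclusion $\overline L\supseteq D'$ along the geodesic of the tree $\mc E_\F$ joining $D'$ to a dicritical component (one exists by hypothesis) shows $\overline L$ contains the invariant component $D^{(m)}$ adjacent to a dicritical component $D_0$; but near the corner $D^{(m)}\cap D_0$, where $\F^\sharp$ is transversal to $D_0$, the plaques of $L$ accumulating on points of $D^{(m)}$ cross $D_0$, so $L$ meets $\mc E_\F^{\mr{dic}}$, contradicting $L\subseteq Z$. Hence $Z\subseteq S_\F^\sharp$, as claimed. (This is the argument behind the reference to Proposition~\ref{proprtranscurve}; running it through the dynamical-component decomposition, together with Theorem~\ref{sepdyncomp} — which provides a non-nodal isolated separatrix in every dynamical component of $S_\F^\sharp$ once $\F^\sharp$ has no nodal singularity — yields the same statement. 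The one delicate point I expect is a possible \emph{saddle-node} on $\mc E_\F$, whose weak separatrix need not lie in the closure of a transverse leaf; this either restricts the cleanest form of the statement to generalized curves or needs a separate local check.)

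\emph{Every leaf in $U$ carries a separatrix.} Let $L$ be a leaf of $\F$ in $U$ and $L^\sharp\subseteq M_\F$ its strict transform. By the previous step $L^\sharp\subseteq\Omega\cup S_\F^\sharp$, so $L^\sharp$ either coincides with the strict transform of an isolated separatrix $C$ of $\F$ — and then $L\cup\{0\}$ is the germ $C$ — or it meets some dicritical component $D_0$ at a point $x_0$. In the latter case take a small plaque $\delta\subseteq L^\sharp$ through $x_0$, transversal to $\mc E_\F$ and meeting it only at $x_0$; $\delta$ is an analytic disc, so by Remmert's proper mapping theorem $E_\F(\delta)$ is a one-dimensional analytic germ at $0$, which is invariant (the image of a piece of a leaf), hence a dicritical separatrix, and it is contained in $L\cup\{0\}$.

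\emph{Infinite holonomy forces infinitely many separatrices in a leaf.} Fix a dicritical component $D_0$ and the invariant component $D$ with infinite holonomy group $H$. A standard property of infinite subgroups of $\mr{Diff}(\C,0)$ provides, arbitrarily close to the base point of a transversal $\Sigma$ of $D$, a point $z_0$ whose $H$-orbit is infinite and stays in any prescribed disc; let $L^\sharp$ be the leaf of $\F^\sharp$ through $z_0$, so $L^\sharp\cap\Sigma\supseteq H\cdot z_0$ is infinite. Now follow $L^\sharp$ along the geodesic of $\mc E_\F$ from $D$ to $D_0$: holonomy transport inside each invariant component of the path, and the corner transition maps at the successive reduced singular points, are germs of injective maps, so they carry the infinite set $L^\sharp\cap\Sigma$, step by step, onto an infinite subset of a transversal of the last invariant component $D^{(m)}$ near the corner $D^{(m)}\cap D_0$; since $\F^\sharp$ is transversal to $D_0$ there, that transversal may be taken to be $D_0$ itself, whence $L^\sharp$ meets $D_0$ in infinitely many points. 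As in the previous step each such point produces a distinct dicritical separatrix contained in $L:=E_\F(L^\sharp)$, and $L\subseteq U$ because $L^\sharp\subseteq\Omega$; thus $L$ contains infinitely many separatrices. The main obstacle here is the bookkeeping of the corner maps, which are only defined on suitable sectors: one must choose $z_0$ close enough to $D$ (equivalently, replace $H\cdot z_0$ by the orbit of a contracting or parabolic element of $H$, or keep the whole orbit when $H$ is bounded) that the iterated image stays inside all the domains along the path; granting this, injectivity of every map in the chain gives the infinitude of $L^\sharp\cap D_0$.
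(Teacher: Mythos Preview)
The paper does not give a detailed proof of this remark; it only points to Proposition~\ref{proprtranscurve} and to Figure~\ref{fig-inf-sep}, the latter depicting the simplest situation where the invariant component with infinite holonomy is \emph{adjacent} to a dicritical one (so that a piece of the dicritical component itself serves as the holonomy transversal). Your argument is therefore much more explicit than anything in the text, and for the first two claims it is correct and in the same spirit: the tree-propagation via Camacho--Rosas is exactly the mechanism behind Proposition~\ref{proprtranscurve}, and the ``every leaf carries a separatrix'' step is straightforward once the first step is in hand. Your caveat about saddle-nodes is honest and appropriate.

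There is, however, a genuine slip in your third step. The corner transition at a reduced singular point $s=D_i\cap D_{i+1}$ is \emph{not} a germ of injective map in general. In local coordinates where the saddle has first integral $x^py^q$ (resonant case $\lambda=-p/q$), the passage from a transversal of $D_i$ to one of $D_{i+1}$ is modelled on $z\mapsto z^{q}$ (or $z\mapsto z^{p}$ in the other direction); for an irrational real saddle $\lambda\in\R_{<0}\setminus\mathbb Q$ a single plaque already meets each transversal infinitely often, so there is no well-defined single-valued corner map at all. What actually survives --- and is all you need --- is the implication ``$L^\sharp\cap\Sigma_i$ infinite near $D_i$ $\Rightarrow$ $L^\sharp\cap\Sigma_{i+1}$ infinite near $D_{i+1}$'': count plaques of $L^\sharp$ in a local bidisc around $s$ and note that each plaque contributes a bounded positive number of points to each transversal (or infinitely many in the irrational real case), so infinitude passes through. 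With that correction, and with your acknowledged care in choosing $z_0$ so that the successive images stay in the relevant domains, the transport argument goes through. Alternatively, the adjacent case in the paper's figure already covers the situation with no corner at all, and this is likely all the authors had in mind.
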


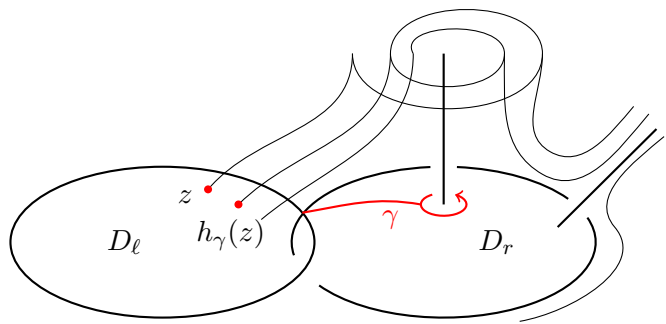
\begin{figure}[ht]
\begin{tikzpicture}
\draw[thick] (2.5,1.5) [elliparc=0:360:2cm and 1cm]; 
\node at (2,1.5) {$D_\ell$};
\draw[thick] (6.2,1.5) [elliparc=215:379:2cm and 1cm];
\draw[thick] (6.2,1.5) [elliparc=35:85:2cm and 1cm];
\draw[thick] (6.2,1.5) [elliparc=95:190:2cm and 1cm];
\node at (6.9,1.5) {$D_r$}; 
\draw[thick] (6.2,2) to (6.2,4);
\draw[thick] (7.7,1.7) to (9,3);
\draw (5,4) [out=270,in=-90] to  (7.5,4)  [out =90,in=90]  to (5.5,4) [out=-90,in=-90] to (7,4) [out=90,in=120] to (5.8,4);
\draw (5.8,4) [out=-90,in=40] to (3.8,1.8);
\draw (5.5,4) [out=-90,in=45] to (3.5,2);
\draw (5,4) [out=-90,in=50] to (3.1,2.2);
\draw [red,thick] (4.34,1.89) [out=10,in=170] to (5.9,2);
\draw [red,thick,->] (6.2,2) [elliparc=120:420:.3cm and .15cm];
\node at (5.5,1.8) {\color{red} $\gamma$};
\fill [red] 
(3.1,2.2) circle [radius=1.5pt];
\node at (2.8,2.1) {$z$};
\fill [red] 
 (3.5,2) circle [radius=1.5pt];
\node at (3.4,1.6) {$h_\gamma(z)$};
\draw (7,4) [out=-90,in=150] to (7.5,2.4) [out=-30,in=230] to (8.9,3.2);
\draw (7.5,4) [out=-90,in=150] to (7.6,2.7) [out=-30,in=230] to (8.7,3.3);
\draw (9.1,2.9) [out=-140,in= 130] to (8.45,1.7) [out=-50,in=10] to (7.2,.45);
\end{tikzpicture}
\caption{The left irreducible component $D_\ell$ of $\mc E_\F$ is dicritical. The right irreducible component $D_r$ is invariant and its projective holonomy group is infinite. The leaf of $\F^\sharp$ passing through the point $z\in D_\ell$ meets $D_\ell$ infinitely many times and consequently its image by $E_\F$ is a leaf of $\F$ containing infinitely many separatrices as germs of curves through $0$.}\label{fig-inf-sep}
\end{figure}

Assertion (1) in Proposition~\ref{proprtranscurve} does not imply that the decomposition of $\A_\F$ into dynamical components is a topological invariant of $\F$. 
\begin{question}\label{question}
If $\phi:(\C^2,0)\to(\C^2,0)$ is a homeomorphism germ conjugating a generalized curve $\F$ to another generalized curve $\G$, does the graph morphism $\A_\phi$ defined in Proposition~\ref{InducedGraphMorphism} send any dynamical component of $\A_\F$ into a dynamical component of $\A_\G$?
\end{question}
It can be easily deduced from assertion (1) in Proposition~\ref{proprtranscurve} that the answer is  positive for dynamical components without nodal singularities:

\begin{cor}\label{compdynsansnoeud}
Let $\mc D$ be the closure of a connected component of $\mc E_\F\setminus\mc E_\F^{\mr{dic}}$ and let $\A_{\mc D}$ be the dual graph of $\mc D$. 
If $\phi:\F\to\G$ is a topological conjugacy between generalized curves then $\A_\phi(\A_{\mc D})$ is the dual graph of the closure of a connected component $\mc D'$ of $\mc E_\G\setminus\mc E_\G^{\mr{dic}}$. Moreover if $\mc D$ does not contain nodal singular points of $\F^\sharp$ then $\mc D'$ does not contain nodal singular points of $\G^\sharp$.
\end{cor}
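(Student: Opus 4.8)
The strategy is to reduce everything to assertion (1) of Proposition~\ref{proprtranscurve}, which identifies a dynamical component of $S_\F^\sharp$ meeting a transversal disc $\Delta$ as an intersection of saturations $\bigcap_i \mathrm{sat}(\Delta_i^\sharp, B^\sharp)$. First I would observe that a connected component $\mc D$ of $\mc E_\F\setminus\mc E_\F^{\mr{dic}}$ is a union of dynamical components of $S_\F^\sharp$ in the sense of Definition~\ref{dyncompdef}, glued along nodal singularities of $\F^\sharp$; indeed removing only the dicritical vertices (and their edges) but keeping the edges through nodal singularities yields $\mc D$, while additionally cutting the nodal edges yields the finer decomposition into dynamical components. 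So it suffices to prove the statement for a single dynamical component of $S_\F^\sharp$ and then reassemble: $\A_\phi$ sends the dynamical components inside $\A_{\mc D}$ to dynamical components of $\A_\G$, and I must check these glue back into a single connected component $\mc D'$ of $\mc E_\G\setminus\mc E_\G^{\mr{dic}}$.

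Next I would set up the transversal picture. Pick an isolated separatrix $C$ in the chosen dynamical component of $S_\F$ (it exists by Theorem~\ref{sepdyncomp}, unless the component is a single nodal separatrix, a case I treat separately below), and a transversal disc $\Delta$ meeting $S_\F$ only at a point $p\in C$. By Proposition~\ref{proprtranscurve}(1), $\bigcap_i \mathrm{sat}(\Delta_i^\sharp, B^\sharp)$ is exactly the dynamical component $\mc E_{\mc D_0}$ of $S_\F^\sharp$ through $\Delta^\sharp$. Now apply $\phi$: it is a homeomorphism conjugating $\F$ to $\G$, so it sends leaves to leaves, hence $\phi(\mathrm{sat}(\Delta_i^\sharp, B^\sharp))$ is the saturation of $\phi(\Delta_i)^\sharp$ inside $E_\G^{-1}(\phi(B))$, up to shrinking balls. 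Since $\phi(\Delta)$ is a disc transversal to $\G$ (transversality of an embedded disc to the foliation is a topological property, as both the disc and the leaves are sent homeomorphically) meeting the isolated separatrices $S_\G$ at the single point $\phi(p)$ — here I use Proposition~\ref{conjappr}, which guarantees $\phi$ carries isolated separatrices of $\F$ to isolated separatrices of $\G$ — Proposition~\ref{proprtranscurve}(1) applies on the $\G$ side and identifies $\bigcap_i \mathrm{sat}(\phi(\Delta_i)^\sharp, \cdot)$ with the dynamical component of $S_\G^\sharp$ through $\phi(\Delta)^\sharp$. Therefore $\phi$ conjugates the two intersections of saturations, and in particular $\phi$ induces a homeomorphism between the dynamical component $\mc E_{\mc D_0}$ of $S_\F^\sharp$ and a dynamical component $\mc E_{\mc D_0'}$ of $S_\G^\sharp$. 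Comparing with the graph isomorphism $\A_\phi$ of Proposition~\ref{InducedGraphMorphism} — which sends the vertex of the strict transform of $C$ to the vertex of the strict transform of $\phi(C)$, and is compatible with the intersection form and with the dicritical/non-dicritical dichotomy — one concludes $\A_\phi$ maps the subgraph $\A_{\mc D_0}$ onto $\A_{\mc D_0'}$. (The degenerate case where the component is a single nodal separatrix is handled by Theorem~\ref{C0InvSep}: $\phi$ carries nodal separatrices to nodal separatrices, so the corresponding one-vertex dynamical component maps to a one-vertex dynamical component.)

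Finally I would assemble and check the nodal assertion. Running the above over all dynamical components of $S_\F^\sharp$ contained in $\mc D$, we get that $\A_\phi$ sends each to a dynamical component of $\A_\G$; since $\A_\phi$ is a graph isomorphism compatible with the edges through nodal singularities (Theorem~\ref{C0InvSep} says $\phi$ sends nodal singularities to nodal singularities, so an edge $\langle D, D'\rangle$ with $D\cap D'$ nodal goes to an edge with the same property) and with the dicritical components, the image of $\A_{\mc D}$ is a union of dynamical components of $\A_\G$ glued exactly along nodal edges with no dicritical vertices, i.e. the dual graph of the closure of a connected component $\mc D'$ of $\mc E_\G\setminus\mc E_\G^{\mr{dic}}$, as claimed. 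For the last sentence, if $\mc D$ contains no nodal singular points of $\F^\sharp$, then $\mc D$ is itself a single dynamical component of $S_\F^\sharp$, and by the same token the absence of internal nodal edges is preserved by $\A_\phi$ (nodal singularities correspond to nodal singularities by Theorem~\ref{C0InvSep}), so $\mc D'$ contains no nodal singularities of $\G^\sharp$.

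The main obstacle I anticipate is the bookkeeping with the two balls $B'\subset B$ of Proposition~\ref{proprtranscurve}(2) and the shrinking needed to make $\phi$ carry one complete system of separators into another: a priori $\phi$ need not send $\mc S_\alpha$ exactly to $\mc S_\alpha'$, only approximately, so the sandwich ``$\mathrm{sat}(\Delta_i,B')$ contains a $\G$-dynamical component and is contained in a $\G$-dynamical component'' has to be threaded carefully through the homeomorphism, together with Theorem~\ref{C0InvSep} (images of separator germs are separator germs) to guarantee that the images are genuinely usable as pieces of a complete system on the $\G$ side. Assertion (1), being stated as an honest equality of the intersection $\bigcap_i$, is the cleanest input and is what I would lean on most heavily; assertion (2) is only needed to confirm the non-nodal refinement.
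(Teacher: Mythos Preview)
Your proposal has a genuine gap centred on a misreading of Theorem~\ref{C0InvSep}. The first assertion of the corollary follows immediately from Proposition~\ref{InducedGraphMorphism}: since $\A_\phi$ is a graph isomorphism sending dicritical vertices to dicritical vertices and separatrix vertices to separatrix vertices, it restricts to an isomorphism of the subgraphs spanned by the non-dicritical compact vertices and hence permutes their connected components. Your route---decompose $\mc D$ into dynamical components, show each maps to a dynamical component, then reassemble along nodal edges---is unnecessary here and, more seriously, the reassembly step invokes ``$\phi$ sends nodal singularities to nodal singularities'', which you attribute to Theorem~\ref{C0InvSep}. That theorem only says that $\phi$ carries nodal separatrices and nodal \emph{separator germs} (subsets of $(\C^2,0)$) to objects of the same kind for $\G$; it says nothing about which edge of $\A_\G$ corresponds to a given nodal edge of $\A_\F$. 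The claim that $\A_\phi$ preserves the nodal/non-nodal dichotomy on edges is exactly Question~\ref{question}, which the paper leaves open. Relatedly, your paragraph-2 argument, if it really proved that $\A_\phi$ sends every dynamical component to a dynamical component, would already answer Question~\ref{question}; the characterisation in Proposition~\ref{proprtranscurve}(1) lives in $B^\sharp$ while $\phi$ only acts on $B$, and what one can genuinely transport via $\phi$ is the set of \emph{separatrices} belonging to the component, not its compact part.

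For the ``moreover'' clause your main idea is the right one, but the final sentence again leans on the unjustified nodal-to-nodal claim. Here is how to close it without that. When $\mc D$ has no nodal points, $\mc D$ together with all its attached isolated separatrices is a single dynamical component $\mc E_{\mc D_0}$; the argument through Proposition~\ref{proprtranscurve}(1) (transported downstairs by $\phi$) shows that all separatrices attached to $\mc D'=\A_\phi(\mc D)$ lie in a single dynamical component $\mc E_{\mc D_0'}$ of $S_\G^\sharp$. Since dynamical components do not cross dicritical vertices, every dynamical component of $S_\G^\sharp$ meeting $\mc D'$ is contained in $\mc D'$ together with its attached separatrices. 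If $\mc D'$ contained a nodal singular point of $\G^\sharp$ there would therefore be a second such component, lying entirely inside $\mc D'$ and carrying no separatrix, contradicting Theorem~\ref{sepdyncomp}. This is the ``easy deduction from assertion~(1)'' that the paper has in mind.
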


Moreover, as we will see later, under some Krull-generical assumptions on the differential form defining $\F$, Question~\ref{question} has also a positive answer.

 \section{Incompressibility of leaves} \label{secIncompr}

The goal of this chapter  is to  
bound  for any generalized curve the complexity of the topology of the leaves as immersed Riemann surfaces. This  is given by the following result:

\begin{teo}[Incompressibility Theorem]\label{IncomprThm} Let $\F$ be a generalized curve, $S$ a $\F$-appropriate curve in a Milnor tube $\mb T$ for $S$ and  $C$ a non-necessarily irreducible closed analytic curve in $\mb T$ such that $C\setminus \{0\}$ is smooth and transversal to $\F$. 
Then there exists a fundamental system $(U_n)_{n\in \N}$, $U_{n+1}\subset U_n$,  of open neighborhoods of $S$ in $\mb T$ such that  for each $n\in \N$ we have:
\begin{enumerate}
\item\label{inclpi1} the inclusion map $U_n\hookrightarrow \mb T$ induces an isomorphism between the fundamental groups of $U_n\setminus S$ and $\mb T\setminus S$;
\item\label{incomp} for each leaf $L$ of the foliation induced by $\F$on ${U_n\setminus S}$, the inclusion map  $L\hookrightarrow U_n\setminus S$ induces an injective morphism $\pi_1(L) \hookrightarrow \pi_1(U_n\setminus S)$;
\item\label{interDelta}  any connected component $C'\subset q^{-1}(C)$ of the pull back  of $C\cap U_n$ by the universal covering map $q: \tilde U_n\to U_n\setminus S$ is an embedded open disc such that, for each leaf $L'$ of the foliation    induced by $q^{-1} \F$ on $\tilde U_n$,  we have  $\mr{card}(L'\cap C') \leq1$. 
\end{enumerate}
\end{teo}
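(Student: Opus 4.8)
The plan is to exploit the geometric block decomposition of the Milnor tube $\mb T^\ast_{\varepsilon,\eta}$ described in \S\ref{decompCompCurve}, and to produce the neighborhoods $U_n$ together with a \emph{foliated} version of that decomposition. More precisely, I would first enlarge $S$ if necessary so that its total transform $S^\sharp=E_\F^{-1}(S)$ contains, for each geometric block $\mc B_D$ (equivalently each vertex $D$ of valence $\ge 3$ in $\A_\F$), enough of the curve to make $\mc B_D\setminus S$ behave well with respect to $\F^\sharp$; the hypothesis that $S$ is $\F$-appropriate (Definition~\ref{Fappropcurve}) is exactly what guarantees this on dicritical components, and over invariant divisors $S$ already contains all isolated separatrices. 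Then I would transport the neighborhood basis: take a fundamental system of neighborhoods $W_n$ of $S^\sharp$ in $E_\F^{-1}(\mb T)$ adapted to the block decomposition (each $W_n$ a union of pieces, one over the interior of each block and one over each chain/branch), and set $U_n=E_\F(W_n)$. Assertion \eqref{inclpi1} — that $U_n\setminus S\hookrightarrow\mb T\setminus S$ is a $\pi_1$-isomorphism — follows from the cone structure of the full Milnor tube and from the incompressibility properties (B1)–(B4) of the blocks: $\mb T^\ast_{\varepsilon,\eta}$ deformation retracts onto the boundary, and each block, being incompressible with incompressible intersections, assembles via ordinary Van Kampen so that a sufficiently thin tubular neighborhood of $S^\sharp$ carries the same fundamental group.

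The heart of the argument is \eqref{incomp}, the injectivity of $\pi_1(L)\hookrightarrow\pi_1(U_n\setminus S)$. Here I would invoke the \emph{foliated Van Kampen theorem} and the notion of \emph{foliated connectedness} advertised in the introduction to \S2. The strategy is to decompose $U_n\setminus S$ into the foliated blocks obtained by lifting the geometric blocks of $\mb T^\ast$ through $E_\F$, and to prove injectivity block by block. On an invariant block $\mc B_D$ over a divisor $D$ of $\F^\sharp$, the foliation restricted to (a neighborhood of) $D^\star$ is, away from the singularities, a suspension of the projective holonomy representation of $D$; a leaf piece there is either a disc or is seen as a covering piece of $D^\star$, and its loops inject because the Hopf/Seifert fibration of the block (property (B4)) is itself incompressible and the holonomy pseudogroup has no ``hidden'' relations relative to $\pi_1$ of the punctured disc bundle. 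On a dicritical block the foliation is a fibration transverse to $D$ and a leaf piece maps by a covering to a piece of $D$ minus the chosen discs $D_s$; again the induced $\pi_1$-map is injective because the fibration is incompressible. The $\F$-appropriate hypothesis ensures that at each dicritical component with a single singular point of $\mc E_\F$ the extra separatrix in $S$ cuts the would-be homotopically trivial vanishing cycle, so that $\Gamma=\pi_1(U_n\setminus S)$ is large enough — this is the phenomenon anticipated in \S\ref{Exdic}. Finally, gluing: a loop $\gamma$ in a leaf $L$ that bounds a disc in $U_n\setminus S$ would, by the foliated Van Kampen theorem, be expressible through the amalgamation along the torus intersections $T_{\mc C}$; since each $T_{\mc C}$ is incompressible in both adjacent blocks and the general fibers of the two Seifert structures disagree on it (condition (2) of the torus collection), no cancellation across a torus can kill a nontrivial leaf loop, and one concludes $\gamma$ was already null-homotopic inside a single foliated block, hence — by the block-wise injectivity — null-homotopic in $L$.

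For \eqref{interDelta} I would argue on the universal cover $q\colon\widetilde U_n\to U_n\setminus S$. Since $C\setminus\{0\}$ is smooth and transverse to $\F$, each component $C'$ of $q^{-1}(C)$ is a smooth surface transverse to the lifted foliation $q^{-1}\F$; it is an embedded disc because $C\cap U_n$ retracts (for $U_n$ in the system, using \eqref{inclpi1}) onto something with the $\pi_1$ of a small transversal, which injects into $\pi_1(U_n\setminus S)$, so no loop of $C$ survives upstairs. For the bound $\mr{card}(L'\cap C')\le 1$: two intersection points of one lifted leaf $L'$ with one lifted transversal $C'$ would, after projecting, give a leaf path of $\F$ from a point of $C$ back to $C$; completing it by a path inside $C$ yields a loop whose holonomy along $C$ fixes a point. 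Using \eqref{incomp} — the leaf is incompressible — such a loop, if it were nontrivial in $\pi_1(U_n\setminus S)$, would give a nontrivial element of $\pi_1(L)$, contradicting that $C'$ lies in the simply connected cover where the leaf piece $L'$ and $C'$ are both discs meeting transversally; and if it were trivial, lifting shows the two points coincide. I expect the main obstacle to be the second assertion, specifically setting up the foliated Van Kampen machinery so that ``no cancellation across a torus'' is rigorous — one must control how a leaf of $\F^\sharp$ threads through the chains of $\mc D_S$ connecting blocks, and verify that the transition holonomies along those chains (which are tangent to the Hopf fibers) cannot conspire to trivialize a leaf loop; this is precisely where the foliated connectedness notion and the careful choice of the $W_n$ (thin enough that each chain contributes only an incompressible coreless full torus, as in (B2)) do the work.
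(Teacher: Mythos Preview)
Your overall strategy matches the paper's: foliated connectedness and a foliated Van Kampen theorem, applied to a decomposition into foliated blocks refining the geometric block decomposition of the Milnor tube. Two points deserve correction.

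For assertion~(\ref{interDelta}) your argument is wrong, not merely incomplete. The loop $q(\alpha\vee\beta)$ you form (leaf path followed by transversal path) is \emph{always} null-homotopic in $U_n\setminus S$, since $\alpha\vee\beta$ is already a loop in the simply connected $\tilde U_n$; so your case ``nontrivial'' never occurs, and in the ``trivial'' case the conclusion ``lifting shows the two points coincide'' is false --- $\alpha\vee\beta$ being a loop upstairs says nothing, and two simply connected complex curves meeting transversally can perfectly well meet in several points. The paper's mechanism is to establish $(C\setminus S)\Fcon(U_n\setminus S)$ as part of the block construction (cf.\ the proof of Lemma~\ref{transverse-curves}): with $p_1,p_2\in L'\cap C'$ and paths $\alpha\subset L'$, $\beta\subset C'$ joining them, the projected leaf path $q(\alpha)$ is homotopic in $U_n\setminus S$ to the transversal path $q(\beta)^{-1}$; condition~(2) of Definition~\ref{defuncopr} then forces $q(\alpha)$ to be homotopic \emph{inside its leaf} to a path in the discrete set $L\cap C$, hence to a constant, and forces $q(\beta)$ to be null-homotopic in $C\setminus S$, whence $\beta$ is a loop and $p_1=p_2$.

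For assertion~(\ref{incomp}) your sketch is in the right direction but is not yet a proof, and the paper's argument is organised differently. Rather than a single amalgamation over all geometric blocks, the paper builds an exhaustion $W_0\subset W_1\subset\cdots\subset W_m$ of a neighborhood of $S^\sharp$ by adjoining one foliated block $V_i$ at a time, checks $W^*_{i-1}\Fconl W^*_{i-1}\cap V_i^*\Fcon V_i^*$, and uses the foliated Van Kampen theorem plus transitivity of $\Fcon$ to climb the filtration. The technical heart --- which your ``no cancellation across a torus'' slogan does not capture --- is the inductive construction of three types of foliated blocks (Seifert, collar, gluing) whose common boundary components are \emph{suspension type} sets with controlled roughness; this control is what makes the relevant sets star-shaped and the local $\Fcon$ relations verifiable. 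The graph-of-groups intuition you invoke handles ordinary incompressibility (condition~(1) of Definition~\ref{defuncopr}) but not the leafwise homotopy statement (condition~(2)), which is where all the work lies.
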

\noindent This result was obtained by two of us  in \cite{MM1} for  generalized curves under the additional assumptions of non-dicriticity and the absence,  after reduction, of singularities that are simultaneously non-resonant and non-linearizable. The non-dicriticity assumption was removed in \cite{MM4} by introducing the notion of appropriate curve, cf. Definition~\ref{Fappropcurve}.  Finally in  \cite{Loic} L. Teyssier extended the result  
under the sole generalized curve assumption;
he also proved the necessity of this hypothesis by constructing non-dicritical foliations which are non-generalized curves possessing  compressible leaves in the complementary of  their unique  appropriate curve, cf. \S\ref{Loic}. 

In the following two paragraphs we will describe the steps of the proof of Theorem~\ref{IncomprThm}.

\subsection{Foliated connectedness and a foliated Van Kampen Theorem}\label{folconVK}
To study the incompressibility property of the leaves of $\F$ in the complement of a $\F$-appropriate curve~$S$, the main tool is the notion of $\F$-connectedness introduced in \cite{MM1}. It allows to localize this global property of each leaf,   by transforming it into a local property of the foliations induced by~$\F$ on each block of a  suitable  decomposition of the space. Each block of this decomposition will be contained in a geometric block of the decomposition of a Milnor tube  of $S$ described in~\S\ref{decompCompCurve}.
\begin{defin}\label{defuncopr}  Let $A\subset B$ be subsets of a manifold $M$ endowed with a regular  foliation~$\G$. We say that \emph{$A$ is strictly $\G$-connected in $B$} and we denote $A\Gcon B$ or $B\Gconl A$,  if the following properties are fulfilled:
\begin{enumerate}
\item\label{uncomprAB} $A$ is incompressible\footnote{i.e. the inclusion $A\subset B$ induces monomorphisms $\pi_1(A,a)\hookrightarrow\pi_1(B,a)$ for every $a\in A$.}  in $B$,
 \item for any  leaf $L$ of $\G$,  if a path  $\beta : [0,1] \to L\cap B$ is homotopic
in $B$ to a path $\alpha:[0,1] \to  A$ then $\beta$  is also  homotopic in $L\cap B$ to a path $\gamma:[0,1] \to L\cap A$ and the paths $\alpha$ and $\gamma$ are homotopic in $A$.
\end{enumerate}
\end{defin}
\begin{figure}[ht]
\centerline{\includegraphics[width=5cm]{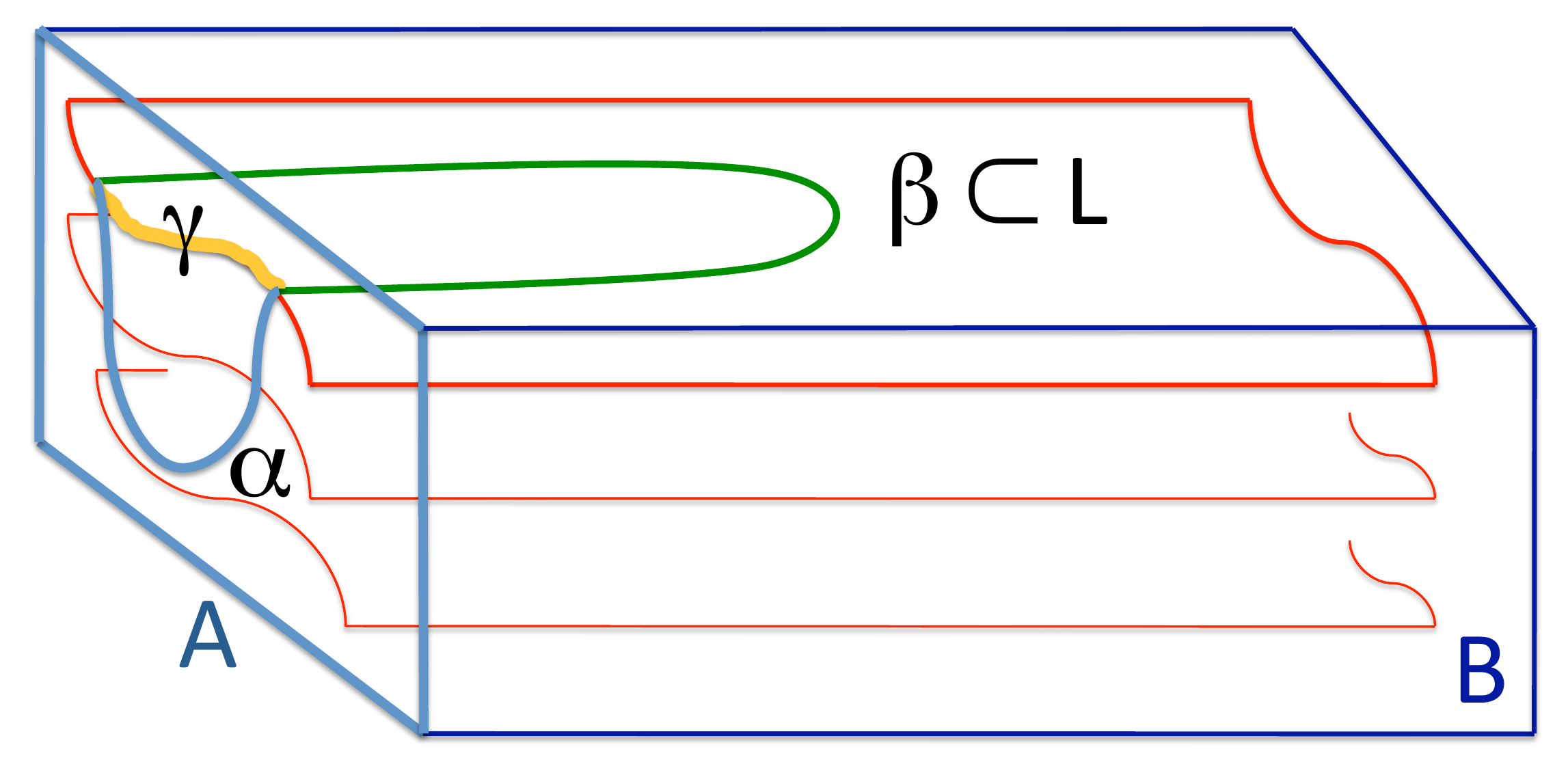}}
\end{figure}

A remarkable property of this notion is that if the set $A$ is  reduced to a single point, $A$ is $\G$-connected in $B$ if and only if the leaf $L$ containing this point is incompressible in $B$. Another trivial but useful property is the transitivity of this relation:
\begin{equation}\label{transitivity}
\Big(A{\Gcon}B\hbox{ and }B{\Gcon}C \Big)\Longrightarrow A{\Gcon}C\,.
\end{equation}
Thus, to prove the incompressibility property of leaves stated in Theorem~\ref{IncomprThm}, it suffices to  construct  a filtration in the Milnor tube ${\mc T}_{\varepsilon,\eta}:=E_\F^{-1}(\mb T_{\varepsilon,\eta})$ of $S^\sharp=E_\F^{-1}(S)$
\begin{equation}\label{exhaustion}
W_0\subset W_1\subset\cdots\subset W_m\;\subset \;{\mc T}_{\varepsilon,\eta}\,,
\end{equation}
satisfying:
\begin{enumerate}
\item[(a)] any leaf of $\F^\sharp$ restricted to $W_0^\ast:=W_0\setminus S^\sharp$ is incompressible in $W_0^\ast$,
\item[(b)] if we denote $W_i^\ast:=W_i\setminus S^\sharp$, then we have $W^\ast_{i-1}\Fcon W^\ast_{i}$,  $i=1,\ldots,m$,
\item[(c)]  $W_m$ is an open set in $\mc T_{\varepsilon,\eta}$, containing a Milnor tube
$\mc T_{\varepsilon,\eta'}$ of $S^\sharp$  and the inclusions $\mc T_{\varepsilon,\eta'}\setminus S^\sharp \subset W_m^\ast\subset \mc T_{\varepsilon,\eta}\setminus S^\sharp$ induce isomorphisms between their fundamental groups.
\end{enumerate}
In order to obtain property (b) the method developed in \cite{MM1} is to construct a sequence $(V_j)_{j=0}^m$ of real four dimensional submanifolds of $\mc T_{\varepsilon,\eta}$ with boundary, that we call \emph{foliated blocks}, such that after setting
 \begin{equation}\label{constrUi}
 W_i:=V_0\cup\cdots\cup V_i\,, 
\quad i=0,\ldots,m\,,
 \end{equation}
 each pair $(W_{i-1}^\ast,V_i^\ast)$, where $V_i^\ast:=V_i\setminus  S^\sharp$, $i=1,\ldots,m$, satisfies the relation
 \[ 
 W_{i-1}^\ast \stackrel{\F^\sharp}{\fconl} W_{i-1}^\ast\cap V_i^\ast\stackrel{\F^\sharp}{\fcon} V_i^\ast\,.
  \]
The following \emph{foliated Van Kampen Theorem}  gives then  the desired  relation 
\[W^*_{i-1}\stackrel{\F^\sharp}{\fcon} W^*_{i-1}\cup V_i^*=W_i^*.\]

\begin{teo}\cite[Th\'eor\`eme~1.3.1]{MM1} Let $(A, B)$ be a pair of  connected subsets of a manifold $M$ endowed with a regular foliation $\G$ of class  $\mc C^1$. Suppose that  
$A\cap B$   is a (non-necessary connected) transversely orientable submanifold of $M$,
that is transversal to~$\G$, and that  $A\cup B$ is a neighborhood of  $A\cap B$. 
Then we have:
\[ \Big(
A\Gconl A\cap B\Gcon B\Big)\Longrightarrow A\Gcon A\cup B\,.
 \]
\end{teo}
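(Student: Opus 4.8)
The statement is a leafwise refinement of the classical van Kampen theorem, and the plan is to deduce it from that theorem together with the two hypotheses, which I write as $A\cap B\Gcon A$ and $A\cap B\Gcon B$. Put $N:=A\cap B$. First I would fix the geometry near $N$: these hypotheses provide a two-sided collar $\mathcal C\cong N\times(-1,1)$ of $N$ inside $A\cup B$, with $N=N\times\{0\}$, $N\times(-1,0]\subset A$ and $N\times[0,1)\subset B$; moreover, since $N$ is transversal to $\G$, it meets every leaf $L$ transversally, so $N\cap L$ is a codimension-one submanifold of $L$ and a generic path in $L$ crosses it transversally. The point-set fact I will use throughout is that, because $N=A\cap B$, one has the disjoint decomposition $(A\cup B)\setminus N=(A\setminus N)\sqcup(B\setminus N)$.

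\emph{Condition (1) (incompressibility of $A$ in $A\cup B$).} Using the collar, $A$ and $B$ may be enlarged to open sets $A'\simeq A$ and $B'\simeq B$ with $A'\cap B'\simeq N$ and $A'\cup B'=A\cup B$, so the van Kampen theorem gives $\pi_1(A\cup B)\cong\pi_1(A)\ast_{\pi_1(N)}\pi_1(B)$ (a graph-of-groups version when $N$ is disconnected, which changes nothing below). The hypotheses $N\Gcon A$ and $N\Gcon B$ contain in particular that $N$ is incompressible in $A$ and in $B$, so $\pi_1(N)\hookrightarrow\pi_1(A)$ and $\pi_1(N)\hookrightarrow\pi_1(B)$; by the structure theory of amalgamated free products each factor embeds in the product, whence $\pi_1(A)\hookrightarrow\pi_1(A\cup B)$.

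\emph{Condition (2) (leafwise transfer).} Let $L$ be a leaf of $\G$ and $\beta\colon[0,1]\to L\cap(A\cup B)$ a path homotopic in $A\cup B$ to some $\alpha\colon[0,1]\to A$; I argue with a loop $\beta$, the general case being identical once $\pi_1$ is replaced by the fundamental groupoid. After a perturbation supported near $\beta^{-1}(N)$ and taking place inside the open collar, I may assume $\beta$ is transversal to $N$, so $\beta$ decomposes into finitely many maximal subarcs lying alternately in $L\cap A$ and in $L\cap(B\setminus N)$, with all division points on $N$; choosing auxiliary paths in $N$ from these points to a basepoint on $N$, the class $[\beta]\in\pi_1(A\cup B)$ is then represented by an alternating word $g_1\cdots g_r$ with syllables alternately in $\pi_1(A)$ and $\pi_1(B)$. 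Now I would induct on the number $b(\beta)$ of subarcs of $\beta$ not contained in $A$ (so $b(\beta)=0$ forces $\beta\subset L\cap A$): while $b(\beta)>0$, the normal form theorem in $\pi_1(A)\ast_{\pi_1(N)}\pi_1(B)$ together with $[\beta]\in\mathrm{im}\big(\pi_1(A)\big)$ forces some syllable $g_j$ to lie in $\pi_1(N)$, which means precisely that the corresponding subarc $\sigma_j$ is homotopic rel endpoints, inside $A$ or inside $B$, to a path contained in $N$; condition (2) of $N\Gcon A$ (resp. $N\Gcon B$) then replaces $\sigma_j$, by a homotopy carried out inside $L\cap A$ (resp. $L\cap B$) and hence inside $L\cap(A\cup B)$, by a subarc contained in $L\cap N$, and since $L\cap N$ lies in both $L\cap A$ and $L\cap B$, absorbing that subarc into its two opposite-region neighbours strictly decreases $b(\beta)$ while keeping $[\beta]$ in the image of $\pi_1(A)$. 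When $b(\beta)=0$ we have produced $\gamma\colon[0,1]\to L\cap A$ with $\beta\simeq\gamma$ in $L\cap(A\cup B)$. Finally $\alpha\simeq\beta\simeq\gamma$ in $A\cup B$ with $\alpha,\gamma$ in $A$, so the injectivity $\pi_1(A)\hookrightarrow\pi_1(A\cup B)$ established above yields $\alpha\simeq\gamma$ already in $A$; this is exactly condition (2) of $A\Gcon A\cup B$.

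\emph{Expected main difficulty.} The delicate point is to make the combinatorial van Kampen reduction cohabit with the constraint that every homotopy of $\beta$ stays \emph{inside the leaf} $L$. This is exactly what the hypotheses $N\Gcon A$ and $N\Gcon B$ are built for: their condition (2) turns a subarc that is trivial in $\pi_1(N)$ into a leafwise path actually contained in $N$, which then costs nothing to absorb into its neighbours; meanwhile the transversality of $N$ to $\G$ (clean subdivision of leaf-paths by $N$) and the two-sided collar (alternation of the two regions, and room to perturb near $N$) are what let the topological and the combinatorial pictures be matched step by step. Some additional care is needed for the point-set hygiene of the open thickenings of $A$ and $B$, and for replacing fundamental groups by fundamental groupoids when $N$ is disconnected.
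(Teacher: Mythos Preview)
The paper does not give its own proof of this theorem: it is stated with a citation to \cite[Th\'eor\`eme~1.3.1]{MM1}, and the only comment is that the incompressibility part (condition~(1) of $\Gcon$) follows from the classical Van~Kampen theorem. Your sketch is the natural argument and is, in outline, the one carried out in \cite{MM1}: the collar of $N=A\cap B$ gives the amalgamated (or graph-of-groups) decomposition of $\pi_1(A\cup B)$, the normal form theorem supplies the combinatorial reduction, and the foliated hypotheses $N\Gcon A$ and $N\Gcon B$ convert each reduction step into a homotopy that stays inside the leaf.

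One point in your Condition~(2) argument should be tightened. You apply the normal form theorem to the identity $a^{-1}g_1g_2\cdots g_r=1$ in $\pi_1(A)\ast_{\pi_1(N)}\pi_1(B)$, but the alternating syllables of \emph{this} word are $(a^{-1}g_1),g_2,\ldots,g_r$, not $g_1,g_2,\ldots,g_r$; so a priori it is $a^{-1}g_1$, not $g_1$, that may fall into $\pi_1(N)$. If that happens, absorb it into $g_2$: since $g_2\notin\pi_1(N)$ by assumption, $(a^{-1}g_1)g_2\in\pi_1(B)\setminus\pi_1(N)$, and the shortened word $(a^{-1}g_1g_2)g_3\cdots g_r=1$ is reduced of length $r-1\ge 1$, which is impossible unless some $g_j\in\pi_1(N)$ with $j\ge 2$. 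So your induction does go through, but this extra step should be made explicit. The remaining caveats you already flag (groupoids for paths and for disconnected $N$, choosing the collar so that the $\{p\}\times(-1,1)$ lie in leaves, and the open thickenings needed to invoke Van~Kampen) are exactly the technical points that \cite{MM1} handles.
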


\noindent Notice that the above implication is a corollary of the classical Van Kampen Theorem when we only consider the incompressibility property~(\ref{uncomprAB}) in Definition~\ref{defuncopr}.

\subsection{Construction of foliated blocks}\label{subsecConstrElemBlocs}
For each irreducible component $D$ of the total transform
$S^\sharp$ of the $\F$-appropriate curve $S$ in Theorem~\ref{IncomprThm}, we define
$K_D$ as the complement in $D$ of disjoint conformal discs centered at the points of $\mr{Sing}( S^\sharp)\cap D$ that do not belong to any dead branch.
The foliated blocks $V_j$ used in (\ref{constrUi}) to construct the exhaustion $(W_i)_{i=0}^m$  are of three types: 
\begin{itemize}
\item  \emph{Foliated Seifert blocks} $\mathscr B_D$ that
are contained in geometric blocks of the form $\mc B_D$ described in (B\ref{BB2}) of \S\ref{decompCompCurve}, the inclusion map inducing an isomorphism $\pi_1(\mathscr B_D\setminus S^\sharp)\simeq\pi_1(\mc B_D\setminus  S^\sharp)$. 
Moreover, $\mathscr B_D$ is a neighborhood of the union of $K_D$ with all the dead branches of $ S^\sharp$ meeting $D$.
\item  \emph{Foliated collar blocks}  $\mathscr B_C$ that are neighborhoods of either
\begin{enumerate}[(a)]
\item a set $K_D$  with $D$  a non-dicritical irreducible component of valence $2$ in a chain of $ S^\sharp$, or
\item a set  $K_D$ with $D$ the strict transform of an irreducible component of $S$, or
\item a set $K_s= S^\sharp\cap \{|u|,|v|\le 1\}$ where $(u,v)$ are local coordinates centered at a  singular point $s$ of $ S^\sharp$ belonging to a chain and $uv=0$ is a local equation of~$S^\sharp$.
\end{enumerate}
The name foliated collar is justified by the fact that there is a homeomorphism $\varphi: \mb D^*\times\mb S^1\times (0,1)\iso\inte{\mathscr B}_C\setminus S^\sharp$ such that $\varphi(\{(z,\theta)\}\times(0,1))$ is contained in a leaf of $\F^\sharp$, where $\inte{\mathscr B}_C$ denotes the interior of $\mathscr B_C$.
\item  \emph{Gluing blocks} that are neighborhoods of $K_D$ with $D$ a dicritical component of~$S^\sharp$.
\end{itemize}
Moreover, if $\mathscr B$ and $\mathscr B'$ are foliated blocks such that
$\mathscr B\cap\mathscr B'\neq\emptyset$,  then this intersection is a common connected component of $\partial\mathscr B$ and of $\partial\mathscr B'$, 
satisfying
\[\mathscr B\setminus S^\sharp\Fconl (\mathscr B\cap\mathscr B')\setminus S^\sharp\Fcon\mathscr B'\setminus S^\sharp.\]
Each block is built  by an induction process, from the common
boundary component of one of the previously constructed blocks. 

The boundary components of each block
are all  sets of \emph{suspension type}. This means that  they are constructed as follows. First we fix  a submersion $\rho$ defined on a neighborhood $\Omega$ of an invariant irreducible component $D$ of $ S^\sharp$, that is equal to the identity map on $D$, and we also fix a simple  loop $\gamma:[0,1]\to D\setminus \mr{Sing}( S^\sharp)$ and a holomorphic coordinate 
\[z:\rho^{-1}(\gamma(0))\iso \{\vert z\vert\leq 1\}\subset\C\,,\qquad z(\gamma(0))=0\,.
\]
Then we consider a closed conformal disc $\Delta\subset \rho^{-1}(\gamma(0))$  containing $\gamma(0)$  in its interior whose boundary $\partial\Delta$  is parametrized  by a  piecewise real analytic path $\delta$.  We assume $\Delta$ small enough so that  for any point $m\in \Delta$, $\gamma$ lifts through  $\rho$  to the leaf $L_m$ of $\F^\sharp$ containing $m$, defining a (unique)  path $\gamma_m:[0,1]\to L_m\cap\Omega$  with origin $\gamma_m(0)=m$, that satisfies $\rho\circ\gamma_m=\gamma$. 
The set 
\begin{equation*}\label{suspensiontype}
T_\Delta := \{\gamma_m(t)\;|\; m\in \Delta, 0\leq t\leq 1\}
\end{equation*}
is called \emph{set of suspension type relatively to $\Delta$ over $\gamma$}. 

Let us highlight that the set 
$T^\ast_\Delta:=T_\Delta\setminus  S^\sharp$
may have a complex topology because the fundamental group of $\Delta\cup h_\gamma(\Delta)$ may be non-trivial,  $h_\gamma$ denoting the holonomy map $m\mapsto \gamma_m(1)$. To avoid this possibility  we must ensure that $\Delta$ and $h_\gamma(\Delta)$ are star-shaped. To control this property we fix a parametrization $\delta(t)$ of $\partial\Delta$ and we  define the \emph{roughness}  of $\Delta$ as the maximum $e_z(\Delta)$ of the angles between   the tangents  $\delta_{\pm}'(t)$ to $\partial\Delta$ at $\delta(t)$ and the tangent $i\delta(t)$ to the circle passing through this point with center  $z=0$. The star-shaped property is fulfilled when the roughness is $
<\pi/2$. Moreover, as consequence of the smoothness of $h_\gamma$,  the quotient $| e_z(\Delta)/e_z(h_\gamma(\Delta))|$ tends to $1$  when the diameter of $\Delta$ tends to $0$. Hence we have that $T^\ast_\Delta$
has the homotopy type of  a $2$-torus when the roughness and the diameter of $\Delta$ are small enough.

The inductive process to construct  foliated blocks is based on the following existence property (cf. \cite[Th\'eor\`eme 3.2.1]{MM1}).
For every suspension type set $T$ over a loop $\gamma\subset\partial K$, with $K=K_D$ or $K=K_s$, there exists a foliated  block $\mathscr B$ which is a neighborhood of $K$ such that
\begin{itemize}
\item $T$ is a connected component of $\partial\mathscr B$,
\item every connected component $T'$ of $\partial\mathscr B$ is a suspension type set whose roughness is controlled by that of $T$ and it satisfies 
$T'\setminus S^\sharp\Fcon \mathscr B\setminus S^\sharp$,
\item the leaves of the restriction $\F^\sharp_{|\mathscr B\setminus S^\sharp}$ of $\F^\sharp$ to $\mathscr B\setminus S^\sharp$ are incompressible in $\mathscr B\setminus S^\sharp$,
\item for any small enough Milnor tube $\mb T_{\varepsilon,\eta}$ of $S$ we have $\pi_1(\mathscr B\setminus  S^\sharp)\simeq\pi_1(W\cap E_S^{-1}(\mb T^*
_{\varepsilon,\eta}))$, where $W$ is a fixed  
neighborhood of $K$.
\end{itemize}

The order of construction of the blocks is given by first fixing a suitable  exhaustion of~$S^\sharp$ which will correspond to the exhaustion 
\begin{equation*}\label{exhaustionS}
W_0\cap S^\sharp\subset W_1\cap S^\sharp\subset\cdots\subset W_m\cap S^\sharp=S^\sharp.
\end{equation*}
determined by~(\ref{exhaustion}) at the end of the induction process.

\section{Examples}

\subsection{Dicritical cuspidal singularity}\label{Exdic}
We revisit Example~1.1 of \cite{MM4}.
Consider the dicritical foliation $\F$ in $(\C^2,0)$ defined by the rational first integral $f(x,y)=\frac{y^2-x^3}{x^2}$ whose isolated separatrix set is the cusp $S=\{y^2-x^3=0\}$. We will first compute the fundamental group of the complement of $S$ in a Milnor ball $B$, then we will show that not all the leaves of $\F$ are incompressible in $B\setminus S$, but by adding to $S$ a non-isolated separatrix $S'$ of $\F$ the incompressibility property  will be fulfilled for all the leaves in $B\setminus(S\cup S')$.\\

\subsubsection{}\label{271} We consider the composition $E:M\to B$ of the three blowing-ups defining the minimal desingularization of $S$, that also coincides with the reduction map of the foliation. The exceptional divisor $\mc E=E^{-1}(0)$ has three irreducible components $D_1$, $D_2$, $D_3$ which we numerate according to the order that they appear in the blowing-up process.
Thus, the corresponding self-intersections are 
\[
D_1\cdot D_1=-3\,,\quad D_2\cdot D_2=-2\,,\quad D_3\cdot D_3=-1\,.
\]
The strict transform $\mc S$ of $S$ only meets $D_3$ and consequently $D_1$ and $D_2$ are dead branches attached to $D_3$. In addition, $D_1$ is a dicritical component for $\F$ (in fact, it is totally transverse to $\F^\sharp=E^{-1}\F$). Let us illustrate the computation of the fundamental group of $B\setminus S\simeq M\setminus (\mc E\cup\mc S)$. For each $i=1,2,3$ we consider a tubular neighborhood $U_i$ of $D_i$ which is a disc fibration over $D_i\simeq\mb P^1$ of Chern class $k=D_i\cdot D_i$. For $i=1,2$, $U_i\cap(\mc E\cup \mc S)$ consists in $D_i$ and $r=1$ fiber and for $i=3$, $U_i\cap(\mc E\cup \mc S)$ consists in $D_i$ and $r=3$ fibers. We can cover each 
\[
 U=U_i=\varphi_x(\mb D\times\mb D^*)\cup\varphi_y(\mb D\times\mb D^*) \]
 by two trivializing charts $\varphi_x(t,x)$, $\varphi_y(s,y)$ with transition functions $ts=1$, $y=t^{-k}x$. We can assume that 
\[U^*:=U\setminus(\mc E\cup \mc S)=\varphi_x((\mb D\setminus\{t_1,\ldots,t_r\})\times\mb D^*)\cup\varphi_y(\mb D\times\mb D^*).\] 
We consider simple loops $\bar \alpha_j(u)$ in $\mb D\setminus\{t_1,\ldots,t_r\}$ of index $1$ around $t_j$ so that the product $\bar \alpha_0=\bar \alpha_1\cdots \bar \alpha_r$ is homotopic to $e^{2i\pi u}$. We define $\alpha_j(u)=\varphi_x(\bar \alpha_j(u),1)$ and $\gamma(u)=\varphi_x(1,e^{2i\pi u})$.
Then the loop 
\[
 \alpha_0\gamma^k\sim \varphi_x(e^{2i\pi u},e^{2i\pi ku})=\varphi_y(e^{-2i\pi u},1)
 \]
is null-homotopic in $\varphi_y(\mb D\times\mb D^*)\subset U^*$. By applying Seifert-Van Kampen's theorem we obtain that 
\[\pi_1(U^*)=\langle \alpha_1,\ldots,\alpha_r,\gamma\,|\, \alpha_1\cdots \alpha_r=\gamma^{-k}, [\alpha_j,\gamma]=1\rangle.\]
Particularizing this computation to each $U_i$ we deduce that
\[\pi_1(U_1^*)=\langle a,c\,|\, c=a^3, [a,c]=1\rangle,\quad\pi_1(U_2^*)=\langle b,c\,|\,c=b^2, [b,c]=1\rangle\]
and
\[\pi_1(U_3^*)=\langle a,b,c,d\,|\, abd=c, [a,c]=[b,c]=[b,d]=1\rangle,\]
where the loops $a,b,c,d$ are depicted in Figure~\ref{cusp}. Notice that the loops $a,b,c,d$ can be also defined as boundaries (meridians) of fibers of disc fibrations over $D_1$, $D_2$, $D_3$ and $\mc S$ respectively.
Since any geometric realization of the dual tree of $\mc E\cup\mc S$ is contractible, a suitable embedding of it in $M\setminus(\mc E\cup\mc S)$ described in \cite[\S3.1]{MM2},  can be chosen  as a common ``base point'' of the loops $a,b,c,d$, cf.  Figure~\ref{cusp}.

\begin{center}
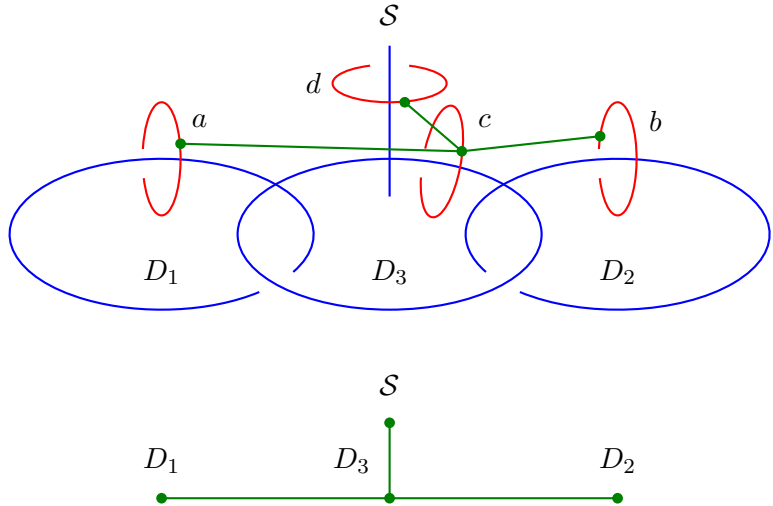
\begin{figure}[ht]
\begin{tikzpicture}
\draw[thick,blue] (2,2.5) [elliparc=-30:310:2cm and 1cm];
\draw[thick,blue] (5,2.5) [elliparc=0:360:2cm and 1cm];
\draw[thick,blue] (8,2.5) [elliparc=-130:210:2cm and 1cm];
\draw[thick,blue] (5,3) to (5,5);
\draw[thick,red] (2,3.5) [elliparc=-160:170:.25cm and .75cm];
\draw[thick,red] (8,3.5) [elliparc=-160:170:.25cm and .75cm];
\draw[thick,red] (5,4.5) [elliparc=-250:70:.75cm and .25cm];
\begin{scope}[rotate=-10]
\draw[thick,red] (5,4.4) [elliparc=-160:170:.25cm and .75cm];
\end{scope}
\fill [green] (2.25,3.7) circle [radius=2pt];
\fill [green] (5.95,3.6) circle [radius=2pt];
\draw[green,thick] (2.25,3.7) to (5.95,3.6) to (5.2,4.25);
\fill [green] (5.2,4.25) circle [radius=2pt];
\fill [green] (7.77,3.8) circle [radius=2pt];
\draw [green,thick] (5.95,3.6) to (7.77,3.8);
\node at (2,2) {$D_1$};
\node at (5,2) {$D_3$};
\node at (8,2) {$D_2$};
\node at (2.5,4) {$a$};
\node at (6.25,4) {$c$};
\node at (8.5,4) {$b$};
\node at (4,4.5) {$d$};
\node at (5,5.4) {$\mc S$};
\fill [green] (2,-1) circle [radius=2pt];
\fill [green] (5,-1) circle [radius=2pt];
\fill [green] (5,0) circle [radius=2pt];
\fill [green] (8,-1) circle [radius=2pt];
\draw [green,thick] (2,-1) to (5,-1) to (5,0);
\draw [green,thick] (5,-1) to (8,-1); 
\node at (2,-0.5) {$D_1$};
\node at (4.5,-0.5) {$D_3$};
\node at (5,0.5) {$\mc S$};
\node at (8,-0.5) {$D_2$};
\end{tikzpicture}
\caption{The strict transform $\mc S$ of of the cusp $S=\{y^2-x^3=0\}$, the total transform $S^\sharp=D_1\cup D_2\cup D_3\cup\mc S$ with its dual tree, and the generators $a,b,c,d$ of the fundamental group of its complement.}\label{cusp}
\end{figure}
\end{center}
By applying again Seifert-Van Kampen's theorem we obtain the following presentation of the  fundamental group of $M^*:=M\setminus(\mc E\cup \mc S)$:
\begin{align*}
\pi_1(M^*)&=\pi_1(U_1^*)\ast_{\pi_1(U_1^*\cap U_3^*)}\pi_1(U_3^*)\ast_{\pi_1(U_3^*\cap U_2^*)}\pi_1(U_2^*)\\
&=\langle a,b,c \,|\, a^3=b^2=c\rangle.
\end{align*}
This method can be applied to compute the fundamental group of the complement of any germ of curve (see \cite[Proposition 3.3]{MM2}).\\

\subsubsection{}Now, we shall show that there exist non-incompressible leaves of $\F$ inside $B\setminus S$. Looking at the situation after the first blow-up $E_1:(M_1,D_1)\to (B,0)$ in the chart $(t,x)$ such that $(x,y)=E_1(t,x)=(x,tx)$ we have that 
\[
f_1(t,x):=f(x,tx)=t^2-x\] 
and
we see that there are two types of leaves of $\F$: those that are near the isolated separatrix~$S$, which are discs minus two points and the others which are diffeomorphic to~$\mb D^*$. If $L$ is a leaf of the first kind then $\pi_1(L)$ is a free group of rank $2$. We claim that we can choose the generators  $\lambda,\mu$ of $\pi_1(L)$ so that the morphism $\imath:\pi_1(L)\to\pi_1(M^*)$ induced by the inclusion is given by $\imath(\lambda)=a$ and $\imath(\mu)=b^{-1}ab$. It will then follow that 
\[
 \imath(\lambda^3\mu^{-3})=a^3b^{-1}a^{-3}b=1\] and consequently $L$ will not be incompressible in $B\setminus S\simeq M^*$. \\

At first,  let $\varepsilon>0$ be small enough so that $V:=f_1^{-1}(\mb D_\varepsilon)$ is a retract by deformation of the image of $U_2\cup U_3$ by the blowing-up $M\to M_1$ and the restriction of $f_1$ to $W:=f_1^{-1}(\mb S^1_\varepsilon)\setminus  D_1$ is a locally trivial $C^\infty$-fibration over the circle $\mb S^1_\varepsilon$ of radius $\varepsilon$, whose fiber over $\varepsilon$ is isomorphic to $F_\varepsilon:=\mb D\setminus\{\pm\sqrt{\varepsilon}\}$. 
The pull-back of $f_1:W\to\mb S^1_\varepsilon$ by the exponential map $[0,1]\to\mb S^1_{\varepsilon}$, $u\mapsto \varepsilon e^{2i\pi u}$, is trivial and there is a trivializing map 
\[
\tau:F_\varepsilon\times [0,1]\to W\,,\qquad (z,u)\mapsto(t,x)=(ze^{i\pi u},(z^2-\varepsilon)e^{2i\pi u})\,.
\]
We consider the path $\beta:[0,1]\to F_\varepsilon\times[0,1]$ given by $\beta(u)=(0,u)$ projecting by $\tau$ onto the loop $(t,x)=(0,-\varepsilon e^{2i\pi u})\in W\subset U_1$ which is a meridian of $D_2$, so that we can choose the generator $b$ of $\pi_1(M^*)$ as the homotopy class of $\tau(\beta)$.\\

Now, let $z(u)$ be a simple loop in $F_\varepsilon$ based in $z=0$ having index $+1$ around $+\sqrt{\varepsilon}$ and index $0$ around $-\sqrt{\varepsilon}$. We define $\alpha_i(u)=(z(u),i)$ for $i=0,1$. It is clear that $\alpha_1$ is homotopic to $\beta\alpha_0\beta^{-1}$ in $F_\varepsilon\times[0,1]$. Hence their respective projections by $\tau$, $\lambda=\tau(\alpha_1)$ and $\tau(\beta\alpha_0\beta^{-1})=b\mu b^{-1}$ are also homotopic in $W$, where $\mu=\tau(\alpha_0)$. Notice that the loops $\lambda,\mu$, which are contained in the leaf $L$ passing through the point $(t,x)=(0,-\varepsilon)$, are meridians around $D_1$ so that we can choose the generator $a\in\pi_1(M^*)$ as the homotopy class of $\lambda$. Moreover, $\lambda$ (resp. $\mu$) turns around the point $(t,x)=(+\sqrt{\varepsilon},0)\in\bar L\cap D_1$  (resp. $(t,x)=(-\sqrt{\varepsilon},0)\in\bar L\cap D_1$). Hence the free fundamental group of $L$ is generated by $\lambda$ and $\mu$ whose images by $\imath:\pi_1(L)\to\pi_1(M^*)$ are $a$ and $b^{-1}ab$ respectively.\\

Finally, if we remove the non-isolated separatrix $S'=\{x=0\}$ then all the leaves of $\F$ are incompressible in $B\setminus(S\cup S')$. Indeed, using the description given in \S\ref{271} we obtain that 
\[  
\pi_1(U_1\setminus (\mathcal{E}\cup\mc S\cup \mc S'))=\langle a,c,e\,|\, ce=a^3,[a,c]=[a,e]=1\rangle\,,
 \]
 where $e$ is a meridian of  the strict transform $\mc S'$ of $S'$; consequently
\begin{align*}
\pi_1(M\setminus(\mc E\cup\mc S\cup\mc S'))&=\langle a,b,c,d,e\,|\, ce=a^3, abd=c=b^2,
[a,e]=[c,a]=[c,b]=[c,d]=1 \rangle\\
&=\langle a,b\, |\, [a,b^2]=1\rangle\,.
\end{align*}
The quotient of this group by the normal subgroup $\langle b^2\rangle$ is the free product $\Z a\ast\Z_2\bar b$. The composition morphism
\[
\langle \lambda,\mu\rangle\to \langle a,b\,|\,[a,b^2]=1\rangle\to\Z a\ast \Z_2\bar b\,,
\]
sending the free generators $\lambda$ and $\mu$ of $\pi_1(L)$  to $a$ and $\bar b a \bar b$ respectively, is clearly injective and so is $\pi_1(L)\hookrightarrow\pi_1(M\setminus(\mc E\cup\mc S\cup\mc S'))$. Alternatively, it can be seen that $W$ is a retract by deformation of $M\setminus(\mc E\cup\mc S\cup\mc S')$ and
$\pi_1(L)=\pi_1(F_\varepsilon)$ injects into $\pi_1(M\setminus(\mc E\cup\mc  S\cup\mc S'))=\pi_1(W)$ thanks to the long exact sequence of the fibration $F_\varepsilon\to W\stackrel{f_1}{\to}\mb S^1_\varepsilon$.

\subsection{Foliations which are not generalized curves}\label{Loic}
Every saddle-node has a convergent separatrix (called strong) and a formal separatrix (called weak) that may be divergent. If all the saddle-nodes that appear in the minimal reduction of a foliation have the strong separatrix transverse to the exceptional divisor, the foliation is called \emph{strongly presentable} in \cite[D\'efinition 1.2]{Loic}. The general result of L. Teyssier \cite[Th\'eor\`eme~C]{Loic} is that
the  properties (\ref{inclpi1}) and (\ref{incomp}) in
 Incompressibility Theorem \ref{IncomprThm} remain true for strongly presentable foliations and not only for generalized curves.
Moreover, property (\ref{interDelta}) also remains true by choosing suitably the curve $C$ in Theorem \ref{IncomprThm}.

On the other hand, L. Teyssier \cite[\S8.2]{Loic} exhibits examples of non-dicritical foliations $\F$ that are not strongly presentable for which the fundamental group of the leaves do not inject into the fundamental group of the complement of the separatrices curve $S_\F$. This is a consequence of the fact that the strong separatrix of some of the saddle-nodes appearing in the singularity reduction of $\F$ is contained in the exceptional divisor and the corresponding weak separatrix is divergent so that the fundamental group  $\Z\oplus\Z$ of the complement of $S_\F$ is too small to contain the  fundamental group (which is non-abelian free group) of a generic leaf (which is a non-compact Riemann surface).

Surprisingly, he also presents \cite[\S8.3]{Loic} an example of a foliation reduced after one blowing-up in which all saddle-nodes have two convergent separatrices admitting compressible leaves in the complement of the separatrices. The construction is based on the existence of transverse curves (discovered by E. Paul) for which foliated connectedness property (\ref{interDelta}) of Theorem  \ref{IncomprThm} fails.

\section{Monodromy}
The notion of  monodromy of a foliation reflects the action of the fundamental group of the space on the ``ends'' of leaves or, more specifically, on the leaf  spaces inverse system. Before we introduce this notion in \textsection \ref{subsecmonodromy}, we first examine in \textsection \ref{EndLeavesRedFol} how for reduced foliations the structure of the leaf spaces contain all the useful informations to classify these foliations. In paragraph \textsection \ref{subsecleavespaces} we see that Incompressibility Theorem \ref{IncomprThm} allows to endow any leaf space with a (in general non-Haussdorff) complex manifold structure. In \textsection \ref{subsecmonodromy}, after defining the  notion of monodromy, we make explicit  its relation to  the holonomies of the invariant irreducible components of the exceptional divisor of the reduction, and to the  extended holonomy defined in \S\ref{ext-hol}. That one corresponds in the case of  foliations that can be  reduced by  a single blowing-up, to the notion  introduced  by L. Ortiz-bobadilla, E. Rosales-Gonz\'alez and S.M. Voronin in  \cite{OBRGV3}. Finally in \S\ref{TCT} we give the statement and an idea of the proof of  Classification Theorem~\ref{ClassThm}  in terms of monodromy and Camacho-Sad  indices.

\subsection{Ends of leaves space of reduced foliations}\label{EndLeavesRedFol}

The analytic classification of reduced foliation singularities can be reformulated in the framework of ends of leaves space.
Let us first recall the case of a saddle $\F$ defined by a $1$-form 
\[\omega=(1+\cdots)ydx-(\lambda+\cdots)xdy,\quad\text{with}\quad \lambda\notin\mb Q_{\geq 0}.\]
The axes $\{x=0\}$ and $\{y=0\}$ are the only separatrices and $S:=\{xy=0\}$ is a $\F$-appropriate curve.
Assume that $\omega$ is holomorphic on the polydisc $P=\{|x|\le 1,\ |y|\le 1\}$ and let us fix a fundamental system of open neighborhoods $\mc U$ of $S\cap P$ in $P$, ordered by the inclusion. For any $U\in\mc U$ we denote by $Q_U$ the leaf space of the foliation $\F_{|U\setminus S}$ induced by $\F$ on $U\setminus S$. They form \emph{the inverse system of leaf spaces} together with the continuous maps
 \begin{equation}\label{systespfeuilles}
 \rho_{VU} : Q_U\longrightarrow Q_V\,,\quad V \supset U\,,\quad U, V \in \mc U\,,
 \end{equation}
 that send a leaf $L_U$ of $\F_{|U\setminus S}$ to the leaf of $\F_{|V\setminus S}$ that contains $L_U$.
The \emph{space of ends of leaves of $\F$ along $S\cap P$} is the inverse limit
 \begin{equation*}\label{espBouts}
 \limproj_{U\in \mc U}\, Q_U := \left\{ (L_U)_{U\in \mc U}\;;\; L_U\in  Q_U\;,  V\supset U \Longrightarrow L_U\subset L_V \right\} 
 \subset \prod_{U\in \mc U} Q_U\,,
 \end{equation*}
which does not depend on the choice of the fundamental system $\mc U$. We distinguish three cases:

\subsubsection{Poincar\'e type singularity $\lambda\notin\R$}\label{PoincareType}
We can construct $\mc U=(U_r)_{0<r<1}$ such that the intersections $U_r\cap\{|x|=1\}$ are suspension type subsets relatively to the  transverse discs $\Delta_r:=\{x=1,\, |y|<r\}$ over the loop $\gamma(s)=(e^{2i\pi s},0)$, cf. \S\ref{subsecConstrElemBlocs}, and such that their saturations by $\F_{|U_r}$ are the whole $U_r$.
Thus the space $Q_{U_r}$ can be identified with the space of orbits of the holonomy $h:\Delta_r\iso h(\Delta_r)$ of the foliation along $\gamma$, which is a linearizable contraction (or a dilatation). Hence $Q_{U_r}$ is the elliptic curve 
\[
\C/(\Z2i\pi+\Z2i\pi\lambda)\simeq\C/(\Z+\Z\lambda)
\]
and the projections $\underleftarrow{\lim}_{r}\, Q_{U_r}\to Q_{U_r}$ are isomorphisms. Classically $\lambda$ determines the analytic type of $\underleftarrow{\lim}_{r}\,Q_{U_r}$ 
and $\lambda$ is the only analytic invariant of $h$ and hence of $\F$ because it is linearizable.

Notice that all Poincar\'e type foliations are topologically conjugated, see Lemma~\ref{Poincare}. Indeed, the leaves are transverse to sufficiently small spheres so they are cones over the intersections with a sphere. The induced foliation on the sphere is a dimension one real foliation of Morse-Smale type with two closed leaves.

\subsubsection{Non-linearizable resonant saddles}\label{322}

The topological classification of these foliations is given by three natural numbers $p,q,k\in\N$, $\gcd(p,q)=1$. Indeed, by \cite{Camacho-Sad}
any non-linearizable resonant saddle $\F$ is 
topologically conjugated to the foliation defined by
\[\omega_{p,q,k,\nu}=py(1+(\nu-1)(x^py^q)^k)dx+qx(1+\nu(x^py^q)^k)dy\]
for any $\nu\in\C$, which is an analytic (and formal) invariant  of $\F$ but not a topological one.

As in the previous case we can use a fundamental system $(U_r)_r$ of neighborhoods  of the axes using  transverse discs $\Delta_r$ which allows to identify again the leaf space $Q_{U_r}$ to the space of orbits of the holonomy of the loop $\gamma$ realized on $\Delta_r$. The leaf space $Q_{U_r}$  is again a Riemann surface, but in contrast to the previous case, is not Hausdorff. More precisely, $Q_{U_r}$ has a structure of ``chapelet de sph\`eres'' (string of spheres), that is, the quotient of the disjoint union 
$\bigsqcup_{j\in \Z/k\Z} (S_j^- \cup S_j^+)$  of punctured spheres
 $S_j^+=S_j^-=\overline{\C}\setminus \{0,\infty\}$, 
 under the equivalence relation $m\sim\varphi_j^\star(m)$ induced by ``gluing'' biholomorphisms 
\[ \varphi_j^0\;\,:\;\, S_j^+\supset (V^+_j\setminus \{0\}) \;\stackrel{\sim}{\longrightarrow}\; (V^-_j\setminus \{0\})\subset S_j^- 
\]
\[ 
\varphi_j^\infty\;\,:\;\, S_j^-\supset (W^-_j\setminus \{\infty\}) \;\stackrel{\sim}{\longrightarrow}\; (W^+_{j+1}\setminus \{\infty\})\subset S_{j+1}^+\,,
 \]
where $V_j^\pm$, resp. $W_j^\pm$, are open neighborhoods of  $0$, resp. of $\infty$,  in $S_j^\pm$ and $j\in \Z/k\Z$, see for instance \cite{MR} or \cite[p. 116]{Loray}.

The space of ends of leaves $\limproj_r\, Q_{U_r}$ is the Hausdorff non-connected Riemann surface  
$\bigsqcup_{j\in \Z/k\Z} (S_j^- \cup S_j^+)$
where  $k$ is the number of petals of the dynamics of the holonomy. To obtain a full analytic invariant of the foliation one needs to collect all the gluing data and consider the whole inverse system of leaf spaces $(Q_{U_r})_r$ introduced in (\ref{systespfeuilles}):
\begin{teo}[Birkhoff, \'Ecalle, see {\cite[Th\'eor\`eme 2.7.1]{Loray}}]\label{BE}
Two germs of non-linearizable resonant saddles $\F$ and $\F'$ with the same topological invariants $p,q,k$ and the same formal invariant $\nu$ are analytically conjugated if and only if there is an isomorphism between their inverse systems of leaf spaces.
\end{teo}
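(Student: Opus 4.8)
The plan is to reduce the statement, via the Mattei--Moussu principle, to the analytic classification of the separatrix holonomy, and then to recognise the inverse system of leaf spaces as an avatar of the \'Ecalle--Voronin modulus of that holonomy. First I would work on a polydisc $P$ on which $\omega$ is holomorphic, with separatrices $S=\{xy=0\}$, and use the fundamental system $\mc U=(U_r)_r$ of neighbourhoods of $S\cap P$ built from suspension type sets over the loop $\gamma(s)=(e^{2i\pi s},0)$ relative to the transverse discs $\Delta_r=\{x=1,\ |y|<r\}$, arranged as in \S\ref{subsecConstrElemBlocs} and \S\ref{PoincareType} so that each $U_r$ is saturated by $\F_{|U_r}$. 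Sending a leaf to its trace on $\Delta_r$ then identifies $Q_{U_r}$ with the orbit space $\Delta_r/\langle h\rangle$ of the holonomy germ $h=h_\gamma\colon(\C,0)\to(\C,0)$, the bonding maps $\rho_{VU}$ becoming the obvious inclusions of orbit spaces. Reading the linear part off $py\,dx+qx\,dy+\cdots=0$ gives $h'(0)=e^{-2i\pi p/q}$, a primitive $q$-th root of unity; the non-linearizability hypothesis forces $h^q\neq\id$, and the integers $p,q,k$ together with $\nu$ are exactly the formal invariants of $h$, so two foliations with the same $(p,q,k,\nu)$ have formally conjugate separatrix holonomies.

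Next I would invoke the classical reduction of the analytic classification of a reduced singularity to that of the holonomy of a separatrix (Mattei--Moussu): an analytic conjugacy between $\F$ and $\F'$ restricts, after shrinking, to an analytic conjugacy of their separatrix holonomies, and conversely a conjugacy of holonomies extends along the local product structure of the foliation near $\{y=0\}$ to a conjugacy of foliations; both operations are visibly compatible with the identifications above. This reduces the theorem to the assertion that $h$ and $h'$ are analytically conjugate if and only if their inverse systems of orbit spaces are isomorphic. The ``only if'' direction is immediate, since a conjugacy $\psi h\psi^{-1}=h'$ descends to a biholomorphism of orbit spaces commuting with the inclusions.

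For the ``if'' direction I would first make the structure of an orbit space explicit. Applying the Leau--Fatou flower theorem to the parabolic germ $h^q$ produces finitely many attracting and repelling petals on which $h^q$ is conjugate to the translation $w\mapsto w+1$ via Fatou coordinates, the root-of-unity part of $h$ permuting these petals cyclically; quotienting a small punctured disc by $\langle h\rangle$ then glues $k$ pairs of punctured spheres $S_j^\pm=\overline{\C}\setminus\{0,\infty\}$, $j\in\Z/k\Z$, along neighbourhoods of $0$ and of $\infty$ by the transition maps $\varphi_j^0,\varphi_j^\infty$ of consecutive Fatou charts. These transition maps are precisely the \'Ecalle--Voronin horn maps of $h$. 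Hence the isomorphism class of the inverse system $(Q_{U_r})_r$ encodes exactly the collection $(\varphi_j^0,\varphi_j^\infty)_j$ modulo the \'Ecalle--Voronin equivalence (rescalings $w\mapsto\lambda w$ on each sphere and cyclic relabelling of the $S_j^\pm$), because the non-Hausdorff points of $Q_{U_r}$ are intrinsically the points $0,\infty$ of the spheres and the chain in which they sit is intrinsic, so any isomorphism of inverse systems must permute the $S_j^\pm$ along the chain and act on each by an automorphism of $\overline{\C}\setminus\{0,\infty\}$.

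It follows that an isomorphism of the inverse systems of $\F$ and $\F'$ forces the horn maps of $h$ and $h'$ to be conjugate in the \'Ecalle--Voronin sense; it remains to synthesise from this a genuine analytic conjugacy $\psi$ with $\psi h\psi^{-1}=h'$. On each petal, Fatou coordinates identify $h$ and $h'$ with the same translation, and the matching of horn maps says precisely that, on the overlaps of consecutive petals near $0$ and near $\infty$, the two families of sectorial normalisations differ by maps that patch together into a single-valued germ commuting with $h$ and $h'$ on a punctured neighbourhood of $0$; this germ is bounded, being built from biholomorphisms of sectors, hence extends holomorphically across $0$ by the removable singularity theorem, yielding $\psi$. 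I expect this last synthesis step --- checking that the sectorial pieces do glue into one single-valued holomorphic map --- to be the main obstacle: it is the analytic core of the Birkhoff--\'Ecalle theorem, where the full compatibility carried by the inverse system is used, and I would defer its detailed verification to \cite[Th\'eor\`eme~2.7.1]{Loray}.
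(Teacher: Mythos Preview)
The paper does not give its own proof of this theorem: it is stated as a classical result attributed to Birkhoff and \'Ecalle, with a reference to \cite[Th\'eor\`eme~2.7.1]{Loray}. What the paper does provide, in the paragraph following Definition~\ref{ISTop}, is precisely the identification you rely on: an isomorphism of the inverse systems $(Q_{U_r})_r$ and $(Q_{U'_{r'}})_{r'}$ amounts to a biholomorphism of the ``chapelets de sph\`eres'' $\bigsqcup_j(S_j^-\cup S_j^+)$ that commutes with the gluing maps $(\varphi_j^0,\varphi_j^\infty)$, which is exactly the \'Ecalle--Voronin equivalence of horn maps.

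Your sketch is the standard route and is correct in outline. The reduction to the holonomy via Mattei--Moussu, the identification of $Q_{U_r}$ with the orbit space of $h$ on $\Delta_r$, the description of that orbit space as a necklace of twice-punctured spheres glued by horn maps, and the observation that the non-Hausdorff points intrinsically mark $0$ and $\infty$ so that any isomorphism must respect the chain structure --- all of this matches both the paper's discussion and the argument in \cite{Loray}. Your honest flagging of the synthesis step (gluing the sectorial normalisations into a single-valued germ) as the analytic core, deferred to the reference, is appropriate: that is indeed where the work lies, and it is not reproduced in the paper either.

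One small point worth tightening: you write that ``any isomorphism of inverse systems must permute the $S_j^\pm$ along the chain and act on each by an automorphism of $\overline{\C}\setminus\{0,\infty\}$''. This is the crux of extracting the \'Ecalle--Voronin equivalence from an abstract $\underleftarrow{\mr{Top}}$-isomorphism, and it uses more than the non-Hausdorff locus: one also needs that the isomorphism is represented, for each level, by a \emph{holomorphic} map between orbit spaces (the inverse system lives in $\underleftarrow{\C\text{-}\mr{Man}}$, not merely $\underleftarrow{\mr{Top}}$), so that on each sphere it is an affine map $w\mapsto\lambda w$ or $w\mapsto\lambda/w$. The paper's remark that $\underleftarrow{\mr{Iso}}(\mc Q,\mc Q')$ corresponds to biholomorphisms commuting with the $\varphi_j^\star$ is asserting exactly this, and you should make the holomorphy explicit rather than leave it implicit in ``automorphism''.
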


In the above statement the notion of \emph{isomorphism of inverse systems} is  as follows, see \cite[\S2.8]{Douady}: 
\begin{defin}\label{ISTop} Let  $\mr{Top}$ be the category of topological spaces and continuous maps.  
The category $\underleftarrow{\mr{Top}}$ is the category whose objects are inverse systems of topological spaces
\[ 
\mc A:=\big( (A_\alpha)_{\alpha}\,,\quad (\rho_{\alpha',\alpha} :  A_\alpha \to A_{\alpha'})_{\alpha \preccurlyeq \alpha' } \big)
 \]
and whose morphisms, that we call here \emph{continuous germs}\footnote{In \cite{MM3} they are called {pro-germs}.}, are the elements of
 \begin{equation}\label{Hom}
\underleftarrow{\mr{Hom}}\big(\mc A,\mc B\big) := {\limproj}_{\beta}\,{\limind}_{\alpha}
 \mc C^0(A_\alpha,B_\beta),
\end{equation}
where $\mc C^0=\mr{Hom_\mr{Top}}$. Similarly $\underleftarrow{\mr{\C\text{-}Man}}\subset \underleftarrow{\mr{Top}}$ is the subcategory of inverse systems of complex manifolds, its morphisms, that we call \emph{holomorphic germs}, being the elements of ${\limproj}_{\beta}\,{\limind}_{\alpha}
 \mc O(A_\alpha,B_\beta),
$ with $\mc O=\mr{Hom_{\C\text{-}Man}}$.
\end{defin}

\noindent 
Notice that for any $\beta$ the system $(
\mc O(A_\alpha,B_\beta))_\alpha$ is direct, while the system 
$\left({\limind}_{\alpha}
\mc O(A_\alpha,B_\beta)\right)_\beta$ is inverse. This explain the  limits used in the definition of $\underleftarrow{\mathrm{Hom}}$.
\\

Going back to Theorem~\ref{BE}, if $\mc Q=(Q_{U_r})_r$ and $\mc Q'=(Q_{U'_{r'}})_{r'}$ are systems of leaf spaces of resonant saddles $\F$ and $\F'$ then  the invertible elements $\underleftarrow{\mr{Iso}}(\mc Q,\mc Q')\subset \underleftarrow{\mr{Hom}}(\mc Q,\mc Q')$ correspond to biholomorphisms between the ends of leaves spaces (Hausdorff ``chapelets de sph\`eres'')
\[
\Phi:\underleftarrow{\lim}\,\mc Q\iso\underleftarrow{\lim}\,\mc Q'
\]
that commute with the gluing maps $(\varphi_j^0,\varphi_j^\infty)$. Therefore the set of automorphisms of the ends of leaves space $\underleftarrow{\lim}\,\mc Q$ strictly contains the automorphisms of the inverse system $\mc Q$.

\subsubsection{Real saddles $\lambda\in\R_{<0}$.}
 
For this type of singularities the space of ends of leaves $\underleftarrow{\lim}\,\mc Q$ provides a criterion of linearizability.

\begin{teo}
A  real saddle is linearizable if and only if its ends of leaves space is empty.
\end{teo}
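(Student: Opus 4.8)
The plan is to transfer the statement to the dynamics, near its fixed point, of the holonomy $h$ of $\F$ along the loop $\gamma(s)=(e^{2i\pi s},0)$. This $h$ is a germ of biholomorphism fixing $0\in\Delta_R:=\{x=1,\ |y|<R\}$ whose linear part is the rotation $y\mapsto e^{2i\pi/\lambda}y$; since a rational $\lambda<0$ would turn $\F$ into a resonant saddle (cf.\ \S\ref{322}), here $\lambda\in\R_{<0}$ is irrational and this rotation is irrational. As recalled in \S\ref{EndLeavesRedFol}, one may choose the fundamental system $\mc U=(U_r)_{0<r<R}$ so that $Q_{U_r}$ is identified with the orbit space $\Delta_r^*/h$, where $\Delta_r^*:=\Delta_r\setminus S$, the bonding map $Q_{U_r}\to Q_{U_{r'}}$ for $r<r'$ being the one induced by the inclusion $\Delta_r^*\subset\Delta_{r'}^*$. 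The statement to prove becomes: $\limproj_r(\Delta_r^*/h)$ is empty if and only if $h$ is linearizable, equivalently (by the classical fact that a reduced non-resonant saddle is determined by the holonomy of one separatrix) if and only if $\F$ is linearizable.

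First I would treat the easy direction, ``$h$ linearizable $\Rightarrow$ empty''. After conjugation we may assume $h$ is the linear rotation itself; the orbit of a point $c\ne0$ of $\Delta_R^*$ is then $\{c\,e^{2i\pi n/\lambda}\}_{n\in\Z}$, dense in the circle $\{|y|=|c|\}$. Hence $\Delta_r^*/h$ is homeomorphic to the interval $(0,r)$ via the modulus $|c|$, and the bonding maps are the inclusions $(0,r)\hookrightarrow(0,r')$. A thread of $\limproj_r(0,r)$ would be a single real number $\rho$ lying in $(0,r)$ for \emph{every} $r>0$, which is impossible; so the inverse limit is empty. Geometrically, in the linear model a leaf $y=cx^{1/\lambda}$ satisfies $|y|\ge|c|$ on the whole polydisc, hence is disjoint from $U_r$ as soon as $r\le|c|$, and no leaf survives the whole tower of neighbourhoods.

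For the converse I argue by contraposition: if $h$ is not linearizable I must exhibit a non-trivial element of $\limproj_r(\Delta_r^*/h)$. The decisive input is P\'erez-Marco's theory of hedgehogs: inside every sufficiently small disc $\Delta_r$ there is a compact connected full $h$-invariant continuum $\mc K_r$ meeting $\partial\Delta_r$, these can be chosen nested ($\mc K_r\subset\mc K_{r'}$ for $r<r'$, for instance the connected component of $0$ in the set of points whose full $h$-orbit stays in $\Delta_r$), and --- when $h$ is not linearizable --- \emph{every} $h$-orbit contained in $\mc K_r$ has $0$ in its closure, in sharp contrast with the linearizable case in which the orbits inside the corresponding hedgehog lie on invariant circles around $0$. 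Suspending the $\mc K_r$ over $\gamma$ produces a nested family of $\F$-invariant subsets of the $U_r$, hence an inverse subsystem $\mc K_r^*/h\hookrightarrow Q_{U_r}$ with non-empty terms all of whose orbits descend to arbitrarily small scales; from this one builds a coherent family $(L_r)_r$ of leaves, $L_r\subset L_{r'}$, which is the sought point of $\limproj_r Q_{U_r}$.

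The step I expect to be the main obstacle is precisely this last construction of the thread. The bonding maps of the subsystem $(\mc K_r^*/h)_r$ are not surjective --- a hedgehog orbit lying in $\mc K_{r'}$ usually leaves $\Delta_r$ and hence misses $\mc K_r$ --- so no soft non-emptiness argument applies; and although a hedgehog orbit visits every neighbourhood of $0$, its intermediate iterates wander far out, so the connected components of the trace of a fixed global leaf in the successive $U_r\setminus S$ do not automatically nest. Producing the thread requires exploiting finer geometry of the hedgehogs, namely their connectedness and the accessibility of $0$ from the complement, rather than a single orbit. The two external ingredients I would simply quote are this part of P\'erez-Marco's theory --- in particular the fact that orbits inside the hedgehog of a non-linearizable germ accumulate at the fixed point, for which elementary substitutes such as Cremer's periodic small cycles do not suffice, finite orbits being bounded away from $0$ --- together with the classical determination of a reduced non-resonant saddle by the holonomy of one of its separatrices.
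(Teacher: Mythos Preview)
Your strategy coincides with the paper's: reduce to the holonomy $h$ of one separatrix, handle the linearizable case by the real invariant (your observation that in the linear model a leaf through $(1,c)$ stays in $\{|y|\ge|c|\}$ is exactly the paper's ``the leaves are contained in the closed real hypersurfaces $|x||y|^\lambda=\text{constant}$''), and for the non-linearizable irrational case invoke P\'erez-Marco \cite{Perez-Marco-cercle}.

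Two points of comparison. First, you set aside $\lambda\in\mb Q_{<0}$ as ``already a resonant saddle treated in \S\ref{322}'', but \S\ref{322} covers only the \emph{non-linearizable} resonant case; the theorem as stated concerns all real saddles $\lambda\in\R_{<0}$, and the paper's proof does treat rational $\lambda$ explicitly (linearizable via the holomorphic first integral $x^py^q$, non-linearizable by pointing to the non-empty string of spheres of \S\ref{322}). This omission is easy to repair.

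Second, the obstacle you isolate --- producing a \emph{coherent} thread $(L_r)_r$ from an orbit that accumulates at $0$ while its intermediate iterates may leave $\Delta_r$ --- is a genuine subtlety, but the paper is no more explicit than you are: its entire argument for this implication is the single sentence ``Consequently there are leaves accumulating to the separatrices and $\limproj\,\mc Q\neq\emptyset$''. So on this last step your proposal and the paper's proof are at the same level of detail; the hedgehog-connectedness route you sketch is plausible, but the paper does not carry it out either.
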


\begin{proof}
If $\lambda=-q/p\in\mb Q_{<0}$ and $\omega$ is conjugated to $pydx+qxdy$ then has $x^py^q$ as first integral, no leaf accumulates on $S$ and the space of ends of leaves $\underleftarrow{\lim}\,\mc Q$ is empty.
If $\lambda\in\mb Q_{<0}$ and $\omega$ is not linearizable then the previous description in \S\ref{322} trivially shows that the ends of leaves space
 $\underleftarrow{\lim}\,\mc Q$ is non-empty.

If $\lambda\in\R_{<0}\setminus\mb Q$ and the singularity is linear in the coordinates $(x,y)$ then the leaves are contained in the closed real hypersurfaces $|x||y|^\lambda=$constant. In the non-linearizable case
P\'erez-Marco \cite{Perez-Marco-cercle} proves the existence of orbits of the holonomy accumulating to $0$. Consequently there are
leaves accumulating to the separatrices and  $\underleftarrow{\lim}\,\mc Q\neq\emptyset$. 
\end{proof}

\subsubsection{Non-reduced logarithmic singularities}
Let $\F$ be a logarithmic foliation germ defined by the meromorphic form 
\[\lambda_1\frac{df_1}{f_1}+\cdots+\lambda_r\frac{df_r}{f_r}\]
with $\lambda_i/\lambda_j\notin\R$ if $i\neq j$. By  \cite[Corollaire 2.5]{Paul2} there exists a fundamental system of neighborhoods $\mc U$ of the origin such that  leaf spaces $Q_U$, $U\in\mc U$, are all identified with 
\[\C/(\Z 2i\pi\lambda_1+\cdots+\Z 2i\pi\lambda_r)\simeq\C/(\Z\lambda_1+\cdots+\Z\lambda_r),\]
 which for $r\geq 3$ is not a topological manifold. However, the leaf space of the pull-back foliation $\tilde\F$ by the universal covering map $q:\tilde U\to U\setminus S$, $S=\{f_1\cdots f_r=0\}$, can be identified with $\C$.
This example shows the interest  for non-reduced singularities to rather consider the foliation induced on the universal covering space $\tilde U$.

 \subsection{Complex structure on leaf spaces}\label{subsecleavespaces} Let us suppose again that $\F$ is a generalized curve  and let us fix 
  a Milnor tube $\mb T$ of a $\F$-appropriate curve $S$. We respectively denote  by
\begin{equation*}\label{covmap}
q:\tilde{\mb T}\to 
\mb T\setminus S\,,\quad \Gamma:=\mr{Aut}_q(\tilde{\mb T})\,,\quad \tilde \F := q^{-1}\F\,,
\end{equation*}
the universal covering of $\mb T\setminus S$, the deck transformation group
and  the pull-back of the foliation $\F$ on $\tilde{\mb T}$. We fix a fundamental system $\mc U=(U_n)_{n\in\mb N}$ of neighborhoods of $S$ satisfying the properties stated in Incompressibility Theorem~\ref{IncomprThm} and  we consider the \emph{leaf spaces inverse system of $\tilde\F$}, i.e.
the inverse system of topological spaces
\begin{equation}\label{inversesystem}
\mc Q:=\{(Q_{n})_{n\in\mb N},\  (Q_n\leftarrow Q_m)_{m<n}\},\qquad Q_{n}:=Q^{\tilde\F}_{\tilde U_n}
\end{equation}
where $Q^{\tilde\F}_{\tilde U_n}$ denotes the leaf space of the foliation $\tilde\F_{|\tilde U_n}
$ induced by $\tilde\F$ on  
\begin{equation*}\label{tildeU}
\tilde U_n:=q^{-1}(U_n\setminus S)
\end{equation*}

We associate to any subset $A\subset q^{-1}(\mb T\setminus S)$
 the following  inverse system of topological spaces
\[(A,\infty):=
\left\{\left( A\cap \tilde U_n\right)_{n\in\mb N},\ \left(A\cap\tilde U_m\hookleftarrow A\cap \tilde U_n\right)_{m<n}\right\}\,.
\]
The 
maps \[\tau_{m,n}:A\cap \tilde U_n\to Q_{m},\quad  m<n,\] 
sending $p\in A\cap \tilde U_n$ to the leaf of $\tilde\F_{|\tilde U_m}$ passing through $p$, 
define a morphism
\begin{equation}\label{tautologicalMorphism}
\tau_{{}_{A}}:(A,\infty)\to\mc Q
\end{equation}
 in the category $\underleftarrow{\mr{Top}}$, that we call
\emph{tautological morphism associated to $A$}.\\

Assertion (\ref{interDelta}) in Theorem~\ref{IncomprThm} is not sufficient to cover the whole leaf space with coordinate charts in the dicritical case. For this reason  we give a refinement of this property:

\begin{lema}\label{transverse-curves} Let $\mb T$ a Milnor tube of an appropriate curve $S$ of a generalized curve $\F$. Then there exists a fundamental system $(U_n)_{n\in\N}$ of open neighborhoods of $S$ in $\mb T$ satisfying assertions (\ref{inclpi1}) and (\ref{incomp}) of Theorem \ref{IncomprThm} and there exists in each $U_n$ a closed curve $\Delta_n$ such that 
\begin{enumerate}[(i)]
 \item any connected component $\tilde \Delta_n^\alpha\subset q^{-1}(\Delta_n\setminus S)$ of the preimage of $\Delta_n\setminus S$ by the universal covering map $q: \tilde U_n\to U_n\setminus S$ is an embedded open disc satisfying: for each leaf $L$ of the foliation    induced by $q^{-1} \F$ on $\tilde U_n$,  we have  $\mr{card}(L\cap \tilde \Delta_n^\alpha) \leq1$;
 \item $\mr{sat}(\Delta_n\setminus S,\,U_n\setminus S)=U_n\setminus S$.
\end{enumerate}
\end{lema}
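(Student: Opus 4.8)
The plan is to produce $\Delta_n$ as a finite union of transverse discs attached to the foliated block decomposition underlying Theorem~\ref{IncomprThm}, so that property~(ii) becomes a formal consequence of the foliated connectedness of the blocks, while property~(i) is obtained from the incompressibility of the leaves exactly as in assertion~(\ref{interDelta}) of that theorem. The starting point is the observation that the fundamental system $(U_n)$ furnished by Theorem~\ref{IncomprThm} is built, in \S\ref{subsecConstrElemBlocs}, so that $E_\F^{-1}(U_n)$ is covered by foliated blocks $V_0^{(n)},\dots,V_{m_n}^{(n)}$ of Seifert, collar or gluing type; since this system depends only on the block decomposition and not on a choice of transverse curve, it makes assertions~(\ref{inclpi1}) and~(\ref{incomp}) hold, and it makes assertion~(\ref{interDelta}) hold for \emph{every} closed analytic curve $C\subset U_n$ transverse to $\F$ with $C$ minus its singular points smooth. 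I will simply exhibit a curve $\Delta_n$ of this kind which, moreover, is a complete transversal.

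First I would attach to each block $V_j^{(n)}$ one of its boundary components $T_j^{(n)}$. Recall from \S\ref{subsecConstrElemBlocs} that $T_j^{(n)}$ is a set of suspension type, i.e.\ $T_j^{(n)}=T_{\delta_j^{(n)}}$ for a closed disc $\delta_j^{(n)}$ transverse to $\F^\sharp$, contained in a local slice $\rho^{-1}(\gamma_j^{(n)}(0))$ attached to an invariant irreducible component $D_j^{(n)}$ of $S^\sharp$, where $\gamma_j^{(n)}\subset D_j^{(n)}\setminus\mr{Sing}(S^\sharp)$ is a loop. After shrinking these discs, I may assume they are pairwise disjoint in $M_\F$, that each $\delta_j^{(n)}$ meets $S^\sharp$ only at its centre $\gamma_j^{(n)}(0)$, and that $\delta_j^{(n)}\subset E_\F^{-1}(U_n)$. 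Then I set $\Delta_n:=E_\F\bigl(\bigcup_j\delta_j^{(n)}\bigr)\subset U_n$; this is a closed curve whose part away from $\{0\}$ and from the finitely many points $E_\F(\gamma_j^{(n)}(0))$ is smooth and transverse to $\F$, and each connected component of $\Delta_n\setminus S$ is one of the punctured discs $E_\F(\delta_j^{(n)})\setminus S$.

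For (ii) I would argue as follows. Any connected component $\ell$ of $L\cap(U_n\setminus S)$, $L$ a leaf of $\F$, lifts to $\ell^\sharp\subset\bigcup_j(V_j^{(n)}\setminus S^\sharp)$, hence meets some block $V_j^{(n)}$ in a leaf $\ell_j^\sharp$ of $\F^\sharp$ restricted to that block. Since the block construction gives $T_j^{(n)}\setminus S^\sharp\Fcon V_j^{(n)}\setminus S^\sharp$ with both sets connected, every leaf of $\F^\sharp$ on $V_j^{(n)}\setminus S^\sharp$ meets $T_j^{(n)}\setminus S^\sharp$; pulling back the suspension loop $\gamma_j^{(n)}$ inside such a leaf from a point of $T_j^{(n)}$ back to its origin shows it also meets $\delta_j^{(n)}\setminus S^\sharp$. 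Hence $\ell^\sharp$, and therefore $\ell$, meets $E_\F(\delta_j^{(n)})\setminus S\subset\Delta_n\setminus S$, which is exactly $\mr{sat}(\Delta_n\setminus S,U_n\setminus S)=U_n\setminus S$. For (i), I would first note that the class of a meridian $\mu_{D_j^{(n)}}$ of an irreducible component of $S^\sharp$ has infinite order in $\pi_1(\mb T\setminus S)$ (in $H_1(\mb T\setminus S)\cong\Z^r$ it is a nonzero combination of the meridians of the branches of $S$), so the cyclic group $\pi_1(E_\F(\delta_j^{(n)})\setminus S)$ injects into $\pi_1(\mb T\setminus S)$, hence into $\pi_1(U_n\setminus S)$ by~(\ref{inclpi1}); therefore each component $\tilde\Delta_n^\alpha$ of $q^{-1}(\Delta_n\setminus S)$ is the universal cover of a punctured disc, i.e.\ an embedded open disc. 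The bound $\mr{card}(L\cap\tilde\Delta_n^\alpha)\le 1$ then repeats the argument for~(\ref{interDelta}) applied to $C=E_\F(\delta_j^{(n)})$: two distinct points of a leaf $L$ of $q^{-1}\F$ on $\tilde\Delta_n^\alpha$, joined by an arc in $L$ and an arc in the disc $\tilde\Delta_n^\alpha$, would project to a loop of $q(L)$ freely homotopic in $U_n\setminus S$ to a power of $\mu_{D_j^{(n)}}$, and the incompressibility of $q(L)$ (assertion~(\ref{incomp})) together with the fact that $q|_L$ is the universal cover of $q(L)$ forces the two points to coincide.

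The main obstacle I anticipate is bookkeeping rather than conceptual: one must be sure that the fundamental system of Theorem~\ref{IncomprThm} really can be chosen block by block and independently of the transverse curve, so that assertion~(\ref{interDelta}) is available for the curve $\Delta_n$ built afterwards; and one must control the sizes of the core discs $\delta_j^{(n)}$ so that they fit inside $E_\F^{-1}(U_n)$ while still meeting every leaf of their block — the latter being precisely the elementary fact that a single suspension-type boundary disc is a complete transversal of a foliated block. The dicritical case requires no new idea here, since the gluing blocks are treated by the same boundary-disc recipe as the Seifert and collar blocks.
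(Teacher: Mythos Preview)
Your overall strategy---take the foliated-block decomposition underlying $U_n$ and attach to each block a transverse disc---is exactly the paper's. The paper's (sketch of) proof does precisely this: for each block $V_j$ it invokes the construction in \cite{MM1} to obtain discs $\check\Delta_{j,n}\subset V_j$ satisfying \emph{simultaneously}
\[
(\check\Delta_{j,n}\setminus S^\sharp)\;\Fcon\;(V_j\setminus S^\sharp)
\qquad\text{and}\qquad
\mr{sat}(\check\Delta_{j,n}\setminus S^\sharp,\,V_j\setminus S^\sharp)=V_j\setminus S^\sharp,
\]
then uses transitivity~(\ref{transitivity}) through the chain $V_0\subset W_1\subset\cdots\subset W_m=U_n$ to obtain $(\Delta_{j,n}\setminus S)\Fcon(U_n\setminus S)$, and finally observes that this relation directly forces each component of $q^{-1}(\Delta_{j,n})$ to be a disc meeting each leaf at most once.

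Your execution has two genuine gaps, both of the same nature: you try to replace the foliated-connectedness relation by weaker facts.

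For (i), incompressibility of $q(L)$ alone does not give $\mr{card}(L\cap\tilde\Delta_n^\alpha)\le 1$. If $p_1\neq p_2$ lie in $L\cap\tilde\Delta_n^\alpha$ and you project the two arcs, you obtain a leaf-path $q(\alpha)$ from $q(p_1)$ to $q(p_2)$ homotopic in $U_n\setminus S$ to a disc-path $q(\beta)$; but $q(\alpha)$ is a \emph{path}, not a loop in $q(L)$, so the injection $\pi_1(q(L))\hookrightarrow\pi_1(U_n\setminus S)$ says nothing. What is actually needed is condition~(2) of Definition~\ref{defuncopr} for the pair $(\Delta_n\setminus S,\,U_n\setminus S)$: it lets you replace $q(\alpha)$, \emph{inside the leaf} $q(L)$, by a path in the discrete set $q(L)\cap(\Delta_n\setminus S)$, hence a constant, forcing $q(p_1)=q(p_2)$ and then $p_1=p_2$. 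You never establish $(\delta_j^{(n)}\setminus S^\sharp)\Fcon(U_n\setminus S)$ for your boundary discs.

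For (ii), you deduce that every leaf of $V_j^{(n)}\setminus S^\sharp$ meets $T_j^{(n)}\setminus S^\sharp$ from the relation $T_j^{(n)}\setminus S^\sharp\Fcon V_j^{(n)}\setminus S^\sharp$. This implication is false: $\Fcon$ concerns homotopies of paths, not saturation (e.g.\ a single point is $\G$-connected in any simply connected foliated manifold but meets only one leaf). The saturation property is a \emph{separate} output of the block construction in \cite{MM1}, and the paper invokes it as such. Your suspension-boundary discs $\delta_j^{(n)}$ may well enjoy both properties, but this must be checked inside the construction of the blocks, not read off from $\Fcon$.
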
  
 \begin{proof}[Idea of the proof] 
 We consider the construction process of each $U_n$ as a union of foliated blocks $V_j$ described in \S\ref{folconVK} and \S\ref{subsecConstrElemBlocs}. It follows from the construction of each $V_j$ in \cite{MM1} that we can  choose a union $\check\Delta_{j,n}\subset V_j$  of disjoint embedded discs  such that
\[ 
(\check\Delta_{j,n} \setminus S^\sharp)\Fcon (V_j\setminus S^\sharp)\quad \hbox{ and }\quad \mr{sat}(\check\Delta_{j,n} \setminus S^\sharp,V_j \setminus S^\sharp) =V_j\setminus S^\sharp\,.
 \]
  By transitivity of the foliated connectedness, see (\ref{transitivity}), the curve $\Delta_{j,n}:=E_\F(\check\Delta_{j,n}) $ satisfies  $(\Delta_{j,n}\setminus S)\Fcon (U_n\setminus S)$. This relation implies that in the universal covering of $U_n\setminus S$ each connected component of $q^{-1}(\Delta_{j,n})$ is an embedded disc that meets each leaf in at most one point. To end the proof we set  $\Delta_n:=\cup_{j}\Delta_{j,n}$.
 \end{proof}

A direct consequence of Lemma~\ref{transverse-curves} is that each  leaf space 
${Q}_{n}$ of the foliation induced by $\tilde\F=q^{-1}\F$ on $\tilde U_n$ is endowed with an atlas\footnote{In fact, it is an ``atlas'' with values in $q^{-1}(\Delta_n)$.}, called here \emph{distinguished atlas induced by~$\Delta_n$}. 
The charts of this atlas are the inverses of  the injective open
maps
\begin{equation*}\label{taualphan}
\tilde \Delta_n^\alpha\hookrightarrow Q_{n}
\end{equation*}
sending $p\in \tilde \Delta^\alpha_n$ to the leaf  of $\tilde \F_{|\tilde U_n}$ containing $p$. We  thus obtain  a structure of (not necessarily Hausdorff) Riemann surface on ${Q}_{n}$. We can see that this structure does not depend on the choice of $\Delta_n$.

\subsection{Extended Holonomy}\label{ext-hol}
After fixing a  $\F$-appropriate curve $S$, let us now consider an invariant irreducible component $D$ of the total transform $S^\sharp$ of $S$,  that is not contained in a dead branch of $S^\sharp$. Let us also consider a smooth retraction defined on a tubular neighborhood $\Omega_D$ of $D$,
\[ \rho_D : \Omega_D\to D, \]
 that is a locally trivial disc fibration, holomorphic on a neighborhood of each singular point $s_1,\ldots,s_v$ of $\F^\sharp$ in $D$ and such that the discs $\rho_D^{-1}(s_j)$ are in $S^\sharp$.  Up to  permutation, we suppose that $s_1,\ldots,s_r$, $1\leq r\leq v$, are the singular points $s_j$
 which do not belong to any dead branch of $S^\sharp$. We choose small enough disjoint closed discs
 \[
 D_{s_j}\subset D\,,\quad j=1,\ldots,v\,,
 \]
  centered at $s_j$  so that $\rho_D$ is holomorphic and trivial along $D_{s_j}$. We also suppose the Milnor tube $\mb T$ of $S$  small enough so that the real hypersurfaces $\rho_D^{-1}(\partial D_{s_j})$ intersect transversally the boundary of $E_\F^{-1}(\mb T)$ along a $2$-torus contained in $E_\F^{-1}(\delta\mb T)$. Then we consider 
the closure $\mc B_D$ of the connected component of the complementary in $E_{\F}^{-1}(\mb T)$ of $\cup_{j=1}^r\rho_D^{-1}(D_{s_j})$ that contains the complementary of $ \bigcup_{j=1}^r D_{s_j}$ in $D$, i.e.
\begin{equation*}
K_D:=D\setminus \bigcup_{j=1}^r \inte{D}_{s_j}\;\subset \; \mc B_D\;\subset\;E_{\F}^{-1}(\mb T)\setminus \bigcup_{j=1}^r\rho_D^{-1}(\inte{D}_{s_j})\,.
\end{equation*}
We fix  a connected component $\tilde{\mc B}_D^\xi$ of $\tilde{\mc B}_D:=q^{-1}(E_\F({\mc B}_D)\setminus S)$ and
we will denote by 
\[ 
\Gamma_{\xi}\subset \Gamma 
 \]
 the subgroup of deck transformations of the covering $q$ leaving invariant $\tilde{\mc B}^\xi_D$.
Using the particular presentation of the fundamental group of $\mc B_D\setminus S^\sharp_\F$ given in
 \cite[Proposition 3.3]{MM2} we can prove that
\begin{itemize}\it
\item $\mc B_D\setminus S^\sharp$ is incompressible in $E_\F^{-1}(\mb T)\setminus S^\sharp$, consequently 
the restriction of $q$ to $\tilde{\mc B}_D^\xi$ is the universal covering of $E_\F(\mc B_D\setminus S^\sharp)$ and
$\Gamma_\xi$ can be canonically identified with the deck transformation group of this covering;
\item $\mc B_D\cap S^\sharp$ contains all the dead branches of $S^\sharp$ meeting $D$;
\item the restriction of $\rho_D$ to $(\mc B_D\setminus S^\sharp)\cap \Omega_D$ extends to a smooth surjective map 
\begin{equation}\label{fibrSeifertBD}
\pi_D: \mc B_D\setminus S^\sharp\to K_D
\end{equation}
 that is a  fibration in punctured discs, locally trivial, except when $r<v$  at the points $s_{r+1},\ldots,s_v$;
in this case the closure of $\pi_D^{-1}(s_j)$ is a closed disc transverse to the end component of the dead branch of $S^\sharp$ containing $s_j$, $j=r+1,\ldots,v$;
 \item   there is a retraction by deformation along the fibers of $\pi_D$ of $\mc B_D\setminus S^\sharp$ to  $\delta\mc B_D:=\mc B_D\cap E_\F^{-1}(\delta \mb T)$;
 \item the restriction of $\pi_D$ to $\delta\mc B_D$ is a Seifert fibration whose exceptional fibers are the  transverse intersections  $\delta\mc B_D\cap\pi_D^{-1}(s_j)$,   $j= r+1,\ldots v$, when $r<v$. 
\end{itemize}

 Let us now fix a connected regular curve $\Sigma\subset \mc B_D$ that meets
$D\subset S^\sharp$ at a regular point $\sigma$  and is transversal to $\F^\sharp$. 
We denote by $\tilde\Sigma^j$, $j\in\mc J$, the connected components of $q^{-1}(E_\F(\Sigma))\cap\tilde{\mc B}_D^\xi$.
 We  assume that  the   fundamental system $\mc U=(U_n)_{n\in\N}$  satisfies Assertion~(\ref{interDelta}) in Theorem~ \ref{IncomprThm} taking   $C=E_\F(\Sigma)$. Then each
 $\tilde\Sigma^j_n:=\tilde\Sigma^j\cap \tilde U_n$ 
   is an embedded disc  and $\mr{card}(L\cap \tilde\Sigma_n^j)\le 1$  for each   leaf $L$ of $\tilde\F_{|\tilde{\mc B}_D^\xi}$. 
\begin{obs}\label{monotau}
The holomorphic maps 
$\tau_n^j :\tilde\Sigma_n^j\hookrightarrow Q_{n}$
sending  $p\in \tilde\Sigma_n^j$ into the leaf of $\tilde\F_{|\tilde U_n}$ passing through $p$, define the tautological morphism 
\[\tau_{\tilde\Sigma^j}:(\tilde\Sigma^j,\infty)\to\mc Q\] introduced in (\ref{tautologicalMorphism}).
As $\tau^j_n$ are injective, $\tau_{\tilde\Sigma^j}$ 
 is a monomorphism in the category
$\underleftarrow{\C\text{-Man}}$.
\end{obs}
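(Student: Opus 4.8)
The plan is to check, in order, that the maps $\tau^j_n$ patch together into a morphism of inverse systems, that this morphism is the $A=\tilde\Sigma^j$ instance of the tautological morphism (\ref{tautologicalMorphism}) and actually lives in $\underleftarrow{\C\text{-Man}}$, and finally that it is left-cancellable. The first point is purely formal: since $\tilde U_n\subset\tilde U_m$ for $m\le n$, the inclusions $\tilde\Sigma^j_n\hookrightarrow\tilde\Sigma^j_m$ are the structure maps of the inverse system $(\tilde\Sigma^j,\infty)$, while the structure map of $\mc Q$ relating $Q_n$ and $Q_m$ sends a leaf of $\tilde\F_{|\tilde U_n}$ to the leaf of $\tilde\F_{|\tilde U_m}$ containing it; going either way around the resulting naturality square carries a point $p\in\tilde\Sigma^j_n$ to the leaf of $\tilde\F_{|\tilde U_m}$ through $p$, so the square commutes and $(\tau^j_n)_n$ is a level morphism $(\tilde\Sigma^j,\infty)\to\mc Q$ in $\underleftarrow{\mr{Top}}$. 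Since $\tilde\Sigma^j\subset\tilde{\mc B}_D\subset q^{-1}(\mb T\setminus S)$, one may then simply unwind the construction (\ref{tautologicalMorphism}) with $A=\tilde\Sigma^j$: its $m$-component sends, for $n\ge m$ large, the point $p\in\tilde\Sigma^j_n$ to the leaf of $\tilde\F_{|\tilde U_m}$ through $p$, which is the image of $\tau^j_n(p)$ under the structure map of $\mc Q$; hence $\tau_A$ is precisely the morphism determined by the level maps $\tau^j_n$, i.e.\ $\tau_{\tilde\Sigma^j}$.

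To promote this morphism to $\underleftarrow{\C\text{-Man}}$ I must check that each $\tau^j_n$ is holomorphic. Here $\tilde\Sigma^j_n$ is an embedded disc transverse to the holomorphic foliation $\tilde\F$, hence a Riemann surface, and $Q_n$ carries the distinguished (possibly non-Hausdorff) Riemann surface structure induced by a transverse curve $\Delta_n$ as in Lemma~\ref{transverse-curves} and the discussion following it — a structure that, as recalled there, does not depend on the chosen transverse curve. Fix $p\in\tilde\Sigma^j_n$ with image leaf $L=\tau^j_n(p)$; since the distinguished atlas covers $Q_n$, there is a chart $\tilde\Delta_n^\alpha\hookrightarrow Q_n$ whose image contains $L$, and a path in $L$ from $p$ to a point of $L\cap\tilde\Delta_n^\alpha$ induces a holonomy transport of a neighbourhood of $p$ in $\tilde\Sigma^j_n$ onto a neighbourhood in $\tilde\Delta_n^\alpha$. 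This transport is biholomorphic because $\tilde\F$ is holomorphic, and it is exactly the expression of $\tau^j_n$ read in the chart $\tilde\Delta_n^\alpha$, so $\tau^j_n$ is holomorphic near $p$. Injectivity of $\tau^j_n$ is already recorded just before the statement: by Assertion (\ref{interDelta}) of Theorem~\ref{IncomprThm} applied with $C=E_\F(\Sigma)$, every leaf of $\tilde\F_{|\tilde U_n}$ meets $\tilde\Sigma^j_n$ in at most one point, so $\tau^j_n(p)=\tau^j_n(p')$ forces $p=p'$. (In fact each $\tau^j_n$ is then an open embedding, an injective holomorphic map of Riemann surfaces being a biholomorphism onto its image.)

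For the monomorphism assertion I would invoke the general principle that a morphism of inverse systems represented by a level morphism all of whose components are monomorphisms in the base category is itself a monomorphism in the pro-category. Concretely, let $g_1,g_2\colon\mc A\to(\tilde\Sigma^j,\infty)$ be morphisms in $\underleftarrow{\C\text{-Man}}$ with $\tau_{\tilde\Sigma^j}\circ g_1=\tau_{\tilde\Sigma^j}\circ g_2$. Spelling out composition in $\underleftarrow{\mr{Hom}}$, for each $n$ the germs $\tau^j_n\circ(g_1)_n$ and $\tau^j_n\circ(g_2)_n$ agree, hence become equal maps after restriction to a term further along in the directed system underlying $\mc A$; injectivity of $\tau^j_n$ then gives that $(g_1)_n$ and $(g_2)_n$ already agree as germs, for every $n$, whence $g_1=g_2$. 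The one genuinely non-formal ingredient is the holomorphy of the $\tau^j_n$, which comes from Lemma~\ref{transverse-curves} together with the holomorphy of holonomy of a holomorphic foliation; everything else is a routine unwinding of the definition of $\underleftarrow{\mr{Hom}}$, so the write-up should be short.
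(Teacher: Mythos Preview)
Your verification is correct and follows exactly the reasoning the paper intends: the remark in the paper is stated without a separate proof (the \texttt{obs} environment simply ends with a \qedsymbol), the content being regarded as an immediate unwinding of the definitions of $(\tilde\Sigma^j,\infty)$, of the tautological morphism (\ref{tautologicalMorphism}), and of the distinguished atlas on $Q_n$ built after Lemma~\ref{transverse-curves}. Your write-up spells out those unwindings and the standard fact that a level morphism with injective components is a monomorphism in the pro-category, which is precisely what the paper takes for granted; nothing further is needed.
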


\begin{prop}[{\cite[Proposition 4.2.1]{MM3}}]\label{mij} For any $i,j\in\mc J$ and large enough $n\in\mb N$, the images of the maps $\tau_n^i$ and $\tau_n^j$ have a non-empty intersection $W_n^{ij}$, and 
for $p\gg n\gg 1$ we have the following  inclusions \[ 
\tilde\Sigma_n^{k}\cap q^{-1}(U_p)\subset  (\tau_n^{k})^{-1}(W_n^{ij})\,,\qquad k=i, j\,.
 \]
The family of holomorphic maps   
 \[ 
  m^{ij}_n=(\tau^i_n)^{-1}\circ \tau^j_n:(\tau_n^{j}){}^{-1}(W_n^{ij})
  \to
  (\tau_n^{i}){}^{\,-1}(W_n^{ij})\,,\quad n\in\mb N\,.
     \]
defines  a morphism  in the category $\underleftarrow{\C\text{-}\mr{Man}}$ 
\[ m^{ij}\,\in\,\mr{Hom}_{\underleftarrow{\C\text{-}\mr{Man}}}\big((\tilde\Sigma^j,\infty),\,(\tilde\Sigma^i,\infty)\big)
 \]
 such that  $\tau_{\tilde\Sigma^i}\circ m^{ij}=\tau_{\tilde\Sigma^j}$
 and for any $i,j,k\in\mc J$ and any deck automorphism $\varphi\in\Gamma_\xi$ we have
\begin{equation}\label{relatChange_Charts}
m^{ij}\circ m^{jk}=m^{ik}\,,\quad
m^{ij}=(m^{ji})^{-1}\,,\quad 
\varphi\circ m^{ij}\circ\varphi^{-1}=m^{\varphi(i)\varphi(j)}\,,
\end{equation}
where  $\varphi(k)\in\mc J$ is the index such that  
$\varphi(\tilde \Sigma^{k})=\tilde\Sigma^{\varphi(k)}$.
\end{prop}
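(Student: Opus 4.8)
By Remark~\ref{monotau} each tautological morphism $\tau_{\tilde\Sigma^i}$ is a monomorphism in $\underleftarrow{\C\text{-}\mr{Man}}$; hence a morphism $m^{ij}$ satisfying $\tau_{\tilde\Sigma^i}\circ m^{ij}=\tau_{\tilde\Sigma^j}$ is unique once it exists, and all the identities \eqref{relatChange_Charts} then follow formally from this uniqueness together with the symmetric construction of $m^{ji}$: after postcomposition with the relevant $\tau_{\tilde\Sigma^\bullet}$ the two sides of each of them reduce to the same morphism, the only extra input being the trivial identity $\tau^{\varphi(i)}_n\circ\varphi=\varphi_*\circ\tau^i_n$ (used for the equivariance relation), where $\varphi_*$ denotes the action of $\varphi\in\Gamma_\xi$ on the leaf spaces $Q_n$ induced by $L\mapsto\varphi(L)$. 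So the whole statement reduces to \emph{constructing} $m^{ij}$, i.e. to showing that the image of $\tau_{\tilde\Sigma^j}$ is, in the projective limit, contained in that of $\tau_{\tilde\Sigma^i}$.

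The concrete form of this is the \emph{overlap estimate}: for $p\gg n\gg1$ every leaf of $\tilde\F_{|\tilde U_n}$ that meets $\tilde\Sigma^j\cap q^{-1}(U_p)$ also meets $\tilde\Sigma^i$, necessarily at a single point by assertion~\eqref{interDelta} of Theorem~\ref{IncomprThm} applied with $C=E_\F(\Sigma)$. Granting it, one obtains $\tilde\Sigma^k_n\cap q^{-1}(U_p)\subset(\tau^k_n)^{-1}(W^{ij}_n)$ for $k=i,j$, in particular $W^{ij}_n\neq\emptyset$; then $m^{ij}_n:=(\tau^i_n)^{-1}\circ\tau^j_n$ is well defined on $(\tau^j_n)^{-1}(W^{ij}_n)$, it is holomorphic because the $\tau^i_n$ are holomorphic injections onto open subsets of $Q_n$ for the distinguished atlas of Lemma~\ref{transverse-curves}, and by construction it maps this domain into $(\tau^i_n)^{-1}(W^{ij}_n)\subset\tilde\Sigma^i\cap\tilde U_n$ (the point $m^{ij}_n(x)$ lies on a leaf of $\tilde\F_{|\tilde U_n}$). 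Compatibility in $n$ is immediate: a leaf of $\tilde\F_{|\tilde U_n}$ is contained in a leaf of $\tilde\F_{|\tilde U_{n'}}$ for $n'<n$, so the ``at most one point'' property of Theorem~\ref{IncomprThm} forces the two determinations of $m^{ij}_\bullet$ to agree on a common domain; hence $(m^{ij}_n)_n$ defines a morphism $m^{ij}$ in $\underleftarrow{\C\text{-}\mr{Man}}$ with $\tau_{\tilde\Sigma^i}\circ m^{ij}=\tau_{\tilde\Sigma^j}$, and running the same argument with $i$ and $j$ exchanged produces $m^{ji}$ with $m^{ij}\circ m^{ji}=\mathrm{id}$, so $m^{ij}$ is invertible.

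The main obstacle is the overlap estimate, which I would establish through the foliated Seifert block structure of $\mc B_D$ --- in particular, as for the blocks built in \cite{MM1}, the transverse disc $E_\F(\Sigma)\setminus S^\sharp$ is strictly $\F^\sharp$-connected in $\mc B_D\setminus S^\sharp$. Lifting to the universal covering $q|_{\tilde{\mc B}_D^\xi}$, the meridian $\mu$ of $D$ generates the fundamental group of the punctured disc $E_\F(\Sigma)\setminus S^\sharp$, the components $\tilde\Sigma^k$, $k\in\mc J$, of $q^{-1}(E_\F(\Sigma))\cap\tilde{\mc B}_D^\xi$ are indexed by the right cosets $\langle\mu\rangle\backslash\Gamma_\xi$, and from the Seifert fibration of $\delta\mc B_D$ over $K_D$ (the restriction of $\pi_D$, cf. \eqref{fibrSeifertBD}) one checks that $\Gamma_\xi$ is generated by $\mu$ together with the image of $\pi_1(D\setminus\mr{Sing}(S^\sharp))$, this last image being realised by the holonomy transport of $\F^\sharp$ along loops in $D$. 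Consequently, for the fixed pair $(i,j)$ there is a fixed word $w$ in a finite system of loops generating $\pi_1(D\setminus\mr{Sing}(S^\sharp))$ whose holonomy carries the sheet $\tilde\Sigma^j$ onto the sheet $\tilde\Sigma^i$; once $n$ is fixed, for $x\in\tilde\Sigma^j$ deep enough in the tube --- that is, $x\in q^{-1}(U_p)$ with $p$ large --- the leaf path realising $w$ from $x$ is defined and winds so close to $D$ that it stays inside $\tilde U_n$, so the leaf of $\tilde\F_{|\tilde U_n}$ through $x$ reaches $\tilde\Sigma^i$. This is exactly the overlap estimate, and it also accounts for the quantifier order $p\gg n\gg1$, $n$ being chosen before $p$.
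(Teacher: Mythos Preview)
The paper does not prove this proposition: it is quoted from \cite[Proposition~4.2.1]{MM3} and used immediately afterwards without argument. So there is no ``paper's proof'' to compare against here, and your sketch has to be judged on its own.

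Your reduction is the right one: once the tautological maps are monomorphisms (Remark~\ref{monotau}), uniqueness of $m^{ij}$ and the relations~\eqref{relatChange_Charts} are formal, so the content is entirely the overlap estimate. Your mechanism for that estimate --- holonomy transport along a fixed word $w$ in loops of $D\setminus\mathrm{Sing}(S^\sharp)$, with $p$ large enough that the transported path stays in $\tilde U_n$ --- is the correct picture and is essentially what \cite{MM3} does, using the explicit presentation of $\pi_1(\mc B_D\setminus S^\sharp)$ alluded to just before Proposition~\ref{conjHolproj}.

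One point you should make explicit, because as written it looks like a gap: you assert that the holonomy lift over $w$ ``carries the sheet $\tilde\Sigma^j$ onto the sheet $\tilde\Sigma^i$'', but a priori the leaf path over $w$ starting at $x\in\tilde\Sigma^j$ has a homotopy class in $\Gamma_\xi$ that could depend on $x$, and hence land in different sheets for different $x$. What saves you is that all such leaf paths project to $w$ under $\pi_D$, so their classes differ only by elements of $\ker(\pi_{D})_*$, which is the normal closure of the meridian $\mu$; since $\mu$ is central in $\Gamma_\xi$ (from the Seifert structure of $\mc B_D\setminus S^\sharp$, see the properties listed before \eqref{fibrSeifertBD}) and $\langle\mu\rangle$ is exactly the stabiliser of each sheet, the landing sheet is indeed independent of $x$. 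You should say this; it is the one place where the specific group-theoretic structure of $\Gamma_\xi$ is actually used, and it is precisely why the paper refers to the ``particular presentation'' of \cite[\S4.1]{MM3}.
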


\noindent 
We easily deduce from  relations  (\ref{relatChange_Charts}) that the   map
\begin{equation*}\label{ext-Holonomy}
\mc H^i :  \Gamma_{\xi}\longrightarrow 
\underleftarrow{\mr{Aut}}(\tilde\Sigma^i,\infty)
\,,\quad \gamma \mapsto \mc H^i(\gamma):= m^{i\gamma(i)}\circ \gamma\,,
\end{equation*}
is well defined  and is a group morphism.  
We call it the \emph{extended holonomy of $\F$  along ${\mc B}_D$ over  $\tilde\Sigma^i$} (see Figure~\ref{fig5}).

\begin{center}
\begin{figure}[ht]
\begin{tikzpicture}
\draw[thick] (1,4) to (1,0);
\draw[thick] (9,4) to (9,0);
\draw[thick] (0,4) to [out=-60,in=150] (1,3) to [out= -30,in=170] (9,1) to  [out=-10,in=175] (10,0.9);
\node at (1,-0.5) {$\tilde\Sigma^i$};
\node at (9,-0.5) {$\gamma(\tilde\Sigma^i)$};
\draw[thick,->] (2.7,-0.5) to (6.8,-0.5);
\node at (4.5,-0.1) {$\gamma$};
\node at (4.5,2) {$L$};
\fill (1,3) circle [radius=2pt];
\fill  (1,1.25) circle [radius=2pt];
\fill  (9,1) circle [radius=2pt];
\node at (0.75,1.25) {$z$};
\node at (9.45,1.25) {$\gamma(z)$};
\node at (0,2.7) {$(\mc H^i(\gamma))(z)$};
\end{tikzpicture}
\caption{The extended holonomy transformation $\mc H^i(\gamma)$: the deck transformation $\gamma$ is represented as an horizontal translation and $L$ is the leaf of $\tilde\F=q^{-1}\F$ containing $\gamma(z)$.}\label{fig5}
\end{figure}
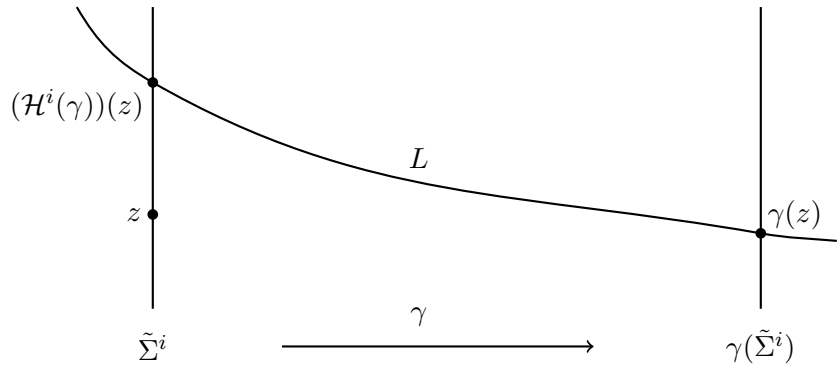
\end{center}

\subsection{Monodromy}\label{subsecmonodromy} Keeping the notations of previous subsection, let us consider now the group 
\begin{equation*}\label{autQ}
\underleftarrow{\mr{Aut}}(\mc Q)\subset\underleftarrow{\mr{Hom}}(\mc Q,\mc Q)
\end{equation*}
of invertible morphisms of $\underleftarrow{\mr{Hom}}(\mc Q,\mc Q)$ in the category $\underleftarrow{\mr{Top}}$,  see (\ref{Hom}). 
Any deck transformation
$\gamma\in \Gamma$ keeps invariant the foliation $\tilde\F$,  it induces an automorphism of each leaf space $Q_{\tilde U_n}$, $n\in\mb N$, and thus  defines an automorphism  $\gamma_\ast\in \underleftarrow{\mr{Aut}}(\mc Q)$.
\begin{defin}\label{definmonodromy}
The map 
\[
\mf m^\F:\Gamma\to
\underleftarrow{\mr{Aut}}(\mc Q)\,,\quad
\gamma\mapsto\gamma_\ast\,,
\] 
is a group morphism which we call  \emph{monodromy representation} of $\F$.
\end{defin}
\noindent In fact the values of  $\mf m^\F$ are in  the group of holomorphic automorphisms of $\mc Q$.\\

The commutative diagram below shows  that   $\mc H^i(\gamma)$ can be interpreted as  the ``expression of $\mf m^\F(\gamma)$ in the chart $\tilde\Sigma^i$''
\begin{equation}
\label{mH}
\xymatrix{
\mc Q \ar[rr]^{\mf{m}^\F(\gamma)}  & & \mc Q\\
(\tilde\Sigma^i,\infty)\ar@{^{(}->}[u]_{\tau_{{}_{\tilde\Sigma^{i}}}} \ar[r]^<<<<{\gamma}
\ar@/_2pc/[rr]^{\mc H^i(\gamma)}
&(\gamma(\tilde\Sigma^{i}),\infty)\ar@{^{(}->}[ur]^{\tau_{{}_{\gamma(\tilde\Sigma^i)}}}\ar[r]^<<<<<{m^{i\gamma(i)}} 
&(\tilde\Sigma^i,\infty)\ar@{^{(}->}[u]_{\tau_{{}_{\tilde\Sigma^i}}}
}
\end{equation}

Now for another foliation $\F'$, let us fix $\mb T'$, $S'$, $\Delta'$, $\check\Delta'$, $\mc U':=(U'_n)_n$, $q':\tilde{\mb T}'\to \mb T'\setminus S'$, $D'$, $\tilde{\mc B}_{D'}^{'\xi'}$, $\Sigma'$, $\sigma'$ and let us denote by $\tilde\F'$,  $\Gamma'$, $\mc Q'$,  $(\tilde\Sigma'{}_n^{j})_{j\in\mc J'}$, $\Gamma'_{\xi'}$, the analogous elements for $\F'$ that we have considered for $\F$. We will also denote more simply by $\mf m$ and  $\mf m'$ the monodromy representations $\mf m^\F$ and $\mf m^{\F'}$ of $\F$ and $\F'$ and by $\mc H^i$ and $\mc H'{}^{i'}$ their extended holonomies along $\mc{B}_D$ and $\mc{B}'_{D'}$,  over $\tilde\Sigma^i$ and $\tilde\Sigma'{}^{i'}$.\\

A \emph{$\mc C^0$-conjugacy\footnote{In \cite{MM3} this corresponds to the notion of geometric conjugacy.} between the monodromies} $\mf m$ and $\mf m'$ is a pair $(\tilde g_\ast,h)$ formed by 
\begin{enumerate}[(i)]
\item\label{conj1} a group morphism $\tilde g_\ast :\Gamma\to\Gamma'$, $\gamma\mapsto \tilde g\circ\gamma\circ \tilde g^{-1}$, defined by a lifting $\tilde g:\tilde{\mb T}\to\tilde{\mb T}'$ of a homeomorphism $g:(\mb T,S)\to(\mb T',S')$,
\item an isomorphism $h:\mc Q\to\mc Q'$ in the category $\underleftarrow{\mr{Top}}$
\end{enumerate}
such that the following diagram is commutative:
\begin{equation}\label{conjmonodr}
\xymatrix{\Gamma\ar[r]^{\mf m\hphantom{aaa}}\ar[d]_{\tilde g_*} & \underleftarrow{\mr{Aut}}(\mc Q)\ar[d]^{h_*}\\
\Gamma' \ar[r]^{\mf m'\hphantom{aaa}} & \underleftarrow{\mr{Aut}}(\mc Q')}
\end{equation}
where $\tilde g_*(\gamma):= \tilde g\circ\gamma\circ\tilde g^{-1}$  and $h_\ast(\psi):= h\circ \psi\circ h^{-1}$. 
According to Theorem~\ref{curves}, without modifying the first element $\tilde g_*$, we always can choose the homeomorphism $g$ defining it so that its lifting through the reduction maps  $E_\F$ and $E_{\F'}$   extends homeomorphically to the exceptional divisor $\mc E_\F$ and this extension is holomorphic at the  singular points of $S^\sharp:=E_\F^{-1}(S)$; we then say that $g$ is  an \emph{excellent conjugacy between $S$ and $S'$}.

\begin{defin}\label{conjmon}
We will say that a conjugacy $(\tilde g_*,h)$ between $\mf m$ and $\mf m'$  is \emph{excellent} if \emph{$h$ is excellent} in the following sense:  there is a union $K$ of nodal and dicritical separators of $\F$ such that $h=\limproj_{n}\limind_{m}h_{m,n}$ and, for large enough $m, n$, the $\mc C^0$-maps   $h_{mn}:Q_{m}\to Q'_{n}$ are  holomorphic at each point $L\in Q_{m}$ that is a leaf not intersecting $K$.
\end{defin}
Similary a \emph{holomorphic conjugacy between the extended holonomies} $\mc H^{i}$ and $\mc H'{}^{i'}$ along $\mc{B}_D$ and $\mc{B}'_{D'}$ over  $\tilde\Sigma^i$ and $\tilde\Sigma'{}^{i'}$ is a pair $(\tilde g_*,\tilde \varphi)$ formed by a group morphism $\tilde g_\ast:\Gamma_\xi\to\Gamma'_{\xi'}$  such that:
\begin{enumerate}
\item $g$ is an excellent conjugacy   between $S$ and $S'$ whose lifting $g^\sharp$ through $E_\F$ and $E_{\F'}$ sends $D$ onto $D'$,
\item $\tilde g$ is a lifting  of $g$ through $q$ and $q'$ such that for large enough  $n\gg p$,  $\tilde g(\tilde\Sigma_n^{i})$ is contained in $\tilde\Sigma'_p{}^{i'}$  and consequently the induced by $g$ group morphism $\tilde g_*:\Gamma\to\Gamma'$ sends $\Gamma_\xi$ onto $\Gamma'_{\xi'}$,
\item  $\tilde\varphi:(\tilde \Sigma^i,\infty)\to (\tilde\Sigma'{}^{i'},\infty)$ is  a lifting of a biholomorphism germ $\varphi:(\Sigma,\sigma)\to (\Sigma',\sigma')$ and for any $\gamma\in\Gamma_\xi$ the following diagram is commutative:
\begin{equation}\label{conjextholo}
\xymatrix{
(\tilde \Sigma^i,\infty)\ar[r]^{\tilde\varphi}\ar[d]^{\mc H^i(\gamma)} & (\tilde\Sigma'{}^{i'},\infty)\ar[d]^{\mc H'{}^{i'}(\tilde g_*(\gamma))}\phantom{\,.}
\\
(\tilde \Sigma^i,\infty)\ar[r]^{\tilde\varphi}  & (\tilde\Sigma'{}^{i'},\infty)
\,.}
\end{equation}\end{enumerate}

With these notations we have:
\begin{prop}\label{conjMonExtHol}
Let us consider  $(\tilde g_\ast,h)$  a conjugacy between the monodromies $\mf m$ and $\mf m'$ of $\F$ and $\F'$ 
such that $g$ is excellent, $g^\sharp(D)=D'$ and  $\tilde{g}(\tilde\Sigma_n^{i})\subset\tilde{\Sigma}'_n{}^{i'}$. Let $\tilde\varphi : (\tilde\Sigma^i,\infty)\to(\tilde\Sigma'{}^{i'},\infty)$
be a lifting 
 of a biholomorphism germ $\varphi:(\Sigma,\sigma)\to (\Sigma',\sigma')$, that is \emph{compatible with $h$} in the sense that the following diagram 
\begin{equation}\label{relcompat}
\xymatrix{
(\tilde\Sigma^i,\infty)\ar[r]^{\tilde\varphi}\ar[d]^{\tau} & (\tilde\Sigma'{}^{i'},\infty)\ar[d]^{\tau'}\\
\mc Q\ar[r]^{h} & \mc Q'
}
\end{equation}
is commutative, with $\tau$, resp. $\tau'$, denoting the tautological morphisms $\tau_{{}_{\tilde\Sigma^{i}}}$, resp.  $\tau_{{}_{\tilde\Sigma'{}^{i'}}}$ defined in (\ref{tautologicalMorphism}). 
Then $(\tilde g_\ast, \tilde\varphi)$ is a holomorphic conjugacy between the extended holonomies of $\F$ and $\F'$ along $\mc B_D$ and $\mc B'_{D'}$ over $\tilde\Sigma^i$ and $\tilde\Sigma'{}^{i'}$. 
\end{prop}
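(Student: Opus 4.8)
The plan is to verify, one by one, the three conditions in the definition of a \emph{holomorphic conjugacy between the extended holonomies}. Conditions (1) and (3) --- that $g$ is excellent with $g^\sharp(D)=D'$ and that $\tilde\varphi$ is a lifting of a biholomorphism germ $\varphi:(\Sigma,\sigma)\to(\Sigma',\sigma')$ --- are already part of the hypotheses, so the substance of the argument is twofold: first, that the group morphism $\tilde g_*$ maps $\Gamma_\xi$ onto $\Gamma'_{\xi'}$ (the part of condition (2) that is not directly assumed, and which is needed merely for $\mc H'{}^{i'}(\tilde g_*(\gamma))$ to be defined); second, the commutativity of diagram (\ref{conjextholo}).

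For the first point I would argue as follows. Since $g$ is excellent with $g^\sharp(D)=D'$, its lifting $g^\sharp$ through $E_\F$ and $E_{\F'}$ sends the geometric block $\mc B_D$ to a set having the same germ along $D'$ as $\mc B'_{D'}$, both being the block attached to the vertex $D'$ and constructed as in \S\ref{ext-hol} from a tubular retraction (this uses that $g^\sharp$ is a homeomorphism near the exceptional divisors respecting the dual graphs, i.e.\ the content recalled via Theorem~\ref{curves}). Consequently $\tilde g$ induces a morphism $(\tilde{\mc B}_D^\xi,\infty)\to(\tilde{\mc B}'_{D'}{}^{\xi'},\infty)$: indeed the hypothesis $\tilde g(\tilde\Sigma_n^i)\subset\tilde\Sigma'_n{}^{i'}$, together with $\tilde\Sigma^i\subset\tilde{\mc B}_D^\xi$ and $\tilde\Sigma'{}^{i'}\subset\tilde{\mc B}'_{D'}{}^{\xi'}$, forces the connected set $\tilde g(\tilde{\mc B}_D^\xi)$ to coincide, near $\mc E_{\F'}$, with $\tilde{\mc B}'_{D'}{}^{\xi'}$, since both are connected components of the $q'$-preimage of (the germ along $D'$ of) the same set and they meet along $\tilde g(\tilde\Sigma^i)$. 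Conjugation by $\tilde g$ then carries the stabilizer of $\tilde{\mc B}_D^\xi$ to that of $\tilde{\mc B}'_{D'}{}^{\xi'}$, so $\tilde g_*(\Gamma_\xi)\subseteq\Gamma'_{\xi'}$; applying the same reasoning to $\tilde g^{-1}$, a lifting of $g^{-1}$ (again excellent, with $(g^{-1})^\sharp(D')=D$), yields the reverse inclusion, hence $\tilde g_*(\Gamma_\xi)=\Gamma'_{\xi'}$.

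For the commutativity of (\ref{conjextholo}) I would use that, by Remark~\ref{monotau}, the tautological morphism $\tau':=\tau_{\tilde\Sigma'{}^{i'}}$ is a monomorphism in $\underleftarrow{\C\text{-}\mr{Man}}$, so it suffices to check the desired equality after post-composing with $\tau'$. Writing $\tau:=\tau_{\tilde\Sigma^i}$ and using in turn the compatibility square (\ref{relcompat}) ($\tau'\circ\tilde\varphi=h\circ\tau$), diagram (\ref{mH}) for $\F$ ($\tau\circ\mc H^i(\gamma)=\mf m(\gamma)\circ\tau$), the monodromy-conjugacy square (\ref{conjmonodr}) ($h\circ\mf m(\gamma)=\mf m'(\tilde g_*(\gamma))\circ h$), again (\ref{relcompat}), and finally the analogue of (\ref{mH}) for $\F'$ ($\mf m'(\tilde g_*(\gamma))\circ\tau'=\tau'\circ\mc H'{}^{i'}(\tilde g_*(\gamma))$), one obtains, for every $\gamma\in\Gamma_\xi$,
\begin{align*}
\tau'\circ\tilde\varphi\circ\mc H^i(\gamma)
&= h\circ\tau\circ\mc H^i(\gamma)
= h\circ\mf m(\gamma)\circ\tau\\
&= \mf m'(\tilde g_*(\gamma))\circ h\circ\tau
= \mf m'(\tilde g_*(\gamma))\circ\tau'\circ\tilde\varphi\\
&= \tau'\circ\mc H'{}^{i'}(\tilde g_*(\gamma))\circ\tilde\varphi,
\end{align*}
and cancelling the monomorphism $\tau'$ gives $\tilde\varphi\circ\mc H^i(\gamma)=\mc H'{}^{i'}(\tilde g_*(\gamma))\circ\tilde\varphi$, which is exactly the commutativity of (\ref{conjextholo}). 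I expect the main obstacle to be the first point: making precise, in the inverse-system (``germ at infinity'') language, that $\tilde g$ respects the block $\tilde{\mc B}_D^\xi$ up to the auxiliary choices (retraction $\rho_D$, discs $D_{s_j}$, Milnor tube) entering its construction, and therefore induces a correct identification $\Gamma_\xi\cong\Gamma'_{\xi'}$; everything else is a formal diagram chase in $\underleftarrow{\C\text{-}\mr{Man}}$ once that is in place.
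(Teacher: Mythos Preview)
Your proof is correct and follows essentially the same approach as the paper: the paper organizes the argument as a cubic diagram whose lateral faces commute (by (\ref{relcompat}) and (\ref{mH})) and whose top face commutes by (\ref{conjmonodr}), then deduces the bottom face (\ref{conjextholo}) using that $\tau'$ is a monomorphism --- exactly the chain of equalities you wrote out linearly. Your additional discussion of why $\tilde g_*(\Gamma_\xi)=\Gamma'_{\xi'}$ makes explicit a step the paper leaves implicit (it is the ``consequently'' in condition~(2) of the definition), and your justification via connectedness of $\tilde g(\tilde{\mc B}_D^\xi)$ is sound.
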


\begin{proof} For any connected component $\tilde\Sigma^i$ of $\tilde \Sigma$, let us consider the following diagram  where $\tilde \Sigma'{}^{{i'}}$ is the image of $\tilde\Sigma^i$ by $\tilde\varphi$, $\gamma\in\Gamma_\xi$ and $\gamma':=\tilde\varphi_\ast(\gamma)\in\Gamma'_{\xi'}$.  
\begin{equation}\label{cubicdiagram}
\xymatrix
{ 
& \mc Q \ar@{->}[rr]^h\ar@{<-^)}'[d][dd]_>>>>>>>{\tau}
&&\mc Q' \ar@{<-^)}[dd]_>>>>>>>{\tau'}\\ 
\mc Q \ar@{->}[ur]^{\mf m(\gamma)}
\ar@{->}[rr]^>>>>>>>>>>>h\ar@{<-^)}[dd]_>>>>>>>>>>>{\tau}
&&\mc Q'\ar@{->}[ur]^{\mf m'(\gamma')}\ar@{<-^)}[dd]_>>>>>>>>{\tau'}\\
    &(\tilde\Sigma^i,\infty) \ar@{->}
    '[r]
   [rr]^<<<<<{\tilde\varphi}
    && (\tilde\Sigma^{'{i'}},\infty)\\
(\tilde\Sigma^i,\infty) \ar@{->}[rr]^{
\tilde{\varphi}
}\ar@{->}[ur]^{\mc H^i({\gamma})} && (\tilde\Sigma^{'{i'}},\infty) \ar@{->}[ur]_<<<<<<<<<<<<{\mc H^{'{i'}}({\gamma'})}
}
\end{equation}
According to the $\tilde\varphi$-compatibility relation (\ref{relcompat}) and the
extended holonomy relations (\ref{mH}), all lateral squares of this cubic diagram are commutative.
We need to prove the commutativity of the bottom square (\ref{conjextholo}). 
For this, first notice that we have the following equivalence:
\begin{equation}\label{equ}
\tilde\varphi\circ\mc H^i(\gamma)=\mc H'{}^{{i'}}\circ\tilde\varphi\;\;
\Longleftrightarrow \;\;
\tau'\circ \tilde\varphi\circ\mc H^i(\gamma)\;=\tau'\circ\mc H'{}^{{i'}}\circ\tilde\varphi
\end{equation}
 because $\tau'$ is a monomorphism, see Remark \ref{monotau}. But the commutativity of the lateral squares gives: 
 \[\tau'\circ
 \tilde\varphi\circ\mc H^i(\gamma)=h\circ\mf m(\gamma)\circ\tau
 \quad
 \hbox{ and }\quad \tau'\circ\mc H'{}^{{i'}}\circ\tilde\varphi=\mf m'(\gamma')\circ h\circ\tau\,.
 \]
 Therefore the two equivalent equalities in (\ref{equ}) are also  equivalent to 
 \[h\circ\mf m(\gamma)\circ\tau=\mf m'(\gamma')\circ h\circ\tau
\,. \]
But that equality is satisfied because the top square in  diagram (\ref{cubicdiagram}) is commutative thanks to
 the conjugacy of monodromies relation (\ref{conjmonodr}). 
\end{proof}

With the same notations we also have:
\begin{prop}\label{conjHolproj} Let  $(\tilde g_\ast, \tilde\varphi)$ be a conjugacy between the extended holonomies $\mc H^{i}$ and $\mc H^{'^{i'}}$ of $\F$ and $\F'$ along $\mc B_D$ and $\mc B_{D'}$  over $\tilde\Sigma^{i}$ an $\Sigma^{'{i'}}$ respectively. Then the holonomy representations $H_{D}^{\F}$ and $H_{D'}^{\F'}$ of the foliations  $\F^\sharp$ and $\F'{}^\sharp$ along $D$ and $D'$ are holomorphically conjugated, more precisely the following diagram is commutative:
\begin{equation}\label{conjhol}
\xymatrix{\pi_1(D^*,\sigma)\ar[r]^{H_D^\F}\ar[d]_{ g^{\sharp}_*}&\mr{Diff}(\Sigma,\sigma)\ar[d]^{\varphi_*}\\ \pi_1(D'^*,\sigma')\ar[r]^{H_{D'}^{\F'}} &\mr{Diff}(\Sigma',\sigma')}
\end{equation}
with $g^\sharp_\ast$ induced by the restriction of $g^\sharp$ to $D^*:=D\setminus \mr{Sing}(\F^\sharp)$ onto $D'{}^*:=D'\setminus \mr{Sing}(\F'{}^\sharp)$ and $\varphi_\ast(f):=\varphi\circ f\circ \varphi^{-1}$.
\end{prop}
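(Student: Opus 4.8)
The plan is to obtain the commutative square~(\ref{conjhol}) by transporting the commutativity of~(\ref{conjextholo}) from the pro-objects $(\tilde\Sigma^i,\infty)$, $(\tilde\Sigma'{}^{i'},\infty)$ down to the transversal germs $(\Sigma,\sigma)$, $(\Sigma',\sigma')$ through the covering projections onto the transversals. First I would record these projections. Following \S\ref{ext-hol}, the restriction of $q$ to a component $\tilde\Sigma^i$ of $q^{-1}(E_\F(\Sigma))\cap\tilde{\mc B}_D^\xi$ is a covering of $E_\F(\Sigma)\setminus\{0\}$; composing it with the canonical identification $E_\F(\Sigma)\simeq\Sigma$ gives a covering map, hence a morphism of pro-objects
\[
P^i\colon(\tilde\Sigma^i,\infty)\longrightarrow(\Sigma,\sigma),
\]
which is the universal covering of $\Sigma\setminus\{\sigma\}$ (recall that $\tilde\Sigma^i$ is an embedded disc), with group of deck transformations the infinite cyclic subgroup of $\Gamma_\xi$ generated by a lift $\tilde m_D$ of the meridian of $D$; and likewise $P'{}^{i'}\colon(\tilde\Sigma'{}^{i'},\infty)\to(\Sigma',\sigma')$ for $\F'$.

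Next I would isolate two structural facts, both essentially built into the construction of the extended holonomy. (a) The disc fibration $\varpi_D$ of $\mc B_D\setminus S^\sharp$ transverse to $\F^\sharp$ — equal to $\rho_D$ over the generic part of $D$ and, near each $s\in\mr{Sing}(\F^\sharp)\cap D$, to a disc fibration transverse to $D$ — is a locally trivial bundle over $D^*:=D\setminus\mr{Sing}(\F^\sharp)$ with fibre a meridian of $D$; it is onto $D^*$ because $\mc B_D$ contains, beyond each such $s$, its dead branch, so that $D^*$ itself (and not a proper quotient of it) is the base. Hence $\varpi_D$ induces a surjective morphism $\kappa_D\colon\Gamma_\xi\twoheadrightarrow\pi_1(D^*,\sigma)$ with kernel the normal closure of $\tilde m_D$; moreover, since $g$ is excellent with $g^\sharp(D)=D'$ and $\tilde g$ lifts $g$, the homeomorphism $g^\sharp$ respects $\varpi_D$ and $\varpi_{D'}$ up to homotopy, whence $\kappa_{D'}\circ\tilde g_\ast=g^\sharp_\ast\circ\kappa_D$. (b) Unwinding $\mc H^i(\gamma)=m^{i\gamma(i)}\circ\gamma$ and using that $q$ carries leaves of $\tilde\F$ onto leaves of $\F^\sharp$ while the holonomy of $\F^\sharp$ along $D$ is obtained by lifting loops of $D^*$ to the leaves near $D$, one checks that $\mc H^i(\gamma)$ is the $P^i$-lift of $H_D^\F(\kappa_D(\gamma))$, i.e.
\[
P^i\circ\mc H^i(\gamma)=H_D^\F\!\big(\kappa_D(\gamma)\big)\circ P^i\qquad(\gamma\in\Gamma_\xi),
\]
and similarly $P'{}^{i'}\circ\mc H'{}^{i'}(\gamma')=H_{D'}^{\F'}\!\big(\kappa_{D'}(\gamma')\big)\circ P'{}^{i'}$ for $\F'$.

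Granting (a) and (b), the rest is a formal chase. Condition~(3) in the definition of a holomorphic conjugacy of the extended holonomies says that $\tilde\varphi$ is a lift of $\varphi$ through $P^i$ and $P'{}^{i'}$, that is $P'{}^{i'}\circ\tilde\varphi=\varphi\circ P^i$ as germs. Given $[\ell]\in\pi_1(D^*,\sigma)$, write $[\ell]=\kappa_D(\gamma)$ with $\gamma\in\Gamma_\xi$ (possible since $\kappa_D$ is onto) and set $\gamma'=\tilde g_\ast(\gamma)$, so that $\kappa_{D'}(\gamma')=g^\sharp_\ast([\ell])$ by (a). Composing on the right with $P^i$ and using in turn (b) for $\F$, the relation $\varphi\circ P^i=P'{}^{i'}\circ\tilde\varphi$, the commutativity of~(\ref{conjextholo}), (b) for $\F'$ together with $\kappa_{D'}(\gamma')=g^\sharp_\ast([\ell])$, and again $P'{}^{i'}\circ\tilde\varphi=\varphi\circ P^i$, we obtain
\begin{align*}
\varphi\circ H_D^\F([\ell])\circ P^i
&=\varphi\circ P^i\circ\mc H^i(\gamma)
=P'{}^{i'}\circ\tilde\varphi\circ\mc H^i(\gamma)
=P'{}^{i'}\circ\mc H'{}^{i'}(\gamma')\circ\tilde\varphi\\
&=H_{D'}^{\F'}\!\big(g^\sharp_\ast([\ell])\big)\circ P'{}^{i'}\circ\tilde\varphi
=H_{D'}^{\F'}\!\big(g^\sharp_\ast([\ell])\big)\circ\varphi\circ P^i.
\end{align*}
Since $P^i$ is surjective (an open covering onto a punctured neighbourhood of $\sigma$) it is right-cancellable, so $\varphi\circ H_D^\F([\ell])=H_{D'}^{\F'}(g^\sharp_\ast([\ell]))\circ\varphi$, i.e.\ $\varphi_\ast(H_D^\F([\ell]))=H_{D'}^{\F'}(g^\sharp_\ast([\ell]))$. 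As $[\ell]$ is arbitrary, diagram~(\ref{conjhol}) commutes.

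The hard part will be fact (b), namely the precise dictionary between the extended and the ordinary holonomy: one has to verify that the loop $\kappa_D(\gamma)\subset D^*$ attached to $\gamma\in\Gamma_\xi$ is the correct one and that the recipe ``apply the deck automorphism $\gamma$, then come back along the leaf of $\tilde\F$'' reads down, through $P^i$, exactly to the lift along the nearby leaves of that loop — in particular near the singular points of $\F^\sharp$ lying on dead branches, where one must also see that $\mc B_D$ is big enough for $\kappa_D$ to stay onto — together with the bookkeeping of the base points $\sigma$, $\sigma'$ with respect to $\tilde g$ and $\tilde\varphi$. Modulo this (which is contained in \S\ref{ext-hol} and in \cite{MM3}), the statement follows from the chase above, the remaining ingredient being the compatibility $\kappa_{D'}\circ\tilde g_\ast=g^\sharp_\ast\circ\kappa_D$ furnished by the excellence of $g$.
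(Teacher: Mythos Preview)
Your strategy is the paper's: project via the covering map $P^i$ (the paper writes $q_\infty$), relate $\mc H^i$ to $H_D^\F$ by an upstairs--downstairs identity, and then cancel $P^i$ using that it is an epimorphism in $\underleftarrow{\C\text{-}\mr{Man}}$. This is exactly the cubic-diagram chase sketched in the paper's ``Idea of the proof''.

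There is, however, a genuine slip in your fact~(a). When $D$ carries dead branches (i.e.\ $r<v$ in the notation of \S\ref{ext-hol}) the fibration $\pi_D\colon\mc B_D\setminus S^\sharp\to K_D$ of (\ref{fibrSeifertBD}) has base $K_D$, which \emph{properly contains} $D^*=D\setminus\mr{Sing}(\F^\sharp)$: the attaching points $s_{r+1},\dots,s_v$ of the dead branches are not removed from $K_D$. So what the fibration yields is a map $\Gamma_\xi\to\pi_1(K_D,\sigma)$, a proper quotient of $\pi_1(D^*,\sigma)$, and there is no natural surjection $\kappa_D\colon\Gamma_\xi\twoheadrightarrow\pi_1(D^*,\sigma)$ with kernel the meridian. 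This is not just cosmetic: the holonomy of $\F^\sharp$ around a dead-branch point is a nontrivial root of unity, so $H_D^\F$ does \emph{not} factor through $\pi_1(K_D,\sigma)$, and the expression $H_D^\F(\kappa_D(\gamma))$ in your~(b) is ill-posed in that case. The paper repairs this by replacing your $\kappa_D$ with the assignment $\gamma\mapsto\dot\gamma\in\pi_1(D^*,\sigma)/\ker(H_D^\F)$, built by projecting via $\pi_D\circ E_\F^{-1}\circ q$ a path lying in a leaf of $\tilde\F$ and joining $\tilde\Sigma^i$ to $\gamma(\tilde\Sigma^i)$; using the presentation of $\pi_1(\mc B_D\setminus S^\sharp)$ from \cite[\S4.1]{MM3} one checks that $\dot\gamma$ is well defined modulo $\ker(H_D^\F)$ and that $\gamma\mapsto\dot\gamma$ is onto. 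With $\kappa_D$ replaced by $\dot\gamma$ (and the analogous $\dot\gamma'$ on the $\F'$ side), your identity~(b) becomes $P^i\circ\mc H^i(\gamma)=H_D^\F(\dot\gamma)\circ P^i$ and your compatibility in~(a) becomes $\dot{(\tilde g_\ast\gamma)}=g^\sharp_\ast(\dot\gamma)$; after that your chase goes through verbatim. When $r=v$ (no dead branches) your formulation is already correct.
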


\begin{proof}[Idea of the proof.]
We can assume that $\Sigma$ is the (regular) fiber over $\sigma\in D\setminus \mr{Sing}(\F^\sharp)$  of the fibration $\pi_D:\mc B_D\setminus S^\sharp\to K_D$ previously  described, see (\ref{fibrSeifertBD}). For any
$\gamma\in\Gamma_\xi$, thanks to  Proposition~\ref{mij} there is a path $\alpha$ in a leaf 
of $\tilde \F_{|\tilde{\mc B}_D^\xi}$ with endpoints in $\tilde\Sigma^i$ and $\gamma(\tilde\Sigma^i)$; when $r<v$ we can choose $\alpha$ generic enough so that the image of $E_\F^{-1}\circ q\circ\alpha$ meets no  exceptional fiber $\pi_D^{-1}(s_j)$, $j=r+1,\ldots,v$. 
The particular presentation of the fundamental group of $\mc B_D\setminus S^\sharp$ given in
\cite[\S 4.1]{MM3}, allows to check that
\begin{itemize}\it
\item the class $\dot\gamma$ of the loop  $\pi_D\circ E_\F^{-1}\circ q\circ\alpha$  in the quotient group $\,\pi_1(D^\ast, \sigma)/\ker(H_D^\F)\,$ does not depend on the choice of the path $\alpha$ but only on $\gamma$;
\item the map $\Gamma_\xi\to \,\pi_1(D^\ast, \sigma)/\ker(H_D^\F)$, $\,\gamma\mapsto\dot\gamma$, is surjective and $H_D^\F(\dot\gamma)$ is well defined.
\end{itemize}
For  $\gamma'=\tilde g_*(\gamma)\in\Gamma'_{\xi'}$  we similarly define $\dot\gamma'\in\pi_1(D'^*,\sigma')/\ker(H^{\F'}_{D'})$.
Let  $q_\infty$, resp.  $q'_\infty$, be the morphism in the category $\underleftarrow{\C\text{-Man}}$ induced by the restrictions of the covering map $q$, resp.  $q'$, to $q^{-1}(U_n)$, resp.  $q'{}^{-1}(U'_n)$, $n\in \mb N$. It follows from the construction of $\dot\gamma$ and $\dot\gamma'$ that all lateral squares of the following cubic diagram are commutative:
\[\xymatrix
{ 
& (\tilde\Sigma^i,\infty)  \ar@{->}[rr]^{\tilde\varphi}\ar@{->>}'[d][dd]^<<<<{q_\infty}
&&(\tilde\Sigma^{'{i'}},\infty)\ar@{->>}[dd]^{q'_\infty}\\ (\tilde\Sigma^i,\infty)\ar@{->}[ur]^{\mc H^i({\gamma})}
\ar@{->}[rr]^>>>>>>>>>>>{\tilde\varphi}\ar@{->>}[dd]^{q_\infty}
&&(\tilde\Sigma^{'{i'}},\infty)\ar@{->}[ur]^{\mc H^{'{i'}}({\gamma'})}\ar@{->>}[dd]^<<<<<<<{q_\infty'}\\
    &(\Sigma,\sigma) \ar@{->}'[r][rr]^<<<<<<<{\varphi}
    && (\Sigma',\sigma')\\
(\Sigma,\sigma) \ar@{->}[rr]^{
{\varphi}
}\ar@{->}[ur]^{H^\F_D(\dot{\gamma})} && (\Sigma',\sigma') \ar@{->}[ur]^<<<<<<{H^{\F'}_{D'}(\dot{\gamma}')}
}\]
The top square is commutative by hypothesis. We deduce that the bottom square is also commutative by proceeding as in the proof of Proposition~\ref{conjMonExtHol} after noting that $q_\infty$ and $q'_\infty$ are epimorphisms in the category $\underleftarrow{\C\text{-Man}}$.
\end{proof}

Finally, as a consequence of  Propositions~\ref{conjMonExtHol} and~\ref{conjHolproj} we obtain:
\begin{teo}\label{monhol}\cite[Theorem 4.3.1]{MM3} Let $(\tilde g_\ast,h)$ be a $\mc C^\mr{ex}$-conjugacy between the monodromies of $\F$ and $\F'$, let $\Sigma$ and $\Sigma':=g^\sharp(\Sigma)$ be transverse sections to invariant irreducible components $D\subset \mc E_\F$ and $D':=g^\sharp (D)\subset\mc E_{\F'}$. Let $\tilde\varphi:\tilde\Sigma\iso\tilde\Sigma'$ be a lifting of a biholomorphism $\varphi:\Sigma\to\Sigma'$, that is compatible to~$h$. Then the holonomies $H^\F_D$ and $H^{\F'}_{D'}$ of $\F^\sharp$ and $\F'{}^\sharp$ along $D$ and $D'$ are holomorphically conjugated by $\varphi$, i.e.  diagram (\ref{conjhol}) is commutative.
\end{teo}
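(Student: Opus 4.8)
The plan is to derive the statement by composing Proposition~\ref{conjMonExtHol} with Proposition~\ref{conjHolproj}, which is exactly the two-step route announced just above the statement. No new computation is needed: all the work lies in arranging that the geometric data furnished by the hypothesis match the hypotheses of those two propositions, after which the conclusion is a purely formal concatenation of the cubic diagrams displayed inside their proofs.

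First I would fix the data entering the extended holonomies. Since the $\mc C^{\mr{ex}}$-conjugacy $(\tilde g_\ast,h)$ comes with an \emph{excellent} base homeomorphism $g:(\mb T,S)\to(\mb T',S')$, its lifting $g^\sharp$ through $E_\F$ and $E_{\F'}$ is defined along $\mc E_\F$, is holomorphic at the singular points of $S^\sharp$, and sends $D$ to $D'=g^\sharp(D)$ biholomorphically near the chosen regular point $\sigma=\Sigma\cap D$; with $\Sigma'=g^\sharp(\Sigma)$ and $\sigma'=\Sigma'\cap D'$ this gives an isomorphism $g^\sharp_\ast:\pi_1(D^\ast,\sigma)\to\pi_1(D'^\ast,\sigma')$. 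Next I would pick a component $\tilde{\mc B}_D^\xi$ of $q^{-1}(E_\F(\mc B_D)\setminus S)$ and a component $\tilde\Sigma^i\subset\tilde{\mc B}_D^\xi$ of $q^{-1}(E_\F(\Sigma))$; because $\tilde g$ lifts $g$ and $g^\sharp(D)=D'$, $\tilde g$ carries $\tilde{\mc B}_D^\xi$ into a component $\tilde{\mc B}_{D'}^{'\xi'}$ and carries $\tilde\Sigma_n^i=\tilde\Sigma^i\cap\tilde U_n$ into a component $\tilde\Sigma'_p{}^{i'}$ for $n\gg p$. Composing $\tilde\varphi$ with a suitable element of $\Gamma'_{\xi'}$ if necessary, I may assume $\tilde\varphi(\tilde\Sigma^i)=\tilde\Sigma'{}^{i'}$ for that same $i'$, and that $\tilde\varphi$ is a lifting of a biholomorphism germ $\varphi:(\Sigma,\sigma)\to(\Sigma',\sigma')$ which is $h$-compatible in the sense of diagram~(\ref{relcompat}) --- precisely what the theorem supplies. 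Recall from \S\ref{ext-hol} that, as $\mc B_D\setminus S^\sharp$ is incompressible in $E_\F^{-1}(\mb T)\setminus S^\sharp$, the group $\Gamma_\xi$ is canonically the deck group of the universal covering of $E_\F(\mc B_D\setminus S^\sharp)$, and likewise for $\Gamma'_{\xi'}$.

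Second, I would apply Proposition~\ref{conjMonExtHol}: its hypotheses are now met ($g$ excellent, $g^\sharp(D)=D'$, $\tilde g(\tilde\Sigma_n^i)\subset\tilde\Sigma'_n{}^{i'}$ after shrinking the fundamental system $(U_n)_n$, and $\tilde\varphi:(\tilde\Sigma^i,\infty)\to(\tilde\Sigma'{}^{i'},\infty)$ an $h$-compatible lifting of $\varphi$), so $(\tilde g_\ast,\tilde\varphi)$ is a holomorphic conjugacy between the extended holonomies $\mc H^i$ of $\F$ along $\mc B_D$ over $\tilde\Sigma^i$ and $\mc H'{}^{i'}$ of $\F'$ along $\mc B'_{D'}$ over $\tilde\Sigma'{}^{i'}$: the morphism $\tilde g_\ast$ sends $\Gamma_\xi$ onto $\Gamma'_{\xi'}$ and diagram~(\ref{conjextholo}) commutes for every $\gamma\in\Gamma_\xi$. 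Third, I would feed this holomorphic conjugacy of extended holonomies into Proposition~\ref{conjHolproj}, whose input is exactly such a pair $(\tilde g_\ast,\tilde\varphi)$ and whose output is the commutativity of diagram~(\ref{conjhol}): the projective holonomy representations $H^\F_D:\pi_1(D^\ast,\sigma)\to\mr{Diff}(\Sigma,\sigma)$ and $H^{\F'}_{D'}:\pi_1(D'^\ast,\sigma')\to\mr{Diff}(\Sigma',\sigma')$ are intertwined by $g^\sharp_\ast$ and by $\varphi_\ast(f)=\varphi\circ f\circ\varphi^{-1}$. Since $\sigma'=g^\sharp(\sigma)$ and $\varphi(\sigma)=\sigma'$, this is the asserted holomorphic conjugacy of $H^\F_D$ and $H^{\F'}_{D'}$ by $\varphi$, and the theorem follows.

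The main obstacle is not either application itself --- each is essentially formal once its hypotheses hold --- but the compatibility bookkeeping of the first step: checking that a single choice of connected components $\tilde\Sigma^i$, $\tilde\Sigma'{}^{i'}$ and of subgroups $\Gamma_\xi$, $\Gamma'_{\xi'}$ can be arranged to satisfy simultaneously $\tilde g(\tilde\Sigma_n^i)\subset\tilde\Sigma'_n{}^{i'}$ and $\tilde\varphi(\tilde\Sigma^i)=\tilde\Sigma'{}^{i'}$, and that the excellence assumptions (holomorphy of the $h_{mn}$ away from a union of nodal and dicritical separators, and holomorphy of $g^\sharp$ at the singular points of $S^\sharp$) are exactly what produce the $h$-compatible biholomorphism germ $\varphi$ and the incompressibility of $\mc B_D\setminus S^\sharp$ needed to identify $\Gamma_\xi$ with a deck group. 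Once these identifications are fixed, the proof is the two-line composition above.
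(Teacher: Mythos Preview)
Your proposal is correct and follows exactly the route the paper takes: the theorem is stated there as an immediate consequence of Propositions~\ref{conjMonExtHol} and~\ref{conjHolproj}, with no further argument given. Your only addition is spelling out the component-matching bookkeeping that the paper leaves implicit.
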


\subsection{Classification Theorem}\label{TCT}  We  consider  two germs of foliations $\F$ and $\F'$ at $0\in\C^2$ that are generalized curves. We keep the previous notations: in a Milnor tube $\mb T$, resp. $\mb T'$, of an  appropriate curve $S$ for $\F$, resp. $S'$ for $\F'$; we again denote by  $q:\tilde {\mb T}\to \mb T\setminus S$ and $q':\tilde{\mb T}'\to \mb T'\setminus S'$ the universal covering maps and by $\Gamma$ and $\Gamma'$ the  deck transformation groups of these coverings;  $\mc Q$, resp.  $\mc Q'$, denotes  the leaf spaces inverse system for $\F$, resp.  $\F'$, defined as in (\ref{inversesystem}) by a fundamental system $\mc U=(U_n)_n$, resp. $\mc U'=(U'_n)_n$, of open neighborhoods of $S$, resp. of $S'$,  that satisfies the properties stated in Theorem \ref{IncomprThm}.

\begin{defin}
Given two subsets $V\subset\mb T$ and $V'\subset \mb T'$ whose closures meet $S$ and $S'$ respectively, we call \emph{realization on the germs $(V,S)$  and $(V',S')$ of a  conjugacy $(\tilde g_\ast, h)$} between the monodromies $\mf m$ and $\mf m'$ of $\F$ and $\F'$, the data $(\psi,\tilde \psi)$ of a germ of homeomorphism $\psi: (V,S)\to(V',S')$ and  a $\underleftarrow{\mr{Top}}$-morphism 
 \begin{equation*}\label{liftpsi}
 \tilde\psi :(\tilde V,\infty)\longrightarrow (\tilde V',\infty)\,,\qquad
 (\tilde V,\infty):=(q^{-1}(V\cap U_n))_n\,,\quad
(\tilde V',\infty):=(q'{}^{-1}(V'\cap U'_n))_n\,,
 \end{equation*}
that lifts $\psi$ i.e. $q'\circ\tilde\psi=\psi\circ q$, and satisfy the commutativity of the following diagrams
\begin{equation*}
\xymatrix{
(\tilde V,\infty)\ar[r]^{\tilde\psi}\ar[d]^{\tau} & (\tilde V',\infty)\ar[d]^{\tau'}\\
\mc Q\ar[r]^{h} & \mc Q'
}
\qquad
\qquad
\xymatrix{
\Gamma_{\tilde V}\ar[r]^{\tilde\psi_*} &\Gamma'_{\tilde V'}\\
\Gamma\ar[r]^{\tilde g^*}\ar[u]^\iota & \Gamma'\ar[u]^{\iota'}\\
}
\end{equation*}
where $ \Gamma_{\tilde V}$ and  $ \Gamma'_{\tilde V'}$ are the deck transformation groups of the (not necessarily connected) coverings $q^{-1}(V)\to V$ and $q'{}^{-1}(V')\to V'$ induced by $q$ and $q'$.  
\end{defin}
When $\psi$ lifts through the reduction of singularities maps as a homeomorphism germ between the strict transforms of $V$ and $V'$:
\[ 
\psi^\sharp:(\hat V,S)\longrightarrow (\hat V'{},S^{\sharp})\,,\quad
E_{\F'}\circ\psi^\sharp=\psi\circ E_\F\,,
\qquad
\hat{V}':=\overline{E_\F^{-1}(V')\setminus S'^\sharp}\,,
 \]
we will also say that $(\psi^\sharp,\tilde\psi)$ is a \emph{realization of $(\tilde g_*,h)$ on the germs  $(\hat V,S)$ and $(\hat{V}',S')$}.

 \begin{obs}\label{conjreal}
 If $V$ contains an open set $U$ and $(\psi,\tilde\psi)$ realizes on $(V,S)$ and $(V',S')$ a conjugacy between the monodromies of $\F$ and $\F'$, then the restriction of $\psi$ to $U$ is a conjugacy between the restriction of $\F$ to $U$ and $\F'$ to $\psi(U)$.
 \end{obs}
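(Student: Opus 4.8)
Since the restriction to an open subset of a topological conjugacy between singular foliations is again a conjugacy, the plan is only to show that, after shrinking the representative $U$ of the germ — for instance taking $U=U_n$ inside the fundamental system appearing in (\ref{inversesystem}) — the homeomorphism $\psi:U\to\psi(U)$ maps every leaf of $\F$ onto a leaf of $\F'$; this being a local question near $S$, the leaf-space data will be enough.

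First I would record that the tautological morphisms $\tau=\tau_{{}_{\tilde V}}$ and $\tau'=\tau_{{}_{\tilde V'}}$ of (\ref{tautologicalMorphism}) collapse leaves to points: by construction $\tau$ sends $p\in\tilde V\cap\tilde U_n$ to the leaf of $\tilde\F_{|\tilde U_m}$ through $p$, and, thanks to the distinguished atlas furnished by Lemma~\ref{transverse-curves} together with the incompressibility properties of the $U_n$ coming from Theorem~\ref{IncomprThm}, each leaf space $Q_n$ genuinely parametrizes the leaves of $\tilde\F_{|\tilde U_n}$, so that $\tau(p_1)=\tau(p_2)$ holds exactly when $p_1$ and $p_2$ lie on a common leaf of $\tilde\F$. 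Since $\tilde\psi$ lifts $\psi$, i.e. $q'\circ\tilde\psi=\psi\circ q$, and the first of the two commutative diagrams in the definition of a realization gives $\tau'\circ\tilde\psi=h\circ\tau$ in $\underleftarrow{\mr{Top}}$ with $h$ an isomorphism, one gets that $\tilde\psi$ induces a bijection between the leaf spaces, hence carries each leaf of $\tilde\F$ onto a leaf of $\tilde\F'$.

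Next I would descend this through the universal coverings $q$ and $q'$. If $x,y\in U\setminus S$ lie on the same leaf $L$ of $\F$, then $q^{-1}(L)$ is a single $\Gamma$-orbit of leaves of $\tilde\F$ — this is where the equivariance diagram $\tilde\psi_*\circ\iota=\iota'\circ\tilde g_*$ of the definition of a realization comes in — so one may choose lifts $\tilde x\in q^{-1}(x)$ and $\tilde y\in q^{-1}(y)$ lying on one and the same leaf of $\tilde\F$; then $\tilde\psi(\tilde x)$ and $\tilde\psi(\tilde y)$ lie on a common leaf of $\tilde\F'$, and applying $q'$ and using $q'\circ\tilde\psi=\psi\circ q$ puts $\psi(x)$ and $\psi(y)$ on a common leaf of $\F'$. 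The reverse implication follows by running the same argument for the pair $(\psi^{-1},\tilde\psi^{-1})$, which realizes on $(V',S')$ and $(V,S)$ the inverse conjugacy $((\tilde g_*)^{-1},h^{-1})$ between the monodromies of $\F'$ and $\F$: the identity $h^{-1}\circ\tau'=\tau\circ\tilde\psi^{-1}$ is formal, and $\psi$ being a homeomorphism, $\psi^{-1}$ lifts to $\tilde\psi^{-1}$.

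The step I expect to be the main obstacle is the bookkeeping inside the pro-categories: the morphisms $\tau$, $\tau'$ and $h$ only constrain $\tilde\psi$ over the small neighbourhoods $\tilde U_n$ of $S^\sharp$ for large $n$, so one must check that the leaf-space description of $\tau$ is faithful at each finite level $n$ — which is precisely what the distinguished atlas of Lemma~\ref{transverse-curves} guarantees — and that no leaf of $\F_{|U_n\setminus S}$ is lost when passing to those neighbourhoods, which follows from the saturation property $\mr{sat}(\Delta_n\setminus S,\,U_n\setminus S)=U_n\setminus S$ of the same lemma.
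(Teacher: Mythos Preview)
The paper states this as a Remark with no proof; the authors treat it as an immediate consequence of the definition of a realization. Your argument is a correct unfolding of exactly that: the tautological morphism $\tau$ is constant precisely along leaves of $\tilde\F$ (this is the very definition of the leaf space $Q_n$, no appeal to Lemma~\ref{transverse-curves} is needed), the compatibility $\tau'\circ\tilde\psi=h\circ\tau$ with $h$ invertible forces $\tilde\psi$ to send leaves of $\tilde\F$ onto leaves of $\tilde\F'$, and the lifting relation $q'\circ\tilde\psi=\psi\circ q$ brings this down to $\psi$. Two minor remarks: first, the second equivariance square in the definition of a realization is not needed for the leaf-preservation argument --- ordinary path-lifting through the covering $q$ already places chosen lifts of two points of a leaf on a common leaf of $\tilde\F$; second, your argument only handles points of $U\setminus S$, so you should add the one-line observation that $\psi(S\cap U)=S'\cap\psi(U)$ disposes of the leaves contained in $S$.
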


\begin{defin}\label{CompTransCurve}
We call \emph{complete $\F$-transversal curve} in a Milnor ball $B$ for $S_\F$, any finite union $\Delta$ of disjoint connected embedded discs in $B\setminus \{0\}$ that are transversal to $\F$ and such that  any dynamical component of $S_\F^\sharp$ meets  $E_\F^{-1}(\Delta)$ at a single point (necessarily belonging to the strict transform of an isolated separatrix of $\F$). 
\end{defin}

Theorem~\ref{sepdyncomp} assures the existence of a complete $\F$-transversal curve by choosing a non-nodal isolated separatrix in each dynamical component of $S_\F^\sharp$ and a small embedded disc transverse to their image by $E_\F$.

\begin{teo}\cite[Classification Theorem]{MM2}\label{ClassThm} 
Let us fix a complete $\F$-transversal curve $\Delta$, resp. complete $\F'$-transversal curve $\Delta'$. Assume that there exists an excellent  $\mc C^0$-conjugacy  $(\tilde g_\ast, h)$ between the monodromy representations $\mf m$ and $\mf m'$ of $\F$ and $\F'$ and let 
\[ \tilde\varphi:\tilde\Delta\to \tilde\Delta' \,,\quad \tilde\Delta:=q^{-1}(\Delta)\,,\quad
 \tilde\Delta':=q'^{-1}(\Delta')\,,
 \]
be a lifting through  the covering maps $q$ and $q'$ of a germ of biholomorphism $\varphi:(\Delta,S_\F)\to (\Delta',S_{\F'})$. Let us assume that $(\varphi,\tilde\varphi)$ is a realization of the conjugacy $(\tilde g,h)$ on the germs $(\Delta,S_\F)$ and $(\Delta',S_{\F'})$ and that 
\begin{enumerate}
\item the image by $\varphi$ of any connected component of $\Delta$ meeting an isolated separatix $C$ of $\F$ is the connected component of $\Delta'$ meeting $g(C)$,
\item\label{eqCSisosep} the Camacho-Sad index of $\F^\sharp$ along the strict transform  $\mc C$ of any isolated separatrix $C$ of $\F$, is equal to that of $\F'{}^\sharp$ along the strict transform $\mc C'$ of $g(C)$, i.e.
 \[ 
\mr{CS}(\F^\sharp, \mc C,s)=\mr{CS}(\F'{}^{\sharp}, \mc C', s')\,,\qquad \{s\}:=\mc C\cap\mc E_\F\,,\quad \{s'\}:=\mc C'\cap\mc E_{\F'}\,.
 \]
\end{enumerate}
Then there exists a homeomorphism germ  $\Phi:(\C^2,0)\to(\C^2,0)$  that conjugates  $\F$ to $\F'$ and lifts through the reduction maps,  to a homeomorphism germ: 
\[ 
\Phi^\sharp:(M_\F, S_\F^\sharp)\to(M_{\F'},S_{\F'}^\sharp)\,,\quad E_{\F'}\circ\Phi^\sharp=\Phi\circ E_\F\,,\quad \Phi^\sharp(\F^\sharp)=\F'{}^{\sharp}\,.
 \]
Moreover $\Phi^\sharp$  is holomorphic at each non-nodal singular point of $\F^\sharp$ and is transversally holomorphic at each point of $S_{\F}^\sharp$ that is  regular for $\F^\sharp$ and does not belong to a dicritical component of $\mc E_\F$.
\end{teo}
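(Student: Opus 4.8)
The plan is to reconstruct $\Phi$ from the monodromy data by assembling local pieces along the dynamical component decomposition of $S_\mathcal{F}^\sharp$. First, since the excellent conjugacy $g$ between $S$ and $S'$ underlying $\tilde g_\ast$ lifts to a homeomorphism $g^\sharp:(M_\mathcal{F},\mathcal{E}_\mathcal{F})\to(M_{\mathcal{F}'},\mathcal{E}_{\mathcal{F}'})$ by Theorem~\ref{curves}, it suffices to produce a homeomorphism germ $\Phi^\sharp:(M_\mathcal{F},S_\mathcal{F}^\sharp)\to(M_{\mathcal{F}'},S_{\mathcal{F}'}^\sharp)$ with $\Phi^\sharp(\mathcal{F}^\sharp)=\mathcal{F}'^\sharp$ and the stated holomorphy properties, and then set $\Phi=E_{\mathcal{F}'}\circ\Phi^\sharp\circ E_\mathcal{F}^{-1}$. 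The map $g^\sharp$ supplies the combinatorial dictionary used throughout: it realizes the graph isomorphism $\mathcal{A}_\mathcal{F}\to\mathcal{A}_{\mathcal{F}'}$, matches valences and self-intersections, sends dicritical components to dicritical components and nodal points to nodal points (Theorem~\ref{C0InvSep}), hence carries the dynamical components of $S_\mathcal{F}^\sharp$ to those of $S_{\mathcal{F}'}^\sharp$, and can be arranged so as to carry a chosen complete system of separators $\mathcal{S}=(\mathcal{S}_\alpha)_\alpha$ for $\mathcal{F}$ onto one, $\mathcal{S}'$, for $\mathcal{F}'$.

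Next I would build the conjugacy over one dynamical component at a time, away from the separators. Fix a dynamical component $\mathcal{D}$ of $S_\mathcal{F}^\sharp$; by Theorem~\ref{sepdyncomp} it contains a non-nodal isolated separatrix, so the complete $\mathcal{F}$-transversal curve $\Delta$ contains exactly one disc $\Delta_\mathcal{D}$ attached to such a separatrix $C\subset\mathcal{D}$, and $\Delta'$ the corresponding disc through $g(C)$. The realization data $(\varphi,\tilde\varphi)$ on $(\Delta,S_\mathcal{F})$ together with the leaf-space isomorphism $h$ promote $\tilde\varphi$ to a point-level conjugacy: for $\tilde p$ in the $\tilde\mathcal{F}$-saturation of $\tilde\Delta_\mathcal{D}$ inside $\tilde U_n$, its leaf meets $\tilde\Delta_\mathcal{D}$ at a single point (Lemma~\ref{transverse-curves}(i)), the isomorphism $h$ names the image leaf, $\tilde\varphi$ names the point on the image transversal, and the fibered structure of the foliated blocks makes the resulting assignment a homeomorphism $\tilde\psi$ onto the saturation of $\tilde\Delta'_{\mathcal{D}'}$ conjugating $\tilde\mathcal{F}$ to $\tilde\mathcal{F}'$; the equivariance square in the definition of a realization forces $\tilde\psi$ to be equivariant via $\tilde g_\ast$, so it descends to a homeomorphism $\psi_\mathcal{D}$ on an open set whose closure, by Proposition~\ref{proprtranscurve}, contains the image of $\mathcal{D}$ minus small discs about its separator-attachment points. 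An equivalent route, which I would use to certify transversal holomorphy, is to invoke Theorem~\ref{monhol} together with Propositions~\ref{conjMonExtHol} and~\ref{conjHolproj}: for each invariant irreducible component $D\subset\mathcal{D}$ the projective holonomies $H_D^\mathcal{F}$ and $H_{D'}^{\mathcal{F}'}$ are holomorphically conjugated, so the classical fact that the holonomy determines the germ of a reduced foliation near an invariant divisor (suspension of the holonomy) yields transversally holomorphic conjugacies on tubular neighborhoods $\mathcal{B}_D$, which I glue along the chains of $\mathcal{D}$ — where the foliated structure is a collar and the matching is free up to reparametrization — to obtain $\psi_\mathcal{D}$.

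Then I would glue in the local models at the reduced singular points and interpolate across the separators. At a non-nodal singular point $s$ of $\mathcal{F}^\sharp$ the generalized curve hypothesis excludes saddle-nodes, so $\mathcal{F}^\sharp$ is there a linearizable Poincar\'e type saddle or a resonant saddle, and relative to its two invariant axes its topological and transversally holomorphic type is governed by the Camacho--Sad indices (\cite{Dulac} in the Poincar\'e case, \cite{Camacho-Sad} for resonant saddles). If $s$ lies on the strict transform $\mathcal{C}$ of an isolated separatrix, hypothesis~\ref{eqCSisosep} equates $\mathrm{CS}(\mathcal{F}^\sharp,\mathcal{C},s)$ with $\mathrm{CS}(\mathcal{F}'^\sharp,\mathcal{C}',s')$; combined with the fact that the other index at $s$ (along $\mathcal{E}_\mathcal{F}$) is pinned down by the self-intersection data carried by $g^\sharp$ together with the Camacho--Sad sum rule, and with $\varphi$ matching the transversals through $C$ and $g(C)$, this produces a holomorphic conjugacy germ at $s$ compatible with $\psi_\mathcal{D}$; interior singular points of $\mathcal{D}$ not on an isolated separatrix are treated the same way, their indices being already fixed by the holonomy conjugacy and the combinatorics. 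At a nodal point one uses instead the linear model $\lambda v\,du-u\,dv$ and Theorem~\ref{C0InvSep} (nodal separatrices and separators are preserved) to glue a homeomorphism conjugating the two linear germs and matching $\psi$ on the punctured transversals, claiming only transversal continuity there. Across a nodal or dicritical separator $\mathcal{S}_\alpha$, which in a Milnor ball is a cone as foliated manifold over its trace on the boundary sphere, I would interpolate radially, using that $h$ restricted to $\mathcal{S}_\alpha$ already prescribes the leaf-to-leaf correspondence, so the conjugacies on the two adjacent dynamical components agree with the interpolation on $\partial\mathcal{S}_\alpha$. Finally, everything in sight being conical in a Milnor ball, the assembled $\Phi^\sharp$ on $E_\mathcal{F}^{-1}(B\setminus\{0\})$ extends continuously over $\mathcal{E}_\mathcal{F}$ and $\Phi$ over $0$ by $0\mapsto0$; the holomorphy at non-nodal singular points and the transversal holomorphy at regular non-dicritical points of $S_\mathcal{F}^\sharp$ are exactly the properties built into the local pieces.

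The main obstacle is the coherence of this gluing: the three kinds of pieces — the holonomy-built conjugacies on the dynamical-component blocks, the local holomorphic models at the reduced singular points, and the cone interpolations across the separators — must agree on all overlap regions, which are the suspension type boundary tori of the foliated blocks and the boundaries of the separators. What forces them to agree is the interplay of the excellence of $h$ (holomorphy off the union $K$ of separators, Definition~\ref{conjmon}), the compatibility square $h\circ\tau=\tau'\circ\tilde\varphi$ of the realization, and the equality of Camacho--Sad indices; in particular \ref{eqCSisosep} is needed precisely to make the local model at an isolated separatrix fit the global monodromy data, since without it the leaf-space isomorphism $h$ need not be induced by a homeomorphism near the separatrix. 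Checking this coherence demands careful bookkeeping of base points, of the non-Hausdorff structure of the leaf spaces $Q_n$, and of the two fundamental systems $\mathcal{U},\mathcal{U}'$, and this is where the bulk of the work lies.
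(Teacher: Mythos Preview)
Your overall architecture --- work dynamical component by dynamical component, pass from monodromy to holonomy via Theorem~\ref{monhol}, use local models at the reduced singular points, then glue across separators --- is the same as the paper's. But there is a genuine logical error and a secondary gap.

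The logical error is your appeal to Theorem~\ref{C0InvSep} to conclude that $g^\sharp$ sends nodal points to nodal points and hence dynamical components to dynamical components. Theorem~\ref{C0InvSep} applies to a homeomorphism that \emph{conjugates the foliations}; here $g$ is only the homeomorphism $(\mb T,S)\to(\mb T',S')$ appearing in item~(i) of the definition of a $\mc C^0$-conjugacy between monodromies --- it is an excellent conjugacy between the \emph{curves} $S$ and $S'$, not between $\F$ and $\F'$. Invoking Theorem~\ref{C0InvSep} is therefore circular: the foliation conjugacy is precisely what you are trying to build. The paper obtains the matching of dynamical components differently: hypothesis~(\ref{eqCSisosep}) on isolated separatrices propagates via the Index Formula~(\ref{index-formula}) to the equality $\mr{CS}(\F^\sharp,D,s)=\mr{CS}(\F'^\sharp,g^\sharp(D),g^\sharp(s))$ for \emph{all} vertices and edges (this is the induction ($\star$) in \S\ref{62}); since a point is nodal iff its index lies in $\R_{>0}\setminus\mb Q$, this forces $g^\sharp$ to respect nodal points and hence dynamical components.

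The secondary gap is your case analysis at non-nodal singular points: you claim $\F^\sharp$ is there ``a linearizable Poincar\'e type saddle or a resonant saddle'', but this omits real irrational saddles $\lambda\in\R_{<0}\setminus\mb Q$, which need not be linearizable. The paper does not rely on any such trichotomy; it uses the classical result of \cite{MatMou} that equality of Camacho--Sad indices together with the holonomy conjugacy on one separatrix yields a holomorphic conjugacy germ at any (non-nodal) saddle. Finally, where you sketch a direct ``saturation'' construction and then a gluing along chains, the paper organizes the same ingredients through a filtration $Z_0\subsetneq\cdots\subsetneq Z_k=\mc D$ by elementary pieces and a single Extension Lemma (Lemma~\ref{extlemma}); this is what makes the coherence you flag as ``the main obstacle'' tractable, since each step extends a realization $(\psi_{j-1},\tilde\psi_{j-1})$ of $(\tilde g_\ast,h)$ across exactly one piece while preserving the fiberwise compatibility~(\ref{comphij}) that forces agreement on overlaps.
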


\begin{defin}
A homeomorphism $\Phi$ that satisfies the conclusions of this theorem will be called \emph{excellent conjugacy} between $\F$ and $\F'$ and we will say that $\F$ and $\F'$ are $\mc C^{\mr{ex}}$-conjugated.
\end{defin}
\begin{proof}[Idea of the proof] With  some additional assumptions this theorem is proved in \cite{MM3} for foliations with  a single dynamical component.
A proof for the general statement  is given in \cite[Proof of Theorem 11.4, Steps (iii)-(v)]{MMS}. 
The conjugacy is independently constructed along each dynamical component of $S^\sharp_\F$,   their gluing on separators being made in a final step. 

We divide any dynamical component $\mc D$ of $S^\sharp_\F$  in \emph{elementary pieces} that are either (small pieces) connected components of the intersection of $S^\sharp_\F$ with  small  closed polydiscs centered at the singular points of $\F^\sharp$, or  (big pieces) the closure of the connected components of the complementary in $\mc D$ of  the union of all small pieces. Over each big piece $K$ we fix a locally trivial  smooth retraction-fibration  $\rho_K:\Omega_K\to K$ defined on an open neighborhood $\Omega_K$ of $K$  whose fibers are discs transversal to $\F^\sharp$. We can suppose that $\rho_K$ is holomorphic at each point of the boundary $\partial K$ of $K$. We can construct an exhaustion of~$\mc D$ by connected closed sets,
\begin{equation*}\label{filtr-elem-pieces}
Z_0\subsetneq Z_1\subsetneq \cdots \subsetneq Z_k=\mc D\,,\qquad Z_j=Z_{j-1}\cup K_j\,,
\end{equation*}
where $Z_0$ is the  big piece whose image by $E_\F$ is contained in the (unique) isolated separatrix in $\mc D$ meeting $\Delta$, and the closure $K_j$ of $Z_j\setminus Z_{j-1}$ is an elementary piece intersecting $Z_{j-1}$.  
Without loss of generality we suppose that $g$ is excellent and that $g^\sharp$ is holomorphic on a neighborhood of any small elementary piece. As we will see in \S\ref{62} 
the Camacho-Sad equalities on the isolated separatrices  in hypothesis (\ref{eqCSisosep}) induce the equality 
 \begin{equation}\label{CSequdem}
 \mr{CS}(\F^\sharp,D,s)=\mr{CS}(\F'^\sharp, g^\sharp(D), g^\sharp(s))
 \end{equation}
for any irreducible component $D$ of $S^\sharp_\F$ and any singular point $s\in D$ of $\F^\sharp$. Therefore $g^\sharp$ sends any dynamical component of $S^\sharp_\F$ onto a dynamical component of $S^\sharp_{\F'}$. 

We also have the  exhaustion 
\[ Z'_0\subsetneq Z'_1\subsetneq\ldots\subsetneq Z'_k=\mc D'\,,\qquad Z'_i:=g^\sharp(Z_i)\,,
 \]
 of the dynamical component $\mc D':=g^\sharp(\mc D)$ of $S^\sharp_{\F'}$. 
 Up to performing isotopies \cite[Theorem~2.9 and Definition~2.5]{MM2}  we can also suppose  
 that for any big piece $K\subset \mc D$ the map  $g^\sharp$ sends the fiber  of $\rho_K$ over any $p\in K$ to the fiber over $g^\sharp(p)$ of a   locally trivial smooth fibration $\rho_{g^\sharp(K)}'{}:\Omega'\to g^\sharp(K)$ that we have previously fixed, i.e.
 \begin{equation*}\label{comrhog}
 \rho_{g^\sharp(K)}'\circ g^\sharp=g^\sharp\circ \rho_{K}\,.
 \end{equation*}

We will successively construct  realizations $(\psi_{j}, \tilde\psi_j)$ of the monodromy conjugacy $(\tilde g_*,h)$, where $\psi_j$ is a homeomorphism  from a neighborhood of $Z_j$ to a neighborhood of $Z'_j$, that extends  $\psi_{j-1}$.  
Moreover, we will have 
\begin{equation}\label{comphij}
\rho'_{g^\sharp(K)}\circ\psi_j=\psi_j\circ\rho_K
\end{equation} 
on the boundary of each elementary big piece $K$ intersecting $Z_j$.

The proof ends by applying  Remark \ref{conjreal} to the  last homeomorphism $\Phi:=\psi_k$; this one conjugates $\F^\sharp$ to $\F'{}^\sharp$ on neighborhoods of  $\mc D$ and $\mc D'$ and the holomorphic transversality  property and the holomorphy property at the non-nodal singular points  will result from the construction.

Assume that we have already constructed $\psi_{j-1}$ and $\tilde\psi_{j-1}$. Let us fix a point $p_j\in K_j\cap Z_{j-1}$ and denote by $K$ the big piece containing $p_j$.
Consider the transversal discs
\[ 
\Sigma_j:=\rho_{K}^{-1}(p_j)\,,\qquad
\Sigma'_j:=\rho_{g^\sharp(K)}'^{-1}(p'_j)=g^\sharp(\Sigma_j)\,,\qquad
p'_j:= g^\sharp(p_j)\,,
\]
and the liftings by their corresponding universal covering maps
\[ 
 \tilde\Sigma_j:=q^{-1}(E_\F(\Sigma_j))\,,\quad \tilde{\Sigma}_j':=q'{}^{-1}(E_{\F'}(\Sigma'_j))\,.
 \]
The key of the induction process  consists in  applying the  following Extension Lemma taking for $\phi$ the restriction of $\psi_{j-1}$ to $\Sigma_{j}$ and for $\tilde \phi_j$ the restriction of $\tilde\psi_{j-1}$ to $\tilde\Sigma_{j}$.
\begin{lema}[{\cite[Lemma 8.3.2]{MM3}}]\label{extlemma}
Let 
\[ 
\phi: (\Sigma_j,p_j)\to(\Sigma_j',p_j')\,,\quad 
\tilde\phi : (\tilde{\Sigma}_j,\infty)\longrightarrow (\tilde{\Sigma}_j',\infty)\,,
 \]
be a biholomorphism germ and a 
$\underleftarrow{\C\text{-}\mr{Man}}$ 
isomorphism  defining a realization  of  $(\tilde g_\ast,h)$ on the germs $(\Sigma_j,S^\sharp_\F)$ and $(\Sigma'_j,S^\sharp_{\F'})$. If $\tilde\phi$ and $\tilde g$   induce the same canonical morphism 
\[ 
\tilde \phi_\star=\tilde g_\star : \pi_0(\tilde \Sigma_j)\to\pi_0(\tilde{\Sigma}_j')\,,
 \]
then there exists a homeomorphism $\Phi$ that extends $\phi$ along a neighborhood $\Omega_j$ of $K_j$ and a lifting $\tilde\Phi$ of $\Phi$ through $q\circ E_\F^{-1}$ and $q'\circ E_{\F'}^{-1}$, that extends $\tilde\phi$ such that 
\begin{enumerate}
\item $(\Phi,\tilde\Phi)$ is a realization of $(\tilde g_\ast,h)$ on the germs $(\Omega_j,S^\sharp_\F)$ and $(\Phi(\Omega_j),S^\sharp_{\F'})$,
\item\label{comrho} on a neighborhood of each connected component $\mathscr C$ of the boundary of $K_j$ the equality $\rho'\circ\Phi=\Phi\circ\rho$ is satisfied, where $\rho$ (resp. $\rho'$) is the disc fibration over the big piece containing $\mathscr C$ (resp. $\Phi(\mathscr C)$),
\item $\tilde\Phi_{|\tilde\Sigma_{j+1}}$ and $\tilde g$ induce the same canonical morphism from $\pi_0(\tilde\Sigma_{j+1})$ to $\pi_0(\tilde\Sigma'_{j+1})$.
\end{enumerate} 
 \end{lema}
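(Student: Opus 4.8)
The plan is to prove the Extension Lemma by treating separately the two possible types of the elementary piece $K_j$. Either $K_j$ is a \emph{big piece}, which we may take to lie inside a block $\mc B_D$ attached to an invariant irreducible component $D$ of $S_\F^\sharp$ as in \S\ref{ext-hol}, with $\Sigma_j=\rho_K^{-1}(p_j)$ a regular fibre of the corresponding transverse fibration; or $K_j$ is a \emph{small piece}, a neighbourhood in $S_\F^\sharp$ of a singular point $s$ of $\F^\sharp$ which — $\F$ being a generalized curve and $K_j$ lying in the interior of the dynamical component under consideration — is a reduced singularity that is neither a saddle-node nor a node. In both cases the strategy is the same: use $\phi$ on $\Sigma_j$ as a seed and propagate it along the leaves of $\F^\sharp$ over $K_j$; correct the result near $\partial K_j$ so that conclusion (\ref{comrho}) holds; and finally check that the resulting lift $\tilde\Phi$ induces $\tilde g_\star$ on $\pi_0$ of the transversal $\tilde\Sigma_{j+1}$ along which the next piece will be glued, so that the induction in the proof of Theorem~\ref{ClassThm} can continue.

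For a big piece $K_j=K_D$, the point is that the realization hypothesis forces the holonomy representation $H_D^\F$ of $\pi_1(D^*,p_j)$ to be conjugated to $H_{D'}^{\F'}$ along $D':=g^\sharp(D)$. Indeed, since $(\phi,\tilde\phi)$ is a realization of $(\tilde g_\ast,h)$ on $(\Sigma_j,S_\F^\sharp)$ — so that $\tilde\phi$ is in particular compatible with $h$ through the tautological morphisms — and since $g$ is excellent with $g^\sharp(D)=D'$ and $\tilde\phi_\star=\tilde g_\star$ on $\pi_0$, Proposition~\ref{conjMonExtHol} shows that $(\tilde g_\ast,\tilde\phi)$ is a holomorphic conjugacy between the extended holonomies along $\mc B_D$ and $\mc B'_{D'}$, and then Proposition~\ref{conjHolproj} (equivalently Theorem~\ref{monhol}) gives $\phi_\ast\circ H_D^\F=H_{D'}^{\F'}\circ g^\sharp_\ast$. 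Having this, a conjugacy-of-suspensions construction over $K_D$ — accounting for the exceptional fibres of the Seifert fibration $\pi_D$ of (\ref{fibrSeifertBD}) over the dead branches attached to $D$ — produces a leaf-preserving homeomorphism $\Phi$ on a tubular neighbourhood $\Omega_j$ of $K_D$, extending $\phi$ and sending the $\rho_K$-fibre over each $p$ to the $\rho'$-fibre over $g^\sharp(p)$; in particular $\rho'\circ\Phi=\Phi\circ\rho$ near $\partial K_D$, which is conclusion (\ref{comrho}). Transporting $\tilde\phi$ along the leaves of $\tilde\F=q^{-1}\F$ over the preimage of $\Omega_j$ then yields the lift $\tilde\Phi$; the hypothesis $\tilde\phi_\star=\tilde g_\star$ is exactly what makes this transport compatible with the way $\tilde g$ permutes the connected components of the preimages, so that $\tilde\Phi$ covers $\Phi$, extends $\tilde\phi$, realizes the deck-group compatibility with $\tilde g_\ast$ required of a realization, and induces $\tilde g_\star$ on $\pi_0$ of every transversal crossed, in particular on $\tilde\Sigma_{j+1}$. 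The remaining commutations — those of $\tilde\Phi$ with the tautological morphisms and with $h$, i.e. item (1) — then follow because they hold on $\Sigma_j$ and $\Omega_j\setminus S^\sharp$ is a saturation of $\Sigma_j\setminus S^\sharp$ along $\F^\sharp$, along which both sides of the diagrams are constant.

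For a small piece $K_j$ around $s$, I would first use the Camacho--Sad equality (\ref{CSequdem}), already in force in the ambient induction, to ensure that $s':=g^\sharp(s)$ is a singularity of $\F'{}^\sharp$ of the same type and with the same Camacho--Sad indices. The transversal $\Sigma_j$ is attached along the separatrix $D_j$ of $\F^\sharp$ at $s$ that lies in $Z_{j-1}$, and the big-piece step applied to $K_{D_j}$ already shows that $\phi$ conjugates the holonomy of $\F^\sharp$ along $D_j$, hence its local holonomy at $s$, to that of $\F'{}^\sharp$ along $g^\sharp(D_j)$ at $s'$. By the classical reconstruction of a reduced non-nodal foliation germ from the holonomy of one of its separatrices (Mattei--Moussu \cite{MatMou} in the generic saddle case; \cite{Camacho-Sad} and \cite{MR} for resonant saddles, the equal Camacho--Sad index pinning down the remaining formal invariant), this conjugacy of local holonomies extends to a germ of biholomorphism $\Phi$ at $s$ conjugating $\F^\sharp$ to $\F'{}^\sharp$ and restricting to $\phi$ on $\Sigma_j$. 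Since the disc fibrations over the big pieces adjacent to $K_j$ were fixed so as to be holomorphic near their boundary shared with $K_j$, after a preliminary isotopy supported near $\partial K_j$ (\cite[Theorem~2.9 and Definition~2.5]{MM2}) one may assume $\Phi$ intertwines these fibrations, which is conclusion (\ref{comrho}); the lift $\tilde\Phi$ extending $\tilde\phi$ is again obtained by leaf transport, and since the covering over the preimage of $\Omega_j$ is governed by the same combinatorial data as over $\tilde\Sigma_j$, it induces $\tilde g_\star$ on $\pi_0(\tilde\Sigma_{j+1})$, with item (1) following as before by saturation.

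The main obstacle — and the reason this has to be an induction lemma carrying the somewhat technical hypothesis and conclusion on $\pi_0$ of transversals — is the coherence of the lift $\tilde\Phi$ with the abstract deck-group morphism $\tilde g_\ast$: one is not merely propagating a topological conjugacy of foliations but a \emph{realization} of the monodromy conjugacy, so at every gluing the connected-component bijection $\tilde\phi_\star$ on the incoming transversal must agree with $\tilde g_\star$, and the same agreement must be produced on the outgoing transversal. A second, more geometric, difficulty is that a big piece $K_D$ carries a genuinely non-trivial Seifert fibration with exceptional fibres over the dead branches attached to $D$, so both the suspension description of $\F^\sharp$ over it and the well-definedness of the leaf transport must be argued through the explicit presentation of $\pi_1(\mc B_D\setminus S^\sharp)$ of \cite[Proposition~3.3]{MM2}; it is the passage from the pro-object $h$ and the extended holonomies to a concrete conjugacy of the holonomy representations, through Propositions~\ref{conjMonExtHol} and~\ref{conjHolproj}, that makes the big-piece step go through.
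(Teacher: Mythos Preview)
Your proposal is correct and follows essentially the same approach as the paper's own sketch: split into big and small pieces; for big $K_j$ use Propositions~\ref{conjMonExtHol} and~\ref{conjHolproj} (i.e.\ Theorem~\ref{monhol}) to pass from the realization hypothesis to a holomorphic conjugacy of the holonomy representations along $D$, then build $\Phi$ as a conjugacy of the suspension foliations over $K_D$; for small $K_j$ use the conjugacy of the local holonomy at $s$ along the incoming separatrix together with the Camacho--Sad equalities (\ref{CSequdem}) and the Mattei--Moussu reconstruction \cite{MatMou} to get a local biholomorphism extending~$\phi$. You have also correctly isolated the role of the $\pi_0$ hypothesis as the coherence condition allowing the lift $\tilde\Phi$ to be defined compatibly with $\tilde g_\ast$, and the presence of exceptional Seifert fibres over dead branches as the geometric subtlety in the big-piece step. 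The only organizational difference is that the isotopy you invoke for the small piece (from \cite[Theorem~2.9]{MM2}) is in the paper absorbed into the preliminary normalisation of $g^\sharp$ and of the fibrations $\rho_K$, $\rho'_{g^\sharp(K)}$, rather than performed inside the lemma itself.
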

Property (\ref{comrho}) above implies the uniqueness of the extension of $\phi$ on a neighborhood  $\Omega$ of $Z_{j-1}\cap K_j$.  Hence according to relation (\ref{comphij}) with the index $j-1$,  $\Phi$ and $\psi_{j-1}$ coincide on $\Omega$, and can be glued to define the homeomorphism $\psi_{j}$.

The proof of Lemma \ref{extlemma}, which is technical, is based on the fact that by Theorem~\ref{monhol} the conjugacy of monodromies realized by $\phi$
implies that $\phi$ also conjugates the holonomies of $\F^\sharp$ and $\F'{}^\sharp$ along the whole piece $K_{j }$ when $K_j$ is big and along the irreducible component of $K_j$ containing $p_j$ when $K_j$ is small. The extension $\Phi$ as a conjugacy of $\F^\sharp$ to $\F'{}^\sharp$ along $K_j$  is then given by a classical theorem on regular foliations when $K_j$ is big. When $K_j$ is small, using  Camacho-Sad indices equalities (\ref{CSequdem})  the extension is given by  a classical theorem on  saddle singularities \cite{MatMou}. 
\end{proof}

\section{Topological Invariance of Camacho-Sad indices} \label{CS0}

\subsection{Camacho-Sad index} We recall that  at a singular point $s$ of a foliation $\F$ on an invariant regular curve $C$, the Camacho-Sad index $\mr{CS}(\F,S,s)$ gives information about the winding of the leaves of $\F$ around $C$ in a neighborhood of $s$. More precisely, if $\F$ is defined by the $1$-form $vA(u,v)du+B(u,v)dv$ and $C=\{v=0\}\ni (0,0)=s$, then
\begin{equation}\label{CS-int}
\mr{CS}(\F,C,s):=\mr{Res}_{u=0}\left(-\frac{B(u,0)}{A(u,0)}\right)=\lim\limits_{z\to 0}\frac{1}{2i\pi}\int_{\Gamma_z}\frac{dv}{v}\,,
\end{equation}
where $\Gamma_z=(\gamma(t),\mu_z(t))$ is a path obtained by lifting into the leaves the loop $\gamma(t)=(e^{2i\pi t},0)$ inside $C$ with $\mu_z(0)=z$.

We will determine a large class of foliation germs  $\F$  for which the Camacho-Sad indices along the exceptional divisor $\mc E_\F$, at the singular points of $\F^\sharp$, \emph{are topological invariants} in the sense that 
if $\phi$ is a topological conjugacy from $\F$ to $\G$ then
 we have the equality
\begin{equation}\label{equalityCS}
\mr{CS}(\F^\sharp,D,s)=\mr{CS}(\G^\sharp,\A_\phi(D),\A_\phi(s))
\end{equation}
for any vertex $D$ adjacent to an edge $s$ in $\A_\F$. Here  the vertices and the edges of the dual trees $\A_\F$ and $\A_\G$ are  identified respectively to the irreducible components and the singular points of the total transforms 
$S_\F^\sharp$ and $S_\G^\sharp$ and $\A_\phi:\A_\F\to\A_\G$ is the graph morphism introduced in Proposition~\ref{InducedGraphMorphism}.

We will  further introduce two conditions (NFC) and (TR) on $\F$  making it possible to obtain the topological invariance of Camacho-Sad indices. But first we will examine the possible obstructions to this.

\subsection{Different types  of dynamical components}\label{62}
A straightforward induction process \cite[\S7.3]{MM3} based on the following Camacho-Sad Index Formula \cite[Appendix]{CS} for any invariant  irreducible component $D$ of $\mc E_\F$
\begin{equation}\label{index-formula}
\sum_{s\in\mr{Sing}(\F^\sharp)\cap D}\mr{CS}(\F^\sharp,D,s)=D\cdot D
\end{equation}
gives that 
\begin{itemize}
\item[($\star$)]
\it the equality (\ref{equalityCS}) holds for any $D$ and $s$ in $\A_\F$  if it is satisfied when $D$ is the strict transform of any isolated separatrix. 
\end{itemize}
For a fixed isolated separatrix $C$ we will examine different cases  according to the nature of the dynamical component $\mc D$ of $S_\F^\sharp$ containing the strict transform $\mc C$ of $C$. Let us notice that Theorem \ref{sepdyncomp}, giving the existence of a non-nodal isolated separatrix meeting the dynamical component $\mc D$, does not ensure that there is   an irreducible component in $\mc D$  with at least 3 singular points of $\F^\sharp$. However using  this theorem and Index Formula~(\ref{index-formula}) we can see (cf. \cite[Proof of Lemma~11.6]{MMS}) that there are only five possibilities for $\mc D$ and~$\mc C$:
 \begin{enumerate}
 \item\label{nodalsep} $\mc C$ is a nodal separatrix and $\mc D$ is reduced to $\mc C$;
 \item\label{smallcomp} 
The non-empty intersection $\mc D':=\mc D\cap\mc E_\F$ is a union $\mc D'=D_1\cup\cdots \cup D_\ell$ of invariant irreducible components of $\mc E_\F$,
 where $D_i$ meets $D_{i+1}$ at a (non-nodal) single point $s_i$ when $\ell>1$, and  $D_i\cap D_j$ is empty  if $|i-j|\neq 0, 1$; moreover $\mc C$ meets $D_1$ at a non-nodal singular point $s$ and one of the following configurations holds:
\begin{enumerate}[(a)]
\item\label{A} $\mc D=\mc C\cup \mc D'$, when $\ell> 1$ 
$\mr{Sing}(\F^\sharp)\cap \mc D$ is equal to 
$\{s,s_1,\ldots,s_{\ell-1}\}$, 
{or to $\{s\}$ when $\ell=1$}, cf. Figure~\ref{ci};
\item\label{B} $\mc D=\mc C\cup \mc D'$, 
$\mr{Sing}(\F^\sharp)\cap \mc D
=\{s,s_1,\ldots,,s_\ell\}$ and $s_\ell\in D_\ell\setminus\{s_{\ell-1}\}$ is a nodal singularity of $\F^\sharp$, cf. Figure~\ref{cii};
\item\label{freecomp} 
$\mc D=\mc C\cup \mc D'\cup\mc C'$, 
$\mc C'$ is an isolated non-nodal separatrix meeting  $D_\ell$ at singular points $s'$, and
$\mr{Sing}(\F^\sharp)\cap \mc D$ is equal to 
$\{s,s_1,\ldots,s_{\ell-1}, s'\}$ when $\ell>1 $ and to $\{s,s'\}$ when $\ell=1$,  cf. Figure~\ref{ciii}. 
\end{enumerate}
In all these cases any irreducible component of $\mc E_\F$ meeting $\mc D$ is either contained in $\mc D$ or dicritical.
 \item\label{bigcomp} There is an irreducible component of  $\mc D$ containing at least 3 singular points of $\F^\sharp$.
\end{enumerate}
\begin{figure}[ht]
$\ell>1$\quad
\begin{tikzpicture}
\draw [thick] (6.3,0) to (6.3,1);
\node at (8.35,0.1) {$\cdots$};
\draw  (7,0) arc [radius=1, start angle=45, end angle= 130];
\draw  (8,0) arc [radius=1, start angle=45, end angle= 130];
\draw  (10,0) arc [radius=1, start angle=45, end angle= 130];
\draw  (11,0) arc [radius=1, start angle=45, end angle= 130];
\node at (6.1,.5) {$s$};
\node at (6.8,.5) {$s_1$};
\node at (9.8,.5) {$s_{\ell-1}$};
\node at (6.35,-0.3) {$D_1$};
\node at (6.35,1.2) {$\mc C$};
\node at (7.4,-0.3) {$D_2$};
\node at (10.35,-0.3) {$D_\ell$};
\draw (6.3,0.28) node {$\bullet$};
\node at (8.35,0.1) {$\cdots$};
\draw (6.785,0.16) node {$\bullet$};
\draw (9.79,0.16) node {$\bullet$};
\node at (8.35,0.1) {$\cdots$};
\end{tikzpicture}
\phantom{AAA} 
or 
\phantom{AAA} 
$\ell=1$\quad
\begin{tikzpicture}
\draw [thick] (6.3,0) to (6.3,1);
\draw  (7,0) arc [radius=1, start angle=45, end angle= 130];
\node at (6.1,.5) {$s$};
\node at (6.35,-0.3) {$D_1$};
\node at (6.35,1.2) {$\mc C$};
\draw (6.3,0.28) node {$\bullet$};
\end{tikzpicture}

\caption{Situation~(\ref{A}): there is no nodal singular point on $\mc D$ and  some dicritical component must meet $\mc D$.
}\label{ci}
\end{figure}

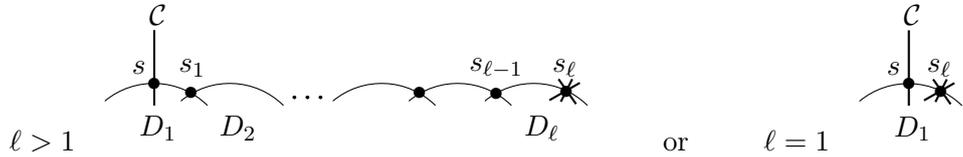
\begin{figure}[ht]
$\ell>1$\quad
\begin{tikzpicture}
\draw [thick] (6.3,0) to (6.3,1);
\node at (6.1,.5) {$s$};
\node at (6.8,.5) {$s_1$};
\node at (8.35,0.1) {$\cdots$};
\node at (6.35,1.2) {$\mc C$};
\node at (6.35,-0.3) {$D_1$};
\node at (7.4,-0.3) {$D_2$};
\node at (11.4,-0.3) {$D_\ell$};
\draw [thick] (11.65,0.05) to (11.8,.35);
\draw [thick] (11.5,0.07) to (11.9,.3);
\draw [thick] (11.85,0.02) to (11.62,.35);
\node at (10.8,.5) {$s_{\ell-1}$};
\node at (11.7,.5) {$s_\ell$};
\draw  (7,0) arc [radius=1, start angle=45, end angle= 130];
\draw  (8,0) arc [radius=1, start angle=45, end angle= 130];
\draw  (10,0) arc [radius=1, start angle=45, end angle= 130];
\draw  (11,0) arc [radius=1, start angle=45, end angle= 130];
\draw  (12,0) arc [radius=1, start angle=45, end angle= 130];
\draw (6.3,0.28) node {$\bullet$};
\node at (8.35,0.1) {$\cdots$};
\draw (6.785,0.16) node {$\bullet$};
\draw (9.79,0.16) node {$\bullet$};
\node at (8.35,0.1) {$\cdots$};
\node at (10.8,0.15) {$\bullet$};
\node at (11.72,0.18) {$\bullet$};
\end{tikzpicture}
\phantom{AAA}or\phantom{AAA}
$\ell=1$\quad
\begin{tikzpicture}
\draw [thick] (6.3,0) to (6.3,1);
\node at (6.1,.5) {$s$};
\node at (6.35,1.2) {$\mc C$};
\node at (6.35,-0.3) {$D_1$};
\draw [thick] (6.65,0.05) to (6.8,.35);
\draw [thick] (6.5,0.07) to (6.9,.3);
\draw [thick] (6.85,0.02) to (6.62,.35);
\node at (6.7,.5) {$s_\ell$};
\draw  (7,0) arc [radius=1, start angle=45, end angle= 130];
\draw (6.3,0.28) node {$\bullet$};
\node at (6.72,0.18) {$\bullet$};
\end{tikzpicture}
\caption{Situation~(\ref{B}): $s_\ell$ is the unique nodal singular point on $\mc D$ and some dicritical components may meet $\mc D$.}\label{cii}
\end{figure}

\begin{figure}[ht]
$\ell>1$
\begin{tikzpicture}
\draw [thick] (6.3,0) to (6.3,1);
\node at (6.1,.5) {$s$};
\node at (6.8,.5) {$s_1$};
\node at (8.35,0.1) {$\cdots$};
\node at (6.35,1.2) {$\mc C$};
\node at (6.35,-0.3) {$D_1$};
\node at (7.4,-0.3) {$D_2$};
\node at (11.4,-0.3) {$D_\ell$};
\draw [thick] (11.62,0.0) to (12,.9);
\node at (12.2,1.1) {$\mc C'$};
\node at (10.8,.5) {$s_{\ell-1}$};
\node at (11.6,.5) {$s'$};
\draw  (7,0) arc [radius=1, start angle=45, end angle= 130];
\draw  (8,0) arc [radius=1, start angle=45, end angle= 130];
\draw  (10,0) arc [radius=1, start angle=45, end angle= 130];
\draw  (11,0) arc [radius=1, start angle=45, end angle= 130];
\draw  (12,0) arc [radius=1, start angle=45, end angle= 130];
\draw (6.3,0.28) node {$\bullet$};
\node at (8.35,0.1) {$\cdots$};
\draw (6.785,0.16) node {$\bullet$};
\draw (9.79,0.16) node {$\bullet$};
\node at (8.35,0.1) {$\cdots$};
\node at (10.8,0.15) {$\bullet$};
\node at (11.72,0.18) {$\bullet$};
\end{tikzpicture}
\phantom{A}
or 
\phantom{A}
$\ell=1$\quad
\begin{tikzpicture}
\draw [thick] (6.3,0) to (6.3,1);
\node at (6.1,.5) {$s$};
\node at (6.95,.5) {$s'$};
\node at (6.35,1.2) {$\mc C$};
\node at (6.35,-0.3) {$D_1$};
\draw [thick] (6.8,0.0) to (7.5,.9);
\node at (7.7,1.1) {$\mc C'$};
\draw  (7,0) arc [radius=1, start angle=45, end angle= 130];
\draw (6.3,0.28) node {$\bullet$};
\draw (6.89,0.1) node {$\bullet$};
\end{tikzpicture}
\caption{Situation~(\ref{freecomp}): 
there is no nodal singular point on $\mc D$ and some dicritical component must meet $\mc D$.}\label{ciii}
\end{figure}

\noindent We say that the dynamical componant $\mc D$  is  \emph{small} in cases (\ref{nodalsep}) and (\ref{smallcomp}) and that  it is  \emph{big} in case (\ref{bigcomp}). 
We will now go through each case.

\subsection{Small dynamical components}\label{smalldyncomp}
We denote again by $\mc C$ the strict transform of an isolated separatrix of $\F$ and by $\phi:\F\to\G$ a topological conjugacy between $\F$ and $\G$.

In the case~(\ref{nodalsep})   the equality 
\[\mr{CS}(\F^\sharp,\mc C,s)=\mr{CS}(\G^\sharp,\A_\phi(\mc C),\A_\phi(s))\] is proven by R. Rosas under  weak hypothesis  \cite[Proposition~13]{Rosas}, see also \cite[Theorem~1.12]{MM4}. 

Index Formula~(\ref{index-formula}) implies that:
\begin{itemize}
\item
 in case (\ref{A}) the Camacho-Sad index $\mr{CS}(\F^\sharp,\mc C,s)$ is a rational number that  can be expressed as a continued fraction in terms of the self-intersections $D_i\cdot D_i$ of the irreducible components of $\mc D\cap \mc E_\F$; as these ones are  invariant by  topological conjugacies, cf. \cite{Z} or \cite{MM2}, the   equality of Camacho-Sad indices (\ref{equalityCS}) is  satisfied;
\item  in case (\ref{B}) we have $\mr{CS}(\F^\sharp,D_\ell,s_\ell)\in\R\setminus\mb Q$ and consequently  $\mr{CS}(\F^\sharp,\mc C,s)\in\R\setminus\mb Q$; in fact  $s$ must be a linearizable saddle of $\F^\sharp$ (recall that $s$ is not a node). Then the proof of~\cite[Theorem~1.12]{MM4} gives the equality~(\ref{equalityCS}).
\end{itemize}

In case (\ref{freecomp})   any reduced singularity may appear at the  point~$s$ and for this reason we will say that $\mc D$ is a  \emph{free (dynamical) component}.
All the free components of \emph{Poincar\'e type}, i.e. with $\mr{CS}(\F^\sharp,\mc C,s)\in \C\setminus \R$,  are topologically conjugated.
Indeed, this claim uses the following fact:
\begin{lema}\label{Poincare}
 \noindent Let $(\lambda_1,\lambda_2)$ and $(\mu_1,\mu_2)$ be two pairs of $\R$-independent complex numbers and 
 $g:\C\to\C$ be a $\R$-linear map such that $g( i\lambda_j):= i\mu_j$. Then the maps  $\psi_j : \C\to\C$, $j=1,2$,   defined by
 \begin{equation*}
 \left\{
  \begin{aligned}
  \psi_j(z)&:=\exp\left(\mu_j^{-1}g(\lambda_j \log z)\right) \hbox{ if } z\neq 0\,,\\
 \psi_j(0)&:=0\,,  \end{aligned}
 \right.
 \end{equation*}
 are well defined homeomorphisms and the map 
\[
\Psi:\C^2\to\C^2\,,\quad\Psi(z_1,z_2)=\big(\psi_1(z_1),\psi_2(z_2)\big)
 \]
is a homeomorphism that conjugates the foliation defined by the meromorphic form $\lambda_1\frac{dz_1}{z_1}+\lambda_2\frac{dz_2}{z_2}$ to that defined by $\mu_1\frac{dz_1}{z_1}+\mu_2\frac{dz_2}{z_2}$.
\end{lema}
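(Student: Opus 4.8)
The plan is to reduce everything to a computation on the logarithmic covering $\pi\colon\C^2\to(\C^\ast)^2$, $\pi(w_1,w_2)=(e^{w_1},e^{w_2})$, where both foliations become linear, and then transport the result back down. First, to see that each $\psi_j$ is well defined on $\C^\ast$, I would note that replacing the branch of $\log z$ by another one changes $\lambda_j\log z$ by $2\pi i\lambda_j$, hence changes $g(\lambda_j\log z)$ by $2\pi\,g(i\lambda_j)=2\pi i\mu_j$ — this is precisely where the hypothesis $g(i\lambda_j)=i\mu_j$ together with the $\R$‑linearity of $g$ enters — hence changes $\mu_j^{-1}g(\lambda_j\log z)$ by $2\pi i$, so that $\psi_j(z)=\exp\bigl(\mu_j^{-1}g(\lambda_j\log z)\bigr)$ is unaffected.

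Next I would identify $\psi_j$, on $\C^\ast$, with $\exp\circ N_j\circ\log$, where $N_j\colon\C\to\C$ is the $\R$‑linear automorphism $N_j(w):=\mu_j^{-1}g(\lambda_j w)$ (a composition of the $\C$‑linear isomorphism $w\mapsto\lambda_j w$, the $\R$‑linear isomorphism $g$, and the $\C$‑linear isomorphism $w\mapsto\mu_j^{-1}w$). Since $N_j(2\pi i)=2\pi i$, the map $N_j$ descends to an automorphism of $\C/2\pi i\Z$, so $\psi_j$ is a homeomorphism of $\C^\ast$; and since $N_j(i)=\mu_j^{-1}g(i\lambda_j)=i$, writing $\log z=\log|z|+i\theta$ gives $N_j(\log z)=\log|z|\cdot N_j(1)+i\theta$, whence $|\psi_j(z)|=|z|^{\,\mr{Re}\,N_j(1)}$ with $N_j(1)=\mu_j^{-1}g(\lambda_j)$. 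The step I expect to require care is exactly the sign of this exponent: $\psi_j$ extends continuously across the origin (with $\psi_j(0)=0$) and is then a proper continuous bijection of $\C$, hence a homeomorphism, precisely when $\mr{Re}\,N_j(1)>0$, i.e.\ when $N_j$ — equivalently $g$ — preserves the orientation of $\C\simeq\R^2$, i.e.\ when $\mr{Im}(\bar\lambda_1\lambda_2)$ and $\mr{Im}(\bar\mu_1\mu_2)$ have the same sign. In the intended application this can always be arranged, if necessary, by exchanging the roles of the two separatrices of the target foliation (which replaces $(\mu_1,\mu_2)$ by $(\mu_2,\mu_1)$), and I would make this normalization explicit at the outset. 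Granting it, $\Psi=(\psi_1,\psi_2)$ is a homeomorphism of $\C^2$ fixing $0$ and mapping each coordinate axis onto itself.

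Finally, for the conjugacy I would observe that $\pi^\ast\bigl(\tfrac{dz_j}{z_j}\bigr)=dw_j$, so $\pi$ pulls the meromorphic forms $\lambda_1\tfrac{dz_1}{z_1}+\lambda_2\tfrac{dz_2}{z_2}$ and $\mu_1\tfrac{dz_1}{z_1}+\mu_2\tfrac{dz_2}{z_2}$ back to the closed linear forms $\lambda_1dw_1+\lambda_2dw_2$ and $\mu_1dw_1+\mu_2dw_2$, whose leaves are the affine complex lines $\{\lambda_1w_1+\lambda_2w_2=c\}$ and $\{\mu_1w_1+\mu_2w_2=c\}$; that $\Psi$ lifts through $\pi$ to $\widetilde\Psi(w_1,w_2)=(N_1(w_1),N_2(w_2))$ (because $\psi_j(e^{w})=\exp N_j(w)$); and that
\[
\mu_1N_1(w_1)+\mu_2N_2(w_2)=g(\lambda_1w_1)+g(\lambda_2w_2)=g(\lambda_1w_1+\lambda_2w_2),
\]
so $\widetilde\Psi$ carries the leaf $\{\lambda_1w_1+\lambda_2w_2=c\}$ bijectively onto $\{\mu_1w_1+\mu_2w_2=g(c)\}$. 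Projecting by $\pi$ then shows that $\Psi$ sends the leaves of the first foliation inside $(\C^\ast)^2$ onto those of the second; and since $\Psi$ also preserves the union of the two coordinate axes — the union of the separatrices, whose punctured parts are the remaining leaves — it is a topological conjugacy between the two foliations on all of $\C^2$. The only genuinely delicate point is the orientation/positivity issue at the origin discussed above; everything else is a formal computation on the covering $\pi$.
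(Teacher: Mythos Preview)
Your argument is correct and follows the same core idea as the paper: the identity $\mu_1 N_1(w_1)+\mu_2 N_2(w_2)=g(\lambda_1 w_1+\lambda_2 w_2)$ is exactly the paper's one-line proof that $(\mu_1\log z_1+\mu_2\log z_2)\circ\Psi = g(\lambda_1\log z_1+\lambda_2\log z_2)$ is constant along the leaves. Your write-up is in fact more complete: you verify well-definedness and the homeomorphism property, and you correctly isolate the orientation condition $\det g>0$ (equivalently $\mathrm{Im}(\bar\lambda_1\lambda_2)$ and $\mathrm{Im}(\bar\mu_1\mu_2)$ of the same sign) needed for each $\psi_j$ to extend continuously across $0$, a point the paper's statement and proof leave implicit but which, as you note, is harmless in the intended application after possibly swapping $\mu_1$ and $\mu_2$.
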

\begin{proof}
The map
\[(\mu_1\log z_1+\mu_2\log z_2)\circ\Psi
=g(\lambda_1\log z_1+\lambda_2\log z_2)\]
is constant along the leaves of $\lambda_1\frac{dz_1}{z_1}+\lambda_2\frac{dz_2}{z_2}$.
\end{proof}

\noindent Thus, in case (\ref{freecomp}) when $\F^\sharp$  is of Poincar\'e type at $s$, then it is  also of Poincar\'e type at each singular point of the free component $\mc D$. 
Using similar methods as those developed by E. Paul in \cite{Paul} one can see that the local conjugacies given by Lemma~\ref{Poincare} at the singular points of the free component can be glued into a global $\mc C^0$-conjugacy along the whole free component. 
Finally, for  two topologically conjugated foliations whose reductions contain a Poincar\'e type free  component, the equality (\ref{equalityCS}) may not be satisfied  at the singular points of the free component. 
This phenomenon forces us to exclude this case (\ref{freecomp}) with the following assumption that there is no free dynamical component:

\begin{enumerate}
\item[(NFC)] \it There is no dynamical component of $S_\F^\sharp$ without nodal singular points of $\F^\sharp$ whose compact irreducible components contain exactly two singular points of $\F^\sharp$.
\end{enumerate}

Notice that if $\phi:\F\to\G$ is a topological conjugacy between generalized curves and $\F$ satisfies (NFC) then $\G$ also satisfies (NFC) thanks to Corollary~\ref{compdynsansnoeud}.

\subsection{Big dynamical components}\label{bigdyncomp}
 To obtain equality (\ref{equalityCS}) in case (\ref{bigcomp}) we will ask $\F$ to fulfill the following \emph{transverse rigidity} assumption:
 \begin{enumerate}
 \item[(TR)] \it If a dynamical component of $S_\F^\sharp$ 
contains an irreducible component with at least $3$ singular points of $\F^\sharp$,  it also contains an irreducible component whose holonomy group is topologically rigid, for instance unsolvable.
 \end{enumerate}
 We recall that a group $G$ of germs of $\mr{Diff}(\C,0)$ is \emph{topologically rigid} if any orientation preserving homeomorphism germ that conjugates this group with another subgroup of $\mr{Diff}(\C,0)$ is necessarily holomorphic. D. Cerveau and P. Sad showed that non abelian subgroups with generic linear part are topologically rigid \cite{CerveauSad} and by classical results of Y. Ilyashenko,  A.A. Shcherbakov and I. Nakai  \cite{Ilyashenko, Shcherbakov, nakai}  any unsolvable subgroup is topologically rigid. Condition (TR) above is Krull generic \cite{LeFloch}. We recall that a property $\ms P$ on the space $\Omega$ of germs  of holomorphic differential $1$-forms on $(\C^2,0)$ is \emph{Krull generic} if it is open dense for the Krull topology on $\Omega$, i.e. 
 \begin{enumerate}[(a)]
 \item for any $\omega\in\Omega$ and for any $p\in\N$ there is  $\eta\in\Omega$ satisfying $\ms P$ with same $p$-jet at $0$ as $\omega$,
 \item if $\omega$ satisfies $\ms P$ then there is $q\in\N$ such that every $\xi\in\Omega$ with same $q$-jet as $\omega$ also satisfies $\ms P$.
 \end{enumerate}
The interest of Condition (TR) is that we can apply the main theorem of J. Rebelo  in   \cite{Rebelo}, whose proof remains  valid for the extended statement below:

 \begin{teo}[Transverse Rigidity Theorem]\label{transrigthm} 
Let $\F$ be a generalized curve  and let $\mc D$ be a dynamical component  of $S_\F^\sharp$
 containing an irreducible component with topologically rigid holonomy group. Then $\F$ is \emph{transversally rigid along $\mc D$} in the following sense: for any homeomorphism $\phi:(\C^2,0)\to(\C^2,0)$  that conjugates $\F$ to  another germ of foliation preserving the orientations of the ambient space and of the leaves,
 there is an open neighbourhood $W$ of $\mc D\setminus \mc N$ such that 
$\phi$ is transversally holomorphic at each point of $E_\F(W)\setminus\{0\}$, where $\mc N\subset \mc E_\F$ is the set of nodal singular points of~$\F^\sharp$.
\end{teo}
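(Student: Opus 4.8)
The plan is to reduce the statement to Rebelo's theorem in \cite{Rebelo} by a careful localization argument. First I would observe that the problem is local along $\mc D$ in the following sense: it suffices to produce, for each point $q\in\mc D\setminus\mc N$, a neighborhood $W_q$ in $M_\F$ such that $\phi$ is transversally holomorphic on $E_\F(W_q)\setminus\{0\}$, and then set $W=\bigcup_q W_q$. The delicate points are the singular points of $\F^\sharp$ lying on $\mc D$ (which are saddles, possibly resonant, linearizable or not, but not nodes by hypothesis) and the intersection points $D\cap D'$ of two irreducible components of $\mc D$; away from these finitely many points transverse holomorphy of $\phi$ at a regular point of $\F^\sharp$ propagates along the leaves by the holonomy.

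The heart of the argument is to show that $\phi$ conjugates the holonomy representation of $\F^\sharp$ along a suitable irreducible component $D_0\subset\mc D$ with that of the image foliation, and that this conjugacy is \emph{holomorphic}. Here I would invoke hypothesis (TR): $\mc D$ contains an irreducible component $D_0$ whose holonomy group $H^\F_{D_0}\subset\mr{Diff}(\C,0)$ is topologically rigid (e.g. unsolvable, by \cite{Ilyashenko, Shcherbakov, nakai}). Using Proposition~\ref{InducedGraphMorphism} and Proposition~\ref{conjappr}, $\phi$ induces a homeomorphism of a Milnor tube of $S_\F$ onto one of the image, compatible with the dual-graph isomorphism $\A_\phi$; restricting to a transverse section $\Sigma$ over a regular point $\sigma\in D_0\setminus\mr{Sing}(\F^\sharp)$ and using the fact (as in \S\ref{decompCompCurve}, \S\ref{ext-hol}) that the fundamental group of the block $\mc B_{D_0}\setminus S^\sharp$ surjects onto $\pi_1(D_0^*)$ modulo the kernel of the holonomy, one sees that $\phi$ transports the holonomy group $H^\F_{D_0}$ to a conjugate subgroup of $\mr{Diff}(\C,0)$ by an orientation-preserving germ of homeomorphism. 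Topological rigidity of $H^\F_{D_0}$ then forces this conjugating germ to be holomorphic, so $\phi$ is transversally holomorphic at $\sigma$.

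Once transverse holomorphy is established at one regular point of $D_0$, I would propagate it: along $D_0\setminus\mr{Sing}(\F^\sharp)$ it spreads by following the holonomy pseudogroup, since a conjugacy of regular foliations that is transversally holomorphic on one transversal is transversally holomorphic on every transversal reached by holonomy; this gives an open neighborhood of $D_0\setminus\mr{Sing}(\F^\sharp)$. To cross a singular point $s$ of $\F^\sharp$ on $\mc D$ and reach the next component, I would use that $s$ is a (non-nodal) reduced saddle and apply the extension of the Mattei–Moussu type theorems on saddle singularities \cite{MatMou}: a germ of homeomorphism conjugating two reduced saddles that is transversally holomorphic on one separatrix-transversal is transversally holomorphic near the whole singularity (this is exactly the mechanism invoked at the end of the proof of Lemma~\ref{extlemma}). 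Iterating over the connected tree $\mc D\setminus\mc N$ — which is connected precisely because we have removed the nodal points, along which the gluing is only topological — yields transverse holomorphy of $\phi$ on an open neighborhood $W$ of all of $\mc D\setminus\mc N$. Finally, as noted, \cite{Rebelo} proves the statement when the ``big'' component is reduced by a single blowing-up, and the point of the present formulation is that Rebelo's proof applies verbatim once the above localization is in place; so I would simply quote that the argument of \cite{Rebelo} goes through.

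The main obstacle I anticipate is the propagation step across the non-linearizable resonant saddles and across chains of several components inside $\mc D$: one must check that the transverse holomorphy obtained on $D_0$ genuinely extends through each $s_i$ without losing regularity, using the correct version of the saddle-conjugacy theorem, and that the neighborhoods $W_q$ can be chosen coherently so their union is an actual open neighborhood of $\mc D\setminus\mc N$ rather than merely a neighborhood of the smooth part of each component. Handling the dicritical components that may attach to $\mc D$ (they are not in $\mc D$, but leaves through $\mc D$ may meet them) requires care to stay inside the block $\mc B_{D_0}$ and its neighbors where the relevant fundamental-group presentation from \cite{MM2} is available.
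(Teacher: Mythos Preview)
The paper gives no proof of this theorem: it simply states, just before Theorem~\ref{transrigthm}, that this is ``the main theorem of J.~Rebelo in \cite{Rebelo}, whose proof remains valid for the extended statement below.'' Your proposal is therefore more detailed than the paper, and its overall architecture (rigidity of one holonomy group $\Rightarrow$ transverse holomorphy on one transversal $\Rightarrow$ propagation along $\mc D\setminus\mc N$ through non-nodal saddles) is indeed the shape of Rebelo's argument, so there is no conflict of approach.

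One caution about your middle paragraph. You write that, using the block decomposition and the presentation of $\pi_1(\mc B_{D_0}\setminus S^\sharp)$, ``one sees that $\phi$ transports the holonomy group $H^\F_{D_0}$ to a conjugate subgroup of $\mr{Diff}(\C,0)$ by an orientation-preserving germ of homeomorphism.'' This is precisely the hard step in \cite{Rebelo}, and it does \emph{not} follow from the fundamental-group bookkeeping alone: the conjugacy $\phi$ lives on $(\C^2,0)$ and does not a priori lift through $E_\F$, so $\phi(\Sigma)$ need not be a transversal to the image divisor, and there is no direct way to read off a germ in $\mr{Homeo}(\C,0)$ conjugating the two holonomy groups. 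Rebelo's contribution is exactly to extract such a transverse conjugacy from $\phi$ by exploiting the density of orbits of the rigid (unsolvable) holonomy group and an analytic-continuation argument; the block decomposition and Proposition~\ref{InducedGraphMorphism} only tell you which component corresponds to which. Since you ultimately defer to \cite{Rebelo} anyway, your proposal is fine as a roadmap, but you should not present this step as a consequence of \S\ref{decompCompCurve}--\S\ref{ext-hol}; cite Rebelo for it directly.
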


\noindent We recall that a germ of homeomorphism $\phi$ which  conjugates two  foliations $\F$ and  $\G=\phi(\F)$, is \emph{transversally holomorphic at a point $p$}, if for any germs of submersion first integrals $u:(\C^2,p)\to(\C,0)$ and $v:(\C^2,\phi(p))\to(\C,0)$ there exists a germ of biholomorphism 
$\phi^\flat:(\C,0)\to(\C,0)$ such that $\phi^\flat\circ  u=v\circ \phi$. 

Notice that for a $\mc C^0$-conjugacy defined  only  on a neighborhood of a punctured separatrix $C\setminus\{0\}$, transversal holomorphy property is not suficient to obtain the invariance of the Camacho-Sad index  along the strict transform of $C$. A trivial counter-example with $C$ regular is given  by performing  the blowing-up $E$ with center $\{0\}$ and by taking for $\G$ the germ of  $E^{-1}\F$ at
the meeting point $p$ of the strict transform $C'$ of $C$ by $E$ and the exceptional divisor $E^{-1}(0)$. Restricted to   a neighborhood $V$ of $C'\setminus \{p\}$, $E$ is a biholomorphism that conjugates $\G_{|V}$ to $\F_{|E(V)}$. However, taking $\phi=(E_{|V})^{-1}$  we have 
\[ \mr{CS}(\F,C,0)=\mr{CS}(\phi(\F),C',p)+1\,,\]see  \cite[Proposition~2.1]{CS}.
We thus must impose as additional
hypothesis the compatibility of the conjugacy with the peripheral structure of $C$ in the fundamental group of the complementary of the $\F$-appropriate curve $S$. 
\subsection{Peripheral structure and Index Invariance Theorem}
We fix local coordinates $(u,v)$ at the meeting point $s$ of the strict transform $\mc C$ of $C$ with the exceptional divisor $\mc E_\F$, such that near $s$  the equation $v=0$ defines   $\mc C$ and $u=0$ defines $\mc E_\F$;  we consider the loop   $m$, resp. $p$, with common origin $a:=(\varepsilon_1,\varepsilon_2)$, defined by $u(t)=\varepsilon_1$, $v(t)=\varepsilon_2e^{2\pi it}$, resp. by  $u(t)=\varepsilon_1e^{2\pi it}$, $v(t)=\varepsilon_2$, $t\in[0,1]$; we choose $\varepsilon_1$ and $\varepsilon_2$ small enough so that the loops $E_\F\circ m$ and $E_\F\circ p$ are contained in a Milnor ball $B$ for the foliation $\F$; let us fix  $b\in B\setminus S$ and a path $\alpha$ in $B\setminus S$ joining $a$ to $b$. We call \emph{peripheral structure of $C$ in $\pi_1(B\setminus S,b)$} the data of the two classes $\mbf m$, $\mbf p\in \pi_1(B\setminus S,b)$, respectively  called \emph{meridian} and \emph{parallel}, defined by  the path 
\[ \alpha^{-1}{\mathsmaller\vee}(E_\F\circ m) {\mathsmaller\vee} \alpha\quad \hbox{ and }\quad \alpha^{-1}{\mathsmaller\vee}(E_\F\circ p) {\mathsmaller\vee} \alpha\,.\] A consequence of the fact \cite{Heil} that the  subgroup $\langle \mbf m,\mbf p\rangle\subset \pi_1(B\setminus S,b)$ is equal to its normalizer in $\pi_1(B\setminus S,b)$, 
  is that $\mbf m$ and $\mbf p$ do
not depend on the choice of the path $\alpha$, see \cite[Corollary~6.1.4]{MM3}. 
\begin{teo}\label{CStranshol}
Let $\phi$ be a homeomorphism defined on an open tubular neighborhood $V$ of  $C\setminus\{0\}$ in $\C^2$ that conjugates $\F_{|V}$ to $\G_{|\phi(V)}$. If $\phi$ is transversally holomorphic and sends respectively the meridian and the parallel of the peripheral structure  of $C$ to  the meridian and the parallel of $\phi(C)$, then the Camacho-Sad index of $\F^\sharp$ along the strict transform of $C$ by $E_\F$ is equal to that of $\G^\sharp$ along the strict transform of $\phi(C)$ by $E_{\G}$.
\end{teo}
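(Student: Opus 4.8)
The plan is to read off $\mr{CS}(\F^\sharp,\mc C,s)$ from the holonomy of $\F$ around the separatrix $C$ via the integral expression (\ref{CS-int}), arranging that the whole computation takes place in a region where $\phi$ is defined, and then to use transverse holomorphy to pin down the multiplier of that holonomy and the peripheral hypothesis to pin down the remaining integer ambiguity.

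Concretely, I would keep the coordinates $(u,v)$ at $s$ and the loops $m,p$ of the definition of the peripheral structure, and also consider the loop $\hat\gamma(t)=(\varepsilon_1e^{2\pi it},0)$ inside $\mc C$ and the transverse disc $\Delta=\{u=\varepsilon_1,\ |v|\le\varepsilon_2\}$ with coordinate $v$. Since $\varepsilon_1>0$, the sets $\hat\gamma,\Delta,m,p$ and small neighbourhoods of them lie in $M_\F\setminus\mc E_\F$, on which $E_\F$ is a biholomorphism onto $\C^2\setminus\{0\}$; for $\varepsilon_2$ small their $E_\F$-images lie in the tubular neighbourhood $V$ of $C\setminus\{0\}$. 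As $E_\F$ identifies $\mc C\setminus\{s\}$ with the leaf $C\setminus\{0\}$ of $\F$, the holonomy $h\in\mr{Diff}(\C,0)$ of $\F^\sharp$ along $\hat\gamma$ realised on $\Delta$ is the holonomy of $\F$ around $C$; lifting $\hat\gamma$ into the leaf of $\F^\sharp$ through $(\varepsilon_1,z)$ produces a path $\Gamma_z$ with endpoints $(\varepsilon_1,z)$, $(\varepsilon_1,h(z))$, and (\ref{CS-int}) reads $\mr{CS}(\F^\sharp,\mc C,s)=\lim_{z\to0}\frac{1}{2\pi i}\int_{\Gamma_z}\frac{dv}{v}$. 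Closing $\Gamma_z$ with a short arc $\sigma_z\subset\Delta\setminus\{0\}$ gives a loop $\bar\Gamma_z$ in $E_\F^{-1}(V)\setminus\mc C$; writing its class in the rank-two free abelian group $\pi_1(W\setminus\mc C)$, where $W$ is a tubular neighbourhood of $\hat\gamma$ in $M_\F$, as $[p]\cdot[m]^{N_\F}$ with $N_\F=\mr{lk}(\bar\Gamma_z,\mc C)$ (constant for $|z|$ small), and pushing forward by $E_\F$, one gets
\[
\mr{CS}(\F^\sharp,\mc C,s)=N_\F+\tfrac{1}{2\pi i}\,\ell\big(h'(0)\big),\qquad [\bar\Gamma_z]=\mbf p\cdot\mbf m^{\,N_\F}\ \text{in}\ \pi_1(B\setminus S,b),
\]
where $\ell$ is a branch of the logarithm fixed by the choice of the arcs $\sigma_z$. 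I would then repeat this construction on the $\G$-side along $\phi\circ\hat\gamma$, closing up with $\phi\circ\sigma_z$, and obtain the analogous identity with an integer $N_\G$, the holonomy $h_\G$ of $\G^\sharp$ around $\mc C'$, and --- crucially --- the \emph{same} branch $\ell$.

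The result would then follow from two comparisons. On one hand, $\phi$ maps the leaf $C\setminus\{0\}$ of $\F$ homeomorphically onto the leaf $\phi(C)\setminus\{0\}$ of $\G$ and is transversally holomorphic, so it conjugates the holonomy of $\F$ around $C$ to that of $\G$ around $\phi(C)$ by a germ of biholomorphism; hence $h_\G'(0)=h'(0)$ and the logarithmic terms agree. On the other hand, $\phi_*$ sends $\mbf m,\mbf p$ to the meridian and parallel $\mbf m',\mbf p'$ of $\phi(C)$ by hypothesis, and it sends $[\bar\Gamma_z]$ to $[\phi(\bar\Gamma_z)]=\mbf p'\cdot\mbf m'^{\,N_\G}$; comparing inside $\langle\mbf m',\mbf p'\rangle$, which is free abelian of rank two and (by \cite{Heil}) its own normalizer, forces $N_\F=N_\G$, whence $\mr{CS}(\F^\sharp,\mc C,s)=\mr{CS}(\G^\sharp,\mc C',s')$.

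The delicate point --- and the reason the peripheral hypothesis is indispensable, as the blowing-up counter-example preceding the statement shows --- is the bookkeeping of the integer part: one must check that closing up $\Gamma_z$ and its image $\phi(\Gamma_z)$ by \emph{corresponding} short arcs yields the same determination $\ell$ on both sides, so that the entire difference of the two indices is concentrated in $N_\F-N_\G$ and is then annihilated by $\phi_*\mbf p=\mbf p'$, $\phi_*\mbf m=\mbf m'$. An equivalent route is to observe that transverse holomorphy furnishes a holomorphic isomorphism between the normal bundles of $\mc C$ and $\mc C'$ conjugating the flat connections induced by $\F^\sharp$ and $\G^\sharp$, that the equality $\phi_*\mbf p=\mbf p'$ means this isomorphism is untwisted with respect to the coordinate framings, and that the monodromy of such a connection --- whose logarithm is $2\pi i$ times the Camacho--Sad index --- is therefore preserved; verifying this untwistedness is where the main technical work lies.
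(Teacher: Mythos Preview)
Your overall strategy is sound and close in spirit to the paper's, but the execution differs in one important way, and there is one step you flag as ``delicate'' that you do not actually close.

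The paper splits the equality into two parts. First, transverse holomorphy alone gives that the holonomies of $\mc C$ and $\mc C'$ are analytically conjugate, hence have the same multiplier $e^{2\pi i\,\mr{CS}}$, so $\mr{CS}(\F^\sharp,\mc C,s)-\mr{CS}(\G^\sharp,\mc C',s')\in\Z$. Second, the paper uses an $n$-fold trick: lifting the $n$-th iterate of the base loop, closing up by an arc $\xi_n$ with bounded argument variation, one gets integers $I_n=(2\pi i)^{-1}\int_{\Gamma_n\vee\xi_n}dv/v$ with $\mr{Re}(\mr{CS})=\lim I_n/n$; after an isotopy normalizing $u'\circ\phi^\sharp=\phi^\sharp\circ u$ near the circle, the same holds on the $\G$ side with integers $I_n'$, and the peripheral hypothesis forces $I_n=I_n'$. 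This device completely avoids choosing a branch of the logarithm: all quantities compared are integers, and the real part is recovered as a limit.

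Your route, by contrast, works with a single loop and writes $\mr{CS}=N+\tfrac{1}{2\pi i}\ell(h'(0))$, then tries to match both summands. Matching $h'(0)=h_\G'(0)$ and $N_\F=N_\G$ is fine. The step that is not carried out is the claim that $\lim_{z\to0}\tfrac{1}{2\pi i}\int_{\phi(\Gamma_z)}\tfrac{dv'}{v'}=\mr{CS}(\G^\sharp,\mc C',s')$: the transported path $\phi(\Gamma_z)$ is a leaf path for $\G^\sharp$, but it is \emph{not} the lift of a standard coordinate loop via the $u'$-fibration, so formula~(\ref{CS-int}) does not apply directly. The paper handles this by the isotopy normalization; you would need either this or a separate argument showing the limit is insensitive to the choice of base loop and transverse fibration. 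Without it, the ``same branch $\ell$'' comparison, even if correct, does not finish the proof. The $n$-fold limit is precisely the clean way to absorb both this issue and the branch bookkeeping you identify.
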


\begin{proof}
Let $(u,v)$ (resp. $(u',v')$) be local coordinates centered at the intersection point $s$ (resp. $s'$) of the strict transform $\mc C=\{v=0\}$ (resp. $\mc C'=\{v '=0\}$) of $C$ (resp. $\phi(C)$) with the exceptional divisor $\mc E_\F=\{u=0\}$ (resp. $\mc E_\G=\{u'=0\}$).
By the transversal holomorphy of $\phi$ the holonomies of $\mc C$ and $\mc C'$ are analytically conjugated and consequently 
\begin{equation}\label{CS1}
\mr{CS}(\F^\sharp,\mc C,s)-\mr{CS}(\G^\sharp,\mc C',s')\in\Z.
\end{equation}
 Let $\Gamma_n$ be the lift of the loop $t\mapsto (\varepsilon_1 e^{2i\pi nt},0)$ to a leaf of $\F^\sharp$. Formula (\ref{CS-int}) implies that
 \[
 \mr{CS}(\F^\sharp,\mc C,s)=\lim\limits_{n\to\infty}\frac{1}{2i\pi n}\int_{\Gamma_n}\frac{dv}{v}\,.
 \] 
Let $\xi_n$ be a path contained in $\{u=\varepsilon_1, 0<|v|<1/n\}$ joining the points $\Gamma_n(1)$ and $\Gamma_n(0)$ in such a way that the variation of the argument of $v\circ\xi_n$ is bounded, consequently
\[
\lim\limits_{n\to\infty}\mr{Re}\left( \frac{1}{2i\pi n }\int_{\xi_n}\frac{dv}{v}\right)=0
\]
and
\begin{equation}\label{CS2}
\mr{Re}(\mr{CS}(\F^\sharp,\mc C,s))=\lim\limits_{n\to\infty}\frac{I_n}{n},\quad\text{where}\quad I_n:=\frac{1}{2i\pi}\int_{\Gamma_n\vee \xi_n}\frac{dv}{v}\in\Z\,.
\end{equation}
The homotopy class of $E_\F(\Gamma_n\vee\xi_n)$ can be decomposed as $I_n\mbf m_n+n \mbf p_n$ using the peripheral structure $\mbf m_n,\mbf p_n$ of $C$  in $\pi_1(V\setminus C,b_n)\hookrightarrow\pi_1(B\setminus S,b_n)$, 
$b_n=E_\F(\Gamma_n(0))$. 
Up to isotopy we can assume that $\phi^\sharp\circ u=u'\circ\phi^\sharp$ in a neighbourhood of the circle $\{|u|=\varepsilon_1,v=0\}$, where $\phi^\sharp=E_\G^{-1}\circ\phi\circ E_\F$. As the variation of the argument of $v'\circ\phi^\sharp\circ\xi_n$ is bounded, because $\phi$ is transversely holomorphic, we have 
\begin{equation}\label{CS3}
\mr{Re}(\mr{CS}(\G^\sharp,\mc C',s'))=\lim\limits_{n\to\infty}\frac{I_n'}{n},\quad\text{where}\quad I_n':=\frac{1}{2i\pi}\int_{\phi^\sharp(\Gamma_n\vee \xi_n)}\frac{dv'}{v'}\in\Z\,,
\end{equation}
and the homotopy class of $\phi(E_\F(\Gamma_n\vee \xi_n))$ can be decomposed as $I_n'\mbf m'_n+n\mbf p'_n$, where $\mbf m'_n,\mbf p'_n$ is the peripheral structure of $C'$ in $\pi_1(\phi(V\setminus C),\phi(b_n))\hookrightarrow\pi_1(B'\setminus S',\phi(b_n))$. By hypothesis the group morphism
\[
\phi_*:\pi_1(V\setminus C,b_n)\to\pi_1(\phi(V\setminus C),\phi(b_n))
\]
sends $\mbf m_n$ into $\mbf m_n'$ and $\mbf p_n$ into $\mbf p'_n$. Since 
\[
\phi_*(I_n\mbf m_n+n\mbf p_n)=I_n\mbf m_n'+n\mbf p_n'\,,
\]
we obtain that $I_n=I_n'$. Hence relations (\ref{CS1}), (\ref{CS2}) and (\ref{CS3}) give the equality
$\mr{CS}(\F^\sharp,\mc C,s)=\mr{CS}(\G^\sharp,\mc C',s')$.
\end{proof}
\color{black}

In addition global homeomorphisms always fulfill the peripheral structure assumption:

 \begin{prop}\label{invperiphstructure}\cite[Theorem 6.2.1]{MM3}
Let us consider  Milnor balls $B$ and $B'$ for germs of curves $S$ and $S'$ at the origin of $\C^2$ and $b\in B\setminus S$.  
For any homeomorphism $\phi:B\to \phi(B)\subset B'$
such that $\phi(S\cap B)=S'\cap\phi(B)$,
 the map $\phi_\ast:\pi_1(B\setminus S,b)\to\pi_1(B'\setminus S',\phi(b))$ sends respectively  the meridian and the parallel of the peripheral structure in $\pi_1(B\setminus S,b)$ of any irreducible component $C$ of $S$ to  the meridian and the parallel of the peripheral structure of $\phi(C)$ in $\pi_1(B'\setminus S',b)$.
\end{prop}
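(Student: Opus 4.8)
The plan is to lift $\phi$ to the minimal embedded desingularizations of $S$ and $S'$, where the meridian and the parallel of $C$ are simply the two natural meridians living on the boundary torus of a bidisc around the point at which the strict transform $\mathcal C$ of $C$ meets the exceptional divisor; the topological invariance of the resolution (Zariski \cite{Z}) in the refined form of Theorem~\ref{curves} then does the rest.

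Concretely, I would first apply Theorem~\ref{curves} to $X=S\cap B$, $Y=S'\cap\phi(B)$ and $\varphi=\phi$. This produces a homeomorphism $\psi$ of $(\mathbb C^2,0)$ with $\psi(C)=\phi(C)$ on irreducible components and $\psi_*=\phi_*$ up to an inner automorphism of $\pi_1(B'\setminus S',\phi(b))$, together with a lift $\Psi$ of $\psi$ along the exceptional divisors, $E_{S'}\circ\Psi=\psi\circ E_S$. Since $\psi\circ E_S=E_{S'}\circ\Psi$ maps $S$ to $S'$, the map $\Psi$ is a homeomorphism of pairs $(M_S,\mc D_S)\to(M_{S'},\mc D_{S'})$ between the total transforms, hence permutes their irreducible components preserving incidences; in particular it sends the strict transform $\mathcal C$ of $C$ to the strict transform $\mathcal C'$ of $\phi(C)$ and the unique exceptional component $D$ of $\mc D_S$ meeting $\mathcal C$ to the unique exceptional component $D'$ of $\mc D_{S'}$ meeting $\mathcal C'$.

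Next I would note that, by definition of the peripheral structure, $\mbf m_C$ and $\mbf p_C$ are (transports along $\alpha$ of) the $E_S$-images of the loops $\mu_{\mathcal C}$ and $\mu_D$ — the meridians of $\mathcal C$ and of $D$ — which form a basis of $\pi_1(T)$ for $T=\{\,|u|=\varepsilon_1,\ |v|=\varepsilon_2\,\}$ the boundary torus of a bidisc at $s=\mathcal C\cap D$. The two corresponding slopes on $T$ are intrinsic to the pair $(M_S,\mc D_S)$: one bounds a disc in $M_S\setminus(\mc D_S\setminus\mathcal C)$ meeting $\mathcal C$ once transversely, the other bounds a disc in $M_S\setminus(\mc D_S\setminus D)$ meeting $D$ once transversely. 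Hence $\Psi_*$ carries the meridian slope of $\mathcal C$ to that of $\mathcal C'$ and the meridian slope of $D$ to that of $D'$; assuming — as holds whenever this is applied, $\phi$ being a conjugacy of foliations preserving the orientations of the ambient space and of the leaves — that $\phi$ preserves the orientation of $\mathbb C^2$ and the complex orientation of each branch of $S$, so that, blow-ups being orientation-preserving, $\Psi$ preserves the relevant orientations, the once-around loops are matched with the $+$ sign. Pushing forward by $E_{S'}$ and using $\psi_*=\phi_*$ up to inner automorphism, we obtain that $\phi_*(\mbf m_C)$ is conjugate in $\pi_1(B'\setminus S',\phi(b))$ to $\mbf m_{\phi(C)}$ and $\phi_*(\mbf p_C)$ to $\mbf p_{\phi(C)}$ — and, because $\Psi_*$ matches slopes and not merely the rank-two peripheral subgroup, $\phi_*(\mbf p_C)$ is conjugate to $\mbf p_{\phi(C)}$ itself, not to $\mbf p_{\phi(C)}\mbf m_{\phi(C)}^{k}$.

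It then remains to upgrade these conjugacies to equalities, and this is where the fact recalled before the statement is used: since $\langle\mbf m_{\phi(C)},\mbf p_{\phi(C)}\rangle$ equals its own normalizer in $\pi_1(B'\setminus S',\phi(b))$ \cite{Heil}, the elements $\mbf m_{\phi(C)}$ and $\mbf p_{\phi(C)}$ do not depend on the path chosen to base them (cf.\ \cite[Corollary~6.1.4]{MM3}); taking that path to be $\phi\circ\alpha$, and using that the peripheral subgroup is abelian (so that a conjugate of $\mbf m_{\phi(C)}$ lying in it equals $\mbf m_{\phi(C)}$, and likewise for $\mbf p_{\phi(C)}$), the conjugacies become the desired equalities $\phi_*(\mbf m_C)=\mbf m_{\phi(C)}$ and $\phi_*(\mbf p_C)=\mbf p_{\phi(C)}$. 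The main obstacle is precisely this matching on the nose: one must track basepoints, the path $\alpha$, and the inner-automorphism ambiguity of Theorem~\ref{curves} carefully enough to obtain ``parallel goes to parallel'' exactly rather than up to a meridian power — and for that the self-normalizing property of the peripheral torus subgroup is the key input, the rest being orientation bookkeeping.
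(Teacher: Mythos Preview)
The paper does not supply a proof of this proposition: it is stated with the citation \cite[Theorem~6.2.1]{MM3} and used as a black box, so there is no in-paper argument to compare against.

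Your strategy is the natural one and is almost certainly how \cite{MM3} proceeds: lift $\phi$ to the minimal resolution via Theorem~\ref{curves}, recognize $\mbf m$ and $\mbf p$ as the meridian slopes of $\mathcal C$ and of $D$ on the boundary torus at $s=\mathcal C\cap D$, and observe that each slope is topologically characterized (it bounds a disc meeting exactly one of the two components transversely once), hence is preserved by the lifted homeomorphism $\Psi$. The orientation caveat you flag is genuine: the proposition as stated carries no orientation hypothesis, so literally you only get $\phi_*(\mbf m_C)=\mbf m_{\phi(C)}^{\pm1}$ and $\phi_*(\mbf p_C)=\mbf p_{\phi(C)}^{\pm1}$; in the applications (Theorems~\ref{CStranshol} and~\ref{invthm}) $\phi$ preserves the ambient and leaf orientations, and the signs are $+1$.

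One point to tighten is your last paragraph. The sentence ``$\mbf m$ and $\mbf p$ do not depend on $\alpha$'' cannot mean that every conjugate of $\mbf m_{\phi(C)}$ equals $\mbf m_{\phi(C)}$; that is false in a non-abelian $\pi_1$. What the self-normalizing property \cite{Heil} actually yields is: if two choices of connecting path produce the \emph{same} peripheral $\Z^2$ subgroup, then the conjugating element lies inside it and, the subgroup being abelian, fixes the generators. Your argument shows that $\phi_*\langle\mbf m_C,\mbf p_C\rangle$ is a conjugate of $\langle\mbf m_{\phi(C)},\mbf p_{\phi(C)}\rangle$ with the correct slopes, i.e.\ that $(\phi_*(\mbf m_C),\phi_*(\mbf p_C))$ is the peripheral structure of $\phi(C)$ for \emph{some} choice of path to the basepoint. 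Since the peripheral structure is only well-defined up to exactly this ambiguity --- and since this is all that Theorem~\ref{CStranshol} uses --- you are done; the ``upgrade from conjugacy to equality'' is an artifact of how the structure is packaged rather than an additional step requiring \cite{Heil}.
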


Theorem~\ref{CStranshol}, Proposition~\ref{invperiphstructure}
 and the  previous analysis of small dynamical components give us:

\begin{teo}[Index Invariance Theorem]\label{invthm}
For generalized curves satisfying assumptions (NFC) and~(TR), the Camacho-Sad indices after reduction of singularities are topological invariants in the sense that equalities (\ref{equalityCS}) are satisfied for any  $\mc C^0$-conjugacy $\phi:\F\to\G$.
\end{teo}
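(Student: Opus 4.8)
The plan is to reduce the statement, via the induction principle $(\star)$ of \S\ref{62}, to proving the single equality $\mr{CS}(\F^\sharp,\mc C,s)=\mr{CS}(\G^\sharp,\A_\phi(\mc C),\A_\phi(s))$ in the case where $\mc C$ is the strict transform of an isolated separatrix $C$ of $\F$ and $\{s\}=\mc C\cap\mc E_\F$. Indeed, the Camacho-Sad Index Formula (\ref{index-formula}), applied by a straightforward induction along the dual tree $\A_\F$ (starting from its leaves, which correspond to the strict transforms of the separatrices) together with the topological invariance of the self-intersections $D\cdot D$ recorded in Proposition~\ref{InducedGraphMorphism}, propagates the equality $\mr{CS}(\F^\sharp,D,s)=\mr{CS}(\G^\sharp,\A_\phi(D),\A_\phi(s))$ from the separatrices to every vertex–edge pair of $\A_\F$. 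So the whole weight of the proof rests on handling the isolated separatrices.

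First I would invoke the classification of the five configurations $(\mc D,\mc C)$ recalled in \S\ref{62}, where $\mc D$ is the dynamical component of $S_\F^\sharp$ containing $\mc C$. Assumption (NFC) together with Corollary~\ref{compdynsansnoeud} (which guarantees $\G$ again satisfies (NFC), so $\A_\phi(\mc D)$ is not free either) eliminates the free Poincaré-type case (\ref{freecomp}). For the nodal case (\ref{nodalsep}) the equality is Rosas' result \cite[Proposition~13]{Rosas} (see also \cite[Theorem~1.12]{MM4}). In case (\ref{A}) the Index Formula (\ref{index-formula}) expresses $\mr{CS}(\F^\sharp,\mc C,s)$ as a continued fraction in the self-intersections of the components of $\mc D\cap\mc E_\F$, all of which are topological invariants, yielding (\ref{equalityCS}); in case (\ref{B}) the nodal point $s_\ell$ forces $\mr{CS}(\F^\sharp,\mc C,s)\in\R\setminus\mb Q$ and $s$ to be a linearizable saddle, and the equality again follows from the proof of \cite[Theorem~1.12]{MM4}.

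The core of the argument is case (\ref{bigcomp}), where (TR) enters. Given the $\mc C^0$-conjugacy $\phi:\F\to\G$, which we may assume preserves the orientations of the ambient space and of the leaves, hypothesis (TR) furnishes an irreducible component of $\mc D$ with topologically rigid holonomy, so the Transverse Rigidity Theorem~\ref{transrigthm} provides an open neighbourhood $W$ of $\mc D\setminus\mc N$ on which $\phi$ is transversally holomorphic away from $0$; in particular $\phi$ is transversally holomorphic on a tubular neighbourhood $V$ of $C\setminus\{0\}$. Since $\phi$ is a \emph{global} homeomorphism sending $S_\F$ onto $S_\G$, Proposition~\ref{invperiphstructure} ensures that $\phi_\ast$ carries the meridian and parallel of the peripheral structure of $C$ to those of $\phi(C)$. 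Theorem~\ref{CStranshol} then applies verbatim and gives $\mr{CS}(\F^\sharp,\mc C,s)=\mr{CS}(\G^\sharp,\A_\phi(\mc C),\A_\phi(s))$. Assembling the four cases and applying $(\star)$ completes the proof.

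The step I expect to be the main obstacle is this last case, and precisely the reconciliation of the two partial pieces of information available there: the Transverse Rigidity Theorem only delivers transverse holomorphy \emph{locally}, near $\mc D\setminus\mc N$, while — as the blow-up counterexample following Theorem~\ref{transrigthm} shows — transverse holomorphy along $C\setminus\{0\}$ alone determines the Camacho-Sad index only modulo $\Z$ (relation (\ref{CS1})). What pins down the integer ambiguity is exactly the preservation of the peripheral structure, via the decomposition of the homotopy class of the lifted loop into $I_n\mbf m_n+n\mbf p_n$ in the proof of Theorem~\ref{CStranshol}; verifying that the global homeomorphism $\phi$ enjoys this property (Proposition~\ref{invperiphstructure}) and that it is compatible with the transverse holomorphy produced by Theorem~\ref{transrigthm} is the delicate point. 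Everything else is bookkeeping along $\A_\F$ using the Index Formula.
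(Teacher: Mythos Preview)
Your proposal is correct and follows exactly the approach of the paper: the paper's proof is the single sentence ``Theorem~\ref{CStranshol}, Proposition~\ref{invperiphstructure} and the previous analysis of small dynamical components give us'' the theorem, and you have faithfully unpacked this into the reduction $(\star)$, the case-by-case treatment of small dynamical components from \S\ref{smalldyncomp}, and the combination of Transverse Rigidity Theorem~\ref{transrigthm}, Proposition~\ref{invperiphstructure} and Theorem~\ref{CStranshol} for the big component case. Your identification of the ``main obstacle'' (fixing the $\Z$-ambiguity via the peripheral structure) is precisely the point addressed by Theorem~\ref{CStranshol} and its preceding discussion.
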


The hypothesis of this theorem are minimal.
Condition (NFC) is necessary as we have already seen in \S\ref{smalldyncomp}.
Counter-examples not fulfilling Condition (TR) are given by families of logarithmic foliations, whose topological classification is obtained by E. Paul:
\begin{teo}[{\cite[Th\'eor\`eme 3.5]{Paul2}}]\label{thmpaul}
Let $\omega_t$ be a family of germs of logarithmic forms
\[f_1\cdots f_p\sum_{j=1}^p\lambda_{j,t}\frac{df_j}{f_j},\quad t\in(\C,0),\]
on $(\C^2,0)$
whose residues $\lambda_{j,t}\in\C$ are without relation with positive integer coefficients. If there is a family germ $\{g_t, t\in(\C,0)\}$ of $\mr{GL}_2^+(\R)$ such that each map $g_t$ sends $2i\pi\lambda_{j,0}$ into $2i\pi\lambda_{j,t}$, then the family germ of foliations 
$\{\omega_t, t\in(\C,0)\}$ is topologically trivial.
\end{teo}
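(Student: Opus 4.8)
The plan is to lift the problem to the resolution of singularities, where each $\F_t$ becomes a union of \emph{linear logarithmic} singularities governed by Lemma~\ref{Poincare}, and then to patch the resulting local conjugacies into a global family; this is in essence the argument of E. Paul~\cite{Paul,Paul2}.

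First I would fix the resolution. Since the curve $S:=\{f_1\cdots f_p=0\}$ does not depend on $t$, a single composition of blowing-ups $E:(M,\mc E)\to(\C^2,0)$ simultaneously desingularizes $S$ and realizes the reduction of singularities of every $\F_t$: the hypothesis on the residues makes each $\F_t$ a non-dicritical generalized curve with a $t$-independent reduction, so $S^\sharp:=E^{-1}(S)$ is an $\F_t$-appropriate normal crossings curve for all $t$. Near a point $p\in S^\sharp$ the foliation $\F_t^\sharp:=E^{-1}\F_t$ is again logarithmic; at a crossing point where $S^\sharp=\{uv=0\}$ one has, writing the local branches as $u^{a_j}v^{b_j}(\text{unit})$ with fixed $a_j,b_j\in\Z_{\ge0}$, a multivalued first integral $\mu_1(t)\log u+\mu_2(t)\log v+(\text{holomorphic})$ with $\mu_i(t):=\sum_j c_{ij}\lambda_{j,t}$ and fixed $c_{ij}\in\Z_{\ge0}$; absorbing the holomorphic part into a coordinate change $\Phi_t$ depending holomorphically on $(u,v)$ and on $t$, $\F_t^\sharp$ becomes the \emph{linear} logarithmic model $\mu_1(t)\frac{du}{u}+\mu_2(t)\frac{dv}{v}$, which is a reduced, linearizable singularity because the hypothesis forces $\mu_i(t)\neq0$ and the reduction to be complete.

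Second, I would produce the local conjugacies. The multivalued first integral $F_t:=\sum_j\lambda_{j,t}\log f_j$ of $\F_t$ has additive monodromy group the lattice $\Lambda_t:=\sum_j\Z\,2i\pi\lambda_{j,t}\subset\C$, which, as in \S\ref{EndLeavesRedFol}, governs the leaf spaces. Since $g_t$ is $\R$-linear and sends $2i\pi\lambda_{j,0}$ to $2i\pi\lambda_{j,t}$, it sends $\Lambda_0$ onto $\Lambda_t$ and, by $\Z$-linearity of the $\mu_i$ in the $\lambda_j$, satisfies $g_t(i\mu_i(0))=i\mu_i(t)$ for every local exponent $\mu_i$ above. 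Hence at each crossing point Lemma~\ref{Poincare}, applied with its parameters $(\lambda_1,\lambda_2),(\mu_1,\mu_2)$ taken to be $(\mu_1(0),\mu_2(0)),(\mu_1(t),\mu_2(t))$ and with $g=g_t$, furnishes a homeomorphism of the linear models conjugating $\F_0^\sharp$ to $\F_t^\sharp$ and respecting the first integral up to the choice of branch; conjugating back by $\Phi_t$, $\Phi_0$ yields $\Psi^p_t$ with $\Psi^p_0=\mathrm{id}$, depending continuously on $t$ (when $\mu_1(0)/\mu_2(0)\in\R$ the linear model is $t$-independent and $\Psi^p_t$ is simply holomorphic). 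Near a smooth point of $S^\sharp$ one uses the one-variable analogue $v\mapsto v$, $u\mapsto\exp(\mu(t)^{-1}g_t(\mu(0)\log u))$ adjusted by the holomorphic part.

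Third, and this is the main obstacle, comes the gluing. The local conjugacies need not agree on overlaps, but they all induce the same prescribed transverse action ($g_t$ on $F_t$), and the holonomy of each invariant component $D$ of $S^\sharp$ for $\F_t$ is the explicit linear map $z\mapsto e^{2i\pi\theta_t}z$ with $\theta_t$ the $\R$-linear $g_t$-image of $\theta_0$, so the abstract holonomy conjugacies are consistent from stratum to stratum. Following Paul's patching~\cite{Paul,Paul2}, organized along a foliated block decomposition of a Milnor tube of $S^\sharp$ as in \S\ref{subsecConstrElemBlocs} (Seifert and collar blocks), one extends the local data block by block: on each block the extension is pinned down by the explicit holonomy, and the foliated connectedness and foliated Van~Kampen mechanism underlying Theorem~\ref{IncomprThm} guarantees that the pieces are compatible and assemble into a homeomorphism germ $\Psi_t^\sharp:(M,S^\sharp)\to(M,S^\sharp)$ with $\Psi_t^\sharp(\F_0^\sharp)=\F_t^\sharp$, $\Psi_0^\sharp=\mathrm{id}$, continuous in $t$; pushing down by $E$ yields the trivializing family $\Psi_t:(\C^2,0)\to(\C^2,0)$. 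The delicate points are that everything in \S\ref{subsecConstrElemBlocs} (roughness control of transverse discs, the inductive choice of blocks) must be carried out uniformly in $t$, and that since $g_t$ is merely $\R$-linear the patching cannot be analytic and must go through the topological–dynamical machinery rather than a sheaf argument; alternatively one can invoke a parametrized version of Classification Theorem~\ref{ClassThm}, all of whose hypotheses (monodromy conjugacy, equality of Camacho–Sad indices) hold $g_t$-equivariantly for logarithmic foliations.
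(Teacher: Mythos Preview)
The paper does not give its own proof of this theorem: it is simply quoted from \cite[Th\'eor\`eme~3.5]{Paul2} as an illustration that condition~(TR) cannot be dropped in Theorem~\ref{invthm}.  There is therefore nothing in the paper to compare your argument against beyond the brief hint, in \S\ref{smalldyncomp}, that ``using similar methods as those developed by E.~Paul in~\cite{Paul} one can see that the local conjugacies given by Lemma~\ref{Poincare} at the singular points of the free component can be glued into a global $\mc C^0$-conjugacy''.

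Your outline --- resolve, linearize locally, apply Lemma~\ref{Poincare} at each crossing, then patch --- is indeed the shape of Paul's argument, and your observation that the local exponents $\mu_i(t)$ are $\Z$-linear in the $\lambda_{j,t}$, hence are carried correctly by $g_t$, is the right compatibility check.  Your treatment of the $\R$-dependent case is also correct: if $\mu_2(0)=r\mu_1(0)$ with $r\in\R$ then $g_t$ forces $\mu_2(t)=r\mu_1(t)$, so the linear model is $t$-independent.  Where your proposal remains a sketch is the gluing.  Paul's original proof does not use the foliated block decomposition of \S\ref{subsecConstrElemBlocs} (which was introduced two decades later); he works directly with the multivalued first integral $F_t=\sum_j\lambda_{j,t}\log f_j$ and shows that the homeomorphism can be written globally, away from the axes, essentially as the map induced by $g_t$ on the target of $F_0$.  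Invoking the machinery of \S\ref{secIncompr} or a ``parametrized Classification Theorem~\ref{ClassThm}'' here is anachronistic and also circular in spirit, since Theorem~\ref{ClassThm} already assumes one has the conjugacy of monodromies and the Camacho--Sad equalities, which for logarithmic foliations is precisely what Paul's theorem is meant to exhibit can fail to pin down the foliation.  If you want a self-contained argument, the honest route is to follow \cite{Paul,Paul2} directly rather than to retrofit the later block machinery.
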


\section{Excellence Theorem and topological moduli space}
\subsection{Excellence Theorem} 
\noindent  
On the set of foliation germs satisfying (NFC) and (TR) the equivalence relations of $\mc C^{0}$-conjugacy and $\mc C^{\mr{ex}}$-conjugacy coincide: 

\begin{teo}[Excellence Theorem]\label{exTheorem} Let $\F$ and $\mc G$ be  two topologically equivalent germs  of foliations at $0\in\C^2$ fulfilling (NFC) and  (TR) properties  stated in \S\ref{smalldyncomp} and \textsection \ref{bigdyncomp}. 
Then the following assertions are equivalent:
\begin{enumerate}
\item there exists a homeomorphism germ $\phi:(\C^2,0)\to(\C^2,0)$ that conjugates $\F$ to $\G$;
\item there exists a  homeomorphism germ  $\Phi : (M_\F,\mc E_\F)\to(M_\G,\mc E_\G)$  that conjugates $\F^\sharp$ to $\G^\sharp$; moreover $\Phi$ is holomorphic at each non-nodal singular point of $\F^\sharp$ and is   transversely holomorphic at each point of $\mc E_\F$  that is regular for $\F^\sharp$ and not contained in a dicritical component.
\end{enumerate}
\end{teo}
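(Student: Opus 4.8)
The plan is to prove the non-trivial implication $(1)\Rightarrow(2)$: the converse is immediate, since composing the homeomorphism $\Phi$ of assertion~(2) with the reduction maps $E_\F$ and $E_\G$ produces a homeomorphism germ conjugating $\F$ to $\G$. So assume $\phi:(\C^2,0)\to(\C^2,0)$ conjugates $\F$ to $\G$. The whole strategy is to manufacture, out of $\phi$, exactly the data required by Classification Theorem~\ref{ClassThm}, using hypotheses (NFC) and (TR) to upgrade the bare topological information carried by $\phi$ to the transversely holomorphic information demanded there. First, by Proposition~\ref{conjappr} I fix an $\F$-appropriate curve $S$ with $S':=\phi(S)$ a $\G$-appropriate curve; then I fix Milnor tubes $\mb T$, $\mb T'$, universal covering maps $q:\tilde{\mb T}\to\mb T\setminus S$, $q':\tilde{\mb T}'\to\mb T'\setminus S'$ (endowing the leaf-space inverse systems $\mc Q$, $\mc Q'$ with complex structures via Incompressibility Theorem~\ref{IncomprThm} and Lemma~\ref{transverse-curves}), and complete $\F$- and $\G$-transversal curves $\Delta$, $\Delta'$ (Definition~\ref{CompTransCurve}, whose existence rests on Theorem~\ref{sepdyncomp}). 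After shrinking the tubes, $\phi$ restricts to a homeomorphism $\mb T\setminus S\to\mb T'\setminus S'$, lifts through $q$, $q'$ to a group morphism $\tilde g_\ast:\Gamma\to\Gamma'$, and, sending leaves of $\tilde\F=q^{-1}\F$ to leaves of $\tilde\G$, induces an isomorphism $h:\mc Q\to\mc Q'$ in $\underleftarrow{\mr{Top}}$; the pair $(\tilde g_\ast,h)$ is a $\mc C^0$-conjugacy between the monodromies $\mf m^\F$ and $\mf m^\G$. Replacing $\phi$ by the homeomorphism supplied by Theorem~\ref{curves} applied to $(S,S',\phi)$ — which has the same induced map on fundamental groups up to an inner automorphism, hence the same $\tilde g_\ast$ — I may moreover assume that $\tilde g_\ast$ is realized by an \emph{excellent} conjugacy between $S$ and $S'$.

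The heart of the proof is to show that $h$ is \emph{excellent} in the sense of Definition~\ref{conjmon}, i.e. transversely holomorphic away from a union $K$ of nodal and dicritical separators of $\F$. I argue one dynamical component of $S_\F^\sharp$ at a time, using the classification of their types recalled in \S\ref{62}. On each \emph{big} component (case~(\ref{bigcomp})), hypothesis (TR) furnishes an irreducible component with topologically rigid holonomy, and the Transverse Rigidity Theorem~\ref{transrigthm} yields a neighborhood of the component, minus its nodal points, on which $\phi$ is transversely holomorphic. On the \emph{small} components, the analysis of \S\ref{smalldyncomp} applies: in cases~(\ref{nodalsep}), (\ref{A}) and (\ref{B}) the relevant singular germs are nodes, linearizable saddles, or saddles with rational Camacho--Sad index, and there the local conjugacy can be taken holomorphic — for the rational case because the index is a continued fraction of the topologically invariant self-intersections, and for case~(\ref{B}) by the argument of \cite{MM4} — while the free case~(\ref{freecomp}) is ruled out by (NFC). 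By Rosas's Theorem~\ref{C0InvSep} the images under $\phi$ of the nodal and dicritical separators of $\F$ are nodal and dicritical separators of $\G$; absorbing these into the exceptional set $K$ and performing the isotopies of \cite{MM2,MMS} that align the transversely holomorphic local models along their common boundaries, the local data glue into an excellent morphism $h=\limproj_n\limind_m h_{mn}$. Thus $(\tilde g_\ast,h)$ is an excellent $\mc C^0$-conjugacy between $\mf m^\F$ and $\mf m^\G$.

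It remains to realize $(\tilde g_\ast,h)$ on the transversal curves and to check the Camacho--Sad hypothesis of Theorem~\ref{ClassThm}. For the first, at the point where $\Delta$ meets an isolated separatrix $C$ of $\F$ the excellence of $h$ together with Theorem~\ref{monhol} identifies, analytically, the holonomy of $\F^\sharp$ along the corresponding component with that of $\G^\sharp$; collecting these identifications along $\Delta$ produces a germ of biholomorphism $\varphi:(\Delta,S_\F)\to(\Delta',S_\G)$ and a lift $\tilde\varphi:\tilde\Delta\to\tilde\Delta'$ through $q$, $q'$ that is compatible with $h$ and $\tilde g_\ast$ — so $(\varphi,\tilde\varphi)$ realizes $(\tilde g_\ast,h)$ on $(\Delta,S_\F)$ and $(\Delta',S_\G)$ — and sending the component of $\Delta$ meeting $C$ to the one of $\Delta'$ meeting $\phi(C)$. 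For the second, hypothesis~(\ref{eqCSisosep}) of Theorem~\ref{ClassThm} is precisely the equality of Camacho--Sad indices along strict transforms of isolated separatrices, which holds for generalized curves satisfying (NFC) and (TR) by the Index Invariance Theorem~\ref{invthm}. Applying Classification Theorem~\ref{ClassThm} now yields an excellent conjugacy germ $\Phi:(\C^2,0)\to(\C^2,0)$ between $\F$ and $\G$ lifting to $\Phi^\sharp:(M_\F,S_\F^\sharp)\to(M_\G,S_\G^\sharp)$ with $\Phi^\sharp(\F^\sharp)=\G^\sharp$, holomorphic at each non-nodal singular point of $\F^\sharp$ and transversely holomorphic at each point of $S_\F^\sharp$ that is regular for $\F^\sharp$ and outside the dicritical components. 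Since $\mc E_\F\subset S_\F^\sharp$, the restriction of $\Phi^\sharp$ along $\mc E_\F$ gives assertion~(2).

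The main obstacle is the middle step: converting the bare homeomorphism $\phi$ into an \emph{excellent} conjugacy of monodromies. This is where (NFC) and (TR) are used in an essential way, and it combines the deep rigidity input of Theorem~\ref{transrigthm} (resting on the Shcherbakov--Nakai--Rebelo machinery for non-solvable holonomy groups) with a careful case-by-case treatment of the dynamical components and, most delicately, the gluing by isotopies of the transversely holomorphic local models along nodal and dicritical separators into a single coherent morphism of the leaf-space inverse systems. The dicritical case is the subtlest, since the naive transverse sections are insufficient and one must work with the refined transverse curves of Lemma~\ref{transverse-curves}; by contrast, once $h$ is known to be excellent, assembling the realization data and feeding it into Theorem~\ref{ClassThm} is a lengthy but essentially formal diagram chase.
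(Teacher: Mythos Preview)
Your proof follows the same architecture as the paper's: feed the output of the Transverse Rigidity Theorem~\ref{transrigthm} and the Index Invariance Theorem~\ref{invthm} into the Classification Theorem~\ref{ClassThm}. The paper's own proof is a one-line reference to these three results and a schematic figure, so you have in fact supplied considerably more detail than the authors do, in particular by spelling out how the monodromy conjugacy $(\tilde g_\ast,h)$ is extracted from $\phi$ and why it is excellent.

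Two places deserve tightening. First, in your treatment of the small components of type~(\ref{A}) you argue that the conjugacy can be taken holomorphic ``because the index is a continued fraction of the topologically invariant self-intersections''. That only shows the Camacho--Sad indices are determined; you still need that the saddles along the chain are \emph{linearizable}. This follows because $D_\ell\setminus\{s_{\ell-1}\}\simeq\C$ is simply connected, forcing the holonomy around $s_{\ell-1}$ to be the identity, hence (Mattei--Moussu) the saddle is linearizable, and this propagates down the chain; the analogous argument with the nodal holonomy at $s_\ell$ handles case~(\ref{B}). Second, your construction of $\varphi$ in the third paragraph invokes Theorem~\ref{monhol}, but that theorem already presupposes a compatible $\tilde\varphi$; what you actually want is to \emph{define} $\tilde\varphi$ as $(\tau')^{-1}\circ h\circ\tau$ on each component of $\tilde\Delta$ (legitimate because $h$ is now holomorphic there and $\tau'$ is a monomorphism, cf.\ Remark~\ref{monotau}) and then observe that Proposition~\ref{conjMonExtHol} ensures it conjugates the extended holonomies and descends to the required biholomorphism germ on $\Delta$. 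With these two clarifications your argument is complete and coincides with the paper's route.
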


\begin{proof} It suffices to use Index Invariance Theorem \ref{invthm}, Classification Theorem \ref{ClassThm} and Transverse Rigidity Theorem \ref{transrigthm} as
indicated in Figure~\ref{dem}.
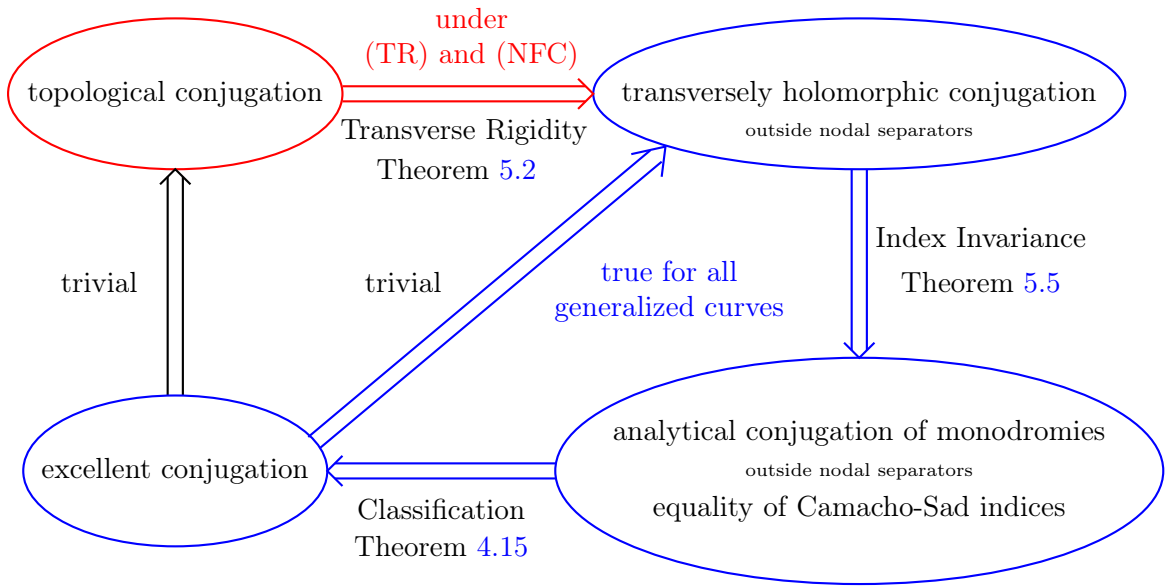
\begin{figure}[ht]
\begin{tikzpicture}[thick]
\node at (0,7) {topological conjugation};
\draw[red,thick] (0,7) ellipse (2.2cm and 1cm);

\node at (9,2.5) {analytical conjugation of monodromies};
\node at (9,2) {\tiny outside nodal separators};
\node at (9,1.5) {equality of Camacho-Sad indices};
\draw[blue,thick] (9,2) ellipse (4cm and 1.5cm);
\draw[blue]  (8.9,3.6) to (8.9,6);
\draw[blue]  (9.1,3.6) to (9.1,6);
\draw[blue]  (8.8,3.7) to (9,3.5) to (9.2,3.7);

\node at (9,7) {transversely holomorphic conjugation};
\node at (9,6.5) {\tiny outside nodal separators};
\draw[blue,thick] (9,7) ellipse (3.5cm and 1cm);

\draw (-0.1,3) to (-0.1,5.9);
\draw( 0.1,3) to (0.1,5.9);
\draw (-0.2,5.8) to (0,6) to (0.2,5.8);

\node at (0,2) {excellent conjugation};
\draw[blue,thick] (0,2) ellipse (2cm and 1cm);

\draw[red] (2.2,7.1) 
to (5.4,7.1);
\draw[red] (2.2,6.9)  
to (5.4,6.9);
\draw[red] (5.3,6.8) to (5.5,7) to (5.3,7.2);
\node at (3.9,8) {\color{red} under};
\node at (3.9,7.5) {\color{red}  (TR) and (NFC)};
\node at (3.8,6.5) {Transverse Rigidity};
\node at (3.7,6) {Theorem~\ref{transrigthm}};

\node at (-1,4.5) {trivial};
\node at (3,4.5) {trivial};

\draw[blue]  (5,2.1) to (2.1,2.1);
\draw[blue]  (5,1.9) to (2.1,1.9);
\draw[blue]  (2.2,2.2) to (2,2) to (2.2,1.8);

\node at (3.5,1.5) {Classification};
\node at (3.5,1) {Theorem~\ref{ClassThm}};

\draw[blue]  (1.9,2.3) to (6.4,6.1);
\draw[blue]  (1.75,2.45) to (6.25,6.25);
\draw[blue]  (6., 6.2) to (6.45, 6.3) to (6.4,5.9);

\node at (10.6,5.1) {Index Invariance};
\node at (10.6,4.5) {Theorem \ref{invthm}};

\node at (6.5,4.65) {\color{blue} true for all};
\node at (6.5,4.15) {\color{blue} generalized curves};
\end{tikzpicture}
\caption{Strategy of the proof of Excellence Theorem~\ref{exTheorem}.}\label{dem}
\end{figure}
\end{proof}

\subsection{Classification Problem: Complete families and Moduli space}\label{moduli} Excellence Theorem reduces the problem of topological classification of germs of foliations at the origin of $\C^2$ to the problem of classification under excellent germs of homeomorphisms.
This one  can be formulated  as follows. 
Let us call \emph{semi-local invariants} of a   foliation germ $\F$  the data of  the  topology of its  separatrices curve,  the Camacho-Sad indices of $\F^\sharp$  and  the holonomy representation morphisms of the invariant irreducible components of $\mc E_\F$.  All these data are $\mc C^{\mr{ex}}$-invariants and therefore topological invariants under the hypothesis of Excellence Theorem \ref{exTheorem}. Let us denote by $\text{SL}(\F)$ the set of all foliation germs with same semi-local invariants as $\F$. 
The goal of  $\mc C^0$-classification is  to construct a natural set of foliation germs
 whose  intersection with the topological class of any foliation in $\mr{SL}(\F)$ is non-empty and totally discontinuous.
For a large class of foliation germs a complete answer to this problem is given in the papers \cite{MMS,MMS2,MMS3} that lead to  a family of foliations in $\text{SL}(\F)$  depending holomorphically on a parameter that varies in a complex manifold of minimal dimension. This family has natural properties of factorization of any other holomorphic family in $\text{SL}(\F)$; moreover the redundancy of the topological types in this family is minimal and can be precisely described. \\

In the spirit of Teichm\"uller theory, it is convenient for our moduli problem to endow each foliation germ with a marking. For this, let us fix a foliation germ $\F_0$ that is a generalized curve and let us simply denote by $\mc E^\diam:=(\mc E,\Sigma)$ the pair $(\mc E_{\F_0},\mr{Sing}(\F_0^\sharp))$.
A \emph{marked by $\mc E^\diam$ foliation germ} is a pair $(\F,f)$ where $\F$ is a foliation germ at $(\C^2,0)$ and $f$ is a homeomorphism from $\mc E$ into $\mc E_\F$ such that 
\[ 
f(\Sigma)=\mr{Sing}(\F^\sharp)\quad\hbox{and}\quad f(D)\cdot f(D')=D\cdot D'
 \]
for any pair $(D,D')$ of irreducible components of $\mc E$. 
Two  $\mc E^\diam$-markings $f$ and $g$ of a same foliation $\F$  are \emph{equivalent} if $g^{-1}\circ f$ is isotopic to the identity map, by an isotopy fixing each point of $\Sigma$.
Two marked by $\mc E^\diam$ foliation germs $(\F,f)$ and $(\G,g)$ are \emph{$\mc C^{\mr{ex}}$-conjugated} if there exists a $\mc C^{\mr{ex}}$-conjugacy germ $\Phi:(M_\F, \mc E_\F)\to\ (M_\G,\mc E_\G)$ such that $g$ and $\Phi\circ f$ are equivalent $\mc E^\diam$-markings  of $\G$.
\begin{defin}
We say that two marked by $\mc E^\diam$ foliation germs $(\F,f)$ and $(\G,g)$ \emph{have same semi-local type}  if
\begin{enumerate}
\item  for any irreducible component $D$ of $\mc E$ and any $s\in \Sigma\cap D$, we have the equality of Camacho-Sad indices  $\mr{CS}(\G^\sharp, g(D), g(s))= \mr{CS}(\F^\sharp, f(D), f(s))$
\item  for any irreducible  component $D$ of $\mc E$ with $D\cap\Sigma\neq\emptyset$, the compositions with the group morphisms induced by the markings
\[  
H_{D}^\F\circ f_*\,\hbox{ and }\;\; H_D^\G\circ g_*\;:\;\pi_1(D\setminus \Sigma,\cdot)\longrightarrow \mr{Diff}(\C,0)
\]
 of the  holonomy morphisms $H_{f(D)}^\F$ and $H_{g(D)}^\G$ of the foliations $\F^\sharp$ and $\G^\sharp$ along $f(D)$ and $g(D)$,
are equal up to composition by an inner automorphism of $\mr{Diff}(\C,0)$.
\end{enumerate}
\end{defin}
To  obtain a ``parametrization'' by a finite dimensional manifold of the set of $\mc C^{\mr{ex}}$-conjugacy classes of marked foliation germs with fixed semi-local type, we need an additional hypothesis called  \emph{finite type}. 
This one was first introduced in \cite{MMS} as a combinatorial property of an appropriate orientation of the dual graph $\A_\F$ and then reinterpreted in \cite{MMS,MMS2} as the finiteness dimension $\tau(\F)$ of a naturally associated cohomological space.
This  assumption is not very restrictive because in the set of $1$-differential forms defining generalized curves, the subset of finite type foliations contains an open dense set for the Krull topology, see \cite[Theorem 6.2.1]{MS} and \cite[Remark 2.5]{MMS}. 
\begin{teo}\label{complThm}\cite[Theorem 3.7]{MMS3} Let $(\F,f)$ be a marked finite type foliation germ which is a generalized curve. Then there exists an equisingular marked family of foliation  germs
\[\left(\F_{z,d},f_{z,d}\right)_{(z,d)\in\C^\tau\times\mbf{D}}\] 
such that
\begin{enumerate}
\item\label{CtauD} $\mbf{D}$ is a quotient of a finite product of totally disconnected subgroups of the circle $\mb U(1)$ and $\tau=\tau(\F)$,
\item\label{comp} 
for any marked foliation  germ
$(\G,g)$ with same semi-local type as $(\F,f)$, there exists $(z,d)\in\C^\tau\times \mathbf D$ such that $(\G,g)$ is $\mc C^{\mr{ex}}$-conjugated to $(\F_{z,d},f_{z,d})$,
\item\label{FacFam} if $(\G_{t},g_t)_{t\in P}$ is an equisingular marked family of   foliation germs with parameter space  a connected manifold $P$ satisfying $H^1(P,\Z)=0$, then for any  $t_0  \in P$ and $( z_0, d_0)\in\C^\tau\times \mathbf D$ such that $(\G_{t_0},g_{t_0})$ is $\mc C^{\mr{ex}}$-conjugated to  $\F_{z_0,d_0}$, there exists a unique holomorphic map $\lambda:P\to \C^\tau\times \mathbf D$ with $\lambda(t_0)=(z_0,d_0)$ such that  for any $t\in P$ the marked foliation germs $(\G_t,g_t)$ and $(\F_{\lambda(t)},f_{\lambda(t))})$ are $\mc C^{\mr{ex}}$-conjugated,
\item\label{improvement} in a neighborhood $W_{t'}$ of any point $t'\in P$, 
the  $\mc C^\mr{ex}$-conjugacies between the marked foliation germs $(\G_t,g_t)$ and $(\F_{\lambda(t)},f_{\lambda(t))})$ in Assertion (\ref{FacFam}) can be realized by excellent homeomorphisms $\Phi_{t}$ depending continuously on $t\in W_{t'}$.
\end{enumerate}
\end{teo}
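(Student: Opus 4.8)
The plan is to reduce the $\mc C^{\mr{ex}}$-classification of marked generalized curves with fixed semi-local type, via Classification Theorem~\ref{ClassThm} and Theorem~\ref{monhol}, to the classification of monodromy representations compatible with that semi-local type, and then to organize the residual freedom into a holomorphic parameter $z\in\C^\tau$ together with a discrete parameter $d\in\mathbf D$.

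First I would set up the moduli functor. Fixing the semi-local type of $(\F,f)$, by Classification Theorem~\ref{ClassThm} two marked foliation germs with this semi-local type are $\mc C^{\mr{ex}}$-conjugated if and only if there is an excellent conjugacy of monodromies compatible with the markings. Along each dynamical component of $S_\F^\sharp$ the holonomies of the invariant components and the Camacho--Sad indices are fixed by the semi-local data (\S\ref{TCT}, \S\ref{CS0}); what remains free is the way these local data are glued across the separators, i.e.\ the transverse changes of coordinates entering the realizations of a monodromy conjugacy. Following \cite{MMS,MMS2} one encodes this gluing data as a $1$-cocycle for a sheaf $\Theta$ on $\mc E$ of infinitesimal transverse symmetries respecting the marking and the semi-local type; the finite type hypothesis is precisely the finiteness $\tau(\F):=\dim_\C H^1(\mc E,\Theta)<\infty$, while the residual discrete freedom (roughly, the angular ambiguity of the transverse linear parts at the places where transverse rigidity does not rigidify a full neighbourhood) is organized into the totally disconnected group $\mathbf D$ of Assertion~(\ref{CtauD}).

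Next I would construct the universal family and prove completeness. Choosing a \v{C}ech representative, to $(z,d)\in\C^\tau\times\mathbf D$ one associates a deformed gluing, hence a foliation germ $\F_{z,d}$ carrying a transported marking $f_{z,d}$, and one checks that $(\F_{z,d},f_{z,d})_{(z,d)}$ is an equisingular marked family, i.e.\ holomorphic in $(z,d)$ with constant dual tree, Camacho--Sad indices and holonomies. For Assertion~(\ref{comp}), given $(\G,g)$ of the same semi-local type, Theorem~\ref{monhol} and Classification Theorem~\ref{ClassThm} furnish an excellent monodromy conjugacy; comparing it with the one defining the universal family produces a cocycle whose cohomology class is the required $z_0\in\C^\tau$ and whose discrete residue is the required $d_0\in\mathbf D$, and $(\G,g)$ is then $\mc C^{\mr{ex}}$-conjugated to $(\F_{z_0,d_0},f_{z_0,d_0})$ by Classification Theorem~\ref{ClassThm}.

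Finally, for universality (Assertions~(\ref{FacFam}) and~(\ref{improvement})), given a holomorphic equisingular family $(\G_t,g_t)_{t\in P}$ I would build the classifying map $t\mapsto\lambda(t)=(z(t),d(t))$ by performing the cocycle comparison of the previous step over $P$: holomorphy of $z(t)$ follows from the holomorphic dependence on $t$ of the transverse data defining the cocycle (equivalently, from coherence of the associated cohomology sheaves), while the discrete component is locally constant, hence constant on the connected $P$ once normalized at $t_0$. The hypothesis $H^1(P,\Z)=0$ is exactly what allows the a priori multivalued period map $P\to\C^\tau/\Lambda$ to lift to a single-valued holomorphic $P\to\C^\tau$, yielding the uniqueness of $\lambda$ with $\lambda(t_0)=(z_0,d_0)$. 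Assertion~(\ref{improvement}) then requires a parametrized version of the proof of Classification Theorem~\ref{ClassThm}: one revisits the inductive block-by-block construction through Extension Lemma~\ref{extlemma}, choosing all the local conjugacies --- of regular foliations, and of saddle singularities via \cite{MatMou} --- and all the gluings on separators continuously in $t$, so that the excellent homeomorphisms $\Phi_t$ assemble continuously over a neighbourhood $W_{t'}$ of any $t'\in P$. The main obstacle, which is the technical core of \cite{MMS,MMS2,MMS3}, is precisely this twofold point: identifying the residual gluing freedom with $H^1(\mc E,\Theta)$ and the finite-type condition with $\tau(\F)<\infty$, and then making the whole construction underlying the Classification Theorem depend holomorphically (respectively continuously) on the parameter, uniformly along all dynamical components and their gluings on separators.
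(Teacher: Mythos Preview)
This survey paper does not contain a proof of Theorem~\ref{complThm}: the result is quoted from \cite[Theorem~3.7]{MMS3}, with only the remark that Assertion~(\ref{improvement}) follows from \cite[Theorem~4.4]{MMS3}. There is therefore no ``paper's own proof'' to compare against beyond these citations. Your outline is broadly consistent with the cohomological philosophy summarized here and developed in \cite{MMS,MMS2,MMS3}: parametrize the residual gluing freedom (after fixing the semi-local type) by a cohomology space of dimension $\tau(\F)$ together with a totally disconnected factor $\mathbf D$, build the family by deforming cocycles, and obtain Assertions~(\ref{FacFam})--(\ref{improvement}) by a parametrized version of the construction underlying Classification Theorem~\ref{ClassThm}.

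Two cautions about the sketch itself. First, your invocation of Classification Theorem~\ref{ClassThm} as an ``if and only if'' criterion is imprecise: that theorem assumes, beyond an excellent monodromy conjugacy $(\tilde g_\ast,h)$, the existence of a biholomorphism $\varphi$ on complete transversal curves that \emph{realizes} the conjugacy (diagram~(\ref{relcompat})), together with the Camacho--Sad equalities on isolated separatrices; producing such a $\varphi$ compatible with $h$ is not automatic and is part of what the cohomological apparatus in \cite{MMS,MMS2} is designed to handle. Second, your description of $\mathbf D$ as ``angular ambiguity of transverse linear parts'' and of $\Theta$ as ``infinitesimal transverse symmetries'' is heuristic; the precise identification of the finite-type hypothesis with $\dim H^1<\infty$ and of the discrete factor with the group appearing in Theorem~\ref{modules} is exactly the technical content of \cite{MMS,MMS2,MMS3} that you are deferring to. In short, your proposal is a faithful roadmap of the strategy, but --- like the present survey --- it relocates all the actual work to the cited papers rather than providing an independent argument.
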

\noindent The notion of \emph{marked family of foliation germs}, 
introduced in \cite[\S 2.2]{MMS3}, is 
a compatibility property of the markings with the local topological triviality in the parameter of the whole exceptional divisor. That of \emph{equisingularity} is precisely  defined in \cite[\S3]{MMS2}; basically it means that the family is locally equireducible and  all the foliation germs in the family have same semi-local type. 
 Assertion (\ref{improvement}) in Theorem~\ref{complThm} follows from \cite[Theorem~4.4]{MMS3}.\\

We finally describe the redundancy of the $\mc C^{\mr{ex}}$-classes in the family $\left(\F_{z,d},f_{z,d}\right)_{(z,d)\in\C^\tau\times\mbf{D}}$. Let us define the \emph{$\mc C^{\mr{ex}}$-moduli space of $(\F,f)$} as the set
\begin{equation*}
\mr{Mod}(\F,f):=\{\;  [\G,g]
\;|\; \text{$(\G,g)$ and $(\F,f)$ have same semi-local type}
\}
\end{equation*}
of $\mc C^{\mr{ex}}$-conjugacy classes $[\G,g]
$ of marked by $\mc E^\diam$ foliations $(\G,g)$. 
To any equisingular marked family $\G_P:=(\G_t,g_t)_{t\in P}$ of foliation germs with same semi-local type as $(\F,f)$ we associate its \emph{moduli map}
\[ 
\text{mod}_{\G_P}:P\longrightarrow \mr{Mod}(\F,f)\,,\qquad t\mapsto [\G_t,g_t]
\,.
 \]
\begin{teo}\cite[Theorem D]{MMS}\label{modules} Let $(\F,f)$ be a marked finite type foliation germ which is a generalized curve.  There is a natural abelian group structure on $\mr{Mod}(\F,f)$ given by an exact sequence 
\begin{equation*}
0\to \Z^p\xrightarrow{\alpha}\C^\tau\xrightarrow{\Lambda} \mr{Mod}(\F,f) \xrightarrow{\beta}\mathbf D\rightarrow 0
\end{equation*}
and for any section $\zeta$ of $\beta$ there exists an equisingular marked family 
\[
\F_{\C^\tau\times\mathbf D}:= 
(\F_{z,d},f_{z,d})_{(z,d)\in\C^\tau\times\mathbf D}\] 
of foliation germs with same semi-local type as $(\F,f)$
such that
\begin{enumerate}
\item all assertions 
in Theorem \ref{complThm} are satisfied,
\item the moduli map $\mr{mod}_{\F_{\C^\tau\times\mathbf D}}$ is surjective and for any $(z,d)\in\C^\tau\times \mathbf D$ we have
\[ 
\mr{mod}_{\F_{\C^\tau\times\mathbf D}}(z,d)=\Lambda(z)\cdot \zeta(d)
 \]
 where $\cdot$ is the group operation in $\mr{Mod}(\F,f)$.
\end{enumerate}
\end{teo}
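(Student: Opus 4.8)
The plan is to combine the completeness statement of Theorem~\ref{complThm}, which is already available, with a cohomological description of the continuous part of the moduli; the details are carried out in \cite{MMS,MMS2,MMS3}. First I would attach to the marked generalized curve $(\F,f)$, after fixing an appropriate curve $S$ and the reduction map $E_\F$, the coherent sheaf $\Theta_\F$ on a neighbourhood of $\mc E_\F$ of germs of holomorphic vector fields tangent to $\F^\sharp$, vanishing along $S^\sharp:=E_\F^{-1}(S)$ and fixing the points of $\mr{Sing}(\F^\sharp)$. The \emph{finite type} hypothesis is precisely the condition that $H^1(\mc E_\F,\Theta_\F)$ is finite dimensional, and one sets $\tau=\tau(\F):=\dim_\C H^1(\mc E_\F,\Theta_\F)$; one also checks that $H^0(\mc E_\F,\Theta_\F)=0$. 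Integrating a $\C$-linear space of $1$-cocycles representing $H^1(\mc E_\F,\Theta_\F)$ --- gluing the local normal forms of $\F^\sharp$ along $\mc E_\F$ by the time-one flows of vector fields depending linearly on a parameter $z\in\C^\tau$ --- produces an equisingular marked holomorphic family $(\F_z,f_z)_{z\in\C^\tau}$ with $(\F_0,f_0)=(\F,f)$, whose Kodaira-Spencer map is everywhere surjective; this is the continuous part of the family in Theorem~\ref{complThm}.

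Next I would build the discrete factor $\mathbf D$. For a fixed semi-local type (prescribed Camacho-Sad indices and holonomy representations), the remaining $\mc C^{\mr{ex}}$-invariants are ``rotational'' data, one attached to each nodal separator and to each dicritical component of $\mc E_\F$, taking values in the circle $\mb U(1)$; since the holonomies are already prescribed and an excellent conjugacy is transversely holomorphic off these loci, each such datum may only move inside a totally disconnected subgroup of $\mb U(1)$. The group $\mathbf D$ is the product of these subgroups, quotiented by the relations the index formula~(\ref{index-formula}) imposes around the dicritical components. Re-gluing $\F_z$ along the corresponding separators by a twist prescribed by $d\in\mathbf D$ gives the family $(\F_{z,d},f_{z,d})_{(z,d)\in\C^\tau\times\mathbf D}$, and assertion~(\ref{comp}) of Theorem~\ref{complThm} says that the induced map $\mr{mod}_{\F_{\C^\tau\times\mathbf D}}:\C^\tau\times\mathbf D\to\mr{Mod}(\F,f)$ is surjective.

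The core of the argument is the computation of the fibres of this map: $(\F_{z,d},f_{z,d})$ and $(\F_{z',d'},f_{z',d'})$ are $\mc C^{\mr{ex}}$-conjugate precisely when $d=d'$ and $z-z'$ lies in a fixed period lattice $\alpha(\Z^p)\subset\C^\tau$. Invariance of $d$ is immediate from the $\mc C^{\mr{ex}}$-invariance of the rotational data. In the continuous direction, an excellent conjugacy between $\F_z$ and $\F_{z'}$ compatible with the markings lifts through $E_\F$ to a $1$-cocycle with values in the sheaf of germs of automorphisms of $\F^\sharp$ fixing $\mc E_\F$ and $\mr{Sing}(\F^\sharp)$; its unipotent part is a cocycle with values in $\Theta_\F$ which, using $H^0(\mc E_\F,\Theta_\F)=0$ and a block-by-block and separator-by-separator splitting along $\mc E_\F$, is shown to be a coboundary modulo the ambiguity in the normalization of the continuous invariants, and this forces $z-z'$ into $\alpha(\Z^p)$; conversely every lattice element is realized by an explicit self-conjugacy. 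Hence $\mr{mod}_{\F_{\C^\tau\times\mathbf D}}$ descends to a bijection $\mr{Mod}(\F,f)\cong(\C^\tau/\Z^p)\times\mathbf D$. Setting $\Lambda:\C^\tau\to\mr{Mod}(\F,f)$, $z\mapsto[\F_{z,0},f_{z,0}]$, and $\beta:\mr{Mod}(\F,f)\to\mathbf D$ the projection, one gets $\ker\beta=\mr{im}\,\Lambda$ and $\ker\Lambda=\alpha(\Z^p)$, which is the exact sequence of the statement; the abelian group structure on $\mr{Mod}(\F,f)$ is the one transported from $(\C^\tau/\Z^p)\times\mathbf D$ through this bijection, and for a section $\zeta$ of $\beta$ --- for instance $d\mapsto[\F_{0,d},f_{0,d}]$ --- the formula $\mr{mod}_{\F_{\C^\tau\times\mathbf D}}(z,d)=\Lambda(z)\cdot\zeta(d)$ holds by construction. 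That this structure is intrinsic, and that the moduli maps of arbitrary equisingular marked families are compatible with it, I would obtain by applying the holomorphic factorisation property Theorem~\ref{complThm}(\ref{FacFam}) to differences of families: the resulting classifying map into $\C^\tau\times\mathbf D$ is holomorphic, hence affine on a simply connected base, which yields additivity; together with Theorem~\ref{complThm} this gives assertions~(1) and~(2).

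The hard part is the fibre computation just sketched --- equivalently, proving that the group of $\mc C^{\mr{ex}}$-self-conjugacies of $(\F,f)$ is discrete and coincides with $\Z^p$, i.e. that no hidden continuous automorphism survives. This rests on the vanishing $H^0(\mc E_\F,\Theta_\F)=0$ together with a delicate, piece-by-piece control along the dynamical decomposition of $S^\sharp_\F$ (big pieces, small pieces and separators) of the way the unipotent part of an automorphism cocycle splits; it is at this point that the finite type hypothesis is used essentially, through the finiteness of $H^1$ and the acyclicity of the auxiliary sheaves over the geometric blocks.
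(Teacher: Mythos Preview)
The paper does not supply its own proof of this theorem: it is stated as a citation of \cite[Theorem~D]{MMS}, followed only by the remark that surjectivity of $\mr{mod}_{\F_{\C^\tau\times\mathbf D}}$ is equivalent to property~(\ref{comp}) of Theorem~\ref{complThm}, and by the corollary on redundancy. So there is no argument here against which to check your proposal step by step.

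That said, your sketch is broadly consistent with the indications the survey does give. The paper explicitly says (just before Theorem~\ref{complThm}) that the finite type hypothesis was ``reinterpreted in \cite{MMS,MMS2} as the finiteness dimension $\tau(\F)$ of a naturally associated cohomological space'', which matches your use of $H^1(\mc E_\F,\Theta_\F)$; and Theorem~\ref{complThm}(\ref{CtauD}) describes $\mathbf D$ as a quotient of a finite product of totally disconnected subgroups of $\mb U(1)$, in line with your construction of the discrete factor. Your overall architecture --- build the $\C^\tau$-family by integrating an $H^1$-basis of infinitesimal cocycles, twist along separators to get the $\mathbf D$-factor, then identify the fibres of the moduli map with a lattice $\alpha(\Z^p)$ --- is the strategy of \cite{MMS,MMS2,MMS3}, and your reduction of the group structure to the bijection $\mr{Mod}(\F,f)\cong(\C^\tau/\Z^p)\times\mathbf D$ is exactly what the corollary following the theorem records. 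One caution: your description of the data indexed by $\mathbf D$ as attached to ``each nodal separator and each dicritical component'' and constrained by the index formula is a plausible heuristic, but the precise bookkeeping in \cite{MMS} is more delicate (it goes through an orientation of the dual graph $\A_\F$ and the associated combinatorics), so you should not present that part as settled without pointing to the source.
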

\noindent Notice that the surjectivity of $\text{mod}_{\F_{\C^\tau\times\mathbf D}}$ is equivalent to property (\ref{comp}) in Theorem~\ref{complThm}.\\

A direct consequence of Theorem~\ref{modules} 
is the explicit description of the redundancy of the $\mc C^\mr{ex}$-classes 
in the family $\F_{\C^\tau\times\mbf D}$:
\begin{cor}
Two marked foliation germs $(\F_{z,d},f_{z,d})$ and $(\F_{z',d'},f_{z',d'})$ in the marked family $\F_{\C^\tau\times\mathbf D}$ are $\mc C^\mr{ex}$-conjugated if and only if $d=d'$ and there exists $N\in\Z^p$ such that $z'=z+\alpha(N)$.
\end{cor}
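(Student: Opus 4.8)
The plan is to obtain this Corollary as a purely formal consequence of Theorem~\ref{modules}: all the analytic and topological substance (the Excellence Theorem~\ref{exTheorem}, the construction of the family $\F_{\C^\tau\times\mathbf D}$, and the abelian group structure fitting into the exact sequence) is already packaged there, so what remains is a short diagram chase. First I would fix the dictionary. By definition $\mr{Mod}(\F,f)$ is the set of $\mc C^\mr{ex}$-conjugacy classes of marked foliation germs having the same semi-local type as $(\F,f)$, and by Theorem~\ref{modules} the family $\F_{\C^\tau\times\mathbf D}$ consists of foliation germs with that semi-local type; hence $(\F_{z,d},f_{z,d})$ and $(\F_{z',d'},f_{z',d'})$ are $\mc C^\mr{ex}$-conjugated if and only if they represent the same class, i.e. if and only if
\[\mr{mod}_{\F_{\C^\tau\times\mathbf D}}(z,d)=\mr{mod}_{\F_{\C^\tau\times\mathbf D}}(z',d').\]
Using assertion~(2) of Theorem~\ref{modules}, this is exactly the equality $\Lambda(z)\cdot\zeta(d)=\Lambda(z')\cdot\zeta(d')$ in the abelian group $\mr{Mod}(\F,f)$.

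For the ``only if'' implication I would apply the group morphism $\beta$ to that equality. Since $\beta\circ\Lambda=0$ (exactness at $\mr{Mod}(\F,f)$) and $\beta\circ\zeta=\mr{id}$ ($\zeta$ being a section of $\beta$), the left-hand side has $\beta$-image $d$ and the right-hand side $d'$, whence $d=d'$. Cancelling $\zeta(d)$ in the abelian group then yields $\Lambda(z)=\Lambda(z')$, i.e. $\Lambda(z'-z)=0$; by exactness at $\C^\tau$, $\ker\Lambda=\alpha(\Z^p)$, so there is $N\in\Z^p$ with $\alpha(N)=z'-z$, that is $z'=z+\alpha(N)$ (and $N$ is even unique, by injectivity of $\alpha$, though we do not need this). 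Conversely, if $d=d'$ and $z'=z+\alpha(N)$, then $\Lambda(z')=\Lambda(z)\cdot\Lambda(\alpha(N))=\Lambda(z)$ because $\Lambda\circ\alpha=0$, hence $\Lambda(z')\cdot\zeta(d')=\Lambda(z)\cdot\zeta(d)$, i.e. $\mr{mod}_{\F_{\C^\tau\times\mathbf D}}(z',d')=\mr{mod}_{\F_{\C^\tau\times\mathbf D}}(z,d)$, and the dictionary gives the $\mc C^\mr{ex}$-conjugacy.

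I do not expect a genuine obstacle here; this is a one-screen argument. The only points that deserve a sentence of care are: (i) that $\beta$ is a bona fide group homomorphism and that cancellation is legitimate, which holds because $\mathbf D$ is a quotient of a product of subgroups of $\mb U(1)$ and $\mr{Mod}(\F,f)$ carries the abelian group structure of the exact sequence of Theorem~\ref{modules}; and (ii) that the equivalence ``same element of $\mr{Mod}(\F,f)$'' $\Longleftrightarrow$ ``$\mc C^\mr{ex}$-conjugated'' is precisely the definition of $\mr{Mod}(\F,f)$, so no extra input (in particular no further appeal to Classification Theorem~\ref{ClassThm} or to Excellence Theorem~\ref{exTheorem} beyond what is already absorbed in Theorem~\ref{modules}) is needed at this last step. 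The real work lies upstream, in Theorem~\ref{modules} itself, which is used here as a black box.
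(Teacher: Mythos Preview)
Your proof is correct and is exactly the argument the paper has in mind: the corollary is stated there as ``a direct consequence of Theorem~\ref{modules}'' with no further proof, and the diagram chase you wrote out (apply $\beta$ to get $d=d'$, cancel $\zeta(d)$, use $\ker\Lambda=\alpha(\Z^p)$) is precisely that direct consequence.
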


\bibliographystyle{plain}

\begin{thebibliography}{99}

\bibitem{Briot-Bouquet} C. Briot and J.C. Bouquet, {\it Recherches sur les fonctions d\'efinies par des \'equations diff\'rentielles.} J. \'Ecole Polytechnique \textbf{21} (1856), 134--198.

\bibitem{FCano} F. Cano, {\it Desingularization of plane vector fields.} Transactions of the AMS, volume \textbf{296}, no 1, (1986), 83--93.

\bibitem{JoseCano} J. Cano, {\it Construction of invariant curves for singular holomorphic vector fields.}  Proceedings of the American Mathematical Society Volume \textbf{125} (1997), 2649--2650.



\bibitem{CLNS} C. Camacho, A. Lins-Neto and P. Sad, {\it Topological invariants and equidesingularization for holomorphic vector fields.} J. Differential Geom. \textbf{20} (1984), no. 1, 143--174. 

\bibitem{CamachoRosas} C. Camacho and R. Rosas, {\it Invariant sets near singularities of holomorphic foliations.} Ergodic Theory Dynam. Systems \textbf{36} (2016), no. 8, 2408--2418.


\bibitem{Camacho-Sad} C. Camacho and P. Sad, {\it Topological Classification and Bifurcations of Holomorphic Flows with resonances in $\C^2$.} Invent. Math. \textbf{67} (1982), 447--472.


\bibitem{CS} C. Camacho and P. Sad, {\it Varieties Through Singularities of Holomorphic Vector Fields.} Ann. of Math. (2) \textbf{115}, no. 3 (1982),  579--595.


\bibitem{CerveauSad} D. Cerveau and P. Sad, {\it Probl\`emes de modules pour les formes diff\'erentielles singuli\`eres dans le plan complexe.} Comment. Math. Helv. \textbf{61} (1986), no. 2, 222--253. 

\bibitem{Douady}R. Douady and A. Douady, {\it Alg\`{e}bre et th\'{e}ories galoisiennes.}
\'Ed. Cassini (2005).

\bibitem{Dulac} H. Dulac, {\it Recherches sur les points singuliers des \'equations diff\'erentielles.} J. \'Ecole Polytechnique \textbf{2} (1904), 1--125.

\bibitem{Elizarov} P. M. Elizarov, {\it  Orbital topological classification of analytic differential equations in a neighborhood of a degenerate elementary singular point on the two-dimensional complex plane.}
Journal of Soviet Mathematics \textbf{50} (1990), 1447--1468.


\bibitem{Heil}{W. Heil,} {\it Normalizers of incompressible surfaces in $3$-dimensional manifolds.}  {Glasnik Mathematicki} {\textbf{16}} (1981), {145--150.}


\bibitem{Ilyashenko} Y. S. Ilyashenko, {\it Topology of phase portraits of analytic differential equations on a complex projective plane.} Trudy  Sem. Petrovsk. \textbf{4} (1978), 83--136.

\bibitem{JacoShalen}{W.H. Jaco and P.B. Shalen, }{\it Seifert Fibered Spaces in 3-Manifolds.} {Memoirs of American Math. Soc. 21, \textbf{220}}{(1979).}	

\bibitem{Johannson}{K. Johannson, }{\it Homotopy equivalences of $3$-Manifolds with Boundaries.} {Lecture Notes in Mathematics, \textbf{761}}{ (1979).}

\bibitem{LMW} D.T. L\^{e}, F. Michel and C. Weber, {\it Courbes polaires et topologie des courbes planes.}{ Ann. Sci. \'Ec. Norm. Sup. (4)}{ \textbf{24} (1991), no. 2}, {141--169.}

\bibitem{LeFloch} L. Le Floch, {\it Rigidit\'{e} g\'{e}n\'{e}rique des feuilletages singuliers.} Ann. Sci. \'Ec. Norm. Sup. (4) \textbf{31} (1998), no. 6, 765--785. 

\bibitem{Loray} F. Loray, {\it Pseudo-groupe d'une singularit\'{e} de feuilletage holomorphe en dimension deux.} Ensaios Matem\'aticos \textbf{36} (2021), 53--274. doi.org/10.21711/217504322021/em362.

\bibitem{MM1} D. Mar\'{\i}n and J.-F. Mattei, {\it L'incompressibilit\'{e} des feuilles de germes de feuilletages holomorphes singuliers.} Ann. Sci. \'Ec. Norm. Sup. (4) \textbf{41} (2008), no. 6, 855--903. 

\bibitem{MM2} D. Mar\'{\i}n and J.-F. Mattei, {\it Mapping class group of a plane curve germ.} Topology Appl. \textbf{158} (2011), no. 11, 1271--1295.

\bibitem{MM3} D. Mar\'{\i}n and J.-F. Mattei, {\it Monodromy and topological classification of germs of holomorphic foliations.} Ann. Sci. \'Ec. Norm. Sup. (4) \textbf{45} (2012), no. 3, 405--445.

\bibitem{MM4} D. Mar\'{\i}n and J.-F. Mattei, {\it Topology of singular holomorphic foliations along a compact divisor.}  J. Singul. \textbf{9} (2014), 122--150.

\bibitem{MMS} D. Mar\'{\i}n, J.-F. Mattei and E. Salem, {\it  Topological moduli space for germs of holomorphic foliations}, International Mathematics Research Notices, Volume \textbf{2020}, Issue 23, (2020),9228--9292.

\bibitem{MMS2} D. Mar\'{\i}n, J.-F. Mattei and E. Salem, {\it Topological moduli space for germs of holomorphic foliations II: universal deformations}, https://arxiv.org/abs/2105.12688

\bibitem{MMS3} D. Mar\'{\i}n, J.-F. Mattei and E. Salem, {\it Topological moduli space for germs of holomorphic foliations~III: complete families}, https://arxiv.org/abs/2201.07479

\bibitem{MR} J. Martinet and J.-P. Ramis, {\it Classification analytique des \'equations diff\'erentielles non lin\'eaires r\'esonnantes du premier ordre.} Ann. Sci. \'Ec. Norm. Sup. (4)  \textbf{16} (1984), 571--621.

\bibitem{MatMou} J.-F. Mattei and R. Moussu, {\it Holonomie et int\'{e}grales premi\`{e}res}
Ann. Sci. \'Ec. Norm. Sup. (4) \textbf{13} (1980), no. 4, 469--523.

\bibitem{MS2} J.-F. Mattei and E. Salem, {\it Complete systems of topological and analytical invariants for a generic foliation of $(\mathbb{C}^2,0)$.} Math. Res. Lett. \textbf{4} (1997), no. 1, 131--141. 

\bibitem{MS} J.-F. Mattei and E. Salem, {\it Modules formels locaux de feuilletages holomorphes.} {(2004) arXiv:math/0402256v1 [math.DS], 89 pp.}

\bibitem{Milnor} J. Milnor, {\it Singular points of complex hypersurfaces.} Annals of Mathematics Studies \textbf{61} (1968). 


\bibitem{nakai} I. Nakai, {\it Separatrices for non-solvable dynamics on $(\mathbb C,0)$.} Ann. Inst. Fourier (Grenoble) \textbf{44} (1994), no. 2, 569--599.

\bibitem{OBRGV} L. Ortiz-Bobadilla, E. Rosales-Gonz\'alez and S.M. Voronin, {\it
On Camacho-Sad's theorem about the existence of a separatrix.} Internat. J. Math. \textbf{21} (2010), no. 11, 1413--1420. 

\bibitem{OBRGV3} L. Ortiz-Bobadilla, E. Rosales-Gonz\'alez and S.M. Voronin, {\it Extended holonomy and topological invariance of vanishing holonomy group.}
J. Dyn. Control Syst. \textbf{14} (2008), 299--358.


\bibitem{Paul} E. Paul, {\it \'Etude topologique des formes logarithmiques ferm\'ees.} Invent. Math. \textbf{95} (1989), 395--420.

\bibitem{Paul2} E. Paul, {\it Classification topologique des germes de formes logarithmiques g\'en\'eriques.} Ann. Inst. Fourier (Grenoble)  \textbf{39} (1989), 909--927.

\bibitem{Perez-Marco-cercle} R. P\'erez Marco, {\it Fixed points and circle maps.} Acta Math. \textbf{179} (1997), 243--294.

\bibitem{Rebelo} J. Rebelo, {\it On transverse rigidity for singular foliations in $(\mathbb{C}^2,0)$.} Ergodic Theory Dynam. Systems \textbf{31} (2011), no. 3, 935--950.

\bibitem{Rosas} R. Rosas, {\it Constructing equivalences with some extensions to the divisor and topological invariance of projective holonomy.}  Comment. Math. Helv. \textbf{89}, no. 3  (2014),  631--670. 

\bibitem{RosasFourier} R. Rosas, {\it Nodal separators of holomorphic foliations.} Ann. Inst. Fourier (Grenoble)  \textbf{68}, no. 2 (2018), 511--539.

\bibitem{Seidenberg} A. Seidenberg, {\it Reduction of Singularities of the Differential Equation Ady = Bdx.} Amer. J. Math. \textbf{90}, no. 1 (1968),  248--269.

\bibitem{Shcherbakov} A.A. Shcherbakov, {\it Topological and analytic conjugation of noncommutative groups of germs of conformal mappings.} Trudy Sem. Petrovsk.  \textbf{10} (1984), 170--192, 238--239.

\bibitem{Loic} L. Teyssier, {\it Germes de feuilletages pr\'esentables du plan complexe.}
Bull. Braz. Math. Soc. (N.S.) \textbf{46} (2015), no. 2, 275--329.

\bibitem{Toma} M. Toma, {\it A short proof of a theorem of Camacho and Sad.}  L'Enseignement Math\'ematique, \textbf{45} (1999),  311--316.

\bibitem{vdE} A. van den Essen, {\it Reduction of singularities of the Differentiable Equation $Ady=Bdx$.} Springer Lecture Notes \textbf{712} (1979),  44--59.

\bibitem{Waldhausen} F. Waldhausen, {\it On irreducible 3-manifolds which are sufficiently large.} Ann. of Math. (2) \textbf{87} (1968), 56--88.

\bibitem{Z} O. Zariski, {\it On the topology of algebroid singularities.}  Amer. J. Math. \textbf{54} no. 3 (1932), 453--465.
\end{thebibliography}

\end{document}